\documentclass[leqno,11pt]{article}
\usepackage[utf8x]{inputenc}
\usepackage{microtype}

\usepackage{amsmath}
\usepackage{amsthm}
\usepackage{amssymb}
\usepackage{mathrsfs} 

\usepackage[T1]{fontenc}

\usepackage{ae,aecompl}
\usepackage{times}

\usepackage[english]{babel}

\usepackage[colorlinks,pagebackref,hypertexnames=false]{hyperref}
 \usepackage[alphabetic,backrefs,msc-links]{amsrefs}
\usepackage{bookmark}

\usepackage{esint} 

\usepackage[yyyymmdd]{datetime}

\newcommand{\defined}[2][\key]{%
  \def\key{#2}%
  \textbf{#2}%
  \index{#1}%
}


\newcommand{\Z}{\bZ}

\newcommand{\R}{\bR}
\newcommand{\C}{\bC}
\renewcommand{\H}{\bH}
\newcommand{\Oc}{\bO}


\newcommand{\su}{\mathfrak{su}}
\newcommand{\so}{\mathfrak{so}}

\newcommand{\SO}{{\rm SO}}

\newcommand{\Spin}{{\rm Spin}}
\newcommand{\SU}{{\rm SU}}
\newcommand{\GL}{\mathrm{GL}}
\newcommand{\Cl}{\mathrm{C\ell}}

\newcommand{\U}{{\rm U}}

\newcommand{\Vect}{\mathrm{Vect}}


\DeclareMathOperator{\Aut}{Aut}

\DeclareMathOperator{\Diff}{Diff}

\DeclareMathOperator{\Hom}{Hom}

\DeclareMathOperator{\Lie}{Lie}

\DeclareMathOperator{\ad}{ad}

\newcommand{\ev}{{\rm ev}}
\newcommand{\odd}{{\rm odd}}

\DeclareMathOperator{\im}{im}

\DeclareMathOperator{\tr}{tr}

\newcommand{\End}{\mathop{\rm End}\nolimits}

\renewcommand{\Im}{\operatorname{Im}}
\renewcommand{\Re}{\operatorname{Re}}
\renewcommand{\det}{\operatorname{det}}

\newcommand{\id}{{\rm id}}

\newcommand{\vol}{{\rm vol}}
\newcommand{\dvol}{{\rm vol}}
\newcommand{\zbar}{\overline z}
\newcommand{\A}{\bA}

\renewcommand{\epsilon}{\varepsilon}

\def\({\left(}
\def\){\right)}
\def\<{\left\langle}
\def\>{\right\rangle}

\newcommand{\co}{\mskip0.5mu\colon\thinspace}

\newcommand{\qand}{\quad\text{and}}

\newcommand{\qandq}{\quad\text{and}\quad}


\usepackage{mathtools}

\DeclarePairedDelimiter{\abs}{\lvert}{\rvert}

\DeclarePairedDelimiter{\set}{\lbrace}{\rbrace}
\DeclarePairedDelimiter{\braket}{\langle}{\rangle}

\newcommand{\inner}[2]{\braket{#1, #2}}
\newcommand{\winner}[2]{\braket{#1 \wedge #2}}


\newcommand{\itref}{\eqref}

\makeatletter
\renewcommand\xleftrightarrow[2][]{%
  \ext@arrow 9999{\longleftrightarrowfill@}{#1}{#2}}
\newcommand\longleftrightarrowfill@{%
  \arrowfill@\leftarrow\relbar\rightarrow}
\makeatother

\newcommand{\Gtwo}{G_2}


\newcommand{\rd}{{\rm d}}

\newcommand{\rG}{{\rm G}}
\newcommand{\rH}{{\rm H}}

\newcommand{\rO}{{\rm O}}




\newcommand{\bA}{\mathbf{A}}

\newcommand{\bC}{\mathbf{C}}

\newcommand{\bE}{\mathbf{E}}

\newcommand{\bH}{\mathbf{H}}

\newcommand{\bO}{\mathbf{O}}

\newcommand{\bR}{\mathbf{R}}

\newcommand{\bZ}{\mathbf{Z}}


\newcommand{\cJ}{\mathcal{J}}

\newcommand{\cL}{\mathcal{L}}

\newcommand{\cP}{\mathcal{P}}

\newcommand{\sA}{\mathscr{A}}

\newcommand{\sE}{\mathscr{E}}
\newcommand{\sF}{\mathscr{F}}
\newcommand{\sG}{\mathscr{G}}
\newcommand{\sH}{\mathscr{H}}

\newcommand{\sL}{\mathscr{L}}

\newcommand{\sS}{\mathscr{S}}


\usepackage{slashed}

\newcommand{\slS}{\slashed S}

\numberwithin{equation}{section}

\renewcommand{\eqref}[1]{\hyperref[#1]{\rm(\ref*{#1})}}

\usepackage{aliascnt}

\def\makeautorefname#1#2{\AtBeginDocument{\expandafter\def\csname#1autorefname\endcsname{#2}}}

\newcommand{\mynewtheorem}[2]{
  \newaliascnt{#1}{equation}          
  \newtheorem{#1}[#1]{#2}
  \aliascntresetthe{#1}
  \makeautorefname{#1}{#2}
}

\newcommand{\mynewproblem}[2]{
  \newaliascnt{#1}{myProblem}          
  \newtheorem{#1}[#1]{#2}
  \aliascntresetthe{#1}
  \makeautorefname{#1}{#2}
}

\mynewtheorem{theorem}{Theorem}
\newtheorem*{theorem*}{Theorem}
\mynewtheorem{prop}{Proposition}
\newtheorem*{prop*}{Proposition}
\newtheorem*{claim}{Claim}
\mynewtheorem{cor}{Corollary}
\mynewtheorem{construction}{Construction}
\mynewtheorem{lemma}{Lemma}
\mynewtheorem{conjecture}{Conjecture}
\mynewtheorem{hyp}{Hypothesis}
\newtheorem{step}{Step}

\numberwithin{substep}{step}
\makeautorefname{step}{Step}
\makeautorefname{substep}{Step}

\numberwithin{subcase}{case}
\makeautorefname{case}{Case}
\makeautorefname{subcase}{case}

\theoremstyle{remark}
\mynewtheorem{remark}{Remark}
\newtheorem*{remark*}{Remark}

\theoremstyle{definition}
\mynewtheorem{definition}{Definition}
\mynewtheorem{example}{Example}
\mynewtheorem{exercise}{Exercise}
\mynewproblem{problem}{Problem}
\mynewtheorem{solution}{Solution}
\mynewtheorem{convention}{Convention}
\newtheorem*{convention*}{Convention}
\newtheorem*{conventions*}{Conventions}
\mynewtheorem{question}{Question}

\makeautorefname{table}{Table}        
\makeautorefname{chapter}{Chapter}
\makeautorefname{section}{Section}
\makeautorefname{subsection}{Section}
\makeautorefname{subsubsection}{Section}
\makeautorefname{footnote}{Footnote}

%
   
\newcommand{\tsF}{\widetilde{\mathscr{F}}}   
\newcommand{\tsG}{\widetilde{\mathscr{G}}}   
\newcommand{\tsS}{\widetilde{\mathscr{S}}}   

%
%

\newcommand{\CS}{{\mathcal{CS}}}    
\newcommand{\RE}{{\mathrm{Re}}}    
\newcommand{\IM}{{\mathrm{Im}}}

\newcommand{\sym}{{\mathrm{sym}}}    

\newcommand{\tf}{{\widetilde f}}    
\newcommand{\tg}{{\widetilde g}}

\newcommand{\tsi}{{\widetilde\sigma}}

\newcommand{\e}{{\mathbf{e}}}    
\renewcommand{\i}{{\mathbf{i}}}    
\renewcommand{\j}{{\mathbf{j}}}    
\renewcommand{\k}{{\mathbf{k}}}    
\newcommand{\one}{\mathbf{1}}

%
\def\slashii#1{\setbox0=\hbox{$#1$}             
\dimen0=\wd0                                 
\setbox1=\hbox{\sl/} \dimen1=\wd1            
\ifdim\dimen0>\dimen1                        
\rlap{\hbox to \dimen0{\hfil\sl/\hfil}}   
#1                                        
\else                                        
\rlap{\hbox to \dimen1{\hfil$#1$\hfil}}   
\hbox{\sl/}                               
\fi}                                         %
%
\def\slashiii#1{\setbox0=\hbox{$#1$}#1\hskip-\wd0\hbox to\wd0{\hss\sl/\/\hss}}
%


    
%

\newcommand{\SPAN}{\mathrm{span}}

\newcommand{\grad}{\mathrm{grad}}       
    
\renewcommand{\Im}{\mathrm{Im\,}}

\renewcommand{\Re}{\mathrm{Re\,}}  
\newcommand{\g}{{\mathfrak{g}}}    
\newcommand{\h}{{\mathfrak{h}}}

\newcommand{\ebar}{{\bar{e}}}
\newcommand{\ubar}{{\bar{u}}}
\newcommand{\vbar}{{\bar{v}}}
\newcommand{\wbar}{{\bar{w}}}
\newcommand{\xbar}{{\bar{x}}}

\newcommand{\eps}{{\varepsilon}}    
\newcommand{\om}{{\omega}}    
\newcommand{\Om}{{\Omega}}    
\newcommand{\Inner}[2]{\left\langle #1, #2\right\rangle}    
\def\NABLA#1{{\mathop{\nabla\kern-.5ex\lower1ex\hbox{$#1$}}}}    
\def\Nabla#1{\nabla\kern-.5ex{}_{#1}}    
\def\Tabla#1{\Tilde\nabla\kern-.5ex{}_{#1}}    
\renewcommand{\Tilde}{\widetilde}

\newcommand{\p}{{\partial}}

\title{Notes on the octonions}
\author{
  Dietmar~A. Salamon\footnote{partially supported
  by the Swiss National Science Foundation}  \\
  ETH Z\"urich
  \and
  Thomas Walpuski \\
  Massachusetts Institute of Technology
}    
\usepackage{enumerate} 
\date{2017-04-04}

\begin{document}    

\maketitle    

\begin{abstract}   
  This is an expository paper. %
  Its purpose is to explain the linear algebra that underlies
  Donaldson--Thomas theory and the geometry of Riemannian manifolds
  with holonomy in $\Gtwo$ and $\Spin(7)$.
\end{abstract}

    
\section{Introduction}
\label{sec:intro}  

In these notes we give an exposition of the structures in linear algebra that 
underlie Donaldson--Thomas theory~\cites{Donaldson1998,Donaldson2009} 
and calibrated geometry~\cites{Harvey1982,Joyce2000}. %
No claim is made to originality. %
All the results and ideas described here (except perhaps
\autoref{thm:CAYLEY}) can be found in the existing literature, notably
in the beautiful paper~\cite{Harvey1982} by Harvey and Lawson. %
Perhaps these notes might be a useful introduction for students who
wish to enter the subject. %

Our emphasis is on characterizing the relevant algebraic
structures---such as cross products, triple cross products, associator
and coassociator brackets, associative, coassocitative, and Cayley
calibrations and subspaces---by their intrinsic properties rather than
by the existence of isomorphisms to the standard structures on the
octonions and the imaginary octonions, although both descriptions are
of course equivalent.

\autoref{sec:cross} deals with cross products and their associative
calibrations. %
It contains a proof that they exist only in dimensions $0$, $1$, $3$,
and $7$.  In \autoref{sec:imO} we discuss nondegenerate $3$--forms on
$7$--dimensional vector spaces (associative calibrations) and explain
how they give rise to unique compatible inner products. %
Additional structures such as associative and coassociative subspaces
and the associator and coassociator brackets are discussed in
\autoref{sec:assoc}. %
These structures are relevant for understanding $\Gtwo$--structures on
$7$--manifolds and the Chern--Simons functional in Donaldson--Thomas
theory. %

The corresponding Floer theory has as its counterpart in linear
algebra the product with the real line. %
This leads to the structure of a normed algebra which only exists in
dimensions $1$, $2$, $4$, and $8$, corresponding to the reals, the
complex numbers, the quaternions, and the octonions. %
These structures are discussed in \autoref{sec:NA}. %
Going from Floer theory to an intrinsic theory for Donaldson-type
invariants of $8$--dimensional $\Spin(7)$--manifolds corresponds to
dropping the space-time splitting. %
The algebraic counterpart of this reduction is to eliminate the choice
of the unit (as well as the product). %
What is left of the algebraic structures is the triple cross product
and its Cayley calibration---a suitable $4$--form on an
$8$--dimensional Hilbert space. %
These structures are discussed in \autoref{sec:TCP}. %
\autoref{sec:cayley} characterizes those $4$--forms on
$8$--dimensional vector spaces (the Cayley-forms) that give rise to
(unique) compatible inner products and hence to triple cross
products. %
The relevant structure groups $\Gtwo$ (in dimension $7$) and
$\Spin(7)$ (in dimension $8$) are discussed in \autoref{sec:G2}
and~\autoref{sec:Spin7}
with a particular emphasis on the splitting of the space 
of alternating multi-linear forms into irreducible representations. %
In \autoref{sec:spin} we examine spin structures in dimensions~$7$
and~$8$. %
\autoref{sec:SU} relates $\SU(3)$ and $\SU(4)$ structures to cross
products and triple cross products and \autoref{sec:DT} gives a brief
introduction to the basic setting of Donaldson--Thomas theory.

\medskip

Here is a brief overview of some of the literature about 
the groups
$\Gtwo$ and $\Spin(7)$. %
The concept of a calibration was introduced in the article of
Harvey--Lawson~\cite{Harvey1982} which also contains definitions of
$\Gtwo$ and $\Spin(7)$ in terms of the octonions.  %
Humphreys~\cite{Humphreys1987}*{Section 19.3} constructs (the Lie
algebra of) $\Gtwo$ from the Dynkin diagram and proves that this
coincides with the definition in terms of the octonions. %
The characterization of $\Gtwo$ and $\Spin(7)$ as the stabilisers of
certain $3$-- and $4$--forms is due to Bonan~\cite{Bonan1966}.

Harvey--Lawson also introduced the (multiple) cross products
and the associator and coassociator brackets. %
The concept of a multiple cross product goes back 
to Eckmann~\cite{Eckmann1943}. %
Building on this work, Whitehead \cite{Whitehead1962} classified 
those completely; see also Brown--Gray \cite{Brown1967}. %
To our best knowledge, the splitting of the exterior algebra into
irreducible $\Gtwo$--representations is due to Fern\'andez--Gray
\cite{Fernandez1982}*{Section 3}, who also emphasize the relation
between $\Gtwo$ and the cross product in dimension seven. %
This as well as the analogous result for $\Spin(7)$ can also be found
in Bryant \cite{Bryant1987}*{Section 2}.

Among many others, the more recent articles by Bryant~\cite{Bryant2006}, 
Karigiannis~\cites{Karigiannis2008,Karigiannis2009a,Karigiannis2010}
and Mu\~noz~\cite{Munoz2014}*{Section 2} contain useful summaries 
of the linear algebra related to $\Gtwo$ and $\Spin(7)$.



\section{Cross products} \label{sec:cross}  

We assume throughout that $V$ is a finite dimensional real Hilbert
space. %
\begin{definition}
  \label{def:cross}
  A skew-symmetric bilinear map
  \begin{equation}
    \label{eq:cross}
    V\times V\to V\co (u,v)\mapsto u\times v
  \end{equation}
  is called a \defined{cross product} if it satisfies
  \begin{align}
    \label{eq:orthogonal}
    \inner{u\times v}{u}
    &=
      \inner{u\times v}{v} = 0, \qand \\
    \label{eq:area}
    \abs{u\times v}^2
    &=
      \abs{u}^2\abs{v}^2 - \inner{u}{v}^2
  \end{align}
  for all $u,v\in V$.
\end{definition}
A bilinear map~\eqref{eq:cross} that satisfies~\eqref{eq:area} also
satisfies $u\times u=0$ for all $u\in V$ and, hence, is necessarily
skew-symmetric. %

\begin{theorem}
  \label{thm:cross}
  $V$ admits a cross product if and only if its dimension is either
  $0$, $1$, $3$, or $7$. %
  In dimensions $0$ and $1$ the cross product vanishes, in
  dimension~$3$ it is unique up to sign and determined by an
  orientation of $V$, and in dimension~$7$ it is unique up to
  orthogonal isomorphism.
\end{theorem}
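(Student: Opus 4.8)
The plan is to pass from a cross product on $V$ to a normed division algebra on $\R\oplus V$ and then apply Hurwitz's theorem. First I would record the elementary consequences of \autoref{def:cross}: the trilinear form $\phi(u,v,w):=\inner{u\times v}{w}$ is skew in $u,v$ and vanishes for $w=u$ by \eqref{eq:orthogonal}, so it is an alternating $3$--form; consequently the operator $L_u\co v\mapsto u\times v$ is skew-adjoint, maps $V$ into $u^\perp$, annihilates $u$, and---by \eqref{eq:area}---restricts to an isometry of $u^\perp$ when $\abs{u}=1$. A skew-adjoint isometry squares to $-\id$, so $u^\perp$ carries a complex structure and $\dim V-1$ is even. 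Thus a priori $\dim V\in\{0,1,3,5,7,9,\dots\}$, and the real content of the theorem is to exclude $5$ and all dimensions $\ge 9$.

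Next I would endow $A:=\R\one\oplus V$ with the inner product for which $\one\perp V$ and $\abs{\one}=1$, and with the bilinear product
\[
  (a\one+u)(b\one+v):=(ab-\inner{u}{v})\,\one+(av+bu+u\times v).
\]
A short computation shows that $\one$ is a two-sided unit and---using only \eqref{eq:orthogonal} and \eqref{eq:area}---that $\abs{xy}^2=\abs{x}^2\abs{y}^2$ for all $x,y\in A$. Hence $A$ is a finite-dimensional real normed algebra with unit (in particular without zero divisors), so by Hurwitz's theorem it is, via a unit-preserving isometry, isomorphic to one of $\R,\C,\H,\Oc$, giving $\dim V=\dim A-1\in\{0,1,3,7\}$. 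I would include a proof of Hurwitz's theorem by the Cayley--Dickson construction: one repeatedly doubles a proper unital, conjugation-closed subalgebra $B\subsetneq A$ to the normed subalgebra $B\oplus B^\perp$ of dimension $2\dim B$, and this doubling stays within the class of normed algebras only as long as $B$ is associative; since the $8$--dimensional octonion algebra is not associative, the tower $\R\subset\C\subset\H\subset\Oc$ cannot be continued, which yields both $\dim A\le 8$ and uniqueness of the normed algebra in each admissible dimension.

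For the converse, on the imaginary parts of $\R,\C,\H,\Oc$---of dimensions $0,1,3,7$---the formula $u\times v:=\Im(uv)$ obeys \eqref{eq:orthogonal} and \eqref{eq:area}, because $uv=-\inner{u}{v}\one+u\times v$ for imaginary $u,v$ and the norm is multiplicative; this establishes existence in all four dimensions. For uniqueness: in dimensions $0$ and $1$, \eqref{eq:orthogonal} places $u\times v$ in a subspace of dimension $\le 1$ orthogonal to both $u$ and $v$, so it vanishes. In dimension $3$, the alternating $3$--form $\phi$ equals $\pm\dvol$ for the volume form of a suitable orientation (the sign pinned by \eqref{eq:area}), and since $\inner{u\times v}{w}=\phi(u,v,w)$ recovers $\times$ from $\phi$ and the inner product, $\times$ is determined up to sign, with the sign governed by the orientation. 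In dimension $7$, two cross products on $V$ and $V'$ produce $8$--dimensional normed algebras, both isomorphic to $\Oc$; a unit-preserving isometric isomorphism between them carries $V=\one^\perp$ to $V'=\one^\perp$ orthogonally and intertwines the products, so the cross product is unique up to orthogonal isomorphism.

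The main obstacle is Hurwitz's theorem---precisely the bound $\dim A\le 8$ and the uniqueness of normed algebras---since this is exactly what eliminates dimension $5$ and all dimensions $\ge 9$ and forces uniqueness in dimension $7$; the remaining steps are formal manipulations of \eqref{eq:orthogonal}--\eqref{eq:area}. A self-contained alternative avoids algebras altogether by constructing, directly from the axioms, an orthonormal frame $e_1,\dots,e_7$ obeying the octonionic cross-product table and showing that a hypothetical eighth unit vector orthogonal to all of them would force $a\times b=0$ for some linearly independent unit vectors $a,b$, contradicting \eqref{eq:area}---but this is the same obstruction in a different guise.
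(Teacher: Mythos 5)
Your proposal is correct in outline but takes a genuinely different route from the paper's, and the difference is worth noting.

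The paper also introduces the operator $\gamma(u)$ acting on $\R\oplus V$ (which is exactly left multiplication by $u$ in your algebra $A$; compare~\eqref{eq:gamma} with your formula), and establishes the two identities in~\eqref{eq:gammau}. But rather than invoking Hurwitz, the paper observes that~\eqref{eq:gammau} means $\gamma$ extends to a Clifford-algebra representation $\Cl(V)\to\End(\R\oplus V)$, whose restriction to the Clifford algebra of any even-dimensional subspace is injective; the dimension count $2^{2n}\le(2n+2)^2$ then forces $\dim V\le 7$. Separately, \autoref{le:Vphi} (the isotropy group $\rG$ lies in $\SO(V)$) forces $\dim V\equiv 3\pmod 4$, which eliminates dimension~$5$ --- the Clifford bound alone does not, since $16\le 36$. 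Your route collapses both exclusions into the single appeal to Hurwitz, which is cleaner conceptually, but it relocates all the difficulty into Hurwitz's theorem. Also note that in the paper's logical architecture the implication runs the other way: \autoref{thm:NDG} proves the equivalence ``cross product $\Leftrightarrow$ normed algebra'' and then \emph{deduces} the Hurwitz classification from \autoref{thm:cross}. Your proposal is not circular as a standalone argument, but it inverts the paper's dependency.

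Where your sketch has a real gap is the Cayley--Dickson proof of Hurwitz. The claim that the doubling $B\mapsto B\oplus B^{\perp}$ ``stays within the class of normed algebras only as long as $B$ is associative'' is precisely the nontrivial content, and you assert it without proof. One needs to show: (a) every proper unital conjugation-closed subalgebra $B$ of a normed algebra $A$ has a unit vector $e\in B^{\perp}$ with $e^2=-1$; (b) $B\oplus Be$ is closed under multiplication and the product is given by the Cayley--Dickson formula; and (c) the norm on $B\oplus Be$ is multiplicative \emph{iff} $B$ is associative. Item (c) is a computation but not a short one, and (a)--(b) need the identities~\eqref{eq:W1}--\eqref{eq:W4} (or their analogues). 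If you intend this as a complete proof rather than a plan, those steps must be carried out. The paper sidesteps all of this by citing a representation-theoretic dimension bound for Clifford modules, which is arguably the more economical device once one has the operators $\gamma(u)$ in hand. Your dimension-$7$ uniqueness argument (two $8$-dimensional normed algebras are both isomorphic to $\Oc$, and a unit-preserving isometry restricts to $\one^{\perp}$) is fine, assuming the uniqueness clause of Hurwitz; the paper instead defers this to \autoref{thm:imO}, which builds the $\rG_2$-frame directly.
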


\begin{proof}
  See page~\pageref{proof:cross}.
\end{proof}

The proof of \autoref{thm:cross} is based on the next five lemmas.

\begin{lemma}
  \label{le:phi}
  Let~\eqref{eq:cross} be a skew-symmetric bilinear map. %
  Then the following are equivalent:
  \begin{enumerate}[(i)]
  \item
    \label{It_Orthogonal}
    Equation~\eqref{eq:orthogonal} holds for all $u,v\in V$.
    
  \item
    \label{It_Frobenius}
    For all $u,v,w\in V$ we have
    \begin{equation}
      \label{eq:frobenius}
      \inner{u\times v}{w} = \inner{u}{v\times w}.
    \end{equation}  

  \item
    \label{It_3Form}
    The map ${\phi\co V^3\to\R}$, defined by 
    \begin{equation}
      \label{eq:phi}
      \phi(u,v,w) := \inner{u\times v}{w},
    \end{equation}
    is an alternating $3$--form (called the \defined{associative
      calibration} of $(V,\times)$).
  \end{enumerate}
\end{lemma}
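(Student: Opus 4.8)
The plan is to prove the two equivalences (i) $\Leftrightarrow$ (ii) and (ii) $\Leftrightarrow$ (iii) separately, using throughout that $\times$ is already assumed to be skew-symmetric. In particular the map $\phi$ of \eqref{eq:phi} is automatically alternating in its first two arguments, and one checks directly that the second identity in \eqref{eq:orthogonal}, namely $\inner{u\times v}{v}=0$, follows from the first by interchanging $u$ and $v$ and using $v\times u=-u\times v$ together with symmetry of the inner product; so condition (i) really amounts to the single family $\inner{u\times v}{u}=0$ for all $u,v\in V$.

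For (i) $\Rightarrow$ (ii) I would polarize the identity $\inner{u\times v}{u}=0$ in the variable $u$: substituting $u+w$ for $u$, expanding by bilinearity, and cancelling the two diagonal terms $\inner{u\times v}{u}$ and $\inner{w\times v}{w}$ which vanish by (i), one is left with $\inner{u\times v}{w}+\inner{w\times v}{u}=0$; rewriting $\inner{w\times v}{u}=-\inner{v\times w}{u}=-\inner{u}{v\times w}$ via skew-symmetry of $\times$ and symmetry of the inner product gives precisely \eqref{eq:frobenius}. For the converse (ii) $\Rightarrow$ (i), set $w=u$ in \eqref{eq:frobenius} to obtain $\inner{u\times v}{u}=\inner{u}{v\times u}=-\inner{u\times v}{u}$, hence $\inner{u\times v}{u}=0$.

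For (ii) $\Leftrightarrow$ (iii), rewrite \eqref{eq:frobenius} using $\inner{u}{v\times w}=\inner{v\times w}{u}$ as $\phi(u,v,w)=\phi(v,w,u)$, i.e. invariance of $\phi$ under the cyclic permutation $(1\,2\,3)$. Since $\phi$ is already anti-invariant under the transposition $(1\,2)$, and these two permutations generate $S_3$ --- with the assignment $(1\,2)\mapsto-1$, $(1\,2\,3)\mapsto+1$ being consistent with the sign character because $\sign(1\,2\,3)=+1$ --- cyclic invariance forces $\phi\circ\tau=\sign(\tau)\,\phi$ for every $\tau\in S_3$, i.e. $\phi$ is an alternating $3$--form; conversely any alternating form is in particular cyclically invariant and hence satisfies (ii). The argument is entirely elementary; the only points needing a little care are the sign bookkeeping in the polarization step and the observation that one should not, and need not, separately verify skew-symmetry of $\phi$ in its first two slots, since it is built into the hypothesis on $\times$.
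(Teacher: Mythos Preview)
Your proof is correct and follows essentially the same approach as the paper: polarizing the orthogonality identity to obtain~\eqref{eq:frobenius}, and then observing that \eqref{eq:frobenius} is exactly cyclic invariance of $\phi$, which together with the built-in skew-symmetry in the first two slots gives the alternating property. The only cosmetic differences are that the paper polarizes the other slot (expanding $\inner{v\times(u+w)}{u+w}=0$) and organizes the argument as the cycle (i)$\Rightarrow$(ii)$\Rightarrow$(iii)$\Rightarrow$(i) rather than two separate equivalences.
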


\begin{proof}  
  Let~\eqref{eq:cross} be a skew-symmetric bilinear map. %
  Assume that it satisfies~\eqref{eq:orthogonal}. %
  Then, for all $u,v,w\in V$, we have
  \begin{align*}
    0
    &=
      \inner{v\times(u+w)}{u+w} \\
    &=
      \inner{v\times w}{u} + \inner{v\times u}{w} \\
    &=
      \inner{u}{v\times w}-\inner{u\times v}{w}.
  \end{align*}
  This proves~\eqref{eq:frobenius}. %
  
  Now assume~\eqref{eq:frobenius} and let $\phi$ be defined
  by~\eqref{eq:phi}. %
  Then, by skew-symmetry, we have $\phi(u,v,w)+\phi(v,u,w)=0$ for all
  $u,v,w$ and, by~\eqref{eq:frobenius}, we have
  $\phi(u,v,w)=\phi(v,w,u)$ for all $u,v,w$. %
  Hence, $\phi$ is an alternating $3$--form. %
  Thus we have proved that~\itref{It_Orthogonal}
  implies~\itref{It_Frobenius} implies~\itref{It_3Form}. %

  That~\itref{It_3Form} implies~\itref{It_Orthogonal} is obvious. %
  This proves \autoref{le:phi}.
\end{proof}

\begin{lemma}
  \label{le:area}
  Let~\eqref{eq:cross} be a skew-symmetric bilinear map that
  satisfies~\eqref{eq:orthogonal}. %
  Then the following are equivalent:
  \begin{enumerate}[(i)]
  \item
    \label{It_Area}
    The bilinear map~\eqref{eq:cross} satisfies~\eqref{eq:area}.

  \item
    \label{It_Orthonormal}
    If $u$ and $w$ are orthonormal, then $\abs{u\times w} = 1$.

  \item
    \label{It_Complex}
    If $\abs{u}=1$ and $w$ is orthogonal to $u$, then
    $u\times (u\times w) = - w$.

  \item
    \label{It_CrossSquare}
    For all $u,w\in V$ we have 
    \begin{equation}
      \label{eq:crosscomplex}
      u\times(u\times w) = \inner{u}{w}u - \abs{u}^2w.
    \end{equation}

  \item
    \label{It_CrossSquare2}
    For all $u,v,w\in V$ we have
    \begin{equation}\label{eq:AREA}
      u\times (v\times w) + v\times(u\times w) 
      = \inner{u}{w}v + \inner{v}{w}u - 2\inner{u}{v}w.
    \end{equation}
  \end{enumerate}
\end{lemma}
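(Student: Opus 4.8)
The plan is to run the cycle of implications
$\itref{It_CrossSquare}\Rightarrow\itref{It_Complex}\Rightarrow\itref{It_Orthonormal}\Rightarrow\itref{It_Area}\Rightarrow\itref{It_CrossSquare}$
and to treat the equivalence $\itref{It_CrossSquare}\Leftrightarrow\itref{It_CrossSquare2}$ separately as a formal polarization. Throughout I may use that, since~\eqref{eq:orthogonal} is assumed, \autoref{le:phi} supplies the Frobenius identity~\eqref{eq:frobenius}, i.e.\ $\inner{u\times v}{w}=\inner{u}{v\times w}$ for all $u,v,w\in V$; this is the one extra ingredient that lets the area condition talk to the double-cross-product identity.

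The implication $\itref{It_CrossSquare}\Rightarrow\itref{It_Complex}$ is immediate: if $\abs u=1$ and $w$ is orthogonal to $u$, then $\inner uw=0$, so~\eqref{eq:crosscomplex} reduces to $u\times(u\times w)=-w$. For $\itref{It_Complex}\Rightarrow\itref{It_Orthonormal}$, let $u,w$ be orthonormal. Applying~\itref{It_Complex} with the roles of the two unit, mutually orthogonal vectors exchanged gives $w\times(w\times u)=-u$, hence, by skew-symmetry, $w\times(u\times w)=u$. Then the Frobenius identity yields
\[
  \abs{u\times w}^2=\inner{u\times w}{u\times w}=\inner{u}{w\times(u\times w)}=\inner{u}{u}=1,
\]
which is~\itref{It_Orthonormal}.

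For $\itref{It_Orthonormal}\Rightarrow\itref{It_Area}$ I would reduce to the orthonormal case. If $u,v$ are linearly dependent, both sides of~\eqref{eq:area} vanish (the left because $u\times u=0$ together with bilinearity). If they are linearly independent, set $\hat u:=u/\abs u$ and $w:=v-\inner{v}{\hat u}\hat u\neq 0$, so that $w$ is orthogonal to $u$; since $u\times\hat u=0$ we get $u\times v=u\times w=\abs u\abs w\,(\hat u\times\hat w)$, and~\itref{It_Orthonormal} gives $\abs{u\times v}^2=\abs u^2\abs w^2$. A one-line computation using $\abs v^2=\inner{v}{\hat u}^2+\abs w^2$ and $\inner uv=\abs u\inner{v}{\hat u}$ shows the right-hand side of~\eqref{eq:area} equals the same quantity.

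The step $\itref{It_Area}\Rightarrow\itref{It_CrossSquare}$ is the crux, and the only place needing genuine care rather than bookkeeping. Polarizing~\eqref{eq:area} in the variable $v$ (replace $v$ by $v+v'$ and subtract the two instances) yields
\[
  \inner{u\times v}{u\times v'}=\abs u^2\inner{v}{v'}-\inner uv\inner{u}{v'}.
\]
Now rewrite the left side by~\eqref{eq:frobenius}: $\inner{u\times v}{u\times v'}=\inner{(u\times v)\times u}{v'}=-\inner{u\times(u\times v)}{v'}$. Since the resulting identity holds for every $v'\in V$, we conclude $u\times(u\times v)=\inner uv\,u-\abs u^2 v$, which is~\eqref{eq:crosscomplex}. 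Finally, for $\itref{It_CrossSquare}\Leftrightarrow\itref{It_CrossSquare2}$: substituting $u\mapsto u+v$ in~\eqref{eq:crosscomplex} and subtracting the instances for $u$ and for $v$ produces~\eqref{eq:AREA}, and conversely setting $v=u$ in~\eqref{eq:AREA} returns~\eqref{eq:crosscomplex}. The main obstacle is thus the bridge $\itref{It_Area}\Rightarrow\itref{It_CrossSquare}$, where one must combine the quadratic polarization of the area identity with the Frobenius identity of \autoref{le:phi} in precisely this order; everything else is elementary linear algebra.
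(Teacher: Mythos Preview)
Your proof is correct. Each implication checks out, including the use of \eqref{eq:frobenius} (available via \autoref{le:phi}) in the steps \itref{It_Complex}$\Rightarrow$\itref{It_Orthonormal} and \itref{It_Area}$\Rightarrow$\itref{It_CrossSquare}.

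The route, however, differs from the paper's. The paper runs the cycle in the \emph{opposite} direction, \itref{It_Area}$\Rightarrow$\itref{It_Orthonormal}$\Rightarrow$\itref{It_Complex}$\Rightarrow$\itref{It_CrossSquare}$\Rightarrow$\itref{It_CrossSquare2}$\Rightarrow$\itref{It_Area}. The main substantive difference is how one bridges between the area identity~\eqref{eq:area} and the double cross product identity~\eqref{eq:crosscomplex}. You go directly \itref{It_Area}$\Rightarrow$\itref{It_CrossSquare} by polarizing~\eqref{eq:area} in one slot and then applying~\eqref{eq:frobenius}; this is slick and purely algebraic. The paper instead passes through \itref{It_Orthonormal} and \itref{It_Complex}: for a unit vector $u$ it views $A\co w\mapsto u\times w$ as a skew-adjoint operator on $u^\perp$, observes that \itref{It_Orthonormal} forces $\inner{w}{A^2w}=-\abs{w}^2$, and concludes $A^2=-\one$ on $u^\perp$ by self-adjointness of $A^2$. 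Your approach avoids this operator argument at the cost of needing the polarization step; the paper's approach makes the complex-structure interpretation of $A$ more visible (which is used again in \autoref{le:phisymp}). Both are equally valid and of comparable length.
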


\begin{proof}
  That~\itref{It_Area} implies~\itref{It_Orthonormal} is obvious.  

  We prove that~\itref{It_Orthonormal} implies~\itref{It_Complex}. %
  Fix a vector $u\in V$ with $\abs{u}=1$ and define the linear map
  $A\co V\to V$ by $Aw := u\times w$. %
  Then, by skew-symmetry and~\eqref{eq:frobenius}, $A$ is skew-adjoint
  and, by~\eqref{eq:orthogonal}, it preserves the subspace
  $W:=u^\perp$. %
  Hence, the restriction of $A^2$ to $W$ is self-adjoint and,
  by~\itref{It_Orthonormal}, it satisfies
  $ \inner{w}{A^2w} = -\abs{u\times w}^2= -\abs{w}^2 $ for $w\in W$. %
  Hence, the restriction of $A^2$ to $W$ is equal to minus the identity. %
  This proves that~\itref{It_Orthonormal} implies~\itref{It_Complex}.

  We prove that~\itref{It_Complex} implies~\itref{It_CrossSquare}. %
  Fix a vector $u\in V$ and define $A\co V\to V$ by ${Aw:=u\times w}$ as
  above. %
  By~\itref{It_Complex} we have $A^2w=-\abs{u}^2w$ whenever $w$ is
  orthogonal to~$u$. %
  Since $A^2u=0$, this implies~\itref{It_CrossSquare}.

  Assertion~\itref{It_CrossSquare2} follows
  from~\itref{It_CrossSquare} by replacing $u$ with $u+v$. %
  To prove that~\itref{It_CrossSquare2} implies~\itref{It_Area}, set
  $w=v$ in~\eqref{eq:AREA} and take the inner product with $u$. %
  Then
  $ \abs{u\times v}^2 = \inner{u}{u\times(v\times v) 
  + v\times(u\times v)} = \abs{u}^2\abs{v}^2-\inner{u}{v}^2$.
  Here the first equality follows from~\eqref{eq:frobenius} and the
  second from~\eqref{eq:AREA} with $w=v$. %
  This proves \autoref{le:area}.
\end{proof}

\bigbreak

\begin{lemma}
  \label{le:cross3}
  Assume $\dim V=3$.  

  \begin{enumerate}[(i)]
  \item
    \label{It_3Uniqueness}
    A cross product on $V$ determines a unique orientation such that
    $u,v,u\times v$ form a positive basis for every pair of linearly
    independent vectors $u,v\in V$.

  \item
    \label{It_3Volume}
    If~\eqref{eq:cross} is a cross product on $V$, then the $3$--form
    $\phi$ given by~\eqref{eq:phi} is the volume form associated to
    the inner product and the orientation in~\itref{It_3Uniqueness}.

  \item
    \label{It_BACCAB}
    If~\eqref{eq:cross} is a cross product on $V$, then
    \begin{equation}
      \label{eq:assoc3}
      (u\times v)\times w = \inner{u}{w}v-\inner{v}{w}u
    \end{equation}
    for all $u,v,w\in V$.

  \item
    \label{It_3Volume2}
    Fix an orientation on $V$ and denote by $\phi\in\Lambda^3V^*$ the
    associated volume form. %
    Then~\eqref{eq:phi} determines a cross product on~$V$.
  \end{enumerate}
\end{lemma}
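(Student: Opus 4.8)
The plan is to prove the four parts in turn, using \autoref{le:phi} and \autoref{le:area} freely and relying on the feature special to dimension three: as soon as $u,v\in V$ are linearly independent, $u,v,u\times v$ is a basis of $V$, because $u\times v\ne0$ by~\eqref{eq:area} while $u\times v\perp u$ and $u\times v\perp v$ by~\eqref{eq:orthogonal}. For~\itref{It_3Uniqueness} I would first note that the $3$--form $\phi$ of~\eqref{eq:phi}, which is alternating by~\autoref{le:phi}, is nonzero: taking two members $e_1,e_2$ of an orthonormal basis, $\phi(e_1,e_2,e_1\times e_2)=\abs{e_1\times e_2}^2=1$ by~\eqref{eq:area}. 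A nonzero element of $\Lambda^3V^*$ orients the three--dimensional space $V$ (declare a basis positive exactly when $\phi$ takes a positive value on it), and with respect to this orientation, whenever $u,v$ are linearly independent the basis $u,v,u\times v$ is positive, since $\phi(u,v,u\times v)=\abs{u\times v}^2>0$; uniqueness follows because any orientation with this property makes one fixed such basis positive, and a basis determines the orientation. Part~\itref{It_3Volume} is then a short computation: for a positively oriented orthonormal basis $e_1,e_2,e_3$, the vector $e_1\times e_2$ is a unit vector orthogonal to $e_1$ and $e_2$ by~\eqref{eq:orthogonal} and~\eqref{eq:area}, hence $e_1\times e_2=\pm e_3$, with sign $+$ because $(e_1,e_2,e_1\times e_2)$ is positive; therefore $\phi(e_1,e_2,e_3)=1$, which forces $\phi$ to be the volume form associated with the inner product and this orientation.

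For~\itref{It_BACCAB} I would first dispose of linearly dependent $u,v$, for which both sides of~\eqref{eq:assoc3} vanish ($u\times v=0$, and $\inner{u}{w}v-\inner{v}{w}u=0$ when $v$ is a multiple of $u$ or when $u=0$). For linearly independent $u,v$, linearity in $w$ reduces~\eqref{eq:assoc3} to the three cases $w\in\{u,v,u\times v\}$: the case $w=u\times v$ is $0=0$ by~\eqref{eq:orthogonal}; the case $w=u$, after applying skew-symmetry on the left, becomes the identity $u\times(u\times v)=\inner{u}{v}u-\abs{u}^2v$, which is a special case of~\eqref{eq:crosscomplex}; and the case $w=v$ follows from $v\times(u\times v)=\abs{v}^2u-\inner{u}{v}v$, which one reads off from~\eqref{eq:AREA} by setting its third argument equal to $v$ so that the term $u\times(v\times v)$ drops out.

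For~\itref{It_3Volume2}, given the orientation and its volume form $\dvol\in\Lambda^3V^*$, I would define $u\times v\in V$ by the Riesz condition $\inner{u\times v}{w}=\dvol(u,v,w)$ for all $w\in V$; this is bilinear, and skew-symmetry as well as~\eqref{eq:orthogonal} are immediate from $\dvol$ being alternating, so the only point to check is~\eqref{eq:area}. For that, both sides vanish when $u,v$ are linearly dependent; when they are linearly independent, I would pick a positively oriented orthonormal basis $e_1,e_2,e_3$ with $u,v\in\SPAN{e_1,e_2}$ and write $u=ae_1+be_2$, $v=ce_1+de_2$, so that the defining relation gives $u\times v=(ad-bc)e_3$ and hence $\abs{u\times v}^2=(ad-bc)^2=\abs{u}^2\abs{v}^2-\inner{u}{v}^2$.

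All of this is elementary; the steps most likely to cause trouble are the two pieces of orientation bookkeeping --- identifying in~\itref{It_3Uniqueness} the orientation canonically carried by the nonzero $3$--form $\phi$, and pinning down the sign $e_1\times e_2=+e_3$ in~\itref{It_3Volume} --- so it is those sign checks, rather than any conceptual difficulty, that I would be most careful to get right.
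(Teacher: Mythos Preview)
Your proof is correct and close in spirit to the paper's, relying on \autoref{le:phi} and \autoref{le:area} in the same places; parts~\itref{It_3Uniqueness} and~\itref{It_3Volume} are essentially identical to the paper's argument (the paper phrases~\itref{It_3Uniqueness} as connectedness of the space of linearly independent pairs, which amounts to the same thing as your observation that the nonzero $3$--form $\phi$ orients $V$).

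The small differences are in~\itref{It_BACCAB} and~\itref{It_3Volume2}. For~\itref{It_BACCAB}, rather than checking~\eqref{eq:assoc3} on the basis $u,v,u\times v$ as you do, the paper observes via~\eqref{eq:frobenius} that $(u\times v)\times w\perp u\times v$, hence lies in the span of $u,v$; the coefficients are then read off by taking inner products with $u$ and $v$ and invoking~\eqref{eq:AREA}. Both arguments ultimately use the same identities from \autoref{le:area}. For~\itref{It_3Volume2}, in place of your explicit coordinate computation the paper argues that for orthonormal $u,v$ the triple $u,v,u\times v$ is a positive orthogonal basis, so the volume form gives $\phi(u,v,u\times v)=\abs{u\times v}$; comparing with $\phi(u,v,u\times v)=\abs{u\times v}^2$ forces $\abs{u\times v}=1$, and~\eqref{eq:area} follows from \autoref{le:area}. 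The paper's version is a bit slicker and coordinate-free, but yours is equally valid and perhaps more transparent.
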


\begin{proof}
  Assertion~\itref{It_3Uniqueness} follows from the fact that the
  space of pairs of linearly independent vectors in $V$ is connected
  (whenever $\dim V\ne2$). %
  Assertion~\itref{It_3Volume} follows from the fact that, if $u,v$
  are orthonormal, then $u,v,u\times v$ form a positive orthonormal
  basis and
  \begin{equation*}
    \phi(u,v,u\times v)=\abs{u\times v}^2=1.
  \end{equation*}

  We prove~\itref{It_BACCAB}. %
  If $u$ and $v$ are linearly dependent, then both sides
  of~\eqref{eq:assoc3} vanish. %
  Hence we may assume that $u$ and $v$ are linearly independent or,
  equivalently, that $u\times v\ne 0$. %
  Since $(u\times v)\times w$ is orthogonal to $u\times v$, by
  equation~\eqref{eq:frobenius}, and $V$ has dimension~$3$, it follows
  that $(u\times v)\times w$ must be a linear combination of $u$ and
  $v$. %
  The formula~\eqref{eq:assoc3} follows by taking the inner products
  with $u$ and $v$, and using
  \autoref{le:area}~\itref{It_CrossSquare2}.

  We prove~\itref{It_3Volume2}. %
  Assume that the bilinear map~\eqref{eq:cross} is defined
  by~\eqref{eq:phi}, where $\phi$ is the volume form associated to an
  orientation of $V$. %
  Then skew-symmetry and~\eqref{eq:orthogonal} follow from the fact
  that $\phi$ is a $3$--form (see \autoref{le:phi}). %
  If $u,v$ are linearly independent, then by~\eqref{eq:phi} we have
  \begin{equation*}
    u\times v\ne 0
  \end{equation*}
  and 
  \begin{equation*}
    {\phi(u,v,u\times v)=\abs{u\times v}^2>0}.
  \end{equation*}
  If $u,v$ are orthonormal, it follows that ${u,v,u\times v}$ 
  is a positive orthogonal basis and so 
  \begin{equation*}
    \phi(u,v,u\times v)=\abs{u\times v}.
  \end{equation*}
  Combining these two identities we obtain 
  $\abs{u\times v}=1$ when $u,v$ are orthonormal. %
  Hence,~\eqref{eq:area} follows from \autoref{le:area}. %
  This proves \autoref{le:cross3}.
\end{proof}

\begin{example}
  \label{ex:cross3}
  On $\R^3$ the cross product associated to the standard inner product
  and the standard orientation is given by the familiar formula
  \begin{equation*}
    u\times v
    =
    \begin{pmatrix}
      u_2v_3-u_3v_2 \\
      u_3v_1-u_1v_3 \\
      u_1v_2-u_2v_1
    \end{pmatrix}.
  \end{equation*}
\end{example}

\begin{example}\label{ex:cross7}
  The standard structure on $\R^7$ can be obtained from 
  a basis of the form $\i,\j,\k,\e,\e\i,\e\j,\e\k$,
  where $\i,\j,\k,\e$ are anti-commuting
  generators with square minus one and $\i\j=\k$. %
  Then the cross product is given by
  \begin{equation}\label{eq:cross7}
    u\times v
    :=
    \begin{pmatrix*}[r]
      u_2v_3 - u_3v_2 - u_4v_5 + u_5v_4 - u_6v_7 + u_7v_6 \\
      u_3v_1 - u_1v_3 - u_4v_6 + u_6v_4 - u_7v_5  + u_5v_7 \\
      u_1v_2 - u_2v_1 - u_4v_7 + u_7v_4 - u_5v_6 + u_6v_5 \\
      u_1v_5 - u_5v_1 + u_2v_6 - u_6v_2 + u_3v_7 - u_7v_3 \\
      -u_1v_4 + u_4v_1 - u_2v_7 + u_7v_2 + u_3v_6 -u_6v_3  \\
      u_1v_7-u_7v_1 - u_2v_4 + u_4v_2 - u_3v_5 + u_5v_3  \\
      -u_1v_6 + u_6v_1+ u_2v_5 - u_5v_2 - u_3v_4 + u_4v_3  
    \end{pmatrix*}.
  \end{equation}
  With 
  \begin{equation*}
    e^{ijk}:=dx_i\wedge dx_j\wedge dx_k
  \end{equation*}
  the associated $3$--form~\eqref{eq:phi} is given by
  \begin{equation}
    \label{Eq_phi0}
    \phi_0 = e^{123} - e^{145} - e^{167} - e^{246} - e^{275} - e^{347} - e^{356}.
  \end{equation}
  The product~\eqref{eq:cross7} is skew-symmetric
  and~\eqref{eq:frobenius} follows from the fact that the matrix
  $A(u)$ defined by
  \begin{equation*}
    A(u)v:=u\times v 
  \end{equation*}
  is skew symmetric for all $u$, namely, 
  \begin{equation*}
    A(u)
    := 
    \begin{pmatrix*}[r]
      0\;\,& -u_3 &  u_2 &  u_5 & -u_4 &  u_7 & -u_6 \\
      u_3 & 0\;\,& -u_1 &  u_6 & -u_7 & -u_4 &  u_5 \\
      -u_2 &  u_1 & 0\;\,&  u_7 &  u_6 & -u_5 & -u_4 \\
      -u_5 & -u_6 & -u_7 & 0\;\,&  u_1 &  u_2 &  u_3 \\
      u_4 &  u_7 & -u_6 & -u_1 & 0\;\,&  u_3 & -u_2 \\
      -u_7 &  u_4 &  u_5 & -u_2 & -u_3 & 0\;\,&  u_1 \\
      u_6 & -u_5 &  u_4 & -u_3 &  u_2 & -u_1 & 0\;\, 
    \end{pmatrix*}.
  \end{equation*}
  We leave it to the reader to verify~\eqref{eq:area} (or equivalently
  $\abs{u\times v}=1$ whenever $u$ and $v$ are orthonormal). %
 
  See also \autoref{rmk:cross7} below.
\end{example}

\begin{lemma}
  \label{le:phisymp}
  Let $V$ be a be a real Hilbert space and~\eqref{eq:cross} be
  a cross product on $V$. %
  Let $\phi\in\Lambda^3V^*$ be given by~\eqref{eq:phi}. %
  Then the following holds:
  \begin{enumerate}[(i)]
  \item
    \label{It_Symplectic}
    Let $u\in V$ be a unit vector and $W_u:=u^\perp$. %
    Define ${\om_u\co W_u\times W_u\to\R}$ and $J_u\co W_u\to W_u$ by
    \begin{equation*}
      \om_u(v,w)
      :=
        \inner{u}{v\times w},\qquad 
      J_uv
      :=
        u\times v
    \end{equation*}
    for $v,w\in W_u$. %
    Then $\om_u$ is a symplectic form on $W_u$, $J_u$ is a complex
    structure compatible with $\om_u$, and the associated inner
    product is the one inherited from $V$. %
    In particular, the dimension of $V$ is odd.

  \item
    \label{It_Volume}
    Suppose $\dim V=2n+1\ge3$. %
    Then there is a unique orientation of $V$ such that the associated
    volume form $\dvol\in\Lambda^{2n+1}V^*$ satisfies
    \begin{equation}
      \label{eq:phin}
      \left(\iota(u)\phi\right)^{n-1}\wedge\phi = n!\abs{u}^{n-1}\dvol
    \end{equation}
    for every $u\in V$. %
    In particular, $n$ is odd.
  \end{enumerate}
\end{lemma}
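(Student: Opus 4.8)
The plan is to treat the two parts separately; part~\itref{It_Symplectic} is a short computation with the endomorphism $v\mapsto u\times v$, and part~\itref{It_Volume} rests on one pointwise identity for $(\iota(u)\phi)^{n-1}\wedge\phi$ together with a connectedness argument. For part~\itref{It_Symplectic}, fix a unit vector $u$ and set $Av:=u\times v$. By \autoref{le:phi} the map $A\co V\to V$ is skew-adjoint, and \eqref{eq:orthogonal} shows $A(W_u)\subseteq W_u$; by \autoref{le:area}~\itref{It_Complex} the restriction $J_u:=A|_{W_u}$ satisfies $J_u^2=-\id$, so $J_u$ is a skew-adjoint---hence orthogonal---complex structure on $W_u$. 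Using \eqref{eq:frobenius} we get $\om_u(v,w)=\inner{u}{v\times w}=\inner{u\times v}{w}=\inner{J_uv}{w}$ for $v,w\in W_u$, which is skew-symmetric, with $\om_u(v,J_uv)=\abs{J_uv}^2=\abs{v}^2>0$ for $v\ne 0$ and $\om_u(v,J_uw)=\inner{J_uv}{J_uw}=\inner{v}{w}$. Hence $\om_u$ is symplectic, $J_u$ is a compatible complex structure, and the associated inner product is the restriction of that of $V$; since a symplectic vector space is even-dimensional, $\dim V=\dim W_u+1$ is odd.

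For part~\itref{It_Volume}, write $\dim V=2n+1$, fix a unit vector $u$, and extend $\om_u$ to a $2$--form on $V$ by $\iota(u)\om_u=0$. The key point is that $\iota(u)\phi=\om_u$ on all of $V$: for $v,w\in W_u$ one has $(\iota(u)\phi)(v,w)=\inner{u\times v}{w}=\inner{J_uv}{w}=\om_u(v,w)$, while $(\iota(u)\phi)(u,\cdot)=0$. Therefore $\phi=u^\flat\wedge\om_u+\phi'$ with $\phi':=\phi-u^\flat\wedge\om_u$ satisfying $\iota(u)\phi'=0$, so $\om_u$ and $\phi'$ are both pulled back from the $2n$--dimensional space $W_u$; in particular $\om_u^{n-1}\wedge\phi'$ is a $(2n+1)$--form pulled back from $W_u$, hence zero, and
\[
  (\iota(u)\phi)^{n-1}\wedge\phi
  =\om_u^{n-1}\wedge\bigl(u^\flat\wedge\om_u+\phi'\bigr)
  =u^\flat\wedge\om_u^{\,n}.
\]

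Since $\om_u$ is nondegenerate on $W_u$, this is a nowhere-vanishing top form on $V$, and it depends continuously---indeed polynomially---on the unit vector $u$. As $\dim V\ge3$, the unit sphere of $V$ is connected, so these forms all induce the same orientation of $V$; let $\dvol$ be the Riemannian volume form for that orientation, normalized so that $u^\flat\wedge\om_u^{\,n}$ is a positive multiple of $\dvol$ (which is independent of the unit vector $u$, by the connectedness). Then the orientation induced on $W_u=u^\perp$ is the symplectic orientation of $(W_u,\om_u)$, for which $\om_u^{\,n}=n!\,\dvol_{W_u}$, and hence $(\iota(u)\phi)^{n-1}\wedge\phi=u^\flat\wedge\om_u^{\,n}=n!\,\dvol$ for every unit vector $u$. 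Rescaling $u$ then gives \eqref{eq:phin} for all $u$---both sides vanish at $u=0$ when $n\ge2$, and the case $n=1$ is \autoref{le:cross3}~\itref{It_3Volume}---and $\dvol$ is unique since the left-hand side of \eqref{eq:phin} with $\abs{u}=1$ equals $n!$ times any admissible volume form.

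Finally, to see that $n$ is odd, note that $-u$ is again a unit vector, so the identity just proved gives $n!\,\dvol=(\iota(-u)\phi)^{n-1}\wedge\phi=(-1)^{n-1}(\iota(u)\phi)^{n-1}\wedge\phi=(-1)^{n-1}n!\,\dvol$, whence $(-1)^{n-1}=1$. I expect the main obstacle to be part~\itref{It_Volume}: getting the splitting $\phi=u^\flat\wedge\om_u+\phi'$ and the vanishing of $\om_u^{n-1}\wedge\phi'$ right, and then the connectedness argument that makes the orientation independent of $u$; once those are in place the normalization constant $n!$ and the parity of $n$ require no further work.
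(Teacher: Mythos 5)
Your proof is correct and follows essentially the same path as the paper: part~\itref{It_Symplectic} is identical, and part~\itref{It_Volume} uses the same ingredients (the symplectic form $\om_u$ on $u^\perp$, connectedness of the space of unit vectors to pin the orientation, the symplectic basis, and the sign of $\iota(-u)\phi$ for the parity of $n$). The one organizational difference is that the paper picks an orthonormal symplectic basis and evaluates both sides of \eqref{eq:phin} on it directly, whereas you first extract the coordinate-free identity $(\iota(u)\phi)^{n-1}\wedge\phi = u^\flat\wedge\om_u^n$ from the splitting $\phi = u^\flat\wedge\om_u + \phi'$ and then invoke $\om_u^n = n!\,\dvol_{W_u}$; this is a slightly cleaner packaging of the same computation.
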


\begin{proof}
  We prove~\itref{It_Symplectic}. %
  By \autoref{le:phi} the bilinear form $\om_u$ is skew symmetric and,
  by \autoref{le:area}, we have $J_u\circ J_u=-\one$. %
  Moreover,
  \begin{equation*}
    \om_u(v,J_uw)
    = \inner{u\times v}{u\times w}
    = -\inner{v}{u\times(u\times w)}
    = \inner{v}{w}
  \end{equation*}
  for all $v,w\in V$. %
  Here the first equation follows from the definition of $\om_u$ and
  $J_u$, the second follows from~\eqref{eq:frobenius}, and the last
  from \autoref{le:area}. %
  Thus the dimension of $W_u$ is even and so the dimension of $V$ is
  odd. %
 
  We prove~\itref{It_Volume}. %
  The set of all bases $(u,v_1,\dots,v_{2n})\in V^{2n+1}$, where $u$
  has norm one and $v_1,\dots,v_{2n}$ is a symplectic basis of $W_u$,
  is connected. %
  Hence, there is a unique orientation of $V$ with respect to which
  every such basis is positive. %
  Let $\dvol\in\Lambda^{2n+1}V^*$ be the associated volume form. %
  To prove equation~\eqref{eq:phin} assume first that $\abs{u}=1$ and
  choose an orthonormal symplectic basis $v_1,\dots,v_{2n}$ of
  $W_u$. %
  (For example pick an orthonormal basis $v_1,v_3,\dots,v_{2n-1}$ of a
  Lagrangian subspace of $W_u$ and define $v_{2k}:=J_uv_{2k-1}$ for
  $k=1,\dots,n$.) %
  Now evaluate both sides of the equation on the tuple
  $(u,v_1,\dots,v_{2n})$. %
  Then we obtain $n!$ on both sides. %
  This proves~\eqref{eq:phin} whenever $u$ has norm one. %
  The general case follows by scaling. %
  It follows from~\eqref{eq:phin} that $n$ is odd since otherwise the
  left hand side changes sign when we replace $u$ by $-u$. %
  This proves \autoref{le:phisymp}.
\end{proof}

\begin{lemma}
  \label{le:Vphi}
  Let $n>1$ be an odd integer and $V$ be an oriented real Hilbert
  space of dimension $2n+1$ with volume form
  $\dvol\in\Lambda^{2n+1}V^*$. %
  Let $\phi\in\Lambda^3V^*$ be a $3$--form and denote its isotropy
  group by
  \begin{equation*}
    \rG:=\left\{g\in\Aut(V) : g^*\phi=\phi\right\}.
  \end{equation*}
  If $\phi$ satisfies~\eqref{eq:phin}, then $\rG\subset\SO(V)$.
\end{lemma}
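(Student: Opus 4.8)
The plan is to show that any $g\in\rG$ preserves the inner product of $V$ (up to the sign issue that is ruled out automatically), and then that it preserves the orientation, so that $g\in\SO(V)$. The key point is that equation~\eqref{eq:phin} expresses the norm $\abs{u}$ and the volume form $\dvol$ purely in terms of $\phi$ and the linear structure of $V$; since $g$ preserves $\phi$ and the linear structure, it must preserve whatever is canonically determined by them.

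First I would observe that if $g^*\phi=\phi$, then for every $u\in V$ one has $\iota(gu)\phi = (g^{-1})^*(\iota(u)\phi)$ — this is the standard naturality of contraction under a linear automorphism together with $g^*\phi=\phi$. Taking $(n-1)$-st exterior powers and wedging with $\phi=g^*\phi$, the left-hand side of~\eqref{eq:phin} evaluated at $gu$ becomes $(g^{-1})^*$ applied to the left-hand side at $u$; that is,
\begin{equation*}
  \left(\iota(gu)\phi\right)^{n-1}\wedge\phi
  = (g^{-1})^*\!\left(\left(\iota(u)\phi\right)^{n-1}\wedge\phi\right)
  = n!\,\abs{u}^{n-1}\,(g^{-1})^*\dvol.
\end{equation*}
On the other hand, applying~\eqref{eq:phin} directly at the vector $gu$ gives $\left(\iota(gu)\phi\right)^{n-1}\wedge\phi = n!\,\abs{gu}^{n-1}\dvol$. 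Writing $(g^{-1})^*\dvol = (\det g)^{-1}\dvol$ (a nonzero scalar multiple of $\dvol$, since $\dvol$ spans the top exterior power) and comparing, we obtain $\abs{gu}^{n-1} = (\det g)^{-1}\abs{u}^{n-1}$ for all $u\in V$.

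Now I would extract the two consequences. Taking $u$ with $\abs{u}=1$ shows $(\det g)^{-1}$ is a positive number (being $\abs{gu}^{n-1}\ge 0$, and nonzero since $g$ is invertible), so $\det g>0$; feeding this back, $\abs{gu}^{n-1}=(\det g)^{-1}\abs{u}^{n-1}$ forces $\det g = 1$ — indeed, rescaling $u\mapsto \lambda u$ multiplies both sides consistently, but evaluating at one unit vector and at a vector of norm $\abs{g^{-1}(\text{unit})}$ pins the constant; more cleanly, apply the identity with $g$ replaced by $g^{-1}$ as well to get $\abs{u}^{n-1} = (\det g)\abs{gu}^{n-1}\cdot(\det g)^{-1}\cdots$, or simply note the multiplicative constant $c:=(\det g)^{-1}$ satisfies $\abs{gu}^{n-1}=c\abs{u}^{n-1}$ and iterating with powers of $g$ gives $c^k$ bounded, so $c=1$. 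Hence $\det g=1$ and $\abs{gu}=\abs{u}$ for all $u$ (here we use $n-1>0$ and that $t\mapsto t^{n-1}$ is injective on $[0,\infty)$), i.e.\ $g\in\O(V)$ with $\det g=1$, which is precisely $g\in\SO(V)$.

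The main obstacle is the bookkeeping in the middle step: one must be careful that $n-1\ge 1$ (guaranteed by the hypothesis $n>1$, indeed $n\ge 3$ since $n$ is odd) so that $t\mapsto t^{n-1}$ is injective on nonnegative reals and the norm is genuinely recovered, and one must correctly handle the scalar by which $g$ distorts the top form. I would phrase the cleanest version as: the identity $\abs{gu}^{n-1}=(\det g)^{-1}\abs{u}^{n-1}$ holds for all $u$; substituting $u=gv$ and iterating, or substituting $g^2,g^3,\dots$, shows $(\det g)^{-1}$ must equal $1$, after which $g$ is an orthogonal transformation of determinant one. Everything else is the routine identity $\iota(gu)(g^*\alpha)=g^*(\iota(u)\alpha)$ and multiplicativity of $g^*$ over $\wedge$.
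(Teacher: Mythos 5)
Your proposal follows the paper's route: use $g^*\phi=\phi$ and naturality of contraction to pull back~\eqref{eq:phin} and obtain $\abs{gu}^{n-1}\,g^*\dvol = \abs{u}^{n-1}\,\dvol$, hence (writing $g^*\dvol = (\det g)\dvol$) the relation $\abs{gu}^{n-1}=(\det g)^{-1}\abs{u}^{n-1}$ and $\det g>0$. Up to this point you match the paper exactly.

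The gap is in the step where you conclude $\det g=1$. None of the three justifications you offer actually closes the loop. Rescaling $u\mapsto\lambda u$ is vacuous (both sides scale identically). Replacing $g$ by $g^{-1}$ gives $\abs{g^{-1}u}^{n-1}=(\det g)\abs{u}^{n-1}$, which combined with the original yields a tautology, not $\det g=1$. And the claim that ``iterating with powers of $g$ gives $c^k$ bounded'' is not true as stated: there is no a priori bound on $c^k$ for a linear automorphism. The ingredient you are missing --- and which the paper uses --- is a second, independent relation between the conformal factor and the determinant coming from the dimension count. From $\abs{gu}^{n-1}=c\,\abs{u}^{n-1}$ with $c:=(\det g)^{-1}$, and $n-1>0$, you get $\abs{gu}=c^{1/(n-1)}\abs{u}$ for all $u$; that is, $g$ is a similarity with ratio $\lambda:=c^{1/(n-1)}$, so $\abs{\det g}=\lambda^{2n+1}=c^{(2n+1)/(n-1)}$. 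Since $\det g>0$ this reads $c^{-1}=c^{(2n+1)/(n-1)}$, i.e.\ $c^{3n/(n-1)}=1$, and since the exponent $3n/(n-1)$ is positive this forces $c=1$. With this inserted, the rest of your argument (hence $\abs{gu}=\abs{u}$ and $\det g=1$, so $g\in\SO(V)$) goes through.
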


\begin{proof}
  Let $g\in\rG$ and $u\in V$. %
  Then it follows from~\eqref{eq:phin} that
  \begin{align*}
    \abs{gu}^{n-1}g^*\dvol
    &=
      \frac{1}{n!}g^*\left(\left(\iota(gu)\phi\right)^{n-1}\wedge\phi\right) \\
    &= 
      \frac{1}{n!}\left(\left(g^*\iota(gu)\phi\right)^{n-1}\wedge g^*\phi\right) \\
    &=
      \frac{1}{n!} \left(\iota(u)g^*\phi\right)^{n-1}\wedge g^*\phi  \\
    &= 
      \frac{1}{n!} \left(\iota(u)\phi\right)^{n-1}\wedge\phi  \\
    &= 
      \abs{u}^{n-1}\dvol.
  \end{align*}
  Hence, there is a constant $c>0$ such that 
  \begin{equation*}
    g^*\dvol = c^{-1}\dvol,\qquad \abs{gu}^{n-1} = c\abs{u}^{n-1}
  \end{equation*}
  for every $u\in V$. %
  Since $n>1$, this gives 
  $
  \abs{gu} = c^{\frac{1}{n-1}}\abs{u}
  $
  for $u\in V$ and hence
  \begin{equation*}
    g^*\dvol
    = c^{\frac{2n+1}{n-1}}\dvol 
    = c^{\frac{3n}{n-1}}g^*\dvol.
  \end{equation*}
  Thus $c=1$ and this proves \autoref{le:Vphi}. 
\end{proof}

\begin{proof}[Proof of \autoref{thm:cross}]\label{proof:cross}
  Assume $\dim V>1$, let~\eqref{eq:cross} be a cross product on $V$,
  and define $\phi\co V\times V\times V\to\R$ by~\eqref{eq:phi}. %
  By \autoref{le:phi}, we have $\phi\in\Lambda^3V^*$. %
  By \autoref{le:phisymp}~\itref{It_Symplectic}, the dimension of $V$ is odd. %
  By \autoref{le:Vphi}, we have $\dim V=4n+3$ for some integer $n\ge0$. %
  In particular $\dim V\ne 5$. 

  We prove that $\dim V \le 7$. %
  Define $A\co V\to\End(V)$ 
  by 
  $
  A(u)v:=u\times v.
  $ 
  Then it follows from \autoref{le:area} that
  \begin{equation*}
    A(u)u=0,\qquad A(u)^2=uu^*-\abs{u}^2\one.
  \end{equation*}
  Define $\gamma\co V\to\End(\R\oplus V)$ by 
  \begin{equation}
    \label{eq:gamma}
    \gamma(u)
    :=
    \begin{pmatrix}
      0 & -u^* \\
      u & A(u)
    \end{pmatrix},
  \end{equation}
  where $u^*\co V\to\R$ denotes the linear functional $v\mapsto\inner{u}{v}$.
  Then 
  \begin{equation}
    \label{eq:gammau}
    \gamma(u)^*+\gamma(u)
      = 0,\qquad
    \gamma(u)^*\gamma(u)
      = \abs{u}^2\one
  \end{equation}
  for every $u\in V$. %
  Here the first equation follows from the fact that $A(u)$ is
  skew-adjoint for every $u$ and the last equation follows by direct
  calculation. %
  This implies that $\gamma$ extends to a linear map from the Clifford
  algebra $\Cl(V)$ to $\End(\R\oplus V)$. %
  The restriction of this extension to the Clifford algebra of any
  even dimensional subspace of $V$ is injective (see,
  e.g.~\cite{Salamon1999a}*{Proposition~4.13}). %
  Hence, $2^{2n}\le (2n+2)^2$. %
  This implies $n\le 3$ and so $\dim V=2n+1\le 7$.  Thus we have
  proved that the dimension of $V$ is either $0$, $1$, $3$, or $7$. %
  That the cross product vanishes in dimension $0$ and $1$ is
  obvious. %
  That it is uniquely determined by the orientation of $V$ in
  dimension $3$ follows from \autoref{le:cross3}. %
  The last assertion of \autoref{thm:cross} is restated and proved in
  \autoref{thm:imO} below.
\end{proof}

\begin{remark}
  \label{rmk:5}
  Let $V$ be a nonzero real Hilbert space that admits a $3$--form
  $\phi$ whose isotropy subgroup $\rG$ is contained in $\SO(V)$. %
  Then
  \begin{equation*}
    \dim\Aut(V)-\dim\Lambda^3V^*\le\dim\rG\le\dim\SO(V).
  \end{equation*}
  Hence, $\dim V\ge7$ as otherwise
  $\dim\SO(V)<\dim\Aut(V)-\dim\Lambda^3V^*$. %
  This gives another proof for the nonexistence of cross products in
  dimension $5$.
\end{remark}

    
\section{Associative calibrations}
\label{sec:imO}  

\begin{definition}\label{def:nondegenerate}
  Let $V$ be a real vector space. %
  A $3$--form $\phi\in\Lambda^3V^*$ is called \defined{nondegenerate} if,
  for every pair of linearly independent vectors $u,v\in V$, there is
  a vector $w\in V$ such that $\phi(u,v,w)\ne 0$.  %
  An inner product on $V$ is called \defined{compatible} with $\phi$ if
  the map~\eqref{eq:cross} defined by~\eqref{eq:phi} is a cross
  product.
\end{definition}

\begin{theorem}\label{thm:imO}
  Let $V$ be a $7$--dimensional real vector space and
  $\phi,\phi'\in\Lambda^3V^*$. %
  Then the following holds:
  \begin{enumerate}[(i)]
  \item
    \label{It_Compatible}
    $\phi$ is nondegenerate if and only if it admits a compatible inner product.

  \item
    \label{It_UniqueG}
    The inner product in~\itref{It_Compatible}, if it exists, is uniquely determined by $\phi$. 

  \item
    \label{It_Transitive}
    If $\phi$ and $\phi'$ are nondegenerate, the vectors $u,v,w$ are orthonormal 
    for~$\phi$ and satisfy $\phi(u,v,w)=0$, and the vectors $u',v',w'$ are orthonormal 
    for~$\phi'$ and satisfy ${\phi'(u',v',w')=0}$, then there exists a $g\in\Aut(V)$ 
    such that ${g(u)=u'}$, ${g(v)=v'}$, ${g(w)=w'}$, and ${g^*\phi'=\phi}$.
  \end{enumerate}
\end{theorem}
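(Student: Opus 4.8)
The plan is to build everything around the $\Lambda^7V^*$--valued symmetric bilinear form
\[
  b_\phi(u,v):=\tfrac16\,\iota(u)\phi\wedge\iota(v)\phi\wedge\phi\in\Lambda^7V^*,
\]
since \autoref{le:phisymp}~\itref{It_Volume} (with $n=3$) says that any inner product $g$ compatible with $\phi$ must satisfy $b_\phi(u,v)=g(u,v)\,\dvol_g$. One half of \itref{It_Compatible} is immediate: if $g$ is compatible and $u,v$ are linearly independent, then $\phi(u,v,u\times v)=\abs{u\times v}^2=\abs{u}^2\abs{v}^2-\inner{u}{v}^2>0$ by \eqref{eq:area} and Cauchy--Schwarz, so $\phi$ is nondegenerate.

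For the converse I first relate nondegeneracy to $b_\phi$. Fixing $0\neq u\in V$, choosing a complement $W$ of $\R u$, and writing $\phi=\alpha\wedge\psi+\chi$ with $\alpha(u)=1$, $\alpha|_W=0$, $\psi\in\Lambda^2W^*$, $\chi\in\Lambda^3W^*$, one gets $\iota(u)\phi=\psi$ and, since $\psi^2\wedge\chi\in\Lambda^7W^*=0$, also $b_\phi(u,u)=\tfrac16\,\alpha\wedge\psi^3$. Unwinding \autoref{def:nondegenerate}, $\phi$ is nondegenerate precisely when $\iota(u)\phi$ is a nondegenerate $2$--form on such a complement for every $u\neq0$, i.e.\ precisely when $b_\phi(u,u)\neq0$ for all $u\neq0$. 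When this holds, $u\mapsto b_\phi(u,u)$ is a nowhere-vanishing continuous function on the connected set $V\setminus\{0\}$, hence of constant sign; orienting $V$ so that this sign is positive makes $b_\phi$ a positive-definite $\Lambda^7V^*$--valued quadratic form. Any such form equals $g\otimes\dvol_g$ for exactly one inner product $g_\phi$: rescaling an auxiliary inner product $h$ to $\lambda h$ sends $h\otimes\dvol_h$ to $\lambda^{9/2}h\otimes\dvol_h$, so there is a unique positive scaling matching $b_\phi$. This already proves \itref{It_UniqueG} and leaves only the task of showing that this $g_\phi$ is compatible.

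Since \eqref{eq:orthogonal}/\eqref{eq:frobenius} hold automatically for $\times=\times_{g_\phi}$ (because $\phi\in\Lambda^3V^*$, by \autoref{le:phi}), only \eqref{eq:area} is in question, and I will prove the sharper statement that there is a $g_\phi$--orthonormal basis $(e_1,\dots,e_7)$ of $V$ in which $\phi=\phi_0$, the model form \eqref{Eq_phi0}. Granting this, the standard inner product is compatible with $\phi_0$ (\autoref{ex:cross7}) and, by \itref{It_UniqueG}, coincides with $g_\phi$ in that basis, so $g_\phi$ is compatible; this completes \itref{It_Compatible}. Part \itref{It_Transitive} then follows from parts \itref{It_Compatible} and \itref{It_UniqueG}: given $(u,v,w)$ orthonormal for $\phi$ with $\phi(u,v,w)=0$, set $f_1=u$, $f_2=v$, $f_3=u\times v$, $f_4=w$, $f_5=-u\times w$, $f_6=-v\times w$, $f_7=-(u\times v)\times w$; a finite computation with the identities of \autoref{le:area}---using $\phi(u,v,w)=0$ exactly to obtain $f_3\perp f_4$---shows that $(f_1,\dots,f_7)$ is orthonormal and that $\phi$ acquires the coefficients of $\phi_0$ in this basis, and likewise for the primed data. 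The linear map $f_i\mapsto f_i'$ is then the required $g\in\Aut(V)$ with $g(u)=u'$, $g(v)=v'$, $g(w)=w'$, and $g^*\phi'=\phi$. (Such triples exist: for orthonormal $u,v$ the functional $\phi(u,v,\cdot)$ is nonzero by nondegeneracy and kills $\R u\oplus\R v$, so its restriction to the $5$--dimensional space $(\R u\oplus\R v)^\perp$ is nonzero and its kernel meets the unit sphere.)

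The main obstacle is the normal-form statement ``nondegenerate $\phi\Rightarrow\phi=\phi_0$ in some $g_\phi$--orthonormal basis''. I would prove it by constructing the basis step by step. Pick a $g_\phi$--unit vector $e_1$; then $\psi:=\iota(e_1)\phi$ is a nondegenerate $2$--form on $W:=e_1^\perp$, and, writing $\phi=e^1\wedge\psi+\rho$ with $e^1$ the covector $g_\phi$--dual to $e_1$ and $\rho\in\Lambda^3W^*$, it suffices to find a $g_\phi|_W$--orthonormal basis $e_2,\dots,e_7$ of $W$ in which
\[
  \psi=e^2\wedge e^3-e^4\wedge e^5-e^6\wedge e^7,\qquad
  \rho=-\bigl(e^2\wedge e^4\wedge e^6+e^2\wedge e^7\wedge e^5+e^3\wedge e^4\wedge e^7+e^3\wedge e^5\wedge e^6\bigr),
\]
for then $\phi=e^1\wedge\psi+\rho=\phi_0$. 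One first Darboux-normalizes $\psi$ compatibly with $g_\phi|_W$, making $(W,g_\phi|_W,\psi)$ a Hermitian space, and then uses nondegeneracy of $\phi$ in the directions $\R e_1\oplus\R v$ $(v\in W)$ together with the normalization $b_\phi=g_\phi\otimes\dvol_{g_\phi}$ to pin $\rho$ down, up to the residual unitary freedom, into the model shape above---that is, to recognize $(\psi,\rho)$ as a standard $\SU(3)$--structure. This last reduction is where the real work lies; everything else is bookkeeping with the earlier lemmas and with \autoref{le:area}.
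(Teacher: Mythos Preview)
Your outline for \itref{It_UniqueG} and the ``only if'' half of \itref{It_Compatible} is fine and matches the paper: define $b_\phi$, observe it is definite exactly when $\phi$ is nondegenerate, and extract the unique inner product $g_\phi$ by the $\lambda^{9/2}$ scaling argument. Your treatment of \itref{It_Transitive}, once compatibility is known, is also essentially the paper's: build the basis from the cross product.

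The gap is in your normal-form argument for compatibility. You write that one ``first Darboux-normalizes $\psi$ compatibly with $g_\phi|_W$, making $(W,g_\phi|_W,\psi)$ a Hermitian space,'' and locate the real work only in the subsequent step of normalizing $\rho$. But the Hermitian step is already the crux. Writing $A:=g_\phi^{-1}\psi\in\so(W)$, an orthonormal block-diagonalization gives $\psi=\lambda_1 e^{23}+\lambda_2 e^{45}+\lambda_3 e^{67}$ in some $g_\phi$--orthonormal basis, and $b_\phi(e_1,e_1)=\dvol_{g_\phi}$ yields only $\lambda_1\lambda_2\lambda_3=1$. You cannot ``make $(W,g_\phi|_W,\psi)$ Hermitian'' until you know $\lambda_1=\lambda_2=\lambda_3$, and nothing you have written forces that; exploiting $b_\phi(v,v)=\abs{v}^2\dvol_{g_\phi}$ for $v\in W$ entangles the $\lambda_j$ with the unknown $\rho$, so it does not give a clean constraint on the $\lambda_j$ alone.

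The paper closes exactly this gap by a non-computational argument you are missing (\autoref{le:imO}, Steps~4--5): the isotropy group $\rG$ of $\phi$ in $\GL(V)$ has dimension $\ge 49-35=14$; by \autoref{le:Vphi} it sits in $\SO(V)$, hence acts on $S^6$, so the stabilizer $\rH$ of $e_1$ has $\dim\rH\ge 8$. Every $h\in\rH$ commutes with $A$, so if the $\lambda_j$ were not all equal one would have $\rH\subset\O(2)\times\O(4)$, which is only $7$--dimensional---contradiction. Hence $\lambda_1=\lambda_2=\lambda_3=1$, $A^2=-\one$ on $W$, and \eqref{eq:area} follows via \autoref{le:area}. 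Only after this does the paper build the normal-form basis (your part \itref{It_Transitive}), using the now-available cross product. If you want to avoid the dimension count and proceed purely computationally, you must actually carry out the coupled analysis of $\psi$ and $\rho$ under the full constraint $b_\phi=g_\phi\otimes\dvol_{g_\phi}$; as written, your proposal assumes away the step that contains the content.
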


\begin{proof}
  See pages~\pageref{proof:imO1} and~\pageref{proof:imO2}.
\end{proof}

\begin{remark}\label{rmk:nondeg}
  If $\dim V=3$, then $\phi\in\Lambda^3V^*$ is nondegenerate if and
  only if it is nonzero. %
  If $\phi\ne 0$, then, by \autoref{le:cross3}, an inner product on $V$
  is compatible with $\phi$ if and only if $\phi$ is the associated
  volume form with respect to some orientation, i.e.,
  $\phi(u,v,w)=\pm1$ for every orthonormal basis $u,v,w$ of $V$. %
  Thus assertion~\itref{It_Compatible} of \autoref{thm:imO} continues
  to hold in dimension three. %

  However, assertion~\itref{It_UniqueG} is specific to dimension seven.
\end{remark}

\begin{lemma}\label{le:imO}
  Let $V$ be a $7$--dimensional real Hilbert space and
  $\phi\in\Lambda^3V^*$. %
  Then the following are equivalent:
  \begin{enumerate}[(i)]
  \item
    \label{It_Compatible2}
    $\phi$ is compatible with the inner product.

  \item
    \label{It_Volume2}
    There is an orientation on $V$ such that the associated 
    volume form $\dvol\in\Lambda^7V^*$ satisfies
    \begin{equation}\label{eq:uvphi}
      \iota(u)\phi\wedge\iota(v)\phi\wedge\phi = 6\inner{u}{v}\dvol
    \end{equation}
    for all $u,v\in V$.
  \end{enumerate}
  Each of these conditions implies that $\phi$ is nondegenerate.
  Moreover, the orientation in~\itref{It_Volume2}, if it exists, is
  uniquely determined by $\phi$.
\end{lemma}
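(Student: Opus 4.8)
The plan is to establish the equivalence (i)$\Leftrightarrow$(ii), the nondegeneracy of $\phi$ under either condition, and the uniqueness of the orientation, treating these largely separately; the implication (ii)$\Rightarrow$(i) is where essentially all the difficulty lies. For (i)$\Rightarrow$(ii) I would invoke \autoref{le:phisymp}~\itref{It_Volume} with $\dim V=7$, i.e.\ $n=3$: it furnishes a unique orientation whose volume form $\dvol$ satisfies $\left(\iota(u)\phi\right)^2\wedge\phi=6\abs{u}^2\dvol$ for all $u$, which is the diagonal case $u=v$ of \eqref{eq:uvphi}. To obtain the general bilinear identity one polarises: the map $(u,v)\mapsto\iota(u)\phi\wedge\iota(v)\phi\wedge\phi$ is bilinear and symmetric (since $\iota(u)\phi$ and $\iota(v)\phi$ are $2$--forms), hence determined by its values on the diagonal, and applying the diagonal identity to $u$, $v$, $u+v$ yields \eqref{eq:uvphi}. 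The uniqueness of the orientation is then immediate from \eqref{eq:uvphi} at any $u=v\ne0$, since two volume forms that both satisfy it must coincide.

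For the nondegeneracy statement there are two implications. If $\phi$ is compatible, then \eqref{eq:area} together with the strict Cauchy--Schwarz inequality gives $\abs{u\times v}^2=\abs{u}^2\abs{v}^2-\inner{u}{v}^2>0$ for linearly independent $u,v$, so $u\times v\ne0$ and hence $\phi(u,v,\cdot)\ne0$. If instead \eqref{eq:uvphi} holds but $\phi(u,v,\cdot)=0$ for some linearly independent $u,v$, then, completing $\{u,v\}$ to a basis, the $2$--form $\theta:=\iota(u)\phi$ has both $u$ and $v$ in its radical, hence lies in $\Lambda^2$ of a $5$--dimensional subspace and satisfies $\theta^3=0$; writing $\phi=\eta\wedge\theta+\phi'$ with $\eta$ the basis covector dual to $u$ and $\phi'$ in $\Lambda^3$ of a $6$--dimensional space, a degree count gives $\left(\iota(u)\phi\right)^2\wedge\phi=\theta^2\wedge\phi'=0$, contradicting \eqref{eq:uvphi}. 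So each condition forces $\phi$ to be nondegenerate.

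The crux is (ii)$\Rightarrow$(i). Let $\times$ be defined by \eqref{eq:phi}; it is skew-symmetric and, $\phi$ being alternating, satisfies \eqref{eq:orthogonal} by \autoref{le:phi}. By \autoref{le:area}~\itref{It_Complex} it therefore suffices to show that for every unit vector $u$ the skew-adjoint operator $A(u)\co v\mapsto u\times v$, which annihilates $u$ and preserves $W_u:=u^\perp$, restricts to a complex structure on $W_u$, i.e.\ $A(u)^2=-\one$ on $W_u$. Fix $u$ and put $A(u)|_{W_u}$ into block normal form: in a suitable orthonormal basis $e_1,\dots,e_6$ of $W_u$ it sends $e_{2k-1}\mapsto a_ke_{2k}$ and $e_{2k}\mapsto-a_ke_{2k-1}$ for reals $a_1,a_2,a_3>0$ (positive because $\phi$ is nondegenerate, as just shown). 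Setting $e_7:=u$ one has $\iota(u)\phi=a_1e^{12}+a_2e^{34}+a_3e^{56}$ and $\phi=e^7\wedge\iota(u)\phi+\psi$ with $\psi\in\Lambda^3W_u^*$; the diagonal case of \eqref{eq:uvphi} at $u$ then collapses (all terms containing $\psi$ vanishing by degree) to $6a_1a_2a_3\,\dvol=6\,\dvol$ in the basis where $\dvol=e^{1234567}$, forcing $a_1a_2a_3=1$ and pinning the orientation (since $a_1a_2a_3<0$ is impossible). It remains to upgrade this to $a_1=a_2=a_3=1$, and for this I would feed \eqref{eq:uvphi} the remaining pairs $(u,e_i)$ and $(e_i,e_j)$ of the adapted basis: each gives a linear relation among the coefficients of $\psi$ weighted by the products $a_ia_j$, and, together with the diagonal relations $\left(\iota(e_i)\phi\right)^2\wedge\phi=6\,\dvol$, these should be rigid enough to force every $a_k=1$, i.e.\ $A(u)|_{W_u}$ orthogonal. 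The main obstacle is exactly this bookkeeping; I expect it to be routine but lengthy.

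A cleaner, though less elementary, alternative for (ii)$\Rightarrow$(i) runs through \autoref{le:Vphi}: the diagonal case of \eqref{eq:uvphi} already shows the isotropy group $\rG$ of $\phi$ lies in $\SO(V)$, while $\dim\rG\ge\dim\Aut(V)-\dim\Lambda^3V^*=49-35=14$ in any case; ruling out $\dim\rG\in\{15,21\}$ by the absence of $\SO(6)$-- and $\SO(7)$--invariant $3$--forms and using that $\SO(7)$ has no connected subgroup of dimension $16,\dots,20$, one gets $\dim\rG=14$, hence $\rG^0$ conjugate to $\Gtwo$, hence $\phi$ is $\Aut(V)$--equivalent to a multiple of $\phi_0$ and in particular compatible — at the price of importing the subgroup structure of $\SO(7)$.
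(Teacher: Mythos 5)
Your handling of (i)$\Rightarrow$(ii), the nondegeneracy statements, and the uniqueness of the orientation is sound and matches the paper's treatment (the paper polarises with $u\pm v$ rather than $u,v,u+v$, but that is the same trick). Your Steps~1--3 of the converse also track the paper exactly: $A(u)$ is skew-adjoint with kernel $\langle u\rangle$, block-diagonalise on $u^\perp$ with positive block parameters $a_1,a_2,a_3$, and the diagonal case of \eqref{eq:uvphi} at $u$ forces $a_1a_2a_3=1$ (the paper's $\lambda_1\lambda_2\lambda_3=1$).

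The gap is exactly where you flag it: you never actually show $a_1=a_2=a_3$. You propose feeding \eqref{eq:uvphi} the remaining basis pairs and hoping the resulting system is rigid enough, but you do not carry this out and you say yourself you are not sure it is routine. It isn't obviously so: the off-diagonal evaluations of \eqref{eq:uvphi} couple the $a_k$ to the unknown components of $\phi|_{u^\perp}\in\Lambda^3(u^\perp)^*$ (a $20$--dimensional space), and it is not clear that the scalar identities alone eliminate these unknowns and isolate the $a_k$ without some additional structural input. The paper sidesteps the bookkeeping entirely with a dimension count: the isotropy group $\rG=\rG(V,\phi)$ has $\dim\rG\ge49-35=14$; since $\rG\subset\SO(V)$ by \autoref{le:Vphi} it acts on $S^6$, so the stabiliser $\rH\subset\rG$ of $u$ has $\dim\rH\ge8$; every $g\in\rH$ commutes with $A(u)$, so if some $a_i$ is distinct from the others then $\rH$ preserves the corresponding $2$-- and $4$--dimensional eigenspace blocks, forcing $\rH\subset\rO(2)\times\rO(4)$ of dimension $\le7$, a contradiction. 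Your sketched alternative via the subgroup lattice of $\SO(7)$ is in the same spirit but needs more than you supply: the claim that $\SO(7)$ has no connected subgroup of dimension $16,\dots,20$ requires proof, and $\dim\rG=14$ alone does not yet give that $\rG^0$ is conjugate to $\rG_2$ or that $\phi$ is $\Aut(V)$--equivalent to $\phi_0$; you would be importing essentially the classification you are trying to avoid. The paper's isotropy argument is the clean way to close the gap, and I would adopt it in place of your unfinished computation.
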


\begin{remark}
  \label{rmk:cross7}
  It is convenient to use equation~\eqref{eq:uvphi} to verify that the
  bilinear map in Example~\ref{ex:cross7} satisfies~\eqref{eq:area}. %
  In fact, it suffices to check~\eqref{eq:uvphi} for every pair of
  standard basis vectors. %
  Care must be taken. %
  There are examples of $3$--forms $\phi$ on $V=\R^7$ for which the
  quadratic form
  \begin{equation*}
    V\times V\to\Lambda^7V^*\co
    (u,v)\mapsto\iota(u)\phi\wedge\iota(v)\phi\wedge\phi
  \end{equation*}
  has signature $(3,4)$. %
  One such example can be obtained from the $3$--form $\phi_0$ in
  \autoref{ex:cross7} by changing the minus signs to plus.
\end{remark}

\begin{proof}[Proof of \autoref{le:imO}]
  If~\itref{It_Compatible2} holds, then, by
  \autoref{le:phisymp}~\itref{It_Volume}, there is a unique
  orientation on $V$ such that the associated volume form satisfies
  \begin{equation*}
    \iota(u)\phi\wedge\iota(u)\phi\wedge\phi = 6\abs{u}^2\dvol
  \end{equation*}
  for every $u\in V$. %
  Applying this identity to $u+v$ and $u-v$ and taking the difference
  we obtain~\eqref{eq:uvphi}. %
  Moreover, if $u,v\in V$ are linearly independent, then
  $\phi(u,v,u\times v) = \abs{u\times v}^2 =
  \abs{u}^2\abs{v}^2-\inner{u}{v}^2\ne 0$. %
  Hence, $\phi$ is nondegenerate. %
  This shows that~\itref{It_Compatible2} implies~\itref{It_Volume2}
  and nondegeneracy.

  Conversely, assume~\itref{It_Volume2}. %
  We prove that $\phi$ is nondegenerate. %
  Let $u,v\in V$ be linearly independent. %
  Then $u\ne 0$ and, hence, by~\eqref{eq:uvphi}, the $7$--form
  \begin{equation*}
    \sigma
    :=
      \iota(u)\phi\wedge\iota(u)\phi\wedge\phi
    =
      6\abs{u}^2\dvol\in\Lambda^7V^*
  \end{equation*}
  is nonzero. %
  Choose a basis $v_1,\dots,v_7$ of $V$ with $v_1=u$ and $v_2=v$. %
  Evaluating $\sigma$ on this basis we obtain that one of the terms
  $\phi(u,v,v_j)$ with $j\ge 3$ must be nonzero. %
  Hence, $\phi$ is nondegenerate as claimed. %

  Now define the bilinear map
  $V\times V\to V\co (u,v)\mapsto u\times v$ by~\eqref{eq:phi}. %
  This map is skew-symmetric and, by \autoref{le:phi}, it
  satisfies~\eqref{eq:orthogonal}. %
  We must prove that it also satisfies~\eqref{eq:area}. %
  By \autoref{le:area}, it suffices to show
  \begin{equation}\label{eq:uxv}
    \abs{u}=1,\;\;
    \inner{u}{v}=0
      \qquad\implies\qquad
    \abs{u\times v}=\abs{v}.
  \end{equation}
  We prove this in five steps. %
  Throughout we fix a unit vector $u\in V$.

  \setcounter{step}{0}
  \begin{step}
    \label{Pf_ImO1}
    Define the linear map $A\co V\to V$ by $Av:=u\times v$. %
    Then $A$ is skew-adjoint and its kernel is spanned by $u$.
  \end{step}

  That $A$ is skew-adjoint follows from the identity
  $\inner{Av}{w}=\phi(u,v,w)$. %
  That its kernel is spanned by $u$ follows from the fact that $\phi$
  is nondegenerate.

  \begin{step}
    \label{Pf_ImO2}
    Let $A$ be as in \autoref{Pf_ImO1}. %
    Then there are positive constants $\lambda_1,\lambda_2,\lambda_3$
    and an orthonormal basis $v_1,w_1,v_2,w_2,v_3,w_3$ of $u^\perp$
    such that $Av_j=\lambda_jw_j$ and $Aw_j=-\lambda_jv_j$ for
    $j=1,2,3$.
  \end{step}

  By \autoref{Pf_ImO1}, there is a constant $\lambda>0$ and a vector 
  $v\in u^\perp$ such that 
  \begin{equation*}
    A^2v=-\lambda^2v,\qquad \abs{v}=1.
  \end{equation*}
  Denote $w:=\lambda^{-1}Av$. %
  Then $Av=\lambda w$, $Aw=-\lambda v$, $w$ is orthogonal to $v$, and
  \begin{equation*}
    \abs{w}^2
    = \lambda^{-2}\inner{Av}{Av}
    =-\lambda^{-2}\inner{v}{A^2v}
    = \abs{v}^2=1.
  \end{equation*}
  Moreover, the orthogonal complement of $u,v,w$ is invariant under
  $A$. %
  Hence, \autoref{Pf_ImO2} follows by induction.

  \begin{step}
    \label{Pf_ImO3}
    Let $\lambda_i$ be as in \autoref{Pf_ImO2}. %
    Then $\lambda_1\lambda_2\lambda_3=1$.
  \end{step}

  Let $A$ be as in \autoref{Pf_ImO1}, denote $W:=u^\perp$, and define
  $\omega\co W\times W\to\R$ by
  \begin{equation*}
    \omega(v,w):=\inner{Av}{w} = \phi(u,v,w)
  \end{equation*}
  for $v,w\in W$. %
  Then, by \autoref{Pf_ImO1}, $\omega\in\Lambda^2W^*$ is a symplectic form. %
  Moreover, $\omega(v_i,w_i)=\inner{Av_i}{w_i}=\lambda_i$ for
  $i=1,2,3$ while $\omega(v_i,w_j)=0$ for $i\ne j$ and
  $\omega(v_i,v_j)=\omega(w_i,w_j)=0$ for all $i$ and $j$. %
  Hence,
  \begin{align*}
    \lambda_1\lambda_2\lambda_3
    &=
      \frac{1}{6}\om^3(v_1,w_1,v_2,w_2,v_3,w_3) \\
    &=
      \dvol(u,v_1,w_1,v_2,w_2,v_3,w_3).
  \end{align*}
  Here the first equation follows from \autoref{Pf_ImO2} and the definition of
  $\omega$ and the second equation follows from~\eqref{eq:uvphi} with
  $u=v$ and $\abs{u}=1$. %
  Since the vectors $u,v_1,w_1,v_2,w_2,v_3,w_3$ form an orthonormal
  basis of $V$, the last expression must be plus or minus one. %
  Since it is positive, \autoref{Pf_ImO3} follows.

  \begin{step}
    \label{Pf_ImO4}
    Define
    \begin{equation}
      \label{eq:GH}
      \rG := \left\{g\in\Aut(V) : g^*\phi=\phi\right\},
      \qquad
      \rH := \left\{g\in\rG : gu=u\right\}.
    \end{equation}
    Then $\dim\rG\ge14$ and $\dim\rH\ge 8$
  \end{step}

  Since $\dim\Aut(V)=49$ and $\dim\Lambda^3V^*=35$, the isotropy
  subgroup $\rG$ of $\phi$ has dimension at least $14$. %
  Moreover, by \autoref{le:Vphi}, $\rG$ acts on the sphere
  $S:=\left\{v\in V : \abs{v}=1\right\}$ which has dimension~$6$. %
  Thus the isotropy subgroup~$\rH$ of $u$ under this action has
  dimension ${\dim\rH \ge \dim\rG - \dim S \ge 14-6 = 8}$. %
  This proves \autoref{Pf_ImO4}.

  \begin{step}
    \label{Pf_ImO5}
    Let $\lambda_i$ be as in \autoref{Pf_ImO2}. %
    Then $\lambda_1=\lambda_2=\lambda_3=1$.
  \end{step}

  By definition of $A$ in \autoref{Pf_ImO1} and $\rH$ in \autoref{Pf_ImO4}, we have
  $\inner{Agv}{gw}=\inner{Av}{w}$ for all $g\in\rH$ and all
  $v,w\in V$. %
  Moreover, $\rH\subset\SO(V)$, by \autoref{le:Vphi}. %
  Hence,
  \begin{equation}
    \label{eq:gH}
    g\in\rH\qquad\implies\qquad gA=Ag.
  \end{equation}
  Now suppose that the eigenvalues $\lambda_1,\lambda_2,\lambda_3$ are
  not all equal. %
  Without loss of generality, we may assume
  $\lambda_1\notin\{\lambda_2,\lambda_3\}$. %
  Then, by~\eqref{eq:gH}, the subspaces $W_1:=\SPAN\{v_1,w_1\}$ and
  $W_{23}:=\SPAN\{v_2,w_2,v_3,w_3\}$ are preserved by each element
  $g\in\rH$. %
  Thus $\rH\subset\rO(W_1)\times\rO(W_{23})$. %
  Since $\dim\rO(W_1)=1$ and $\dim\rO(W_{23})=6$, this implies
  $\dim\rH\le 7$ in contradiction to \autoref{Pf_ImO4}. %
  Thus we have proved that $\lambda_1=\lambda_2=\lambda_3$ and, by
  \autoref{Pf_ImO3}, this implies $\lambda_j=1$ for every $j$. %
  This proves \autoref{Pf_ImO5}.

  By \autoref{Pf_ImO2} and \autoref{Pf_ImO5} we have $A^2v=-v$ for every $v\in u^\perp$. %
  Hence, by \autoref{Pf_ImO1}, $\abs{Av}^2=-\inner{v}{A^2v}=\abs{v}^2$ for every
  $v\in u^\perp$. %
  By definition of $A$, this proves~\eqref{eq:uxv} and
  \autoref{le:imO}.
\end{proof}

\begin{proof}[Proof of \autoref{thm:imO}~\itref{It_Compatible} and~\itref{It_UniqueG}]\label{proof:imO1}
  The ``if'' part of~\itref{It_Compatible} is the last assertion made in
  \autoref{le:imO}. %
  To prove~\itref{It_UniqueG} and the ``only if'' part
  of~\itref{It_Compatible} we assume that $\phi$ is nondegenerate. %
  Then, for every nonzero vector $u\in V$, the restriction of the
  $2$--form $\iota(u)\phi\in\Lambda^2V^*$ to $u^\perp$ is a symplectic
  form. %
  Namely, if $v\in u^\perp$ is nonzero, then $u,v$ are linearly
  independent and hence there is a vector $w\in V$ such that
  $\phi(u,v,w)\ne 0$; the vector $w$ can be chosen orthogonal to $u$. %

  This implies that the restriction of the $6$--form
  $\left(\iota(u)\phi\right)^3\in\Lambda^6V^*$ to $u^\perp$ is nonzero
  for every nonzero vector $u\in V$. %
  Hence, the $7$--form
  $\iota(u)\phi\wedge\iota(u)\phi\wedge\phi\in\Lambda^7V^*$ is nonzero
  for every nonzero vector $u\in V$. %
  Since $V\setminus\{0\}$ is connected, there is a unique orientation
  of $V$ such that $\iota(u)\phi\wedge\iota(u)\phi\wedge\phi$ is a
  positive volume form on $V$ for every $u\in V\setminus\{0\}$. %
  Fix a volume form $\sigma\in\Lambda^7V^*$ compatible with this
  orientation. %
  Then the bilinear form
  \begin{equation*}
    V\times V\to\R\co (u,v)\mapsto 
    \frac{\iota(u)\phi\wedge\iota(v)\phi\wedge\phi}{\sigma}=:g(u,v)
  \end{equation*}
  is an inner product. %
  Define $\mu>0$ by $\sigma=\mu\dvol_g$. %
  Replacing $\sigma$ by $\tsi:=\lambda^2\sigma$ we get
  \begin{equation*}
  \tg=\lambda^{-2}g,\qquad
  \dvol_\tg=\lambda^{-7}\dvol_g. 
  \end{equation*}
  Thus 
  \begin{equation*}
    \tsi = \lambda^2\sigma = \lambda^2\mu\dvol_g=\lambda^9\mu\dvol_\tg.
  \end{equation*}
  With $\lambda:=(6/\mu)^{1/9}$ we get $\tsi=6\dvol_\tg$.

  Thus we have proved that there is a unique orientation and inner
  product on $V$ such that $\phi$ satisfies~\eqref{eq:uvphi}.  Hence
  the assertion follows from \autoref{le:imO}.  
  This proves parts~\itref{It_Compatible} and~\itref{It_UniqueG} of~\autoref{thm:imO}.
\end{proof}

\begin{remark}
  \label{Rmk_Hitchin}
  Let $V,W$ be $n$-dimensional real vector spaces.
  Then the determinant of a linear map $A\co V\to W$ 
  is an element $\det A\in \Lambda^nV^*\otimes\Lambda^nW$.
  In particular, if $V$ is equipped with an orientation and an inner product $g \in S^2V^*$,
  and $i_g\co V \to V^*$ denotes the isomorphism defined by $i_g v := g(v,\cdot)$,
  then $\det i_g \in (\Lambda^n V^*)^2$ and the volume form $\vol_g$ 
  associated to $g$ is
  \begin{equation*}
    \vol_g = \sqrt{\det i_g}.
  \end{equation*}
  Here the orientation is needed to determine the sign of the square root.
 
  If $V$ is $7$--dimensional and $\phi \in \Lambda^3 V^*$ is nondegenerate, 
  then the formula
  \begin{equation*}
    G(u,v) := \frac16 i(u)\phi \wedge i(v)\phi \wedge \phi
    \qquad\mbox{for }u,v\in V
  \end{equation*}
  defines a symmetric bilinear form $G\co V\times V\to \Lambda^7 V^*$ 
  and $i_G \co V \to V^*\otimes \Lambda^7 V^*$ is an isomorphism 
  (see second paragraph in the proof of \autoref{le:imO}).
  The determinant of $i_G$ is an element of $(\Lambda^7 V^*)^9$ 
  and $(\det i_G)^{1/9}$ can be defined without an orientation on $V$.
  If an inner product $g$ and an orientation on $V$ are such that \eqref{eq:uvphi} holds, then
  \begin{equation*}
    \vol_g = (\det(i_G))^{1/9}
    \quad\text{and}\quad
    g = \frac{G}{\vol_g}.
  \end{equation*}
  Conversely, with this choice of inner product and orientation, \eqref{eq:uvphi} holds.
  This observation is due to Hitchin \cite{Hitchin2001}*{Section 8.3}.
\end{remark}

\begin{lemma}
  \label{le:uvw}
  Let $V$ be a $7$--dimensional real Hilbert space equipped with a
  cross product $ V\times V\to V\co (u,v)\to u\times v$. %
  If $u$ and $v$ are orthonormal and $w:=u\times v$, then $v\times w=u$
  and $w\times u=v$.
\end{lemma}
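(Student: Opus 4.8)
The plan is to obtain both identities by direct substitution into the algebraic identities for the cross product already established in \autoref{le:phi} and \autoref{le:area}; in particular the hypothesis $\dim V = 7$ will play no role, and the same argument works verbatim in dimension $3$ (where it recovers the familiar cyclic behaviour of the three-dimensional cross product).

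First, for the identity $v\times w = u$, I would invoke \autoref{le:area}~\itref{It_CrossSquare2}, that is, equation~\eqref{eq:AREA}, and substitute the vectors $v, u, v$ in place of $u, v, w$ there. Since $v\times v = 0$, $\inner{v}{v} = 1$ and $\inner{u}{v} = 0$ (as $u,v$ are orthonormal), the left-hand side reduces to $v\times(u\times v) = v\times w$ and the right-hand side to $u$. This proves $v\times w = u$.

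Next, for the identity $w\times u = v$, I would use skew-symmetry to write $w\times u = (u\times v)\times u = -\,u\times(u\times v)$, and then apply \autoref{le:area}~\itref{It_CrossSquare}, i.e.\ equation~\eqref{eq:crosscomplex}, with the unit vector $u$ and with $w$ replaced by $v$. Since $\abs{u} = 1$ and $\inner{u}{v} = 0$, this gives $u\times(u\times v) = -v$, hence $w\times u = v$.

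I do not anticipate any real obstacle: the statement is a two-line consequence of \autoref{le:area}, and the only thing to watch is the correct matching of variables in~\eqref{eq:AREA} and~\eqref{eq:crosscomplex}. As a consistency check one can also note that, by~\eqref{eq:area} and~\eqref{eq:orthogonal}, the vector $w = u\times v$ is itself a unit vector orthogonal to both $u$ and $v$, so that $u, v, w$ is an orthonormal triple on which the three relations $u\times v = w$, $v\times w = u$, $w\times u = v$ hold simultaneously.
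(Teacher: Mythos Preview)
Your proof is correct and follows essentially the same approach as the paper, which simply cites equation~\eqref{eq:AREA} in \autoref{le:area}; you have merely spelled out the substitutions explicitly (and for the second identity invoked the special case~\eqref{eq:crosscomplex} rather than~\eqref{eq:AREA} itself).
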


\begin{proof}
  This follows immediately from equation~\eqref{eq:AREA}
  in \autoref{le:area}.
\end{proof}

\begin{proof}[Proof of \autoref{thm:imO}~\itref{It_Transitive}]
  \label{proof:imO2}
  Let $\phi_0\co \R^7\times\R^7\times\R^7\to\R$ be the $3$--form in
  Example~\ref{ex:cross7} and let $\phi\in\Lambda^3V^*$ be a
  nondegenerate $3$--form. %
  Let $V$ be equipped with the compatible inner product of
  \autoref{thm:imO} and denote by
  $V\times V\to V\co (u,v)\mapsto u\times v$ the associated cross
  product. %
  Let $e_1,e_2\in V$ be orthonormal and define
  \begin{equation*}
    e_3 := e_1\times e_2.
  \end{equation*}
  Let $e_4\in V$ be any unit vector orthogonal to $e_1,e_2,e_3$ and define 
  \begin{equation*}
    e_5:= - e_1\times e_4.
  \end{equation*}
  Then $e_5$ has norm one and is orthogonal to $e_1,e_2,e_3,e_4$.  For
  $e_1$ and $e_4$ this follows from the definition
  and~\eqref{eq:frobenius}. %
  For $e_3$ we observe
  \begin{equation*}
    {\inner{e_3}{e_5}
    =
      -\inner{e_1\times e_2}{e_1\times e_4}
    =
      \inner{e_2}{e_1\times(e_1\times e_4)}
    =
      -\inner{e_2}{e_4}=0}.
  \end{equation*}
  Here the last but one equation follows from \autoref{le:area}. %
  For $e_2$ the argument is similar; since $e_2=e_3\times e_1$, by
  \autoref{le:uvw}, and $\inner{e_3}{e_4}=0$, we obtain
  $\inner{e_2}{e_5}=0$. %
  Now let $e_6$ be a unit vector orthogonal to $e_1,\dots,e_5$ and
  define
  \begin{equation*}
    e_7:= - e_1\times e_6.
  \end{equation*}
  As before we have that $e_7$ has norm one and is orthogonal to
  $e_1,\dots,e_6$. %
  Thus the vectors $e_1,\dots,e_7$ form an orthonormal basis of $V$
  and it follows from \autoref{le:uvw} that they satisfy the same
  relations as the standard basis of $\R^7$ in
  Example~\ref{ex:cross7}. %
  Hence, the map
  \begin{equation*}
    \R^7\stackrel{g}{\longrightarrow} V\co
    x=(x_1,\dots,x_7)\mapsto \sum_{i=1}^7x_ie_i
  \end{equation*}
  is a Hilbert space isometry and it satisfies $g^*\phi=\phi_0$.  This
  proves \autoref{thm:imO} (and the last assertion of
  \autoref{thm:cross}).
\end{proof}



\section{The associator and coassociator brackets}
\label{sec:assoc}

We assume throughout that $V$ is a $7$--dimensional real Hilbert
space, that ${\phi\in\Lambda^3V^*}$ is a nondegenerate $3$--form
compatible with the inner product, and~\eqref{eq:cross} is the cross
product given by~\eqref{eq:phi}. %
It follows from~\eqref{eq:AREA} that the expression
$(u\times v)\times w$ is alternating on any triple of pairwise
orthogonal vectors $u,v,w\in V$. %
Hence, it extends uniquely to an alternating $3$--form
$
V^3\to V\co (u,v,w)\mapsto[u,v,w]
$
called the \defined{associator bracket}. %
An explicit formula for this $3$--form is
\begin{equation}
  \label{eq:associator}
  [u,v,w] := (u\times v)\times w + \inner{v}{w}u-\inner{u}{w}v.
\end{equation}
The associator bracket can also be expressed in the form
\begin{equation}
  \label{eq:associator1}
  [u,v,w]
  =
  \frac13\Bigl(
    (u\times v)\times w + (v\times w)\times u + (w\times u)\times v
  \Bigr).
\end{equation}

\begin{remark}
  \label{rmk:associator}
  If $V$ is any Hilbert space with a skew-symmetric bilinear
  form~\eqref{eq:cross}, then the associator
  bracket~\eqref{eq:associator} is alternating iff~\eqref{eq:AREA}
  holds. %
  Indeed, skew-symmetry of the associator bracket in the first two
  arguments is obvious, and the identity
  \begin{align*}
    [u,v,w]+[u,w,v]
    &=
      w\times(v\times u) + v\times(w\times u) \\ &\quad
      - \inner{u}{w}v - \inner{u}{v}w + 2\inner{v}{w}u
  \end{align*}
  shows that skew-symmetry in the last two arguments is equivalent
  to~\eqref{eq:AREA}. %
  By \autoref{le:cross3}, the associator bracket vanishes in dimension
  three.
\end{remark}

The square of the volume of the $3$--dimensional parallelepiped
spanned by $u,v,w\in V$ will be denoted by
\begin{equation*}
  \abs{u\wedge v\wedge w}^2
  :=
  \det
  \begin{pmatrix}
    \abs{u}^2 & \inner{u}{v} & \inner{u}{w} \\
    \inner{v}{u} & \abs{v}^2 & \inner{v}{w} \\
    \inner{w}{u} & \inner{w}{v} & \abs{w}^2
  \end{pmatrix}.
\end{equation*}

\begin{lemma}
  \label{le:assocphi}
  For all $u,v,w\in V$ we have
  \begin{equation}
    \label{eq:assocphi}
    \phi(u,v,w)^2 + \abs{[u,v,w]}^2 = \abs{u\wedge v\wedge w}^2.
  \end{equation}
\end{lemma}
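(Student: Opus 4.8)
The plan is to reduce the identity \eqref{eq:assocphi} to an orthogonal decomposition of the vector $(u\times v)\times w$ and then use the already-established formulas for the cross product. First I would dispose of the degenerate case: if $u,v,w$ are linearly dependent, then both $\phi(u,v,w)$ and $[u,v,w]$ vanish (the former since $\phi$ is a $3$--form, the latter since $[\cdot,\cdot,\cdot]$ is alternating), and the right-hand side $\abs{u\wedge v\wedge w}^2$ vanishes too, being a Gram determinant. So assume $u,v,w$ linearly independent.

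The key step is to compute $\abs{(u\times v)\times w}^2$ in two ways. On the one hand, by \eqref{eq:associator} we have $(u\times v)\times w = [u,v,w] - \inner{v}{w}u + \inner{u}{w}v$, and the correction term $-\inner{v}{w}u+\inner{u}{w}v$ lies in $\SPAN\{u,v\}$. On the other hand, $[u,v,w]$ is orthogonal to $u$, to $v$, and to $w$: orthogonality to $w$ is immediate from \eqref{eq:frobenius} applied to $(u\times v)\times w$ together with the definition \eqref{eq:associator}; orthogonality to $u$ and $v$ follows since $[u,v,w]$ is alternating, so $\inner{[u,v,w]}{u} = \phi$-type pairing... more carefully, $\inner{[u,v,w]}{u}$ can be read off from \eqref{eq:associator1} and \eqref{eq:frobenius}, each term $\inner{(u\times v)\times w}{u}$ etc.\ being controlled by Frobenius associativity. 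Granting this, the decomposition $(u\times v)\times w = [u,v,w] + (\text{component in }\SPAN\{u,v\})$ is orthogonal, so
\begin{equation*}
  \abs{(u\times v)\times w}^2 = \abs{[u,v,w]}^2 + \abs{\inner{u}{w}v - \inner{v}{w}u}^2.
\end{equation*}

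Next I would evaluate the left-hand side directly. Writing $A(u\times v)$ for the skew-adjoint map $x\mapsto (u\times v)\times x$ and using \autoref{le:area}\,\itref{It_CrossSquare}, namely $(u\times v)\times\big((u\times v)\times w\big) = \inner{u\times v}{w}\,(u\times v) - \abs{u\times v}^2 w$, one gets
\begin{equation*}
  \abs{(u\times v)\times w}^2 = -\inner{w}{(u\times v)\times((u\times v)\times w)} = \abs{u\times v}^2\abs{w}^2 - \inner{u\times v}{w}^2.
\end{equation*}
Now $\inner{u\times v}{w} = \phi(u,v,w)$ by definition, and $\abs{u\times v}^2 = \abs{u}^2\abs{v}^2 - \inner{u}{v}^2$ by \eqref{eq:area}. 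Combining the two expressions for $\abs{(u\times v)\times w}^2$ yields
\begin{equation*}
  \abs{[u,v,w]}^2 + \abs{\inner{u}{w}v - \inner{v}{w}u}^2 = \big(\abs{u}^2\abs{v}^2 - \inner{u}{v}^2\big)\abs{w}^2 - \phi(u,v,w)^2,
\end{equation*}
so it remains to check the purely inner-product identity
\begin{equation*}
  \big(\abs{u}^2\abs{v}^2 - \inner{u}{v}^2\big)\abs{w}^2 - \abs{\inner{u}{w}v - \inner{v}{w}u}^2 = \abs{u\wedge v\wedge w}^2,
\end{equation*}
which is just the Laplace/cofactor expansion of the $3\times3$ Gram determinant along its last row (expand $\abs{u\wedge v\wedge w}^2$ in minors: the coefficient of $\abs{w}^2$ is the $2\times2$ Gram determinant of $u,v$, and the remaining terms reassemble into $-\abs{\inner{u}{w}v-\inner{v}{w}u}^2$). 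Rearranging gives \eqref{eq:assocphi}.

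The main obstacle I anticipate is the orthogonality claim $[u,v,w]\perp u$ and $[u,v,w]\perp v$, i.e.\ verifying that $[u,v,w]$ genuinely lies in $\{u,v,w\}^\perp$ rather than merely in $w^\perp$; this has to be squeezed out of \eqref{eq:associator1} and \eqref{eq:frobenius} (or, equivalently, from the fact that $[\cdot,\cdot,\cdot]$ is totally alternating together with $\inner{[u,v,w]}{w}=0$ and a relabelling). Everything after that is linear algebra with no further input from the octonionic structure.
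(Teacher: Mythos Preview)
Your proof is correct. The core computation is the same as the paper's: both evaluate $\abs{(u\times v)\times w}^2$ via the area formula~\eqref{eq:area} and identify $\inner{u\times v}{w}$ with $\phi(u,v,w)$. The difference is only in how the bookkeeping is arranged. The paper first reduces by Gram--Schmidt to the case where $w\perp u$ and $w\perp v$, so that $[u,v,w]=(u\times v)\times w$ on the nose and the identity drops out in three lines; you instead work with general $u,v,w$, prove $[u,v,w]\perp u,v,w$ directly (your anticipated obstacle is not one: once you know $\inner{[u,v,w]}{w}=0$, the alternating property plus relabelling gives the other two orthogonalities immediately), and then absorb the correction term $-\inner{v}{w}u+\inner{u}{w}v$ into a cofactor expansion of the Gram determinant. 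Your route is slightly longer but yields the orthogonality $[u,v,w]\in\{u,v,w\}^\perp$ as a by-product, which the paper will need anyway in \autoref{le:psi}.
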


\begin{proof}
  If $w$ is orthogonal to $u$ and $v$, then we have 
  \begin{align*}
    \abs{[u,v,w]}^2
    &=
      \abs{(u\times v)\times w}^2 \\
    &=
      \abs{u\times v}^2\abs{w}^2 - \inner{u}{v\times w}^2 \\
    &= 
      \abs{u\wedge v\wedge w}^2 - \phi(u,v,w)^2.
  \end{align*}
  Here the first equation follows from the definition of the
  associator bracket and orthogonality, the second equation follows
  from~\eqref{eq:area}, and the last equation follows
  from~\eqref{eq:area} and orthogonality, as well as~\eqref{eq:phi}. %
  The general case can be reduced to the orthogonal case by
  Gram--Schmidt.
\end{proof}

\begin{definition}
  \label{def:assoc}
  A $3$--dimensional subspace $\Lambda\subset V$ is called
  \defined{associative} the associator bracket vanishes on $\Lambda$,
  i.e., 
  \begin{equation*}
    [u,v,w]=0\qquad
    \mbox{for all }u,v,w\in\Lambda.
  \end{equation*} 
\end{definition}

\begin{lemma}
  \label{le:assoc}
  Let $\Lambda\subset V$ be a $3$--dimensional linear subspace. %
  Then the following are equivalent:
  \begin{enumerate}[(i)]
  \item
    \label{It_Assoc1}
    $\Lambda$ is associative.

  \item
    \label{It_Assoc2}
    If $u,v,w$ is an orthonormal basis of $\Lambda$, then
    $\phi(u,v,w)=\pm1$.

  \item
    \label{It_Assoc3}
    If $u, v\in\Lambda$, then $u\times v\in\Lambda$.

  \item
    \label{It_Assoc4}
    If $u\in\Lambda^\perp$ and $v\in\Lambda$,
    then $u\times v\in\Lambda^\perp$.

  \item
    \label{It_Assoc5}
    If $u,v\in\Lambda^\perp$, then $u\times v\in\Lambda$.
  \end{enumerate}
  Moreover, if $u,v\in V$ are linearly independent,
  then the subspace spanned by the vectors $u,v,u\times v$ is associative.
\end{lemma}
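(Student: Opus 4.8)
The plan is to establish the equivalences in the cycle $\itref{It_Assoc1}\Leftrightarrow\itref{It_Assoc2}\Leftrightarrow\itref{It_Assoc3}\Leftrightarrow\itref{It_Assoc4}\Leftrightarrow\itref{It_Assoc5}$, treating each neighbouring pair directly, and then to deduce the final assertion from $\itref{It_Assoc2}\Rightarrow\itref{It_Assoc1}$. For $\itref{It_Assoc1}\Leftrightarrow\itref{It_Assoc2}$ I would use \autoref{le:assocphi}. Since the associator bracket is an alternating trilinear map $V^3\to V$, it vanishes identically on $\Lambda$ if and only if it vanishes on some (equivalently every) basis of $\Lambda$; for an orthonormal basis $u,v,w$ of $\Lambda$ one has $\abs{u\wedge v\wedge w}^2=1$, so \eqref{eq:assocphi} reads $\phi(u,v,w)^2+\abs{[u,v,w]}^2=1$, whence $[u,v,w]=0$ exactly when $\phi(u,v,w)=\pm1$.

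For $\itref{It_Assoc2}\Leftrightarrow\itref{It_Assoc3}$, note that by \eqref{eq:orthogonal} and \eqref{eq:area} the product $u\times v$ of two orthonormal vectors is a unit vector orthogonal to $u$ and $v$. If $u,v,w$ is an orthonormal basis of $\Lambda$ with $\phi(u,v,w)=\inner{u\times v}{w}=\pm1$, then equality in the Cauchy--Schwarz inequality forces $u\times v=\pm w\in\Lambda$, and bilinearity yields $\itref{It_Assoc3}$; conversely, if $\itref{It_Assoc3}$ holds then for such a basis $u\times v$ is a unit vector of $\Lambda$ orthogonal to $u$ and $v$, hence $u\times v=\pm w$ and $\inner{u\times v}{w}=\pm1$. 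For $\itref{It_Assoc3}\Leftrightarrow\itref{It_Assoc4}$ I would use the Frobenius identity \eqref{eq:frobenius}: assuming $\itref{It_Assoc3}$, for $u\in\Lambda^\perp$, $v\in\Lambda$ and $w\in\Lambda$ we get $\inner{u\times v}{w}=\inner{u}{v\times w}=0$ since $v\times w\in\Lambda$, so $u\times v\in\Lambda^\perp$; the converse is symmetric, using $\inner{u\times v}{w}=-\inner{u}{w\times v}$ for $u,v\in\Lambda$ and $w\in\Lambda^\perp$.

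The heart of the argument is $\itref{It_Assoc4}\Leftrightarrow\itref{It_Assoc5}$, and the key point is dimensional. For a unit vector $u\in\Lambda^\perp$ the map $L_u\co V\to V$, $L_uv:=u\times v$, is skew-adjoint by \eqref{eq:frobenius}, preserves $u^\perp$ by \eqref{eq:orthogonal}, and restricts to a complex structure ($L_u^2=-\id$) on $u^\perp$ by \autoref{le:area}~\itref{It_CrossSquare}; moreover $\Lambda\subset u^\perp$ and $\Lambda^\perp\cap u^\perp$ is $3$--dimensional. Under $\itref{It_Assoc4}$ the map $L_u$ sends $\Lambda$ into $\Lambda^\perp$ (by $\itref{It_Assoc4}$) and into $u^\perp$ (by \eqref{eq:orthogonal}), hence into the $3$--dimensional space $\Lambda^\perp\cap u^\perp$; injectivity of $L_u$ on $u^\perp$ and a dimension count force $L_u(\Lambda)=\Lambda^\perp\cap u^\perp$, and applying $L_u$ once more gives $L_u(\Lambda^\perp\cap u^\perp)=\Lambda$, so $L_u$ interchanges $\Lambda$ and $\Lambda^\perp\cap u^\perp$. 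Hence, given $a,b\in\Lambda^\perp$ — the case $a=0$ being trivial and, by homogeneity, $\abs a=1$ without loss of generality — writing $b=\inner{b}{a}a+b'$ with $b'\in\Lambda^\perp\cap a^\perp$ gives $a\times b=a\times b'=L_ab'\in\Lambda$, which is $\itref{It_Assoc5}$. Conversely, $\itref{It_Assoc5}$ shows $L_u$ maps $\Lambda^\perp\cap u^\perp$ into, hence (by dimension) onto, $\Lambda$, so $L_u(\Lambda)=L_u^2(\Lambda^\perp\cap u^\perp)=\Lambda^\perp\cap u^\perp\subset\Lambda^\perp$, which is $\itref{It_Assoc4}$.

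For the last assertion, if $u,v\in V$ are linearly independent then $u\times v\ne0$ by \eqref{eq:area} and $u\times v\perp u,v$, so $\Lambda:=\SPAN\{u,v,u\times v\}$ is $3$--dimensional. Gram--Schmidt replaces $(u,v)$ by an orthonormal pair $(e_1,e_2)$ via an invertible linear substitution, and since $u\times v$ is bilinear and alternating in $(u,v)$ this does not change the span, so $\Lambda=\SPAN\{e_1,e_2,e_1\times e_2\}$; with $e_3:=e_1\times e_2$, the triple $e_1,e_2,e_3$ is an orthonormal basis of $\Lambda$ with $\phi(e_1,e_2,e_3)=\abs{e_1\times e_2}^2=1$, so $\Lambda$ is associative by $\itref{It_Assoc2}\Rightarrow\itref{It_Assoc1}$. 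I expect the only genuine obstacle to be $\itref{It_Assoc4}\Leftrightarrow\itref{It_Assoc5}$: it really uses that $\Lambda^\perp$ has dimension one more than $\Lambda$, so that after peeling off the line through a chosen unit vector $u\in\Lambda^\perp$ the complex structure $L_u$ on $u^\perp$ can swap $\Lambda$ with the remaining $3$--plane in $\Lambda^\perp$; the other implications are formal consequences of \eqref{eq:orthogonal}, \eqref{eq:frobenius}, \eqref{eq:area}, and \autoref{le:assocphi}.
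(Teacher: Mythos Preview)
Your argument is correct and follows essentially the same route as the paper: \autoref{le:assocphi} for \itref{It_Assoc1}$\Leftrightarrow$\itref{It_Assoc2}, the Frobenius identity~\eqref{eq:frobenius} for \itref{It_Assoc3}$\Leftrightarrow$\itref{It_Assoc4}, and the complex structure $L_u$ on $u^\perp$ swapping the two $3$--planes for \itref{It_Assoc4}$\Leftrightarrow$\itref{It_Assoc5}. The paper links \itref{It_Assoc1} and \itref{It_Assoc3} directly (using \autoref{le:cross3}~\itref{It_BACCAB} for one direction and \autoref{le:assocphi} for the other), and proves the final assertion by verifying \itref{It_Assoc3} via \autoref{le:area}~\itref{It_CrossSquare}, whereas you go through \itref{It_Assoc2}; these are cosmetic differences.

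One small gap to close: in your \itref{It_Assoc2}$\Rightarrow$\itref{It_Assoc3}, from $u\times v=\pm w\in\Lambda$ alone, ``bilinearity yields \itref{It_Assoc3}'' does not follow---you also need $v\times w\in\Lambda$ and $w\times u\in\Lambda$. This is immediate since $\phi$ is alternating, so $\phi(v,w,u)=\phi(w,u,v)=\phi(u,v,w)=\pm1$ and the same Cauchy--Schwarz argument gives $v\times w=\pm u$ and $w\times u=\pm v$; \emph{then} bilinearity finishes. Add that one line and the proof is complete.
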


\begin{proof}
  That~\itref{It_Assoc1} is equivalent to~\itref{It_Assoc2} follows from \autoref{le:assocphi}.

  We prove that~\itref{It_Assoc1} is equivalent
  to~\itref{It_Assoc3}. %
  That the associator bracket vanishes on a $3$--dimensional subspace
  that is invariant under the cross product follows from
  \autoref{le:cross3}~\itref{It_BACCAB}. %
  Conversely suppose that the associator bracket vanishes on
  $\Lambda$. %
  Let $u,v\in\Lambda$ be linearly independent and let $w\in\Lambda$ be
  a nonzero vector orthogonal to $u$ and $v$. %
  Then, by \autoref{le:assocphi}, we have
  \begin{equation*}
    \inner{u\times v}{w}^2
    =\phi(u,v,w)^2
    =\abs{u\wedge v\wedge w}^2
    =\abs{u\times v}^2\abs{w}^2
  \end{equation*}
  and hence $u\times v$ is a real multipe of $w$. %
  Thus $u\times v\in\Lambda$. 

  We prove that~\itref{It_Assoc3} is equivalent
  to~\itref{It_Assoc4}. %
  First assume~\itref{It_Assoc3} and let $u\in\Lambda$,
  $v\in\Lambda^\perp$. %
  Then, by~\itref{It_Assoc3}, we have $w\times u\in\Lambda$ for every
  $w\in\Lambda$. %
  Hence, $\inner{w}{u\times v}=\inner{w\times u}{v}=0$ for every
  $w\in\Lambda$ and so $u\times v\in\Lambda^\perp$. %
  Conversely assume~\itref{It_Assoc4} and let $u,v\in\Lambda$. %
  Then, by~\itref{It_Assoc3}, we have $w\times u\in\Lambda^\perp$ for
  every $w\in\Lambda^\perp$. %
  Hence, $\inner{w}{u\times v}=\inner{w\times u}{v}=0$ for every
  $w\in\Lambda^\perp$. %
  This implies $u\times v\in\Lambda$. %
  Thus we have proved that~\itref{It_Assoc3} is equivalent
  to~\itref{It_Assoc4}.

  We prove that~\itref{It_Assoc4} is equivalent
  to~\itref{It_Assoc5}. %
  Fix a unit vector $u\in\Lambda^\perp$ and define the endomorphism
  $J\co u^\perp\to u^\perp$ by $Jv:=u\times v$. %
  By \autoref{le:area} this is an isomorphism with inverse $-J$.
  Condition~\itref{It_Assoc4} asserts that $J$ maps $\Lambda$ to
  $\Lambda^\perp\cap u^\perp$ while condition~\itref{It_Assoc5}
  asserts that $J$ maps $\Lambda^\perp\cap u^\perp$ to $\Lambda$. %
  Since both are $3$--dimensional subspaces of $u^\perp$, these two
  assertions are equivalent. %
  This proves that~\itref{It_Assoc4} is equivalent
  to~\itref{It_Assoc5}.

  If $u$ and $v$ are linearly independent, then $u\times v\ne 0$,
  by~\eqref{eq:area}, and ${u\times v}$ is orthogonal to $u$ and $v$,
  by~\eqref{eq:orthogonal}. %
  Hence, the subspace~$\Lambda$ spanned by ${u,v,u\times v}$ is
  $3$--dimensional. %
  That it is invariant under the cross product follows from
  assertion~\itref{It_Assoc4} in \autoref{le:area}. %
  Hence, $\Lambda$ is associative, and this proves \autoref{le:assoc}.
\end{proof}

\begin{lemma}
  \label{le:psi}
  The map $\psi\co V^4\to\R$ defined by
  \begin{equation}
    \label{eq:psi}
    \begin{split}
      \psi(u,v,w,x) 
      &:=
        \inner{[u,v,w]}{x} \\
      &\;=
        \frac13 \Bigl(
          \phi(u\times v,w,x)+\phi(v\times w,u,x)+\phi(w\times u,v,x)
        \Bigr)
    \end{split}
  \end{equation}
  is an alternating $4$--form (the \defined{coassociative calibration}
  of $(V,\phi)$). %
  Moreover, it is given by ${\psi=*\phi}$, where
  $*\co \Lambda^kV^*\to\Lambda^{7-k}V^*$ denotes the Hodge
  $*$--operator associated to the inner product and the orientation in
  \autoref{le:imO}.
\end{lemma}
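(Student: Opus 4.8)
The plan is to establish the three assertions in turn. The second (displayed) formula for $\psi$ is immediate: one substitutes~\eqref{eq:associator1} into $\inner{[u,v,w]}{x}$ and uses $\inner{(a\times b)\times c}{x}=\phi(a\times b,c,x)$, which is just the definition~\eqref{eq:phi} of $\phi$. For the fact that $\psi$ is an alternating $4$--form, recall that the associator bracket $V^3\to V$ is already known to be alternating (see the discussion after~\eqref{eq:associator1} and \autoref{rmk:associator}), so $\psi$ is multilinear and skew in its first three arguments. It then suffices to check skew-symmetry in the last pair, i.e.\ (by polarization) that $\psi(u,v,w,w)=0$; and from~\eqref{eq:associator} one finds $\psi(u,v,w,w)=\inner{(u\times v)\times w}{w}+\inner{v}{w}\inner{u}{w}-\inner{u}{w}\inner{v}{w}=\inner{(u\times v)\times w}{w}=0$ by~\eqref{eq:orthogonal}. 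Since the transpositions $(1\,2),(2\,3),(3\,4)$ generate $S_4$ and $\psi$ changes sign under each, $\psi\in\Lambda^4V^*$.

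For the identity $\psi=*\phi$, note first that both sides are alternating $4$--forms, hence are determined by their values on the quadruples of vectors from a single orthonormal basis, and by \autoref{thm:imO}~\itref{It_Compatible}--\itref{It_Transitive} I may assume $V=\R^7$ with the standard inner product and orientation and $\phi=\phi_0$ as in \autoref{ex:cross7}, and take the standard basis $e_1,\dots,e_7$. I would then put $\Lambda:=\SPAN\{e_1,e_2,e_3\}$ — associative by \autoref{le:assoc}, since $e_1\times e_2=e_3$ — so that $\Lambda^\perp=\SPAN\{e_4,e_5,e_6,e_7\}$, and classify each quadruple $\{e_i,e_j,e_k,e_l\}$ by the number of its indices in $\{1,2,3\}$, comparing $\psi$ on it with $(*\phi_0)(e_i,e_j,e_k,e_l)=\pm\phi_0(e_m,e_n,e_p)$ (the sign being that of the permutation $(i,j,k,l,m,n,p)$ of $(1,\dots,7)$, and $\{m,n,p\}$ the complementary index set).

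When that number is $1$ or $3$, both $\psi$ and $*\phi_0$ vanish: for $\psi$ one uses~\eqref{eq:associator} together with parts~\itref{It_Assoc3}--\itref{It_Assoc5} of \autoref{le:assoc} (e.g.\ $[e_1,e_2,e_3]=(e_1\times e_2)\times e_3=e_3\times e_3=0$, while $[e_i,e_p,e_q]=(e_i\times e_p)\times e_q\in\Lambda$ is orthogonal to $e_r\in\Lambda^\perp$ when exactly one index lies in $\{1,2,3\}$), and for $*\phi_0$ the complementary triple then violates those same criteria of \autoref{le:assoc}. For the single quadruple $\{e_4,e_5,e_6,e_7\}$ with no index in $\{1,2,3\}$ a direct computation from~\eqref{eq:cross7} gives $[e_4,e_5,e_6]=(e_4\times e_5)\times e_6=e_7$, so $\psi(e_4,e_5,e_6,e_7)=1=\phi_0(e_1,e_2,e_3)=(*\phi_0)(e_4,e_5,e_6,e_7)$. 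There remains the two-index case: with $\{i,j\}\subset\{1,2,3\}$, $\{p,q\}\subset\{4,5,6,7\}$ and $e_m$ the third basis vector of $\Lambda$, one has $\psi(e_i,e_j,e_p,e_q)=\phi_0(e_i\times e_j,e_p,e_q)=\pm\phi_0(e_m,e_p,e_q)$ by~\eqref{eq:associator}, while $(*\phi_0)(e_i,e_j,e_p,e_q)$ is $\pm\phi_0(e_m,e_{p'},e_{q'})$ with $\{e_{p'},e_{q'}\}$ the complement of $\{e_p,e_q\}$ in $\Lambda^\perp$; one then checks the six nonzero possibilities directly. Conceptually this last point expresses the self-duality of $\iota(z)\phi_0|_{\Lambda^\perp}$ for $z\in\Lambda$, equivalently the fact that $\Lambda^\perp$ carries the hyperk\"ahler triple of complex structures $J_z\co x\mapsto z\times x$ ($z\in\Lambda$, $\abs{z}=1$; these are complex structures by \autoref{le:area} and preserve $\Lambda^\perp$ by \autoref{le:assoc}~\itref{It_Assoc4}).

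I expect the two-index case to be the real obstacle: it is the only step that genuinely uses the $\Gtwo$--geometry rather than formal identities, and the signs must be tracked with care — equivalently, one must verify that the orientation $\Lambda^\perp$ inherits from $V$ is the one compatible with the $J_z$. Had the decomposition of $\Lambda^4V^*$ into irreducible $\Gtwo$--representations been available at this point, one could instead observe that the $\Gtwo$--invariant $4$--forms span a $1$--dimensional space containing $*\phi$, so that the $\Gtwo$--equivariantly defined $\psi$ must be a scalar multiple of $*\phi$, fixed to be $1$ by the single evaluation $\psi(e_4,e_5,e_6,e_7)=1$; but that material belongs to \autoref{sec:G2}, so here one argues in the explicit model instead.
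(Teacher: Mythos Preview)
Your argument is correct. The skew-symmetry step is handled slightly differently: you verify $\psi(u,v,w,w)=0$ directly from~\eqref{eq:associator} and~\eqref{eq:orthogonal}, whereas the paper first derives the identity $\psi(u,v,w,x)=\inner{u\times v}{w\times x}+\inner{v}{w}\inner{u}{x}-\inner{u}{w}\inner{v}{x}$ (equation~\eqref{eq:psicross}) and reads off the symmetry $\psi(u,v,w,x)=\psi(v,u,x,w)$ from it. Your route is shorter here; the paper's identity is, however, reused later. For $\psi=*\phi$ both arguments reduce to the standard model on $\R^7$, but the bookkeeping differs: the paper proves a single Claim --- if $u,v,w,x$ are orthonormal with $u\times v=w\times x$ then $\psi(u,v,w,x)=1$ --- and uses it together with the multiplication table to write down $\psi_0$ explicitly, then matches $\psi_0$ with $*\phi_0$ term by term. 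You instead fix the associative $3$--plane $\Lambda=\SPAN\{e_1,e_2,e_3\}$ and stratify the basis quadruples by $|\{i,j,k,l\}\cap\{1,2,3\}|$, killing the $1$-- and $3$--index cases via \autoref{le:assoc} and leaving the $0$-- and $2$--index cases to direct computation. Your organization makes the vanishing cases conceptual, but the nonvanishing $2$--index case still comes down to the same six sign checks the paper does; your remark that this is really the self-duality of $\iota(z)\phi_0|_{\Lambda^\perp}$ is the right way to think about it, and your aside about the one-line $\Gtwo$--invariance argument (unavailable at this point) is apt.
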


\begin{proof}
  See page~\pageref{proof:psi}.
\end{proof}

\begin{remark}
  \label{rmk:assbrack}
  By \autoref{le:assoc} and \autoref{le:psi} the associator bracket $[u,v,w]$ is 
  orthogonal to the vectors $u,v,w,v\times w,w\times u,u\times v$. %
  Second, these six vectors are linearly independent if only if $[u,v,w]\ne0$. %
  (Make them pairwise orthogonal by adding to $v$ a real multiple 
  of $u$ and to $w$ a linear combination of ${u,v,u\times v}$. %
  Then their span and $[u,v,w]$ remain unchanged.) %
  Third, if $[u,v,w]\ne0$ then the vectors 
  $u,v,w,v\times w,w\times u,u\times v,[u,v,w]$ form a positive basis of $V$.
\end{remark}

\begin{remark}
  \label{rmk:psi}
  The standard associative calibration on $\R^7$ is
  \begin{equation}
    \label{eq:phi0}
    \phi_0 
    = e^{123}- e^{145} - e^{167} - e^{246} + e^{257} - e^{347} - e^{356}
  \end{equation}
  (see \autoref{ex:cross7}). %
  The corresponding coassociative calibration is
  \begin{equation}
    \label{eq:psi0}
    \psi_0
    = - e^{1247} - e^{1256} + e^{1346} - e^{1357} 
      - e^{2345} - e^{2367} + e^{4567}.
    \end{equation}
\end{remark}

\begin{remark}
  \label{rmk:star}
  Let $V\to V^*\co u\mapsto u^*:=\inner{u}{\cdot}$ be the 
  isomorphism induced by the inner product. %
  Then, for 
  $\alpha\in\Lambda^kV^*$ and $u\in V$, we have 
  \begin{equation}
    \label{eq:star}
    *\iota(u)\alpha = (-1)^{k-1}u^*\wedge *\alpha.
  \end{equation}
  This holds on any finite dimensional oriented Hilbert space.
\end{remark}

\begin{remark}
  \label{rmk:uphipsi}
  Throughout we use the notation 
  \begin{equation}
    \label{eq:cLA}
    (\cL_A\alpha)(v_1,\dots,v_k)
    := \alpha(Av_1,v_2,\dots,v_k)
    + \cdots + \alpha(v_1,\dots,v_{k-1},Av_k)
  \end{equation}
  for the infinitesimal action of $A\in\End(V)$ on a $k$--form
  ${\alpha\in\Lambda^kV^*}$. %
  For ${u\in V}$ denote by ${A_u\in\so(V)}$ the skew-adjoint
  endomorphism ${A_uv:=u\times v}$. %
  Then equation~\eqref{eq:psi} can be expressed in the form
  \begin{equation}
    \label{eq:uphi}
    \cL_{A_u}\phi = 3\iota(u)\psi.
  \end{equation}
  Since $\psi=*\phi$, we have $\cL_A\psi=*\cL_A\phi$ for all
  $A\in\so(V)$. %
  Hence, it follows from equation~\eqref{eq:star} that
  \begin{equation}
    \label{eq:upsi}
    \cL_{A_u}\psi 
    = *(3\iota(u)\psi)
    = -3u^*\wedge\phi.
  \end{equation}
\end{remark}

\begin{proof}[Proof of \autoref{le:psi}]
  \label{proof:psi}
  It follows from \autoref{rmk:associator} that $\psi$ is alternating
  in the first three arguments. %
  To prove that $\psi\in\Lambda^4V^*$ we compute
  \begin{equation}
    \label{eq:psicross}
    \begin{split}
      \psi(u,v,w,x)
      &=
        \inner{(u\times v)\times w + \inner{v}{w}u - \inner{u}{w}v}{x} \\
      &=
        \inner{u\times v}{w\times x}
        + \inner{v}{w}\inner{u}{x}
        - \inner{u}{w}\inner{v}{x}. 
    \end{split}
  \end{equation}
  Here the first equation follows from the definition of $\psi$
  in~\eqref{eq:psi} and the definition of the associator bracket
  in~\eqref{eq:associator}. %
  Swapping $x$ and $w$ as well as $u$ and~$v$ in~\eqref{eq:psicross}
  gives the same expression. %
  Thus
  \begin{equation*}
    \psi(u,v,w,x) = \psi(v,u,x,w) = -\psi(u,v,x,w).
  \end{equation*}
  This shows that $\psi\in\Lambda^4V^*$ as claimed. %
  To prove the second assertion we observe the following.

  \begin{claim}
    If $u,v,w,x$ are orthonormal and $u\times v = w\times x$, then
    $\psi(u,v,w,x)=1$.
  \end{claim}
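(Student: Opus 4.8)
The plan is to read the claim directly off the expansion \eqref{eq:psicross} established just above, combined with the area identity \eqref{eq:area}. No new ideas are needed; it is purely a matter of substituting the hypotheses into a formula already in hand.

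First I would insert the orthonormality hypothesis into \eqref{eq:psicross}. Since $u,v,w,x$ are pairwise orthogonal unit vectors, each of the four inner products $\inner{v}{w}$, $\inner{u}{x}$, $\inner{u}{w}$, $\inner{v}{x}$ vanishes, so \eqref{eq:psicross} collapses to
\[
  \psi(u,v,w,x) = \inner{u\times v}{w\times x}.
\]
Next I would use the hypothesis $u\times v = w\times x$ to rewrite the right-hand side as $\abs{u\times v}^2$. Finally, since $u$ and $v$ are orthonormal, \eqref{eq:area} gives $\abs{u\times v}^2 = \abs{u}^2\abs{v}^2 - \inner{u}{v}^2 = 1$, which is exactly the claim. (An equivalent route, should one prefer to start from \eqref{eq:psi} and \eqref{eq:associator}, is to note that orthonormality reduces $[u,v,w]$ to $(u\times v)\times w$, and then $\inner{(u\times v)\times w}{x} = \inner{u\times v}{w\times x} = \abs{w\times x}^2 = 1$ by \eqref{eq:frobenius} and \eqref{eq:area}; but going through \eqref{eq:psicross} is shortest.)

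There is essentially no obstacle in the claim itself. The single point deserving a moment's care is that \emph{all four} cross terms in \eqref{eq:psicross} really do drop out: this uses that the full quadruple $u,v,w,x$ is orthonormal, not merely the pairs $u,v$ and $w,x$ separately. The genuine work comes only afterwards in the proof of \autoref{le:psi}, where one must verify that orthonormal quadruples with $u\times v = w\times x$, together with the quadruples on which $\psi$ visibly vanishes, are plentiful enough to compare $\psi$ with $*\phi$ on all of $\Lambda^4V^*$; but that lies beyond the present statement.
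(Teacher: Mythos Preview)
Your proof is correct and matches the paper's approach: the paper simply asserts that the claim ``follows directly from the definition of $\psi$ and of the associator bracket in~\eqref{eq:associator} and~\eqref{eq:psi},'' and your substitution into \eqref{eq:psicross} (which was itself derived from those definitions in the immediately preceding lines) is exactly the unpacking of that assertion. The alternative route you sketch through \eqref{eq:frobenius} is the same computation one step earlier.
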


  This 
  follows directly from the definition of $\psi$ and of the
  associator bracket in~\eqref{eq:associator} and~\eqref{eq:psi}. %
  Now, by \autoref{thm:imO}, we can restrict attention to the standard
  structures on $\R^7$.  %
  Thus $\phi=\phi_0$ is given by~\eqref{eq:phi0} and this $3$--form is
  compatible with the standard inner product on $\R^7$. %
  We have the product rule $e_i\times e_j=e_k$ whenever the term
  $e^{ijk}$ or one of its cyclic permutations shows up in this sum,
  and the claim shows that we have a summand $\eps e^{ijk\ell}$ in
  $\psi=\psi_0$ whenever $e_i\times e_j=\eps e_k\times e_\ell$ with
  $\eps\in\{\pm1\}$. %
  Hence, $\psi_0$ is given by~\eqref{eq:psi0}. %
  Term by term inspection shows that $\psi_0=*\phi_0$. %
  This proves \autoref{le:psi}.
\end{proof}

\begin{lemma}
  \label{le:coass}
  For all $u,v,w,x\in V$ we have
  \begin{equation}
    \label{eq:coass}
    \begin{split}
      &[u,v,w,x] 
      := \phi(u,v,w)x-\phi(x,u,v)w+\phi(w,x,u)v-\phi(v,w,x)u \\
      &= \frac13\bigl(
      - [u,v,w]\times x + [x,u,v]\times w 
      - [w,x,u]\times v + [v,w,x]\times u\Bigr).
    \end{split}
  \end{equation}
  The resulting multi-linear map 
  \begin{equation*}
    V^4\to V\co (u,v,w,x)\mapsto[u,v,w,x]
  \end{equation*}
  is alternating and is called the \defined{coassociator bracket} on $V$.
\end{lemma}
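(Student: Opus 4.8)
The statement asserts two things: that the two displayed formulas for $[u,v,w,x]$ agree, and that the resulting map $V^4\to V$ is alternating. The plan is to prove the alternation of each formula first, and then the identity.

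For the first formula I would observe that, for every $y\in V$, taking the inner product with $y$ and using the cyclic invariance of the $3$--form $\phi$ (so that $\phi(x,u,v)=\phi(u,v,x)$ and $\phi(w,x,u)=\phi(u,w,x)$) gives $\inner{[u,v,w,x]}{y}=(\phi\wedge y^*)(u,v,w,x)$, where $y^*=\inner{y}{\cdot}$. Since $\phi\wedge y^*\in\Lambda^4V^*$ and $V\to V^*$ is an isomorphism, this already shows that $[u,v,w,x]$ depends alternatingly on its arguments. For the second formula, after cyclically reordering the entries of the associator brackets (which costs only even permutations) it takes the shape $\tfrac{1}{3}\sum_{i=1}^4(-1)^{i+1}[u_1,\dots,\widehat{u_i},\dots,u_4]\times u_i$ with $(u_1,u_2,u_3,u_4)=(u,v,w,x)$; and for any alternating trilinear $T\colon V^3\to V$ and any bilinear $m\colon V\times V\to V$, the map $(u_1,\dots,u_4)\mapsto\sum_i(-1)^{i+1}m\bigl(T(u_1,\dots,\widehat{u_i},\dots,u_4),u_i\bigr)$ is alternating, because when $u_a=u_b$ the two terms in which neither of the indices $a,b$ is omitted cancel and all other terms vanish since $T$ is alternating. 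This is the same device as in \autoref{rmk:associator}, so both formulas define alternating $4$--linear maps.

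To prove the identity I would reduce to the standard structure. Both sides are alternating and $4$--linear and are constructed from $\phi$ alone (via the compatible inner product and the induced cross product), and by \autoref{thm:imO}~\itref{It_Transitive} every nondegenerate $3$--form on a $7$--dimensional space is equivalent to the standard $\phi_0$ on $\R^7$; hence it suffices to verify the identity for $V=\R^7$, $\phi=\phi_0$, on quadruples of distinct standard basis vectors. The left side is transparent: $\inner{[e_i,e_j,e_k,e_l]}{e_m}=(\phi_0\wedge e_m^*)(e_i,e_j,e_k,e_l)$ is nonzero only if $m\in\{i,j,k,l\}$ and $\{i,j,k,l\}\setminus\{m\}$ is one of the seven index triples occurring in \eqref{eq:phi0} (the lines of the Fano plane on $\{1,\dots,7\}$), in which case it equals $\pm1$; since two distinct lines meet in a point, a quadruple contains at most one such triple, so $[e_i,e_j,e_k,e_l]$ is $\pm e_m$ when the quadruple contains a line and $0$ when it does not (the latter quadruples being exactly the complements of lines). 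The right side on such quadruples is computed directly from \eqref{eq:cross7}, using $[e_a,e_b,e_c]=(e_a\times e_b)\times e_c$ for distinct basis vectors (the inner-product-free case of \eqref{eq:associator}) and \autoref{le:uvw}; this is a finite check, and using the automorphisms of $\phi_0$ that permute the standard basis one reduces it to a small number of representative quadruples --- one that contains a Fano line and one that does not.

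I expect the one genuine difficulty to be the algebra of the iterated products $((e_a\times e_b)\times e_c)\times e_d$: there is no avoiding a moderate computation, and the reduction to $\phi_0$ is what organizes it. One can also proceed invariantly: pair the asserted identity with an arbitrary $y$, use $\inner{a\times b}{y}=\phi(a,b,y)$ and the skew-adjointness of $v\mapsto x\times v$ to get $\inner{[u,v,w]\times x}{y}=\inner{[u,v,w]}{x\times y}=\psi(u,v,w,x\times y)$ by \eqref{eq:psi}, then expand each of the four terms with the infinitesimal-action identity $\cL_{A_x}\psi=-3\,x^*\wedge\phi$ of \eqref{eq:upsi}; the pure-$\phi$ part reassembles (again by cyclic invariance of $\phi$) into $-3[u,v,w,x]$, leaving one to show the surviving cross-product terms sum to $6[u,v,w,x]$, which --- rewriting $\psi$ through \eqref{eq:psi} --- is once more an identity among iterated products requiring the same bookkeeping. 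Finally, granting the $\Gtwo$--splitting of $\Lambda^4V^*$ into irreducibles of dimensions $1$, $7$, $27$, the space $\Hom_{\Gtwo}(\Lambda^4V,V)$ is one-dimensional, so the two $\Gtwo$--equivariant alternating maps coincide up to a scalar that is fixed by evaluating on a single quadruple such as $(e_1,e_2,e_3,e_4)$.
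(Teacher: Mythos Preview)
Your alternation arguments are correct and cleaner than what the paper states (the paper simply asserts $\tau$ is alternating). Your identification $\inner{[u,v,w,x]}{y}=(\phi\wedge y^*)(u,v,w,x)$ is exactly right and is later recorded as \eqref{eq:phipsi21}.

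For the identity itself, however, your route diverges from the paper's and remains at the level of an outline. The paper defines $\tau$ as three times the difference of the two sides and kills it in three short steps: (1) $\tau(u,v,w,x)\perp u,v,w,x$ (a direct computation using \eqref{eq:frobenius} and $x\times(u\times x)=\abs{x}^2u$ for $u\perp x$); (2) $\tau(u,v,u\times v,x)=0$ (immediate from \autoref{le:uvw}); (3) for general orthonormal $u,v,w,x$ with $w\perp u\times v$ and $x\perp v\times w,\,w\times u,\,u\times v$, the vectors $x\times u,\,x\times v,\,x\times w$ span the complement of $\mathrm{span}\{u,v,w,x\}$ and are each orthogonal to $\tau(u,v,w,x)$ by Step~1, forcing $\tau=0$. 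This is entirely coordinate-free and uses nothing beyond \autoref{le:area} and \autoref{le:uvw}.

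Your approach~(a) is valid in principle, and your Fano-plane description of the left side is correct (the $35$ quadruples split as $28$ containing a line and $7$ complements of lines, and these are single orbits under $\Aut(\phi_0)\cap(\text{signed permutations})$), but you stop short of the actual check; the paper's argument avoids any such case analysis. Your approach~(b), as you yourself note, only reshuffles the difficulty. Your approach~(c) via $\Hom_{\Gtwo}(\Lambda^4V,V)\cong\R$ is correct and elegant, but it forward-references the decomposition in \autoref{thm:form7} (Section~\ref{sec:G2}), so it is not available at this point in the paper's logical order; if you use it, you should flag the dependency.
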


\begin{proof}
  Define the alternating multi-linear map $\tau\co V^4\to V$ by 
  \begin{align*}
    \tau(u,v,w,x)
    &:=
      3\Bigl(\phi(u,v,w)x-\phi(x,u,v)w+\phi(w,x,u)v-\phi(v,w,x)u\Bigr) \\ &\quad
      + [u,v,w]\times x - [x,u,v]\times w + [w,x,u]\times v - [v,w,x]\times u.
  \end{align*}
  We must prove that $\tau$ vanishes. %
  The proof has three steps.

  \setcounter{step}{0}
  \begin{step}
    \label{Pf_Coass1}
    $\tau(u,v,w,x)$ is orthogonal to $u,v,w,x$ for all $u,v,w,x\in V$.
  \end{step}

  It suffices to assume that $u,v,w,x$ are pairwise orthogonal. %
  Then we have $[u,v,w]=(u\times v)\times w$ and similarly for
  $[x,v,w]$ etc. %
  Hence,
  \begin{align*}
    \inner{\tau(u,v,w,x)}{x}
    &=
      3\abs{x}^2\phi(u,v,w) - \inner{[u,v,x]}{w\times x} \\ &\quad
      -\, \inner{[w,u,x]}{v\times x} - \inner{[v,w,x]}{u\times x} \\
    &=
      3\abs{x}^2\phi(u,v,w) - \inner{(u\times v)\times x}{w\times x} \\ &\quad
      -\, \inner{(w\times u)\times x}{v\times x} 
      - \inner{(v\times w)\times x}{u\times x} \\
    &=
      0.
  \end{align*} 
  Here the last step uses the identity~\eqref{eq:frobenius} and the
  fact that $x\times (u\times x)=\abs{x}^2u$ whenever $u$ is
  orthogonal to $x$. %
  Thus $\tau(u,v,w,x)$ is orthogonal to $x$. %
  Since $\tau$ is alternating, this proves \autoref{Pf_Coass1}.

  \begin{step}
    \label{Pf_Coass2}
    $\tau(u,v,u\times v,x)=0$ for all $u,v,x\in V$.
  \end{step}

  It suffices to assume that $u,v$ are orthonormal and that $x$ is
  orthogonal to $u$, $v$, and $w:=u\times v$. %
  Then $v\times w=u$, $w\times u=v$, $\phi(u,v,w)=1$, and
  $\phi(x,v,w)=\phi(x,w,u)=\phi(x,u,v)=0$. %
  Moreover, $[u,v,w]=0$ and
  \begin{equation*}
    [x,v,w]
    = [v,w,x]
    = (v\times w)\times x 
    = u\times x,
      \qquad
    [x,v,w]\times u
    = x,
  \end{equation*}
  and similarly $[x,w,u]\times v=[x,u,v]\times w=x$. %
  This implies that $\tau(u,v,w,x)=0$.

  \begin{step}
    \label{Pf_Coass3}
    $\tau(u,v,w,x)=0$ for all $u,v,w,x\in V$.
  \end{step}

  By the alternating property we may assume that $u$ and $v$ are
  orthonormal. %
  Using the alternating property again and \autoref{Pf_Coass2} we may
  assume that $w$ is a unit vector orthogonal to $u,v,u\times v$ and
  that $x$ is a unit vector orthogonal to $u,v,w$ and
  $v\times w,w\times u,u\times v$. %
  This implies that
  \begin{equation*}
    \phi(u,v,w)
    = \phi(x,v,w)
    = \phi(x,w,u)
    = \phi(x,u,v)
    = 0.
  \end{equation*}
  Hence, the vectors $x\times u,x\times v,x\times w$ form a basis of
  the orthogonal complement of the space spanned by $u,v,w,x$. %
  Each of these vectors is orthogonal to $\tau(u,v,w,x)$ and hence
  $\tau(u,v,w,x)=0$ by \autoref{Pf_Coass1}. %
  This proves \autoref{le:coass}. %
\end{proof}

The square of the volume of the 
$4$--dimensional parallelepiped spanned by $u,v,w,x\in V$ 
will be denoted by 
\begin{equation*}
  \abs{u\wedge v\wedge w\wedge x}^2
   :=
   \det
   \begin{pmatrix}
     \abs{u}^2 & \inner{u}{v} & \inner{u}{w} & \inner{u}{x} \\
     \inner{v}{u} & \abs{v}^2 & \inner{v}{w} & \inner{v}{x} \\
     \inner{w}{u} & \inner{w}{v} & \abs{w}^2 & \inner{w}{x} \\
     \inner{x}{u} & \inner{x}{v} & \inner{x}{w} & \abs{x}^2
   \end{pmatrix}.
\end{equation*}

\begin{lemma}
  \label{le:coassocpsi}
  For all $u,v,w,x\in V$ we have 
  \begin{equation}
    \label{eq:coassocpsi}
    \psi(u,v,w,x)^2 + \abs{[u,v,w,x]}^2 = \abs{u\wedge v\wedge w\wedge x}^2.
  \end{equation}
\end{lemma}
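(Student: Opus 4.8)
The plan is to follow the pattern of \autoref{le:assocphi}: reduce to an orthonormal quadruple and then compute. Both sides of \eqref{eq:coassocpsi} are built from the alternating multilinear objects $\psi$, $[\,\cdot\,,\cdot\,,\cdot\,,\cdot\,]$ and $u\wedge v\wedge w\wedge x$, so each is unchanged when one of the four vectors is replaced by itself plus a multiple of another, and scales by $t^2$ when one vector is scaled by $t$. If $u,v,w,x$ are linearly dependent, all three terms vanish; otherwise Gram--Schmidt reduces \eqref{eq:coassocpsi} to the case that $u,v,w,x$ is an orthonormal system, which I assume from now on.

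In this case $\abs{u\wedge v\wedge w\wedge x}^2=1$, and reading off the coefficients in the first expression for $[u,v,w,x]$ in \eqref{eq:coass} gives $\abs{[u,v,w,x]}^2=\phi(u,v,w)^2+\phi(v,w,x)^2+\phi(w,x,u)^2+\phi(x,u,v)^2$. Put $c:=\phi(u,v,w)$; since $\phi$ is alternating, $c=\phi(v,w,u)=\phi(w,u,v)$, so $\inner{u\times v}{w}=\inner{v\times w}{u}=\inner{w\times u}{v}=c$. Let $W:=\{u,v,w\}^\perp$ and introduce the components of the cross products transverse to $\SPAN\{u,v,w\}$,
\[
  n:=u\times v-cw,\qquad m:=v\times w-cu,\qquad p:=w\times u-cv,
\]
all of which lie in $W$, together with $z:=[u,v,w]\in W$. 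Using \eqref{eq:frobenius} and $x\in W$ one checks that $\psi(u,v,w,x)=\inner{z}{x}$ (which is just \eqref{eq:psi}), $\phi(v,w,x)=\inner{m}{x}$, $\phi(x,u,v)=\inner{n}{x}$ and $\phi(w,x,u)=-\inner{p}{x}$, so the left side of \eqref{eq:coassocpsi} becomes $c^2+\inner{z}{x}^2+\inner{m}{x}^2+\inner{n}{x}^2+\inner{p}{x}^2$.

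The crux is to show that $z,m,n,p$ are pairwise orthogonal, each of squared norm $1-c^2$. The norm of $z$ is given by \autoref{le:assocphi}: $\abs{z}^2=\abs{u\wedge v\wedge w}^2-\phi(u,v,w)^2=1-c^2$; for $m$ one uses $v\times w\perp v,w$ and $\abs{v\times w}=1$ (by \eqref{eq:area}) to get $\abs{m}^2=\abs{v\times w}^2-c^2=1-c^2$, and likewise for $n,p$. For orthogonality, \eqref{eq:associator} gives in the orthonormal case $z=(u\times v)\times w=(cw+n)\times w=n\times w$, and by cyclic invariance of the associator bracket also $z=m\times u=p\times v$; hence $\inner{z}{n}=\inner{z}{m}=\inner{z}{p}=0$ by \eqref{eq:orthogonal}. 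Finally $\inner{m}{n}=\inner{v\times w}{u\times v}=-\inner{v\times w}{v\times u}=-\bigl(\abs{v}^2\inner{w}{u}-\inner{v}{w}\inner{v}{u}\bigr)=0$ by the polarization of \eqref{eq:area}, and $\inner{n}{p}$ and $\inner{p}{m}$ vanish by the same computation. (When $c^2=1$ all four vectors are zero, and the left side of \eqref{eq:coassocpsi} is directly seen to equal $1$.) With this established, $\{z,m,n,p\}$ is an orthogonal basis of the $4$--dimensional space $W$ with common squared norm $1-c^2$, so for the unit vector $x\in W$ we get $\inner{z}{x}^2+\inner{m}{x}^2+\inner{n}{x}^2+\inner{p}{x}^2=1-c^2$, whence the left side of \eqref{eq:coassocpsi} equals $c^2+(1-c^2)=1=\abs{u\wedge v\wedge w\wedge x}^2$.

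I expect the orthogonality of $z,m,n,p$ to be the main obstacle: this is the only place the $7$--dimensional (equivalently $\Gtwo$) structure is used, and it enters exactly as in \autoref{le:assocphi}, through the Frobenius identity \eqref{eq:frobenius} and the double cross product formula \eqref{eq:crosscomplex} of \autoref{le:area}.
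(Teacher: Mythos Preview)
Your proof is correct and takes a genuinely different route from the paper's. Both arguments reduce via Gram--Schmidt to an orthonormal quadruple, but from there they diverge.

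The paper first specializes further to the case where $w$ and $x$ are orthogonal to $u\times v$; then $\mathrm{span}\{u,v,u\times v\}$ is associative, so by \autoref{le:assoc} the product $w\times x$ lies in this span, and the identity follows from expanding $\abs{w\times x}^2$ in the orthonormal frame $u,v,u\times v$. The general orthogonal case is then recovered by the perturbation $x\mapsto x+\lambda\,u\times v$ and checking both sides change by the same amount.

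Your approach is more direct: you exhibit an explicit orthogonal basis $z,m,n,p$ of $\{u,v,w\}^\perp$ with common squared norm $1-c^2$ and read off the identity from Parseval. This avoids the appeal to associative subspaces and the perturbation step, using only the elementary identities \eqref{eq:orthogonal}, \eqref{eq:area}, \eqref{eq:frobenius}, \eqref{eq:crosscomplex} and \autoref{le:assocphi}. It is worth noting that your orthogonality computation is essentially the content of \autoref{rmk:assbrack} (that $[u,v,w]$ is orthogonal to $u,v,w,v\times w,w\times u,u\times v$), made fully explicit. The paper's route, on the other hand, foregrounds the associative/coassociative geometry that drives the rest of the section.
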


\begin{proof}
  The proof has four steps.
  
  \setcounter{step}{0}
  \begin{step}
    \label{Pf_CoassocPsi1}
    If $u,v,w,x$ are orthogonal, then
    \begin{align*}
      \psi(u,v,w,x)^2
      &=
        \inner{u\times v}{w\times x}^2, \\
      \abs{[u,v,w,x]}^2
      &=
        \inner{u\times v}{w}^2\abs{x}^2 
        + \inner{u\times v}{x}^2\abs{w}^2 \\ &\quad
        + \inner{u}{w\times x}^2\abs{v}^2 
        + \inner{v}{w\times x}^2\abs{u}^2, \\
      \abs{u\wedge v\wedge w\wedge x}^2
      &=
        \abs{u}^2\abs{v}^2\abs{w}^2\abs{x}^2.
    \end{align*}
  \end{step}

  The first equation follows from~\eqref{eq:associator} 
  and~\eqref{eq:psi}, using~\eqref{eq:frobenius}. %
  The other two equations follow immediately 
  from the definitions.

  \begin{step}
    \label{Pf_CoassocPsi2}
    Equation~\eqref{eq:coassocpsi} holds when $u,v,w,x$ are orthogonal
    and, in addition, $w$ and $x$ are orthogonal to $u\times v$.
  \end{step}

  Since $[u,v,w] \neq 0$, it follows from the assumptions and
  \autoref{le:assoc} that $w\times x$ is a linear combination of the
  vectors $u$, $v$, $u\times v$. %
  Hence, the assertion follows from \autoref{Pf_CoassocPsi1}.

  \begin{step}
    \label{Pf_CoassocPsi3}
    Equation~\eqref{eq:coassocpsi} holds when $u,v,w,x$ are orthogonal
  \end{step}

  Suppose, in addition, that $w$ and $x$ are orthogonal to $u\times v$
  and replace $x$ by $x_\lambda:=x+\lambda u\times v$ for
  $\lambda\in\R$. %
  Then $\psi(u,v,w,x_\lambda)$ is independent of $\lambda$ and
  \begin{equation*}
    \abs{[u,v,w,x_\lambda]}^2
    =\abs{[u,v,w,x]}^2 +\lambda^2\abs{u}^2\abs{v}^2\abs{w}^2\abs{u\times v}^2.
  \end{equation*}
  Hence, it follows from \autoref{Pf_CoassocPsi2}
  that~\eqref{eq:coassocpsi} holds when $u,v,w,x$ are orthogonal and,
  in addition, $w$ is orthogonal to $u\times v$. %
  This condition can be achieved by rotating the pair $(w,x)$. %
  This proves \autoref{Pf_CoassocPsi3}.

  \begin{step}
    \label{Pf_CoassocPsi4}
    Equation~\eqref{eq:coassocpsi} holds always.
  \end{step}

  The general case follows from the orthogonal case via Gram--Schmidt,
  because both sides of equation~\eqref{eq:coassocpsi} remain
  unchanged if we add to any of the four vectors a multiple of any of
  the other three. %
  This proves the lemma.
\end{proof}

\begin{definition}
  \label{def:coassoc}
  A $4$--dimensional subspace $H\subset V$ is called 
  \defined{coassociative} if 
  \begin{equation*}
    [u,v,w,x]=0\qquad\mbox{for all }u,v,w,x\in H.
  \end{equation*}
\end{definition}

\begin{lemma}
  \label{le:coassoc}
  Let $H\subset V$ be a $4$--dimensional linear subspace. %
  Then the following are equivalent:
  \begin{enumerate}[(i)]
  \item
    \label{It_Coassoc1}
    $H$ is coassociative.

  \item
    \label{It_Coassoc2}
    If $u,v,w,x$ is an orthonormal basis of $H$, then $\psi(u,v,w,x)=\pm1$.

  \item
    \label{It_Coassoc3}
    For all $u,v,w\in H$ we have $\phi(u,v,w)=0$.  

  \item
    \label{It_Coassoc4}
    If $u,v\in H$, then $u\times v\in H^\perp$.

  \item
    \label{It_Coassoc5}
    If $u\in H$ and $v\in H^\perp$, then $u\times v\in H$.

  \item
    \label{It_Coassoc6}
    If $u,v\in H^\perp$, then $u\times v\in H^\perp$.

  \item
    \label{It_Coassoc7}
    The orthogonal complement $H^\perp$ is associative. 
  \end{enumerate}
\end{lemma}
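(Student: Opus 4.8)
The plan is to prove the cycle of equivalences by the same strategy used for \autoref{le:assoc}: establish \itref{It_Coassoc1}$\Leftrightarrow$\itref{It_Coassoc2} from the norm identity \autoref{le:coassocpsi}, then link \itref{It_Coassoc3} into the chain, and finally show that the cross-product conditions \itref{It_Coassoc4}--\itref{It_Coassoc6} are each equivalent to \itref{It_Coassoc3}, with \itref{It_Coassoc7} falling out of \autoref{le:assoc} together with \itref{It_Coassoc3}. First I would observe that \itref{It_Coassoc1}$\Leftrightarrow$\itref{It_Coassoc2} is immediate from \eqref{eq:coassocpsi}: if $u,v,w,x$ is an orthonormal basis of $H$, then $\abs{u\wedge v\wedge w\wedge x}^2=1$, so $[u,v,w,x]=0$ for all quadruples in $H$ iff $\psi(u,v,w,x)^2=1$; since $\psi$ is alternating and $H$ is $4$--dimensional, this value is the same (up to sign) for every orthonormal basis, giving the equivalence.

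Next I would prove \itref{It_Coassoc1}$\Leftrightarrow$\itref{It_Coassoc3}. For \itref{It_Coassoc3}$\Rightarrow$\itref{It_Coassoc1}, if $\phi$ vanishes on $H^3$ then the defining formula \eqref{eq:coass} for $[u,v,w,x]$ makes it manifestly zero for $u,v,w,x\in H$. For the converse, suppose $H$ is coassociative; take any orthonormal basis $u,v,w,x$ of $H$ and use the first line of \eqref{eq:coass} evaluated on this basis: $0=[u,v,w,x]=\phi(u,v,w)x-\phi(x,u,v)w+\phi(w,x,u)v-\phi(v,w,x)u$, and since $u,v,w,x$ are linearly independent all four coefficients $\phi(u,v,w),\phi(x,u,v),\phi(w,x,u),\phi(v,w,x)$ vanish; as these are, up to sign, all the values of $\phi$ on triples from a basis of $H$, it follows that $\phi|_{H^3}=0$. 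Then I would note \itref{It_Coassoc3}$\Leftrightarrow$\itref{It_Coassoc4} directly: $\phi(u,v,w)=\inner{u\times v}{w}$, so $\phi$ vanishes on $H^3$ iff $u\times v\perp H$ for all $u,v\in H$.

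For the remaining cross-product conditions I would argue as follows. \itref{It_Coassoc4}$\Leftrightarrow$\itref{It_Coassoc5}: assume \itref{It_Coassoc4}; for $u\in H$, $v\in H^\perp$, and any $w\in H$ we have $\inner{w}{u\times v}=\inner{w\times u}{v}$ by \eqref{eq:frobenius}, and $w\times u\in H^\perp$ by \itref{It_Coassoc4}, so this vanishes, whence $u\times v\in H$; the converse is the symmetric computation using $w\in H^\perp$ and the fact that \itref{It_Coassoc5} lets us pair off $u\times w$ against $v$. For \itref{It_Coassoc6}$\Leftrightarrow$\itref{It_Coassoc7}: by \autoref{le:assoc}\itref{It_Assoc5}, a $3$--dimensional subspace $\Lambda$ is associative iff $u\times v\in\Lambda$ for all $u,v\in\Lambda^\perp$; applying this with $\Lambda=H^\perp$ (so $\Lambda^\perp=H$) shows $H^\perp$ associative iff $u\times v\in H^\perp$ for all $u,v\in H$, which is exactly \itref{It_Coassoc4}; and applying it with $\Lambda=H^\perp$ read the other way (using \autoref{le:assoc}\itref{It_Assoc3}, invariance under $\times$) shows $H^\perp$ associative iff $u\times v\in H^\perp$ for $u,v\in H^\perp$, which is \itref{It_Coassoc6}. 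Thus \itref{It_Coassoc6}$\Leftrightarrow$\itref{It_Coassoc7}$\Leftrightarrow$\itref{It_Coassoc4}, closing the cycle.

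The main obstacle I anticipate is bookkeeping rather than depth: one must be careful that the several sign-laden instances of \eqref{eq:frobenius} and of \autoref{le:assoc} are invoked with the right subspace in the right slot (in particular that $(H^\perp)^\perp=H$ is used consistently, and that the dimension count $3+4=7$ is what makes the ``maps $A$ into $B$'' and ``maps $B$ into $A$'' statements equivalent, exactly as in the \itref{It_Assoc4}$\Leftrightarrow$\itref{It_Assoc5} step of \autoref{le:assoc}). No genuinely new idea beyond \autoref{le:coassocpsi} and \autoref{le:assoc} is needed; the proof is a careful transcription of the associative case into its orthogonal complement.
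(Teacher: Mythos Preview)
Your argument for \itref{It_Coassoc4}$\Leftrightarrow$\itref{It_Coassoc5} is flawed, and since you only connect \itref{It_Coassoc5} to the rest of the chain through this step, it is a genuine gap.

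Consider your forward direction \itref{It_Coassoc4}$\Rightarrow$\itref{It_Coassoc5}. You take $u\in H$, $v\in H^\perp$, and $w\in H$, and compute $\inner{w}{u\times v}=\inner{w\times u}{v}$ with $w\times u\in H^\perp$ by \itref{It_Coassoc4}. But $v$ is also in $H^\perp$, so there is no reason for $\inner{w\times u}{v}$ to vanish; two vectors lying in $H^\perp$ need not be orthogonal. Moreover, even if this pairing did vanish, you would have shown $u\times v\perp H$, i.e., $u\times v\in H^\perp$, which is the wrong conclusion: you want $u\times v\in H$, so you must test against $w\in H^\perp$. If you do take $w\in H^\perp$, then $\inner{u\times v}{w}=\inner{u}{v\times w}$ requires information about $v\times w$ with $v,w\in H^\perp$, which \itref{It_Coassoc4} alone does not provide. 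The converse direction has the symmetric problem: you end up pairing two vectors in $H$.

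The point is that the Frobenius identity alone establishes \itref{It_Coassoc5}$\Leftrightarrow$\itref{It_Coassoc6} (this is the direct analogue of \autoref{le:assoc}\itref{It_Assoc3}$\Leftrightarrow$\itref{It_Assoc4}), but \itref{It_Coassoc4}$\Leftrightarrow$\itref{It_Coassoc5} corresponds to \autoref{le:assoc}\itref{It_Assoc4}$\Leftrightarrow$\itref{It_Assoc5}, which in that lemma required the dimension argument with $J_u$. The clean fix is the one the paper uses and you already half-implement: apply \autoref{le:assoc} with $\Lambda=H^\perp$ to \emph{all four} of \itref{It_Coassoc4}, \itref{It_Coassoc5}, \itref{It_Coassoc6}, \itref{It_Coassoc7} at once. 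Under this substitution, \autoref{le:assoc}\itref{It_Assoc1}, \itref{It_Assoc3}, \itref{It_Assoc4}, \itref{It_Assoc5} become exactly \itref{It_Coassoc7}, \itref{It_Coassoc6}, \itref{It_Coassoc5}, \itref{It_Coassoc4} respectively, and their equivalence is already proved there. You invoke this for \itref{It_Coassoc4} and \itref{It_Coassoc6} but not for \itref{It_Coassoc5}; just add that instance and drop the direct \itref{It_Coassoc4}$\Leftrightarrow$\itref{It_Coassoc5} paragraph. The rest of your proof (\itref{It_Coassoc1}$\Leftrightarrow$\itref{It_Coassoc2} via \autoref{le:coassocpsi}, \itref{It_Coassoc1}$\Leftrightarrow$\itref{It_Coassoc3} via \eqref{eq:coass}, \itref{It_Coassoc3}$\Leftrightarrow$\itref{It_Coassoc4} via $\phi(u,v,w)=\inner{u\times v}{w}$) is correct and matches the paper.
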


\begin{proof}
  That~\itref{It_Coassoc1} is equivalent to~\itref{It_Coassoc2}
  follows from \autoref{le:coassocpsi}.

  We prove that~\itref{It_Coassoc1} is equivalent
  to~\itref{It_Coassoc3}. %
  That~\itref{It_Coassoc3} implies~\itref{It_Coassoc1} is obvious by
  definition of the coassociator bracket in~\eqref{eq:coass}. %
  Conversely, assume~\itref{It_Coassoc1} and choose a basis $u,v,w,x$
  of $H$. %
  Then $[u,v,w,x]=0$ and hence, by~\eqref{eq:coass}, we have
  $\phi(u,v,w)=\phi(x,v,w)=\phi(x,w,u)=\phi(x,u,v)=0$. %
  This implies~\itref{It_Coassoc3}.

  We prove that~\itref{It_Coassoc3} is equivalent
  to~\itref{It_Coassoc4}. %
  If~\itref{It_Coassoc3} holds and $u,v\in H$, then
  $\inner{u\times v}{w}=\phi(u,v,w)=0$ for every $w\in H$ and hence
  $u\times v\in H^\perp$. %
  Conversely, if~\itref{It_Coassoc4} holds and $u,v\in H$, then
  $u\times v\in H^\perp$ and hence
  $\phi(u,v,w)=\inner{u\times v}{w}=0$ for all $w\in H$.

  Thus we have proved that~\itref{It_Coassoc1}, \itref{It_Coassoc2},
  \itref{It_Coassoc3}, \itref{It_Coassoc4} are equivalent. %
  That assertions~\itref{It_Coassoc4}, \itref{It_Coassoc5},
  \itref{It_Coassoc6}, \itref{It_Coassoc7} are equivalent was proved
  in \autoref{le:assoc}.
\end{proof}

\begin{remark}\label{rmk:LaH}
  Let $V$ be a $7$--dimensional real Hilbert space equipped 
  with a cross product and denote the associative and 
  coassociative calibrations by $\phi$ and $\psi$. %
  Let ${\Lambda\subset V}$ be an  associative subspace 
  and define~${H:=\Lambda^\perp}$. %
  Orient $\Lambda$ and $H$ by the volume forms 
  $\dvol_\Lambda:=\phi|_\Lambda$ and $\dvol_H:=\psi|_H$. %
  A \defined{standard basis} of the space~$\Lambda^+H^*$ of self-dual 
  $2$--forms on $H$ is a triple $\om_1,\om_2,\om_3\in\Lambda^+H^*$
  that satisfies the condition $\om_i\wedge\om_j = 2\delta_{ij}\dvol_H$ 
  for all $i$ and $j$. %
  In this situation the map
  \begin{equation}\label{eq:LaH1}
    \Lambda\to\Lambda^+H^*\co u\mapsto -\iota(u)\phi|_H
  \end{equation}
  is an orientation preserving isomorphism that sends every orthonormal 
  basis of~$\Lambda$ to a standard basis of $\Lambda^+H^*$. %
  (To see this, choose a standard basis of $V$ as in 
  \autoref{rmk:psi} with $\Lambda=\mathrm{span}\{e_1,e_2,e_3\}$.) %
  Let ${\pi_\Lambda:V\to\Lambda}$ and ${\pi_H:V\to H}$ be
  the orthogonal projections. Let $u_1,u_2,u_3$ be any
  orthonormal basis of $\Lambda$ and define 
  $\alpha_i:=u_i^*|_\Lambda$ and $\om_i:=-\iota(u_i)\phi|_H$
  for $i=1,2,3$. %
  Then the associative calibration $\phi$ 
  can be expressed in the form
  \begin{equation}\label{eq:LaH2}
    \phi = \pi_\Lambda^*\dvol_\Lambda
    -  \pi_\Lambda^*\alpha_1\wedge\pi_H^*\om_1
    -  \pi_\Lambda^*\alpha_2\wedge\pi_H^*\om_2
    -  \pi_\Lambda^*\alpha_3\wedge\pi_H^*\om_3.
  \end{equation}
\end{remark}

The next theorem characterizes a nondegenerate $3$--form $\phi$ 
in terms of its coassociative calibration $\psi$ in \autoref{le:psi}.

\begin{theorem}
  \label{thm:phipsi}
  Let $V$ be a $7$--dimensional vector space over the reals, 
  let $\phi,\phi'\in\Lambda^3V^*$ be nondegenerate $3$--forms,
  and let $\psi,\psi'\in\Lambda^4V^*$ be their coassociative
  calibrations. %
  Then the following are equivalent:
  \begin{enumerate}[(i)]
  \item
    \label{It_Phipsi1}
    $\phi'=\phi$ or $\phi'=-\phi$.

  \item
    \label{It_Phipsi2}
    $\psi'=\psi$.
  \end{enumerate}
\end{theorem}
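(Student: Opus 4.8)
The plan is to deduce (i)$\Rightarrow$(ii) from a short bookkeeping argument and (ii)$\Rightarrow$(i) by reconstructing from $\psi$ first the inner product and then $\phi$ up to sign. Throughout write $g_\phi$, $o_\phi$ for the inner product and orientation attached to a nondegenerate $3$--form $\phi$ by \autoref{thm:imO} and \autoref{le:imO}, let $\times_\phi$ denote the associated cross product, and recall from \autoref{le:psi} that $\psi=*\phi$, where $*$ is the Hodge operator of $(g_\phi,o_\phi)$.

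For (i)$\Rightarrow$(ii): the case $\phi'=\phi$ is trivial, so suppose $\phi'=-\phi$. The cross product of $-\phi$ is $-\times_\phi$, and \eqref{eq:orthogonal} and \eqref{eq:area} are insensitive to this sign, so by the uniqueness in \autoref{thm:imO}~\itref{It_UniqueG} we have $g_{-\phi}=g_\phi$; feeding $-\phi$ into \eqref{eq:uvphi} in place of $\phi$ shows $o_{-\phi}=-o_\phi$. Since orientation reversal reverses $*$, it follows that $\psi'=*_{g_\phi,-o_\phi}(-\phi)=(-*_{g_\phi,o_\phi})(-\phi)=*_{g_\phi,o_\phi}\phi=\psi$.

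For (ii)$\Rightarrow$(i) the key input is the identity
\[
  \iota(u)\iota(v)\psi\wedge\psi \;=\; -2\,\iota\bigl(u\times_\phi v\bigr)\,\dvol\qquad\text{for all }u,v\in V,
\]
valid for every nondegenerate $3$--form $\phi$ on a $7$--dimensional Hilbert space, where $\psi$, $\times_\phi$, $\dvol$ are the coassociative calibration, cross product and volume form of $\phi$. Both sides are natural under $\GL(V)$ (the left side because $\psi$ is, by \autoref{le:psi}; the right side because $g_\phi$, $o_\phi$, $\times_\phi$ are), so by the transitivity in \autoref{thm:imO}~\itref{It_Transitive} it suffices to check the identity for the standard $\phi_0,\psi_0$ of \autoref{ex:cross7} and \autoref{rmk:psi}, which, both sides being bilinear and alternating in $(u,v)$, is a finite computation on the standard basis. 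Establishing this identity --- equivalently, showing that the $\GL(7,\R)$--stabilizer of $\psi_0$ is no larger than $\Gtwo\cup(-\id)\Gtwo=\{g:g^*\phi_0=\pm\phi_0\}$, i.e.\ that $\psi$ already determines the compatible inner product --- is the one substantive step; the rest is formal.

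Granting it, suppose $\psi'=\psi$. Then $\iota(u\times_\phi v)\,\dvol_{g_\phi,o_\phi}=\iota(u\times_{\phi'}v)\,\dvol_{g_{\phi'},o_{\phi'}}$ for all $u,v$; writing $\dvol_{g_\phi,o_\phi}=c\,\dvol_{g_{\phi'},o_{\phi'}}$ with $c\neq0$ and using that $w\mapsto\iota(w)\dvol$ is injective, this gives $u\times_{\phi'}v=c\,(u\times_\phi v)$ for all $u,v$. Substituting into \eqref{eq:crosscomplex} for $\times_{\phi'}$, and then applying \eqref{eq:crosscomplex} for $\times_\phi$, yields
\[
  g_{\phi'}(u,w)\,u-g_{\phi'}(u,u)\,w \;=\; c^{2}\bigl(g_\phi(u,w)\,u-g_\phi(u,u)\,w\bigr)\qquad\text{for all }u,w.
\]
Fixing $u\neq0$ and choosing $0\neq w$ with $g_\phi(u,w)=0$, the linear independence of $u$ and $w$ forces $g_{\phi'}(u,u)=c^{2}g_\phi(u,u)$; by polarization $g_{\phi'}=c^{2}g_\phi$. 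Comparing the associated volume forms gives $\dvol_{g_{\phi'},o_{\phi'}}=\pm\abs{c}^{7}\dvol_{g_\phi,o_\phi}$, hence $1=\pm c\abs{c}^{7}$, so $\abs{c}=1$ and $c^{2}=1$; thus $g_{\phi'}=g_\phi=:g$. Finally, applying $*_{g,o_\phi}$ to $\psi=*_{g,o_\phi}\phi$ and using $**=\id$ on $3$-- and $4$--forms in dimension $7$ gives $\phi=*_{g,o_\phi}\psi$, and likewise $\phi'=*_{g,o_{\phi'}}\psi$; since $o_{\phi'}=\pm o_\phi$ and $*_{g,-o_\phi}=-*_{g,o_\phi}$, we conclude $\phi'=\pm\phi$, which is (i).
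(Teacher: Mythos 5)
Your argument is correct, but it follows a genuinely different route from the paper's. The paper's proof of (ii)$\Rightarrow$(i) runs through associative subspaces: since $\psi$ determines which $3$--planes are associative (via $\autoref{le:psi}$), and every pair $u,v$ spans an associative subspace with $u\times v$, the cross product $\times'$ can be written as $\alpha(u)v-\alpha(v)u+c\,u\times v$; the heart of the paper's Step~2 is then a small argument with smooth dependence of the coefficients and connectivity of the space of linearly independent pairs. You bypass all of that with a single tensor identity,
\begin{equation*}
\iota(v)\iota(u)\psi\wedge\psi \;=\; 2\,\iota(u\times_\phi v)\,\dvol_\phi,
\end{equation*}
which reads the cross product (up to an overall scalar coming from the volume form) straight off $\psi$. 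That is a cleaner extraction step, and the normalization argument you then run --- feed the rescaled cross product into \eqref{eq:crosscomplex} to get $g_{\phi'}=c^2g_\phi$, then compare volume forms to pin down $|c|=1$ --- is parallel in spirit to the paper's Steps~3--4 but bypasses the linear--functional bookkeeping. Two small remarks. First, your key identity need not be verified by brute force on the standard basis: it falls out directly from the paper's \autoref{le:phipsi}, by substituting $\iota(v)\iota(u)\psi=\iota(u\times v)\phi-u^*\wedge v^*$ from \eqref{eq:phipsi17} and using \eqref{eq:phipsi10} and \eqref{eq:phipsi20}, giving $3\iota(u\times v)\dvol-\iota(u\times v)\dvol$. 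Second, the aside claiming the identity is ``equivalent'' to the $\GL(7,\R)$--stabilizer of $\psi_0$ being exactly $\rG_2\cup(-\id)\rG_2$ overstates it: the stabilizer statement is the theorem being proved, whereas the identity is a concrete, independently checkable ingredient of its proof. That caveat aside, the argument as written is sound.
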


\begin{proof}
  That~\itref{It_Phipsi1} implies~\itref{It_Phipsi2} follows from the
  definition of $\psi$ in \autoref{le:psi} and the fact that reversing
  the sign of $\phi$ also reverses the sign of the cross product and
  thus leaves $\psi$ unchanged (see equation~\eqref{eq:psi}). %
  To prove the converse assume that $\psi'=\psi$ and denote by
  $\inner{\cdot}{\cdot}'$ the inner product determined by~$\phi'$,
  by~$\times'$ the cross product determined by~$\phi'$, and
  by~$[\cdot,\cdot,\cdot]'$ the associator bracket determined
  by~$\phi'$. %
  We prove in four steps that~${\phi'=\pm\phi}$.

  \setcounter{step}{0}
  \begin{step}
    \label{Pf_PhiPsi1}
    A $3$--dimensional subspace $\Lambda\subset V$
    is associative for $\phi$ if and only if it is associative for $\phi'$.
  \end{step}

  Let $\Lambda\subset V$ be a three-dimensional linear subspace. %
  By \autoref{def:assoc} it is associative for $\phi$ if and only if
  $[u,v,w]=0$ for all $u,v,w\in\Lambda$. %
  By \autoref{le:psi} this is equivalent to the condition that the
  linear functional $\psi(u,v,w,\cdot)$ on~$V$ vanishes for all
  $u,v,w\in\Lambda$. %
  Since $\psi=\psi'$, this proves \autoref{Pf_PhiPsi1}.

  \begin{step}
    \label{Pf_PhiPsi2}
    There is a linear functional $\alpha\co V\to\R$ and a  
    $c\in\R\setminus\{0\}$ such that
    \begin{equation*}
      u\times'v = \alpha(u)v-\alpha(v)u+cu\times v
    \end{equation*}
    for all $u,v\in V$.
  \end{step}

  Fix two linearly independent vectors $u,v\in V$. %
  Then the vectors $u,v,u\times v$ span a $\phi$--associative subspace
  $\Lambda\subset V$ by \autoref{le:assoc}. %
  The subspace $\Lambda$ is also $\phi'$--associative by
  \autoref{Pf_PhiPsi1}. %
  Hence, $u\times'v\in\Lambda$ by \autoref{le:assoc} and so there
  exist real numbers $\alpha(u,v),\beta(u,v),\gamma(u,v)$ such that
  \begin{equation}
    \label{eq:albega}
    u\times'v = \alpha(u,v)v+\beta(u,v)u+\gamma(u,v)u\times v.
  \end{equation}
  Since $u,v,u\times'v$ are linearly independent, it follows that
  $\gamma(u,v)\ne 0$ and the coefficients $\alpha,\beta,\gamma$ depend
  smoothly on $u$ and $v$. %
  Differentiate equation~\eqref{eq:albega} with respect to $v$ to
  obtain that $\alpha$ and $\gamma$ are locally independent of $v$. %
  Differentiate it with respect to $u$ to obtain that $\beta$ and
  $\gamma$ are locally independent of $u$.  %
  Since the set of pairs of linearly independent vectors in $V$ is
  connected, it follows that there exist functions
  $\alpha,\beta\co V\to\R$ and a constant $c\in\R\setminus\{0\}$ such
  that
  \begin{equation*}
    u\times'v = \alpha(u)v+\beta(v)u+cu\times v
  \end{equation*}
  for all pairs of linearly independent vectors $u,v\in V$. %
  Interchange $u$ and $v$ to obtain $\beta(v)=-\alpha(v)$ for all $v\in V$. %
  Since the function $V\to V\co u\mapsto u\times'v$ is linear 
  for all $v\in V$ it follows that $\alpha\co V\to\R$ is linear. %
  This proves \autoref{Pf_PhiPsi2}.

  \begin{step}
    \label{Pf_PhiPsi3}
    Let $\alpha$ and $c$ be as in \autoref{Pf_PhiPsi2}. %
    Then $\alpha=0$ and $\inner{u}{v}' = c^2\inner{u}{v}$
    for all $u,v\in V$.
  \end{step}

  Fix a vector $u\in V\setminus\{0\}$ and choose a vector $v\in V$ such that
  $u$ and $v$ are linearly independent. %
  Then $u\times(u\times v) = \inner{u}{v}u-\abs{u}^2v$ by \autoref{le:area}. %
  Hence, it follows from \autoref{Pf_PhiPsi2} that
  \begin{align*}
    \inner{u}{v}'u-{\abs{u}'}^2v
    &=
      u\times'(u\times' v) \\
    &=
      u\times'\left(\alpha(u)v-\alpha(v)u+cu\times v\right) \\
    &=
      \alpha(u)u\times'v + cu\times'(u\times v) \\
    &=
      \alpha(u)\bigl(\alpha(u)v-\alpha(v)u+cu\times v\bigr) \\ &\quad
      +\, c\bigl(\alpha(u)u\times v-\alpha(u\times v)u+cu\times(u\times v)\bigr) \\
    &=
      \alpha(u)\bigl(\alpha(u)v-\alpha(v)u+cu\times v\bigr) \\ &\quad
      +\, c\bigl(\alpha(u)u\times v-\alpha(u\times v)u
      + c\inner{u}{v}u-c\abs{u}^2v\bigr) \\
    &=
      \bigl(c^2\inner{u}{v}-c\alpha(u\times v)-\alpha(u)\alpha(v)\bigr)u \\ &\quad
      +\,\bigl(\alpha(u)^2-c^2\abs{u}^2\bigr)v +  2c\alpha(u)u\times v.
  \end{align*}
  Since $u$, $v$, and $u\times v$ are linearly independent,
  it follows that 
  \begin{equation*}
    \alpha(u)=0,\qquad  {\abs{u}'}^2 = c^2\abs{u}^2 - \alpha(u)^2.
  \end{equation*}
  Since $u\in V\setminus\{0\}$ was chosen arbitrarily, 
  it follows that $\alpha(u)=0$ and 
  \begin{equation*}
    \inner{u}{v}'=c^2\inner{u}{v},\qquad u\times'v=cu\times v
  \end{equation*}
  for all $u,v\in V$. %
  This proves \autoref{Pf_PhiPsi3}.

  \begin{step}
    \label{Pf_PhiPsi4}
    $\phi'=\pm\phi$.
  \end{step}

  It follows from \autoref{Pf_PhiPsi2} and \autoref{Pf_PhiPsi3} that
  \begin{equation*}
    \phi'(u,v,w) = \inner{u\times'v}{w}' = c^3\inner{u\times v}{w} = c^3\phi(u,v,w)
  \end{equation*}
  for all $u,v,w\in V$, and so $\psi=\psi'=c^4\psi$
  by equation~\eqref{eq:psi}. %
  Hence $c=\pm1$ and this proves \autoref{thm:phipsi}.
\end{proof}

The next theorem follows a suggestion by Donaldson for characterizing
coassociative calibrations in terms of their dual $3$--forms.

\begin{theorem}
  \label{thm:psi}
  Let $V$ be a $7$--dimensional vector space over the reals and let
  $\psi\in\Lambda^4V^*$. %
  Then the following are equivalent:
  \begin{enumerate}[(i)]
  \item
    \label{It_Psi1}
    There exists a nondegenerate $3$--form $\phi\in\Lambda^3V^*$ and a
    number $\eps=\pm1$ such that $\eps\psi$ is the coassociative
    calibration of $(V,\phi)$.

  \item
    \label{It_Psi2}
    If $\alpha,\beta\in V^*$ are linearly independent, then there
    exists a $1$--form $\gamma\in V^*$ such that
    $\alpha\wedge\beta\wedge\gamma\wedge\psi\ne0$.
  \end{enumerate}
\end{theorem}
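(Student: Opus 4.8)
The plan is to reformulate \itref{It_Psi2} as a statement about restrictions of $\psi$, prove \itref{It_Psi1}$\Rightarrow$\itref{It_Psi2} by a short Hodge-star computation, and deduce the converse from the fact that \itref{It_Psi2} cuts out exactly the $\GL(V)$-orbit of $*\phi_0$.

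First observe that \itref{It_Psi2} is equivalent to: $\psi|_W\neq0$ for every $5$-dimensional subspace $W\subset V$. Indeed $\alpha\wedge\beta\wedge\psi\in\Lambda^6V^*$ is nonzero iff $\alpha\wedge\beta\wedge\gamma\wedge\psi\neq0$ for some $\gamma$, and if $\alpha,\beta$ span the annihilator of $W=\ker\alpha\cap\ker\beta$ then evaluating on a basis adapted to $W$ gives $\alpha\wedge\beta\wedge\psi\neq0\iff\psi|_W\neq0$, while linear independence of $\alpha,\beta$ corresponds to $\codim W=2$. For \itref{It_Psi1}$\Rightarrow$\itref{It_Psi2}: by \autoref{le:psi} and \autoref{le:imO} the hypothesis says $\psi=*\phi$ for the inner product and orientation compatible with a nondegenerate $3$-form $\phi$ (replace $\phi$ by $-\phi$ if $\eps=-1$). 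For linearly independent $\alpha=u^*$, $\beta=v^*$ and $\gamma=w^*$, iterating~\eqref{eq:star} three times yields $u^*\wedge v^*\wedge w^*\wedge\psi=\phi(u,v,w)\,\dvol$, and nondegeneracy of $\phi$ supplies a $w$ with $\phi(u,v,w)\neq0$.

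For \itref{It_Psi2}$\Rightarrow$\itref{It_Psi1} put $\psi_0:=*\phi_0$ (Example~\ref{ex:cross7}), $\cO:=\GL(V)\cdot\psi_0$ and $\cU:=\{\psi:\itref{It_Psi2}\text{ holds}\}$. By naturality of the coassociative calibration and \autoref{thm:imO}, $\cO$ is precisely the set of $4$-forms described in \itref{It_Psi1}; by the first implication $\psi_0\in\cU$, and since \itref{It_Psi2} is $\GL(V)$-invariant, $\cO\subset\cU$. It remains to prove $\cU\subset\cO$, and I would do this by showing $\cO$ is open, dense, and relatively closed in $\cU$. \emph{Open}: the map $\phi\mapsto*_\phi\phi$ on the open set of nondegenerate $3$-forms is smooth and, by \autoref{thm:phipsi}, locally injective between spaces of equal dimension $35$, hence open by invariance of domain; and $\cU$ is open since its complement is the image of the closed set $\{(W,\psi)\in\Gr(5,V)\times\Lambda^4V^*:\psi|_W=0\}$ under the proper projection to $\Lambda^4V^*$ ($\Gr(5,V)$ being compact). \emph{Dense}: the stable $4$-forms (those with open $\GL(V)$-orbit) form a dense open set, and over $\R$ the only two open orbits are $\cO$ and the ``split'' orbit, represented by $\psi':=*_{g'}\phi'$ with $g'$ of signature $(3,4)$ and $\phi'$ as in \autoref{rmk:cross7}; the latter misses $\cU$, because the indefinite quadratic form $u\mapsto\iota(u)\phi'\wedge\iota(u)\phi'\wedge\phi'$ has a nonzero null vector $u_0$, which forces $\iota(u_0)\phi'$ to have rank $\le4$ and hence gives a $v_0$ with $\phi'(u_0,v_0,\cdot)=0$, so by the computation above (valid up to sign in any signature) $u_0^*\wedge v_0^*\wedge\psi'=0$ and \itref{It_Psi2} fails for $\psi'$. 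Thus $\cO$ is dense in $\cU$. \emph{Closed}: if $g_n^*\psi_0\to\psi\in\cU$, then using the polar decomposition of $g_n$, compactness of $\mathrm O(V)$, and a convergent orthogonal change of coframe (which affects neither hypothesis nor conclusion) one reduces to $g_n^*\psi_0$ having coefficients $\pm\prod_{i\in S}\mu_i^{(n)}$, $\mu_i^{(n)}>0$, on the seven index sets $S$ complementary to the lines of the Fano plane $e^{123},e^{145},\dots,e^{356}$ underlying $\psi_0$; all these products converge. If $\prod_{i\in S}\mu_i^{(n)}\to0$ for the $S$ complementary to a line $\ell$, pick two points $a,b\in\ell$ and set $T:=\{1,\dots,7\}\setminus\{a,b\}$; then $\psi|_{\SPAN\{e_i:i\in T\}}$ has a single surviving term, proportional to that product, and so vanishes in the limit, contradicting the reformulation of \itref{It_Psi2}. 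Hence every $\prod_{i\in S}\mu_i^{(n)}$ has a positive limit; since the incidence matrix of the Fano plane is invertible over $\Q$, each $\mu_i^{(n)}$ then has a positive limit, so $g_n$ subconverges in $\GL(V)$ and $\psi\in\cO$. Combining openness, density, and closedness gives $\cU=\cO$.

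The main obstacle is the closedness step: the Fano combinatorics control the singular-value part of the degenerating family $g_n$, but one must still bound the coefficients of $g_n^*\psi_0$ on the index sets \emph{not} complementary to a Fano line---these are $o(1)$ from the convergent orthogonal factor yet multiplied by possibly unbounded products of the $\mu_i^{(n)}$. Making this precise, or else bypassing the whole degeneration analysis via Hitchin's reconstruction of the dual $3$-form of a stable $4$-form together with the classification of stable $3$-forms, is where the real work lies.
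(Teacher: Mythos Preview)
Your forward implication is correct and coincides with the paper's (which invokes equation~\eqref{eq:phipsi1}). For the converse you take a genuinely different route---an orbit-closure argument---whereas the paper gives a direct construction.

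The gap you flag in the closedness step is real and not easily repaired. After a singular-value decomposition $g_n=k_na_nl_n$ and passage to convergent orthogonal factors $k_n\to k$, $l_n\to l$, you are left analysing $a_n^*(k^*\psi_0)$; but $k^*\psi_0$ need not have the Fano-plane support pattern of $\psi_0$, since only $\Gtwo\subsetneq\mathrm O(7)$ preserves $\psi_0$. So the combinatorial bookkeeping you set up does not apply to the form you actually have to control. Your density step is correct, but it leans on the real classification of open $\GL(7,\R)$--orbits on $\Lambda^4(\R^7)^*$, itself a nontrivial external input.

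The paper's argument is exactly the ``Hitchin reconstruction'' you allude to in your final sentence, and it is short. Fix any volume form $\sigma\in\Lambda^7V^*$ and regard $\psi$ as a $3$--form on the dual space via
\[
\Phi(\alpha,\beta,\gamma):=\frac{\alpha\wedge\beta\wedge\gamma\wedge\psi}{\sigma}
\qquad(\alpha,\beta,\gamma\in V^*).
\]
Condition~(ii) says precisely that $\Phi$ is a nondegenerate $3$--form on $V^*$, so \autoref{thm:imO} supplies a compatible inner product on $V^*$. The induced isomorphism $\kappa\co V\to V^*$ transports this to an inner product on $V$ and pulls $\Phi$ back to a nondegenerate $\phi:=\kappa^*\Phi\in\Lambda^3V^*$. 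The identity $\kappa(u)\wedge\kappa(v)\wedge\kappa(w)\wedge\psi=\phi(u,v,w)\,\sigma$ (which is just~\eqref{eq:phipsi1} read on the $\phi$ side) shows that $\psi$ and the coassociative calibration of $\phi$ agree up to a scalar, and a scaling of $\sigma$ by the appropriate power pins that scalar to $\pm1$. No orbit classification, no degeneration analysis---just \autoref{thm:imO} applied on $V^*$ instead of $V$.
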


\begin{proof}
  That~\itref{It_Psi1} implies~\itref{It_Psi2} follows 
  from equation~\eqref{eq:phipsi1} in \autoref{le:phipsi} below. %
  To prove the converse, assume~\itref{It_Psi2} 
  and fix any volume form $\sigma\in\Lambda^7V^*$. %
  Define the $3$--form $\Phi$ on the dual space $V^*$ by
  \begin{equation}
    \label{eq:Phipsi}
    \Phi(\alpha,\beta,\gamma)
    :=
    \frac{\alpha\wedge\beta\wedge\gamma\wedge\psi}{\sigma}
      \qquad\mbox{for }
    \alpha,\beta,\gamma\in V^*. 
  \end{equation}
  This $3$--form is nondegenerate by~\itref{It_Psi2}. %
  Denote the corresponding coassociative calibration by
  ${\Psi\co V^*\times V^*\times V^*\times V^*\to\R}$ and let
  $\inner{\cdot}{\cdot}_{V^*}$ be the inner product on~$V^*$
  determined by $\Phi$. %
  Let $\kappa\co V\to V^*$ be the isomorphism induced by this inner
  product, so $\alpha(u)=\inner{\alpha}{\kappa(u)}_{V^*}$ for
  $\alpha\in V^*$ and $u\in V$. %
  Let $\inner{\cdot}{\cdot}_V$ be the pullback under $\kappa$ of the
  inner product on $V^*$. %
  Then $ \phi := \kappa^*\Phi\in\Lambda^3V^* $ is~a nondegenerate
  $3$--form compatible with the inner product and the volume form
  \begin{equation*}
    \dvol := \tfrac{1}{7}\kappa^*\Phi\wedge\kappa^*\Psi.
  \end{equation*}
  By equation~\eqref{eq:Phipsi},
  \begin{equation}
    \label{eq:kappaphipsi}
    \phi(u,v,w)\sigma
    = \kappa(u)\wedge\kappa(v)\wedge\kappa(w)\wedge\psi.
  \end{equation}
  for all $u,v,w\in V$. %
  Choose $\lambda>0$ and $\eps=\pm1$ such that
  \begin{equation}
    \label{eq:dvolasi}
    \dvol
    = \eps\lambda^{-4/3}\sigma.
  \end{equation}
  Replace $\sigma$ by $\sigma_\lambda:=\lambda\sigma$
  in~\eqref{eq:Phipsi} to obtain $\Phi_\lambda=\lambda^{-1}\Phi$. %
  Its coassociative calibration is $\Psi_\lambda=\lambda^{-4/3}\Psi$,
  the inner product on $V^*$ induced by $\Phi_\lambda$ is
  $\inner{\cdot}{\cdot}_{V^*,\lambda}=\lambda^{-2/3}\inner{\cdot}{\cdot}_{V^*}$,
  and the isomorphism $\kappa_\lambda\co V\to V^*$ is
  $\kappa_\lambda=\lambda^{2/3}\kappa$. %
  Hence ,
  \begin{equation*}
    \phi_\lambda := \kappa_\lambda^*\Phi_\lambda = \lambda\phi,\qquad
    \psi_\lambda := \kappa_\lambda^*\Psi_\lambda = \lambda^{4/3}\kappa^*\Psi. 
  \end{equation*}
  By~\eqref{eq:dvolasi} this implies
  \begin{equation*}
    \dvol_\lambda
    :=
      \tfrac{1}{7}\phi_\lambda\wedge\psi_\lambda
    =
      \lambda^{7/3}\dvol
    =
      \eps\lambda\sigma=\eps\sigma_\lambda.
  \end{equation*}
  Multiply both sides in equation~\eqref{eq:kappaphipsi} by 
  $\eps\lambda^2$ to obtain
  \begin{equation*}
    \phi_\lambda(u,v,w)\eps\sigma_\lambda
    = \kappa_\lambda(u)\wedge\kappa_\lambda(v)\wedge\kappa_\lambda(w)\wedge\eps\psi
  \end{equation*}
  Since $\eps\sigma_\lambda=\dvol_\lambda$, 
  it follows from~\eqref{eq:phipsi1} below that the same equation 
  holds with~$\eps\psi$ replaced by~$\psi_\lambda$. %
  Thus $\eps\psi=\psi_\lambda$ is the associative calibration of $\phi_\lambda$. %
  (Here $\eps$ is independent of the choice of $\sigma$.) %
  This proves \autoref{thm:psi}. %
\end{proof}

\begin{remark}
  We can interpret \autoref{thm:psi} in the spirit of \autoref{Rmk_Hitchin}. %
  In the notation of \autoref{Rmk_Hitchin},
  if $V$ is an oriented $n$--dimensional vector space with an inner product $g$, 
  then the Hodge $*$--operator $*\co \Lambda^k V^* \to \Lambda^{n-k} V^*$ can be defined as
  \begin{equation*}
    *\alpha
    = (i_g^{-1})^*\alpha \otimes \vol_g \in \Lambda^k V \otimes \Lambda^n V^* = \Lambda^{n-k} V^*.
  \end{equation*}

  If $V$ is a $7$--dimensional vector space and $\psi \in \Lambda^4 V^*$,
  then we can equivalently think of it as a $3$--form $\phi^*$ on
  $V^*$ with values in $\Lambda^7 V^*$ since
  $\Lambda^4 V^* = \Lambda^3 V \otimes \Lambda^7 V^*$. %
  Define a symmetric bilinear form $G^* \co V^*\times V^*\to (\Lambda^7 V^*)^2$ by
  \begin{equation*}
    G^*(\alpha,\beta)
    :=
    \frac16 i(\alpha)\phi^* \wedge i(\beta)\phi^* \wedge \phi^*
    \qquad\mbox{for }\alpha,\beta\in V^*.
  \end{equation*}
  Condition~\itref{It_Psi2} in \autoref{thm:psi} is equivalent to
  $i_{G^*}\co V^* \to V \otimes (\Lambda^7 V^*)^2$ being an
  isomorphism. %
  Note that $\det i_{G^*} \in (\Lambda^7 V^*)^{12}$. %
  After picking an orientation we define a positive root
  $(\det i_{G^*})^{1/12} \in \Lambda^7 V^*$. %
  Define a volume form on $V$ and an inner product on $V^*$ by
  \begin{equation*}
    \vol_g := (\det(i_{G^*}))^{1/12}
    \qandq
    g^* := \frac{G^*}{\vol_g^2}.
  \end{equation*}
  A moment's thought shows that $\vol_g$ is the volume form associated
  with the dual inner product $g$ and the chosen orientation on $V$. %
  Further, the $3$--form 
  \begin{equation*}
  \phi := \frac{(i_g)^*\phi^*}{\vol_g} \in \Lambda^3 V^*
  \end{equation*}
  satisfies
  \begin{equation*}
    \frac16 i(u)\phi \wedge i(v)\phi \wedge \phi
    = g(u,v)\, \vol_g.
  \end{equation*}
  and $*\phi = \psi$.
\end{remark}

The next lemma summarizes some useful identities that will be needed
throughout. %
The first of these has already been used in the proof of
\autoref{thm:psi}. %
Assume that~$V$ is a $7$--dimensional oriented real Hilbert space
equipped with a compatible cross product, $\phi\in\Lambda^3V^*$ is the
associative calibration, and~${\psi:=*\phi\in\Lambda^4V^*}$ is the
coassociative calibration of~$(V,\phi)$.

\begin{lemma}
  \label{le:phipsi}
  The following hold for all $u,v,w,x\in V$ and all
  $\om\in\Lambda^2V^*$:
  \begin{align}
    \psi\wedge u^*\wedge v^*\wedge w^*
    &=
      \phi(u,v,w)\dvol,
      \label{eq:phipsi1}\\
    \phi\wedge u^*\wedge v^*\wedge w^*\wedge x^*
    &=
      \psi(u,v,w,x)\dvol,
      \label{eq:phipsi2}\\
    \iota(u)\psi\wedge v^*\wedge \iota(v)\psi
    &=
      0,
      \label{eq:phipsi3}\\
    *(\psi\wedge u^*)
    &=
      \iota(u)\phi,
      \label{eq:phipsi4}\\
    *(\phi\wedge u^*)
    &= 
      \iota(u)\psi,
      \label{eq:phipsi5}\\
    \abs{\iota(u)\phi}^2 
    &= 
      3\abs{u}^2,
      \label{eq:phipsi6}\\
    \abs{\iota(u)\psi}^2 
    &= 
      4\abs{u}^2,
      \label{eq:phipsi7}\\
    \phi\wedge\iota(u)\phi 
    &= 
      2\psi\wedge u^*,
      \label{eq:phipsi8}\\
    \phi\wedge\iota(u)\psi 
    &= 
      -4\iota(u)\dvol,
      \label{eq:phipsi9}\\
    \psi\wedge\iota(u)\phi 
    &= 
      3\iota(u)\dvol,
      \label{eq:phipsi10}\\
    \psi\wedge\iota(u)\psi
    &=
      0,
      \label{eq:phipsi11}\\
    *(\phi\wedge\iota(u)\phi)
    &=
      2\iota(u)\phi,
      \label{eq:phipsi12}\\
    *(\phi\wedge\iota(u)\psi)
    &=
      -4u^*,
      \label{eq:phipsi13}\\
    *(\psi\wedge\iota(u)\phi)
    &=
      3u^*,
      \label{eq:phipsi14}\\
    *(\psi\wedge*(\psi\wedge\iota(u)\phi))
    &= 
      3\iota(u)\phi,
      \label{eq:phipsi15}\\
    \iota(u)\phi\wedge*\iota(v)\phi 
    &= 
      3\inner{u}{v}\dvol,
      \label{eq:phipsi16}\\
    u^*\wedge v^* 
    &= 
      \iota(u\times v)\phi-\iota(v)\iota(u)\psi,
      \label{eq:phipsi17}\\
    u^*\wedge v^*\wedge*\iota(u\times v)\phi 
    &= 
      \abs{u\times v}^2\dvol,
      \label{eq:phipsi18}\\
    u^*\wedge v^*\wedge\iota(u)\phi\wedge\iota(v)\psi 
    &= 
      2\abs{u\times v}^2\dvol,
      \label{eq:phipsi19}\\
    \psi\wedge u^*\wedge v^*
    &= 
      \iota(u\times v)\dvol,
      \label{eq:phipsi20}\\
    \phi\wedge u^*\wedge v^*\wedge w^*
    &= 
      \iota([u,v,w])\dvol,
      \label{eq:phipsi21}\\
    \phi\wedge u^*\wedge v^*
    &= 
      *\iota(v)\iota(u)\psi,
      \label{eq:phipsi22}\\
    *(\psi\wedge *(\psi\wedge\om))
    &= 
      \om+*(\phi\wedge\om),
      \label{eq:phipsi23}\\
    *(\phi\wedge *(\phi\wedge\om))
    &= 
      2\om+*(\phi\wedge\om).
      \label{eq:phipsi24}
  \end{align}
\end{lemma}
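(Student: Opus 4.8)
The strategy is to reduce everything to the standard model and then derive as much of the list as possible formally. By \autoref{thm:imO}~\itref{It_Transitive}---which in particular makes the isotropy group $\rG$ act transitively on the unit sphere---every nondegenerate compatible $3$--form on a $7$--dimensional Hilbert space is carried by an isometry to the standard $\phi_0$ of \autoref{rmk:psi} on $\R^7$, with $\psi_0=*\phi_0$ by \autoref{le:psi}; and since each identity is multilinear and alternating in $u,v,w,x$ and linear in $\om$, the remaining content is a finite computation on $\R^7$. I would use throughout: $\psi=*\phi$, $*\psi=\phi$ and $**=\id$ (in dimension $7$ on every degree); the pairing $\alpha\wedge*\beta=\inner{\alpha}{\beta}\dvol$ together with $\inner{v_1^*\wedge\dots\wedge v_k^*}{\alpha}=\alpha(v_1,\dots,v_k)$; \autoref{rmk:star}, $*\iota(u)\alpha=(-1)^{k-1}u^*\wedge*\alpha$, whose special cases (after applying $*$) are $\iota(u)\phi=*(\psi\wedge u^*)$, $\iota(u)\psi=*(\phi\wedge u^*)$, $*u^*=\iota(u)\dvol$ and $*\iota(u)\dvol=u^*$; and the Leibniz rule for $\iota(u)$ with $\iota(v)(v^*\wedge\alpha)=\abs{v}^2\alpha-v^*\wedge\iota(v)\alpha$ and $u^*\wedge\iota(v)\dvol=\inner{u}{v}\dvol$.

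The derivations then run as follows. Equations \eqref{eq:phipsi1}, \eqref{eq:phipsi2}, \eqref{eq:phipsi4}, \eqref{eq:phipsi5}, \eqref{eq:phipsi16}, \eqref{eq:phipsi22} are formal consequences of the items above (modulo sign bookkeeping from commuting forms). Equation \eqref{eq:phipsi11} is $\psi\wedge\iota(u)\psi=\tfrac12\iota(u)(\psi\wedge\psi)=0$ because $\psi\wedge\psi\in\Lambda^8\R^7=0$. Applying $*$ identifies \eqref{eq:phipsi8}, \eqref{eq:phipsi9}, \eqref{eq:phipsi10} with \eqref{eq:phipsi12}, \eqref{eq:phipsi13}, \eqref{eq:phipsi14}; contracting $\phi\wedge\psi=\abs{\phi}^2\dvol=7\dvol$ with $\iota(u)$ makes \eqref{eq:phipsi9} equivalent to \eqref{eq:phipsi10}; and since $\psi\wedge\iota(u)\phi\wedge u^*=*\iota(u)\phi\wedge\iota(u)\phi=\abs{\iota(u)\phi}^2\dvol$, the norm identity \eqref{eq:phipsi6} pins down the constant $3$ in \eqref{eq:phipsi10}, hence in \eqref{eq:phipsi9}, \eqref{eq:phipsi13}, \eqref{eq:phipsi14}. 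Equation \eqref{eq:phipsi17} needs no model: pairing both sides with an arbitrary $w^*\wedge x^*$ gives $\inner{u}{w}\inner{v}{x}-\inner{u}{x}\inner{v}{w}$ on the left and $\phi(u\times v,w,x)-\psi(u,v,w,x)$ on the right, which agree by \eqref{eq:psicross} and the definition \eqref{eq:associator} of the associator bracket. From \eqref{eq:phipsi17} the $\dvol$--valued identities \eqref{eq:phipsi18}, \eqref{eq:phipsi19}, \eqref{eq:phipsi20}, \eqref{eq:phipsi21} follow by wedging with the remaining $1$--form(s) and invoking \eqref{eq:phipsi1}--\eqref{eq:phipsi2}, and \eqref{eq:phipsi15} is \eqref{eq:phipsi23} applied to $\om=\iota(u)\phi$ together with \eqref{eq:phipsi12}.

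What is left is a short list that genuinely uses that $\phi$ is a $\Gtwo$--form and not a generic $3$--form, and this I would verify on $\R^7$: the norms \eqref{eq:phipsi6}--\eqref{eq:phipsi7} (each standard index lies in exactly $3$, resp.\ $4$, monomials of $\phi_0$, resp.\ $\psi_0$, so $\abs{\iota(e_i)\phi}^2=3$ and $\abs{\iota(e_i)\psi}^2=4$; by $\Gtwo$--invariance the quadratic forms $u\mapsto\abs{\iota(u)\phi}^2$ and $u\mapsto\abs{\iota(u)\psi}^2$ are multiples of $\abs{u}^2$, hence $3\abs{u}^2$ and $4\abs{u}^2$); the identity \eqref{eq:phipsi12}, equivalently \eqref{eq:phipsi8}; and the operator identities \eqref{eq:phipsi23}--\eqref{eq:phipsi24}. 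The cleanest packaging of the latter is to put $P\om:=*(\phi\wedge\om)$ and $Q\om:=*(\psi\wedge*(\psi\wedge\om))$ on $\Lambda^2\R^7$, so that \eqref{eq:phipsi24} reads $P^2=P+2$ and \eqref{eq:phipsi23} reads $Q=\id+P$. Here $P$ is self-adjoint (the trilinear form $(\alpha,\beta)\mapsto\alpha\wedge\phi\wedge\beta$ on $\Lambda^2$ is symmetric) and has $\{\iota(u)\phi:u\in\R^7\}$ as a $7$--dimensional $2$--eigenspace by \eqref{eq:phipsi12}; the two scalar identities $\tr P=0$ and $\tr P^2=42$, checked on $\phi_0$, then force the complementary eigenspace to be $14$--dimensional with eigenvalue $-1$, whence $(P-2)(P+1)=0$ and \eqref{eq:phipsi24}. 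For \eqref{eq:phipsi23} I would use $Q\om=\iota(c\om)\phi$, where $c\co\Lambda^2\R^7\to\R^7$ is the map $a^*\wedge b^*\mapsto a\times b$ (this identity coming from \eqref{eq:phipsi20} and \eqref{eq:phipsi4}), together with $c(\iota(u)\phi)=3u$ and the vanishing of $c$ on $\ker(P+1)$ (a $\Gtwo$--equivariant map from the $14$--dimensional eigenspace to $\R^7$, hence zero---or verified directly).

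I expect the main obstacle to be precisely this organizational work rather than any single identity: keeping the Hodge-star and wedge-commutation signs uniform across the whole web of implications, and shrinking the residual explicit computations on $\phi_0$ (for \eqref{eq:phipsi6}, \eqref{eq:phipsi12}, $\tr P$ and $\tr P^2$) by first using the $\Gtwo$--action to normalize the vectors involved---for instance taking $u$ to be a standard basis vector in \eqref{eq:phipsi6} and \eqref{eq:phipsi12}, and exploiting \eqref{eq:phipsi12} plus the trace identities to avoid writing out the full $21\times 21$ matrix of $P$.
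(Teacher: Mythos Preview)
Your overall organization matches the paper's: reduce to the standard model via \autoref{thm:imO}, derive most identities formally from $\psi=*\phi$ and \autoref{rmk:star}, and leave a short residual list for explicit verification on~$\phi_0$. Two local tactics differ. First, the paper simply computes \eqref{eq:phipsi6}--\eqref{eq:phipsi10} directly on $u=e_1$, $v=e_2$, whereas you extract \eqref{eq:phipsi9}--\eqref{eq:phipsi10} from \eqref{eq:phipsi6} and the contraction of $\phi\wedge\psi=7\dvol$; this is a nice shortcut. Second, for \eqref{eq:phipsi23}--\eqref{eq:phipsi24} the paper does not use the spectral decomposition of $P$ at all: it checks both identities on decomposable $2$--forms $\om=u^*\wedge v^*$, using \eqref{eq:phipsi17}, \eqref{eq:phipsi20}, \eqref{eq:phipsi4}, and \eqref{eq:phipsi12}. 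Your eigenvalue argument is more conceptual but be careful: the Schur's-lemma step (a $\rG$--map from the $14$--dimensional $(-1)$--eigenspace to $\R^7$ vanishes) anticipates \autoref{thm:form7}, whose proof in the paper invokes \eqref{eq:phipsi23}. You can avoid the circularity cleanly by noting that $Q=R^*R$ with $R\co\Lambda^2\to\Lambda^1$, so $\dim\ker Q\ge\dim\Lambda^2-\dim\Lambda^1=14$; since $Q=3\id$ on the $7$--dimensional $2$--eigenspace of $P$, the kernel is exactly the $(-1)$--eigenspace.

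Two small gaps: you never mention \eqref{eq:phipsi3}, which the paper checks by hand on $e_1,e_2$ and which does not fall out of your formal list; and \eqref{eq:phipsi19} is already a $7$--form, so there is no ``remaining $1$--form'' to wedge with---the paper obtains it by expanding $\iota(v)(u^*\wedge v^*\wedge\iota(u)\phi)\wedge\psi$ via Leibniz and then invoking \eqref{eq:phipsi4}, \eqref{eq:phipsi16}, and \eqref{eq:phipsi18}.
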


\begin{proof}
  It is a general fact about alternating $k$--forms on a finite-dimensional
  Hilbert space $V$ that $\inner{u_1^*\wedge\cdots\wedge u_k^*}{\alpha}
  =\alpha(u_1,\dots,u_k)$
  for all $u_i\in V$ and all $\alpha\in\Lambda^kV^*$. %
  This proves~\eqref{eq:phipsi1} and~\eqref{eq:phipsi2}. %
  Equations~\eqref{eq:phipsi4} and~\eqref{eq:phipsi5} 
  follow from~\eqref{eq:star} in \autoref{rmk:star}. 

  To prove equations~\eqref{eq:phipsi3} and~\eqref{eq:phipsi6}--\eqref{eq:phipsi10}
  assume without loss of generality that $u,v$ are orthonormal.  %
  By \autoref{thm:imO} assume that $V=\R^7$
  with $u=e_1$ and $v=e_2$, and that $\phi$ and $\psi$ 
  are as in~\eqref{eq:phi0} and~\eqref{eq:psi0}, i.e.,
  \begin{equation}
    \label{eq:phipsi0}
    \begin{split}
      \phi
      &
      =\phi_0
      = e^{123} - e^{145} - e^{167} 
      - e^{246} + e^{257} - e^{347} - e^{356}, \\
      \psi
      &
      = \psi_0
      = - e^{1247} - e^{1256} + e^{1346} - e^{1357} 
      - e^{2345} - e^{2367} + e^{4567}.
    \end{split}
  \end{equation}
  Then 
  \begin{equation}
    \label{eq:uphipsi}
    \begin{split}
      \iota(u)\phi &=e^{23}-e^{45}-e^{67},\\
      \iota(u)\psi &= -e^{247}-e^{256}+e^{346}-e^{357}, \\
      v^*\wedge\iota(v)\psi &= -e^{1247}-e^{1256}+e^{2345}-e^{2367}.
    \end{split}
  \end{equation}
  Equation~\eqref{eq:phipsi3} follows by multipying the last two sums,
  and~\eqref{eq:phipsi6} and~\eqref{eq:phipsi7} follow by examining 
  the first two sums. %
  Moreover, by~\eqref{eq:phipsi0} and~\eqref{eq:uphipsi},
  \begin{equation*}
    \phi\wedge\iota(u)\phi 
    = -2e^{12345}-2e^{12367}+2e^{14567}
    = 2\,{*\iota(u)\phi} = 2u^*\wedge\psi.
  \end{equation*}
  This proves~\eqref{eq:phipsi8}. %
  By~\eqref{eq:phipsi0} and~\eqref{eq:uphipsi} we also have
  $\psi\wedge\iota(u)\phi = 3e^{234567}$
  and $\phi\wedge\iota(u)\psi = -4e^{234567}$. %
  This proves~\eqref{eq:phipsi9} and~\eqref{eq:phipsi10}.

  Equation~\eqref{eq:phipsi11} follows by contracting $u$ with the
  $8$--form $\psi\wedge\psi=0$. %
  Equations~\eqref{eq:phipsi12}--\eqref{eq:phipsi14} follow
  from~\eqref{eq:phipsi8}--\eqref{eq:phipsi10} and the fact that
  $*u^*=\iota(u)\dvol$ and $*(u^*\wedge\psi)=\iota(u)\phi$
  by~\eqref{eq:phipsi4}. %
  To prove equation~\eqref{eq:phipsi15} take the exterior product of
  equation~\eqref{eq:phipsi14} with $\psi$ and then
  use~\eqref{eq:phipsi4} to obtain
  \begin{equation*}
    \psi\wedge*(\psi\wedge\iota(u)\phi)=\psi\wedge3u^*=3\,{*\iota(u)\phi}.
  \end{equation*}
  Equation~\eqref{eq:phipsi16} follows from~\eqref{eq:phipsi6} 
  and the fact that the left hand side in~\eqref{eq:phipsi16} 
  is symmetric in $u$ and $v$. %
  Equation~\eqref{eq:phipsi17} is equivalent to~\eqref{eq:psicross}
  in the proof of \autoref{le:psi}. %
  To prove equation~\eqref{eq:phipsi18} choose $w:=u\times v$ 
  in~\eqref{eq:phipsi1} to obtain 
  \begin{equation*}
    \abs{u\times v}^2\dvol
    = u^*\wedge v^*\wedge(u\times v)^*\wedge\psi
    = u^*\wedge v^*\wedge *\iota(u\times v)\phi.
  \end{equation*}
  Here the last equation follows from~\eqref{eq:phipsi4}. %
  To prove~\eqref{eq:phipsi19} we compute
  \begin{equation*}
    \begin{split}
      &
      u^*\wedge v^*\wedge\iota(u)\phi\wedge\iota(v)\psi   \\
      &= 
      -\iota(v)\bigl(u^*\wedge v^*\wedge\iota(u)\phi\bigr)\wedge\psi  \\
      &= 
      - \inner{u}{v}v^*\wedge\iota(u)\phi\wedge\psi  
      + \abs{v}^2u^*\wedge\iota(u)\phi\wedge\psi 
      - u^*\wedge v^*\wedge (u\times v)^*\wedge\psi \\
      &= 
      -\inner{u}{v}\iota(u)\phi\wedge *\iota(v)\phi  
      + \abs{v}^2\iota(u)\phi\wedge *\iota(u)\phi 
      - u^*\wedge v^*\wedge *\iota(u\times v)\phi \\
      &=
      2\abs{u\times v}^2\dvol.
    \end{split}
  \end{equation*}
  Here the second step uses the identity 
  $\iota(v)\iota(u)\phi=\phi(u,v,\cdot)=(u\times v)^*$,
  the third step follows from~\eqref{eq:phipsi4}, and the 
  last step follows from~\eqref{eq:phipsi8} and~\eqref{eq:phipsi18}.

  To prove equation~\eqref{eq:phipsi20} take the exterior product
  with a $1$--form $w^*$ and use equation~\eqref{eq:phipsi1} to obtain
  \begin{align*}
    \bigl(\psi\wedge u^*\wedge v^*\bigr)\wedge w^*
    &=
      \phi(u,v,w)\dvol 
      =
      \inner{u\times v}{w}\dvol \\
    &=
      (*(u\times v)^*)\wedge w^*
      =
      (\iota(u\times v)\dvol)\wedge w^*.
  \end{align*}

  To prove equation~\eqref{eq:phipsi21} take the exterior product
  with a $1$--form $x^*$ and use equation~\eqref{eq:phipsi2} to obtain
  \begin{align*}
    \bigl(\phi\wedge u^*\wedge v^*\wedge w^*\bigr)\wedge x^*
    &=
      \psi(u,v,w,x)\dvol 
      =
      \inner{[u,v,w]}{x}\dvol \\
    &=
      (*[u,v,w]^*)\wedge x^*
      =
      (\iota([u,v,w])\dvol)\wedge x^*.
  \end{align*}

  To prove equation~\eqref{eq:phipsi22} take the exterior 
  product with $w^*\wedge x^*$ for $w,x\in V$ and use 
  equation~\eqref{eq:phipsi20} to obtain
  \begin{align*}
    \bigl(\phi\wedge u^*\wedge v^*\bigr)\wedge (w^*\wedge x^*)
    &=
      \psi(u,v,w,x)\dvol \\
    &=
      \inner{\iota(v)\iota(u)\psi}{w^*\wedge x^*}\dvol \\
    &=
      (*\iota(v)\iota(u)\psi)\wedge(w^*\wedge x^*).
  \end{align*}
  Since $\Lambda^2V^*$ has a basis of $2$--forms of the form
  $w^*\wedge x^*$, this proves~\eqref{eq:phipsi22}.

  To prove equations~\eqref{eq:phipsi23}
  and~\eqref{eq:phipsi24} it suffices to assume 
  \begin{equation*}
    \om=u^*\wedge v^*
  \end{equation*}
  for $u,v\in V$. %
  Then it follows from~\eqref{eq:phipsi17} and~\eqref{eq:phipsi22}
  that
  \begin{equation}
    \label{eq:utvphi}
    \begin{split}
      \iota(u\times v)\phi
      &= 
      u^*\wedge v^* + \iota(v)\iota(u)\psi \\
      &= 
      u^*\wedge v^* + *(u^*\wedge v^*\wedge\phi) \\
      &= 
      \om + *(\phi\wedge\om).
    \end{split}
  \end{equation}
  Moroever, $*(\psi\wedge\om)=(u\times v)^*$ by~\eqref{eq:phipsi20}.
  Hence, by~\eqref{eq:phipsi4} and~\eqref{eq:utvphi},
  \begin{align*}
    *\bigl(\psi\wedge *\bigl(\psi\wedge\om\bigr)\bigr)
    &=
      *\bigl(\psi\wedge(u\times v)^*\bigr) \\
    &=
      \iota(u\times v)\phi \\
    &=
      \om+*(\phi\wedge\om).
  \end{align*}
  This proves equation~\eqref{eq:phipsi23}.
  Moreover, by~\eqref{eq:phipsi12} and~\eqref{eq:utvphi}, 
  \begin{align*}
    *\bigl(\phi\wedge *\bigl(\phi\wedge\om\bigr)\bigr)
    &=
      *\bigl(\phi\wedge\bigl(\iota(u\times v)\phi-\om\bigr)\bigr) \\
    &=
      *\bigl(\phi\wedge\iota(u\times v)\phi\bigr) - *\bigl(\phi\wedge\om\bigr) \\
    &=
      2\iota(u\times v)\phi - *\bigl(\phi\wedge\om\bigr) \\
    &=
      2\om+*(\phi\wedge\om).
  \end{align*}
  This proves equation~\eqref{eq:phipsi24} and \autoref{le:phipsi}.
\end{proof}


    
\section{Normed algebras} \label{sec:NA}  

\begin{definition}
  \label{def:O}
  A \defined{normed algebra} consists of a finite dimensional 
  real Hilbert space $W$, a bilinear map
  \begin{equation*}
    W\times W\to W \co (u,v)\mapsto uv,
  \end{equation*}
  (called the \defined{product}), and a unit vector $1\in W$ (called
  the \defined{unit}), satisfying
  \begin{equation*}
    1u=u1=u
  \end{equation*}
  and
  \begin{equation}
    \label{eq:NORM}
    \abs{uv}=\abs{u}\abs{v}
  \end{equation}
  for all $u,v\in W$. 
\end{definition}

When $W$ is a normed algebra it is convenient to identify the real
numbers with a subspace of $W$ via multiplication with the unit $1$.
Thus, for $u\in W$ and $\lambda\in\R$, we write $u+\lambda$ instead of
$u+\lambda 1$.  Define an involution $W\to W\co u\mapsto\ubar$ (called
\defined{conjugation}) by $\bar 1:=1$ and $\ubar:=-u$ for
$u\in 1^\perp$.  Thus
\begin{equation}\label{eq:ubar}
  \ubar := 2\inner{u}{1} - u.
\end{equation}
We think of $\R\subset W$ as the real part of $W$ and of its
orthogonal complement as the imaginary part. 
The real and imaginary parts of $u\in W$ will be denoted by
$\RE\,u:=\inner{u}{1}$ and $\IM\,u:=u-\inner{u}{1}$. 

\begin{theorem}
  \label{thm:NDG}
  Normed algebras and vector spaces with cross products are related as
  follows.

\begin{enumerate}[(i)]
  \item
    \label{It_NDG1} 
    If $W$ is a normed algebra, then $V:=1^\perp$ is equipped with a
    cross product $V\times V\to V\co (u,v)\mapsto u\times v$ defined by
    \begin{equation}\label{eq:CROSS}
      u\times v := uv+\inner{u}{v}
    \end{equation}
    for $u,v\in 1^\perp$.

  \item
    \label{It_NDG2}
    If $V$ is a finite dimensional Hilbert space equipped with a cross
    product,
    then $W:=\R\oplus V$ is a normed algebra with
    \begin{equation}\label{eq:uv}
      uv := u_0v_0-\inner{u_1}{v_1}+u_0v_1+v_0u_1+u_1\times v_1
    \end{equation}
    for $u=u_0+u_1,v=v_0+v_1\in\R\oplus V$. %
    Here we identify a real number $\lambda$ with the pair
    $(\lambda,0)\in\R\oplus V$ and a vector $v\in V$ with the pair
    $(0,v)\in\R\oplus V$.
  \end{enumerate}

  These constructions are inverses of each other. %
  In particular, a normed algebra has dimension $1$, $2$, $4$, or $8$
  and is isomorphic to $\R$, $\C$, $\H$, or $\Oc$.
\end{theorem}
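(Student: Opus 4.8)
The plan is to verify \itref{It_NDG1} and \itref{It_NDG2} by direct computation, then check that the two constructions are mutually inverse, and finally read off the list of admissible dimensions and the classification from \autoref{thm:cross}. The only preliminary step that does real work is extracting the standard identities of a normed algebra from \eqref{eq:NORM}. I would polarize $\abs{uv}=\abs u\abs v$ first in $u$ and then in $v$ to obtain $\inner{uv}{u'w}+\inner{uw}{u'v}=2\inner u{u'}\inner vw$ for all $u,u',v,w\in W$; specializing $u'=1$ gives $\inner{uv}{w}+\inner{uw}{v}=2\inner u1\inner vw$, hence for $u\in 1^\perp$ the skew-adjointness relation $\inner{uv}{w}=\inner v{\bar uw}$ and, taking $w=1$, the identity $\inner{uv}{1}=-\inner uv$. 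Combining $\inner{uv}{w}=\inner v{\bar uw}$ (applied to $uv$ in place of $v$) with the further polarization $\inner{uv}{uw}=\abs u^2\inner vw$ gives $\bar u(uv)=\abs u^2v$, hence $\bar uu=u\bar u=\abs u^2$ and, for imaginary $u$, $u^2=-\abs u^2$. I expect this bundle of identities to be the main (essentially the only) obstacle; everything afterward is bookkeeping.

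For \itref{It_NDG1}: for $u,v\in V:=1^\perp$ set $u\times v:=uv+\inner uv$. Then $\inner{uv}{1}=-\inner uv$ shows $u\times v\in V$; the computation $\inner{uv}{u}=\inner v{\bar uu}=\abs u^2\inner v1=0$ together with its right-handed mirror gives \eqref{eq:orthogonal}; and expanding $\abs{u\times v}^2$ using $\abs{uv}^2=\abs u^2\abs v^2$ and $\inner{uv}{1}=-\inner uv$ gives \eqref{eq:area}. Skew-symmetry is then automatic by the observation recorded after \autoref{def:cross}, so $\times$ is a cross product. For \itref{It_NDG2}: writing $u=(a,\xi)$ and $v=(b,\eta)$, formula \eqref{eq:uv} reads $uv=(ab-\inner\xi\eta,\,a\eta+b\xi+\xi\times\eta)$; since $\xi\times\eta$ is orthogonal to $\xi$ and $\eta$ by \eqref{eq:orthogonal}, the cross term in $\abs{a\eta+b\xi+\xi\times\eta}^2$ drops out, and with $\abs{\xi\times\eta}^2=\abs\xi^2\abs\eta^2-\inner\xi\eta^2$ from \eqref{eq:area} one finds that $\abs{uv}^2$ collapses to $(a^2+\abs\xi^2)(b^2+\abs\eta^2)=\abs u^2\abs v^2$; that $(1,0)$ is a two-sided unit is immediate from \eqref{eq:uv}.

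To see that the constructions invert one another I would use the canonical orthogonal isomorphism $\R\oplus 1^\perp\to W$, $(a,\xi)\mapsto a+\xi$. Starting from a normed algebra $W$, for imaginary $\xi,\eta$ one has $\RE(\xi\eta)=\inner{\xi\eta}{1}=-\inner\xi\eta$, hence $\xi\eta=-\inner\xi\eta+\xi\times\eta$, and expanding the product of $W$ in these terms reproduces \eqref{eq:uv} transported along this isomorphism. Conversely, in $W=\R\oplus V$ built from $(V,\times)$ the subspace $1^\perp$ is $\{0\}\oplus V$, and for $u,v\in V$ one computes $uv=-\inner uv+u\times v$ in $W$, so the cross product recovered by \itref{It_NDG1} is $uv+\inner uv=u\times v$. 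Finally, for the dimension count and the classification: by \itref{It_NDG1} the space $1^\perp$ carries a cross product, so \autoref{thm:cross} forces $\dim 1^\perp\in\{0,1,3,7\}$ and hence $\dim W\in\{1,2,4,8\}$; moreover an orthogonal isomorphism $(V,\times)\to(V',\times')$ extends via $(a,\xi)\mapsto(a,g\xi)$ to an isomorphism of the associated normed algebras, directly from \eqref{eq:uv}. Thus in dimension $1$ we get $\R$; in dimension $2$ the cross product on the line $1^\perp$ vanishes, so $W\cong\C$; and in dimensions $4$ and $8$ \autoref{thm:cross} makes the cross product on $1^\perp$ unique up to orthogonal isomorphism (in dimension $3$ the sign ambiguity is realized by an orientation-reversing orthogonal map), so $W$ is isomorphic to the corresponding standard model, namely $\H$ in dimension $4$ and $\Oc$ in dimension $8$.
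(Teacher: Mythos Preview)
Your proof is correct and follows essentially the same route as the paper: derive the basic identities of a normed algebra by polarizing~\eqref{eq:NORM} (this is the content of \autoref{le:one}), then verify the cross-product and normed-algebra axioms by direct computation, and finish via \autoref{thm:cross}. The one organizational difference is that the paper handles~\itref{It_NDG1} and~\itref{It_NDG2} with a single computation---it expands $\abs{u}^2\abs{v}^2-\abs{uv}^2$ using the decomposition $u=u_0+u_1$ and shows the result vanishes for all $u,v$ if and only if the cross-product axioms~\eqref{eq:orthogonal} and~\eqref{eq:area} hold---which proves both directions at once; your two separate verifications cover the same ground.
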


\begin{proof}
  See page~\pageref{proof:NDG}.
\end{proof}

\bigbreak

\begin{lemma}
  \label{le:one}
  Let $W$ be a normed algebra.
  Then the following hold:

  \begin{enumerate}[(i)]
  \item
    \label{It_One1}
    For all $u,v,w\in W$ we have 
    \begin{equation}
      \label{eq:W1}
      \inner{uv}{w}=\inner{v}{\ubar w},\qquad
      \inner{uv}{w}=\inner{u}{w\vbar}.
    \end{equation}

  \item
    \label{It_One2}
    For all $u,v\in W$ we have 
    \begin{equation}\label{eq:W2}
      u\bar u = \abs{u}^2,\qquad
      u\vbar+v\ubar = 2\inner{u}{v}.
    \end{equation} 

  \item
    \label{It_One3}
    For all $u,v\in W$ we have
    \begin{equation}\label{eq:W3}
      \inner{u}{v}=\inner{\ubar}{\vbar},\qquad
      \overline{uv}=\vbar\ubar.
    \end{equation}

  \item
    \label{It_One4}
    For all $u,v,w\in W$ we have
    \begin{equation}\label{eq:W4}
      u(\vbar w)+v(\ubar w) = 2\inner{u}{v}w,\qquad
      (u\vbar)w+(u\wbar)v = 2 \inner{v}{w}u
    \end{equation} 
  \end{enumerate}
\end{lemma}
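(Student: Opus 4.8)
The plan is to derive everything from a single consequence of the norm identity \eqref{eq:NORM}, namely its polarization. Replacing $v$ by $v+v'$ in $\abs{uv}^2=\abs{u}^2\abs{v}^2$ and cancelling the diagonal terms yields $\inner{uv}{uv'}=\abs{u}^2\inner{v}{v'}$; replacing $u$ by $u+u'$ in this relation and again cancelling the diagonal terms yields the \emph{polarization identity}
\[
  \inner{uv}{u'v'}+\inner{u'v}{uv'}=2\inner{u}{u'}\inner{v}{v'}
  \qquad\text{for all }u,u',v,v'\in W .
\]
From the explicit formula \eqref{eq:ubar} one reads off immediately that $\overline{\ubar}=u$, that $\inner{\ubar}{1}=\inner{u}{1}$, and (decomposing $u$ into its real and imaginary parts) that $\abs{\ubar}=\abs{u}$; these facts will be used freely. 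Also $\ubar w=2\inner{u}{1}w-uw$ and $\overline{u+v}=\ubar+\vbar$ are immediate from \eqref{eq:ubar}.

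For part~\itref{It_One1} I would specialize the polarization identity. Setting $u'=1$ gives $\inner{uv}{v'}+\inner{v}{uv'}=2\inner{u}{1}\inner{v}{v'}$, which, using $\ubar v'=2\inner{u}{1}v'-uv'$, is precisely $\inner{uv}{v'}=\inner{v}{\ubar v'}$, the first identity in \eqref{eq:W1}. Setting instead $v=1$ gives $\inner{u}{u'v'}+\inner{u'}{uv'}=2\inner{u}{u'}\inner{1}{v'}$, which rearranges (using $u'\overline{v'}=2\inner{v'}{1}u'-u'v'$) to $\inner{uv'}{u'}=\inner{u}{u'\overline{v'}}$, i.e.\ the second identity in \eqref{eq:W1} after renaming. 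Replacing $u$ by $\ubar$ in the first identity and using $\overline{\ubar}=u$ also records the variant $\inner{\ubar v}{w}=\inner{v}{uw}$, which I will want in part~\itref{It_One3}.

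For part~\itref{It_One2} the key step is $u\ubar=\abs{u}^2$. Testing against an arbitrary $w$ and using \eqref{eq:W1}, $\inner{u\ubar}{w}=\inner{\ubar}{\ubar w}$; now $\inner{\ubar\cdot 1}{\ubar w}=\abs{\ubar}^2\inner{1}{w}$ by the first consequence of \eqref{eq:NORM} above (with $a=\ubar$, $b=1$, $c=w$), and $\ubar\cdot 1=\ubar$, $\abs{\ubar}=\abs{u}$, so $\inner{u\ubar}{w}=\abs{u}^2\inner{1}{w}$ for all $w$, hence $u\ubar=\abs{u}^2$. Polarizing this (replace $u$ by $u+v$, using linearity of conjugation) gives $u\vbar+v\ubar=2\inner{u}{v}$. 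This identity $u\ubar=\abs u^2$ is the only genuinely non-formal point: it is where \eqref{eq:NORM} must be combined with \eqref{eq:W1} in a way that is not a bilinear tautology; everything else is bookkeeping. For part~\itref{It_One3}, the first identity $\inner{u}{v}=\inner{\ubar}{\vbar}$ follows by expanding both sides via \eqref{eq:ubar}. For $\overline{uv}=\vbar\ubar$, note that $u\ubar=\abs u^2$ is equivalent to $u^2=2\inner{u}{1}u-\abs u^2$, whose polarization reads $uv+vu=2\inner{u}{1}v+2\inner{v}{1}u-2\inner{u}{v}$; expanding $\vbar\ubar$ by \eqref{eq:ubar}, expanding $\overline{uv}=2\inner{uv}{1}-uv$ and using $\inner{uv}{1}=\inner{v}{\ubar}$ from \eqref{eq:W1}, the difference $\overline{uv}-\vbar\ubar$ collapses exactly to this polarized quadratic relation and hence vanishes.

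For part~\itref{It_One4} I would test both identities against an arbitrary $z\in W$ and apply the polarization identity. For the first: by \eqref{eq:W1}, $\inner{u(\vbar w)+v(\ubar w)}{z}=\inner{\vbar w}{\ubar z}+\inner{\ubar w}{\vbar z}$, and the polarization identity with $(\vbar,w)$ and $(\ubar,z)$ turns this into $2\inner{\vbar}{\ubar}\inner{w}{z}=2\inner{u}{v}\inner{w}{z}$ by part~\itref{It_One3}; since $z$ is arbitrary, $u(\vbar w)+v(\ubar w)=2\inner{u}{v}w$. For the second: by the second identity in \eqref{eq:W1}, $\inner{(u\vbar)w+(u\wbar)v}{z}=\inner{u\vbar}{z\wbar}+\inner{u\wbar}{z\vbar}$, and the polarization identity with $(u,\vbar)$ and $(z,\wbar)$ gives $2\inner{u}{z}\inner{\vbar}{\wbar}=2\inner{v}{w}\inner{u}{z}$, whence $(u\vbar)w+(u\wbar)v=2\inner{v}{w}u$. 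The only thing requiring care throughout is choosing, at each step, which of the two forms in \eqref{eq:W1} to apply so that the pairing falls into the shape of the polarization identity.
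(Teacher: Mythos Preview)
Your proof is correct. The overall strategy is the same as the paper's: both proofs rest on polarizing $\abs{uv}^2=\abs{u}^2\abs{v}^2$ to obtain $\inner{uv}{uv'}=\abs{u}^2\inner{v}{v'}$ and its symmetric companion, and parts~\itref{It_One1} and~\itref{It_One2} unfold in essentially the same way in both arguments.

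There are two places where the routes diverge. For~\itref{It_One3}, the paper gets $\overline{uv}=\vbar\,\ubar$ in one line from~\eqref{eq:W2}: replacing $v$ by $\vbar$ gives $\vbar\,\ubar = 2\inner{u}{\vbar}-uv = 2\inner{uv}{1}-uv=\overline{uv}$, with the middle step by~\eqref{eq:W1}. Your route via the polarized quadratic relation $uv+vu=2\inner{u}{1}v+2\inner{v}{1}u-2\inner{u}{v}$ reaches the same conclusion but with more bookkeeping. For~\itref{It_One4}, the directions are reversed: the paper first shows $u(\ubar w)=\abs{u}^2 w$ by observing that $w\mapsto u(\ubar w)$ is self-adjoint (via~\eqref{eq:W1}) with quadratic form $\abs{u}^2\abs{w}^2$, and then polarizes in $u$; your approach of testing against an arbitrary $z$ and invoking the full polarization identity is more uniform and avoids the self-adjointness step. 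Both methods are short; the paper's~\itref{It_One3} is slicker, your~\itref{It_One4} is arguably cleaner.
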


\begin{proof}
  We prove~\itref{It_One1}. %
  The first equation in~\eqref{eq:W1} is obvious when $u$ is a real
  multiple of $1$. %
  Hence, it suffices to assume that $u$ is orthogonal to $1$. %
  Expanding the identities $\abs{uv+uw}^2=\abs{u}^2\abs{v+w}^2$ and
  $\abs{uv+wv}^2=\abs{u+w}^2\abs{v}^2$ we obtain the equations
  \begin{equation}
    \label{eq:uvuw}
    \inner{uv}{uw} = \abs{u}^2\inner{v}{w},\qquad
    \inner{uv}{wv} = \inner{u}{w}\abs{v}^2.
  \end{equation}
  If $u$ is orthogonal to $1$, the first
  equation in~\eqref{eq:uvuw} gives
  \begin{equation*}
    \inner{uv}{w}+\inner{v}{uw}
    = \inner{(1+u)v}{(1+u)w} - (1+\abs{u}^2)\inner{v}{w}=0.
  \end{equation*}
  Since $\bar u=-u$ for $u\in 1^\perp$, this proves the first equation
  in~\eqref{eq:W1}. %
  The proof of the second equation is similar.

  We prove~\itref{It_One2}.
  Using the second equation in~\eqref{eq:W1} with 
  $v=\bar u$ we obtain
  $
    \inner{u\ubar}{w}=\inner{u}{wu}=\inner{1}{w}\abs{u}^2.
  $
  Here we have used the second equation in~\eqref{eq:uvuw}.
  This implies $u\ubar=\abs{u}^2$ for every $u\in W$.  
  Replacing $u$ by $u+v$ gives $u\vbar+v\ubar = 2\inner{u}{v}$.
  This proves~\eqref{eq:W2}.

  We prove~\itref{It_One3}.
  That conjugation is an isometry follows immediately from the
  definition.
  Using~\eqref{eq:W2} with $v$ replaced by $\vbar$ we obtain
  \begin{equation*}
    \vbar\ubar 
    = 2\inner{u}{\vbar} - uv
    = 2\inner{uv}{1} - uv
    = \overline{uv}.
  \end{equation*}
  Here the second equation follows from~\eqref{eq:W1}.
  This proves~\eqref{eq:W3}. 

  We prove~\itref{It_One4}.
  For all $u,w\in W$ we have
  \begin{equation}
    \label{eq:uww}
    \inner{u(\ubar w)}{w}
    = \abs{\ubar w}^2
    = \abs{\ubar}^2\abs{w}^2
    = \abs{u}^2\abs{w}^2
  \end{equation}
  Since the operator $w\mapsto u(\ubar w)$ is self-adjoint,
  by~\eqref{eq:W1}, this shows that $u(\ubar w) = \abs{u}^2w$ for all
  $u,w\in W$. %
  Replacing $u$ by $u+v$ we obtain the first equation
  in~\eqref{eq:W4}.  The proof of the second equation is similar.
\end{proof}

\begin{proof}[Proof of \autoref{thm:NDG}]\label{proof:NDG}
  Let $W$ be a normed algebra. %
  It follows from~\eqref{eq:W1} that $\inner{u}{v}=-\inner{uv}{1}$
  and, hence, $u\times v:=uv+\inner{u}{v}\in 1^\perp$ for all
  $u,v\in 1^\perp$. %
  We write an element of $W$ as $u=u_0+u_1$ with
  $u_0:=\inner{u}{1}\in\R$ and $u_1:=u-\inner{u}{1}\in V=1^\perp$. %
  For $u,v\in W$ we compute
  \begin{equation*}
    \begin{split}
      \abs{u}^2\abs{v}^2-\abs{uv}^2
      =&
      \left(u_0^2+\abs{u_1}^2\right)
      \left(v_0^2+\abs{v_1}^2\right) 
      - \left(u_0v_0-\inner{u_1}{v_1}\right)^2 \\
      & 
      - \abs{u_0v_1+v_0u_1+u_1\times v_1}^2  \\
      =&
      \;u_0^2\abs{v_1}^2 + v_0^2\abs{u_1}^2 
      + 2u_0v_0\inner{u_1}{v_1} 
      + \abs{u_1}^2\abs{v_1}^2 
      - \inner{u_1}{v_1}^2 \\
      &
      - \abs{u_0v_1+v_0u_1}^2 - \abs{u_1\times v_1}^2 
      - 2\inner{u_0v_1+v_0u_1}{u_1\times v_1} \\
      =&
      \abs{u_1}^2\abs{v_1}^2 - \inner{u_1}{v_1}^2 
      - \abs{u_1\times v_1}^2 \\
      &
      -\; 2u_0\inner{v_1}{u_1\times v_1}
      - 2v_0\inner{u_1}{u_1\times v_1}.
    \end{split}
  \end{equation*}
  The right hand side vanishes for all $u$ and $v$ if and only if the
  product on~$V$ satisfies~\eqref{eq:orthogonal} and~\eqref{eq:area}. %
  Hence,~\eqref{eq:CROSS} defines a cross product on $V$ and the
  product can obviously be recovered from the cross product
  via~\eqref{eq:uv}. %
  Conversely, the same argument shows that, if $V$ is equipped with a
  cross product, the formula~\eqref{eq:uv} defines a normed algebra
  structure on $W:=\R\oplus V$. %
  Moreover, by \autoref{thm:cross}, $V$ has dimension $0$, $1$,
  $3$, or $7$. %
  This proves \autoref{thm:NDG}.
\end{proof}

\begin{remark}
  \label{rmk:W}
  If $W$ is a normed algebra and the cross product on $V:=1^\perp$ is
  defined by~\eqref{eq:CROSS}, then the commutator of two elements
  $u,v\in W$ is given by
  \begin{equation}
    \label{eq:uvcross}
    [u,v] := uv-vu = 2u_1\times v_1.
  \end{equation}
  In particular, the product on $W$ is commutative in dimensions $1$
  and $2$ and is not commutative in dimensions $4$ and $8$.
\end{remark}

\begin{remark}\label{rmk:orientW}
  Let $W$ be a normed algebra of dimension $4$ or $8$. %
  Then $V:=1^\perp$ has a natural orientation determined by
  \autoref{le:cross3} or \autoref{le:imO}, respectively, in
  dimensions $3$ and $7$. %
  We orient $W$ as $\R\oplus V$.
\end{remark}

\begin{remark}\label{rmk:assocW}
  If $W$ is a normed algebra and the cross product on $V:=1^\perp$ is
  defined by~\eqref{eq:CROSS}, then the associator bracket on $V$ is
  related to the product on $W$ by
  \begin{equation}
    \label{eq:assocW}
    (uv)w-u(vw) = 2[u_1,v_1,w_1]
  \end{equation}
  for all $u,v,w\in W$. %
  Thus $W$ is an associative algebra in dimensions $1,2,4$ and is not
  associative in dimension $8$. %
  The formula~\eqref{eq:assocW} is the reason for 
  the term {\it associator bracket}. %
  Many authors actually define the associator bracket as the left hand
  side of equation~\eqref{eq:assocW} (see for example~\cite{Harvey1982}). %

  To prove~\eqref{eq:assocW}, we observe that the associator bracket
  on $V$ can be written in the form
  \begin{equation}
    \label{eq:assocV}
    \begin{split}
      2[u,v,w]
      &= 2(u\times v)\times w + 2\inner{v}{w}u-2\inner{u}{w}v \\
      &= (u\times v)\times w - u\times(v\times w) 
       + \inner{v}{w}u - \inner{u}{v}w
    \end{split}
  \end{equation}
  for $u,v,w\in V$. %
  Here the first equation follows from~\eqref{eq:associator} and the
  second 
  equation follows
  from~\eqref{eq:AREA}. For $u,v,w\in V$ we compute
  \begin{align*}
    (uv)w-u(vw)
    &=
      (-\inner{u}{v} + u\times v)w
      - u(-\inner{v}{w} + v\times w) \\
    &=
      (u\times v)\times w - u\times(v\times w)
      - \inner{u}{v}w + \inner{v}{w}u \\
    &=
      2[u,v,w].
  \end{align*}
  Here the first equation follows from the definition of the cross
  product in~\eqref{eq:CROSS}, the second equation follows by
  applying~\eqref{eq:CROSS} again and using~\eqref{eq:frobenius}, and
  the last equation follows from~\eqref{eq:assocV}. %
  Now, if any of the factors $u,v,w$ is a real number, the term on the
  left vanishes. %
  Hence, real parts can be added to the vectors without changing the
  expression.
\end{remark}

\begin{theorem}\label{thm:tcW}
  Let $W$ be an $8$--dimensional normed algebra.

  \begin{enumerate}[(i)]
  \item
    \label{It_tcW1}
    The map $W^3\to W\co (u,v,w)\mapsto u\times v\times w$ defined by
    \begin{equation}\label{eq:tcW}
      u\times v\times w 
      := \tfrac{1}{2}\bigl((u\vbar)w - (w\vbar)u\bigr)
    \end{equation}
    (called the \defined{triple cross product} of $W$) 
    is alternating and satisfies
    \begin{equation}
      \label{eq:tcW1}
      \inner{x}{u\times v\times w} + \inner{u\times v\times x}{w} = 0,
    \end{equation}
    \begin{equation}
      \label{eq:tcW2}
      \abs{u\times v\times w} = \abs{u\wedge v\wedge w},
    \end{equation}
    for all $u,v,w,x\in W$ and
    \begin{equation}\label{eq:tcW3}
      \inner{e\times u\times v}{e\times w\times x} 
      = -\abs{e}^2\inner{u\times v\times w}{x}
    \end{equation}
    whenever $e,u,v,w,x\in W$ are orthonormal.

  \item
    \label{It_tcW2}
    The map $\Phi\co W^4\to\R$ defined by
    \begin{equation*}
      \Phi(x,u,v,w) := \inner{x}{u\times v\times w}
    \end{equation*}
    (called the \defined{Cayley calibration} of $W$)
    is an alternating $4$--form. Moreover, 
    $\Phi$ is self-dual, i.e., 
    \begin{equation}\label{eq:ASD}
      \Phi=*\Phi,
    \end{equation}
    where $*\co \Lambda^kW^*\to\Lambda^{8-k}W^*$ denotes the
    Hodge $*$--operator associated to the inner product and 
    the orientation of \autoref{rmk:orientW}.

  \item
    \label{It_tcW3}
    Let $V:=1^\perp$ with the cross product defined
    by~\eqref{eq:CROSS} and the associator bracket
    $[\cdot,\cdot,\cdot]$ defined by~\eqref{eq:associator}. %
    Let $\phi\in\Lambda^3V^*$ and ${\psi\in\Lambda^4V^*}$ be the
    associative and coassociative calibrations of $V$ defined
    by~\eqref{eq:phi} and~\eqref{eq:psi}, respectively. %
    Then the triple cross product~\eqref{eq:tcW} of $u,v,w\in W$ can
    be expressed as
    \begin{equation}
      \label{eq:tcW4}
      \begin{split}
        u\times v\times w 
        = &\; \phi(u_1,v_1,w_1) - [u_1,v_1,w_1]  \\
        &- u_0(v_1\times w_1) - v_0(w_1\times u_1) - w_0(u_1\times v_1)
      \end{split}
    \end{equation}
    and the Cayley calibration is given by
    \begin{equation}
      \label{eq:Phiphi}
      \Phi = 1^*\wedge\phi+\psi.
    \end{equation}

  \item
    \label{It_tcW4}
    For all $u,v\in W$ we have
    \begin{equation}
      \label{eq:tcW5}
      uv = u\times 1\times v+\inner{u}{1}v+\inner{v}{1}u-\inner{u}{v}.
    \end{equation}
  \end{enumerate}
\end{theorem}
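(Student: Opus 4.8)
The plan is to establish the four assertions in the order \itref{It_tcW4}; then the alternating property and \eqref{eq:tcW1} of \itref{It_tcW1}; then \itref{It_tcW3}; then \eqref{eq:ASD} of \itref{It_tcW2}; then \eqref{eq:tcW2}; and finally \eqref{eq:tcW3}, which is the main obstacle. Part \itref{It_tcW4} is immediate: since $\bar 1=1$ the definition \eqref{eq:tcW} gives $u\times 1\times v=\tfrac12(uv-vu)$, which equals $u_1\times v_1$ by \eqref{eq:uvcross}; expanding $\inner{u}{1}v+\inner{v}{1}u-\inner{u}{v}$ in the components of $u=u_0+u_1$, $v=v_0+v_1$ and comparing with the product formula \eqref{eq:uv} then yields \eqref{eq:tcW5}. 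For \itref{It_tcW1}, skew-symmetry of $u\times v\times w=\tfrac12\bigl((u\bar v)w-(w\bar v)u\bigr)$ in $u$ and $w$ is visible, and the special cases $(w\bar u)u=\abs u^2w$ and $(u\bar v)v=\abs v^2u$ of the second identity in \eqref{eq:W4} give $u\times u\times w=0=u\times v\times v$, so the triple cross product is alternating. To prove \eqref{eq:tcW1} I would use the two identities in \eqref{eq:W1} to carry factors across the inner products, turning $\inner{x}{u\times v\times w}$ and $\inner{u\times v\times x}{w}$ into combinations of terms $\inner{a\bar b}{c\bar d}$; applying $x\bar w+w\bar x=2\inner xw$ from \eqref{eq:W2}, the identity $\inner{a\bar b}{1}=\inner ab$, and the first identity in \eqref{eq:W4} to reassemble $\bar w(x\bar u)+\bar x(w\bar u)=2\inner wx\bar u$, the two contributions collapse to $\inner xw\inner uv-\inner wx\inner uv=0$. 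Combined with the alternating property this shows $\Phi$ is an alternating $4$--form, because the transposition of the first and last slots together with the permutations of the last three slots generate $S_4$; this is the first assertion of \itref{It_tcW2}.

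Next I would prove the component formula \eqref{eq:tcW4}. By trilinearity and the alternating property, the expansion of $u\times v\times w$ in the components $u=u_0+u_1$, $v=v_0+v_1$, $w=w_0+w_1$ only retains the terms with at most one factor a multiple of $1$. For $u,v,w\in V=1^\perp$ one substitutes $uv=u\times v-\inner uv$ (from \eqref{eq:CROSS}) twice into $-\tfrac12\bigl((uv)w-(wv)u\bigr)$ and rewrites the cross-product part using \eqref{eq:assocV}, arriving at $\phi(u,v,w)-[u,v,w]$; and $1\times v\times w=-v\times w$ for $v,w\in V$ follows from $\bar v=-v$ and \eqref{eq:uvcross}. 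Pairing \eqref{eq:tcW4} with $x=x_0+x_1$, using $\inner{x_1}{v_1\times w_1}=\phi(v_1,w_1,x_1)$ and $\inner{x_1}{[u_1,v_1,w_1]}=\psi(u_1,v_1,w_1,x_1)$, and matching signs via cyclic permutations, yields $\Phi=1^*\wedge\phi+\psi$. Equation \eqref{eq:ASD} then follows from the behaviour of the Hodge star on the orthogonal splitting $W=\R 1\oplus V$: for the orientation of \autoref{rmk:orientW} one has $*_W(1^*\wedge\beta)=*_V\beta$ and $*_W\beta=(-1)^k1^*\wedge *_V\beta$ for $\beta\in\Lambda^kV^*$, so with $*_V\phi=\psi$ and $*_V\psi=\phi$ from \autoref{le:psi} we obtain $*_W\Phi=*_V\phi+1^*\wedge *_V\psi=\psi+1^*\wedge\phi=\Phi$.

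For \eqref{eq:tcW2} I would use \eqref{eq:tcW4}: the real part of $u\times v\times w$ is $\phi(u_1,v_1,w_1)$ and its imaginary part is $-\bigl([u_1,v_1,w_1]+u_0\,v_1\times w_1+v_0\,w_1\times u_1+w_0\,u_1\times v_1\bigr)$. Since $[u_1,v_1,w_1]$ is orthogonal to $u_1\times v_1$, $v_1\times w_1$, $w_1\times u_1$ by \autoref{rmk:assbrack}, the square norm equals $\phi(u_1,v_1,w_1)^2+\abs{[u_1,v_1,w_1]}^2$ — which is $\abs{u_1\wedge v_1\wedge w_1}^2$ by \autoref{le:assocphi} — plus $\abs{u_0\,v_1\times w_1+v_0\,w_1\times u_1+w_0\,u_1\times v_1}^2$. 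Expanding the last term with $\abs{a\times b}^2=\abs a^2\abs b^2-\inner ab^2$ and $\inner{a\times b}{c\times d}=\psi(a,b,c,d)-\inner bc\inner ad+\inner ac\inner bd$ (from \eqref{eq:psicross}), one recognises the resulting quadratic polynomial in $u_0,v_0,w_0$ as the difference of Gram determinants $\abs{u\wedge v\wedge w}^2-\abs{u_1\wedge v_1\wedge w_1}^2$ (this is the matrix-determinant lemma applied to the Gram matrix of $u_1,v_1,w_1$), which finishes \eqref{eq:tcW2}.

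The hard part is \eqref{eq:tcW3}. Expanding over an orthonormal basis $\{f_i\}$ of $W$, the left side equals $\sum_i\Phi(f_i,e,u,v)\Phi(f_i,e,w,x)$ and the right side is $-\abs e^2\Phi(x,u,v,w)$, so \eqref{eq:tcW3} is an identity for the $4$--form $\Phi$ on orthonormal $5$--frames. When $e=1$ it reduces, via $\iota(1)\Phi|_V=\phi$, $\Phi|_V=\psi$ and \eqref{eq:psicross}, to the trivial identity $\inner{u\times v}{w\times x}=\psi(u,v,w,x)$ for orthonormal $u,v,w,x\in V$. For a general unit vector $e$ I would try to show that $\iota(e)\Phi$ restricted to $e^\perp$ is a nondegenerate $3$--form compatible with the induced inner product whose coassociative calibration is $\Phi|_{e^\perp}$, whereupon \eqref{eq:tcW3} holds inside $e^\perp$ by the same computation; proving this compatibility when $e\neq\pm1$ is the real difficulty. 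I expect the cleanest available route is to pass to the standard model $W=\R^8$ with $\Phi=1^*\wedge\phi_0+\psi_0$ (legitimate by \autoref{thm:NDG} and \autoref{thm:imO}) and verify \eqref{eq:tcW3} there by a direct computation organised along the octonion multiplication table — alternatively by invoking transitivity of the stabiliser of $\Phi$ on the unit sphere to reduce the general $e$ to $e=1$, but that result only appears in a later section.
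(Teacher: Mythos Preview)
Your treatment of \itref{It_tcW4}, the alternating property, \eqref{eq:tcW1}, \eqref{eq:tcW4}, \eqref{eq:Phiphi}, and \eqref{eq:ASD} is correct; the paper takes a slightly different path (deriving the alternating property and \eqref{eq:tcW1} from the component formula \eqref{eq:tcW4} rather than directly from \eqref{eq:W1}--\eqref{eq:W4}), but your route is fine. Your proof of \eqref{eq:tcW2} via the component formula and the matrix--determinant lemma is valid but laborious. For \eqref{eq:tcW3} you have no self-contained argument: the reduction to a statement about $\iota(e)\Phi|_{e^\perp}$ being an associative calibration with coassociative calibration $\Phi|_{e^\perp}$ is exactly what the paper proves \emph{later} (in \autoref{thm:cayley}) \emph{using} \eqref{eq:tcW3}, so that direction is circular here; and the fallbacks you propose (explicit multiplication table, or transitivity of the stabiliser) are either unpleasant or forward-referencing.

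The missing ingredient is a single observation that makes both \eqref{eq:tcW2} and \eqref{eq:tcW3} almost immediate: if $u,v,w\in W$ are pairwise orthogonal, then by \eqref{eq:W2} and the second identity in \eqref{eq:W4},
\[
(u\bar v)w = -(u\bar w)v = (w\bar u)v = -(w\bar v)u,
\]
so the two terms in \eqref{eq:tcW} agree and $u\times v\times w=(u\bar v)w$. Now \eqref{eq:tcW2} follows at once from \eqref{eq:NORM} in the orthogonal case and by Gram--Schmidt in general. For \eqref{eq:tcW3} with $e,u,v,w,x$ pairwise orthogonal one computes directly in the normed algebra, using \eqref{eq:W1} and the special case $(yz)\bar z=\abs{z}^2y$ of \eqref{eq:W4}:
\begin{align*}
\inner{e\times u\times v}{e\times w\times x}
&= \inner{(u\bar v)e}{(w\bar x)e}
= \inner{u\bar v}{((w\bar x)e)\bar e} \\
&= \abs{e}^2\inner{u\bar v}{w\bar x}
= \abs{e}^2\inner{(u\bar v)x}{w} \\
&= \abs{e}^2\inner{u\times v\times x}{w}
= -\abs{e}^2\inner{x}{u\times v\times w}.
\end{align*}
No reduction to $e=1$, no transitivity, and no multiplication-table computation is needed.
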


\begin{remark}
  There is a choice involved in the definition of the triple cross
  product in~\eqref{eq:tcW}. %
  An alternative formula is
  \begin{equation*}
    (u,v,w)\mapsto\tfrac12\bigl(u(\vbar w)-w(\vbar u)\bigr).
  \end{equation*}
  This map also satisfies~\eqref{eq:tcW1} and~\eqref{eq:tcW2}. %
  However, it satisfies~\eqref{eq:tcW3} with the minus sign changed to
  plus and the resulting Cayley calibration is given by
  $\Phi=1^*\wedge\phi-\psi$ and is anti-self-dual. %
  Equation~\eqref{eq:tcW5} remains unchanged.
\end{remark}

\begin{proof}[Proof of \autoref{thm:tcW}]
  Let $W\times W\times W\to W:(u,v,w)\mapsto u\times v\times w$ 
  be the trilinear map defined by~\eqref{eq:tcW}. %
  We prove that this map satisfies~\eqref{eq:tcW4}.
  To see this, fix three vectors $u,v,w\in W$. 
  Then, by \eqref{eq:uvcross}, we have
  \begin{equation*}
    \vbar w-w\vbar
      = -2v_1\times w_1,\;\;
    uw-wu
      = -2w_1\times u_1,\;\;
    u\bar v-\vbar u
      = -2u_1\times v_1.
  \end{equation*}
  Multiplying these expressions by $u_0$, $v_0$, $w_0$, respectively,
  we obtain (twice) the last three expressions on the right in~\eqref{eq:tcW4}. 
  Thus it suffices to assume $u,v,w\in V$. %
  Then we obtain
  \begin{align*}
    2u\times v\times w
    &=
      (u\vbar)w-(w\vbar)u \\
    &=
      -(uv)w+(wv)u \\
    &=
      -(-\inner{u}{v}+u\times v)w + (-\inner{w}{v}+w\times v)u \\
    &=
      \inner{u\times v}{w} + \inner{u}{v}w - (u\times v)\times w \\ &\quad
       -\inner{w\times v}{u} - \inner{w}{v}u + (w\times v)\times u \\
    &= 
        2\phi(u,v,w) - 2[u,v,w].
  \end{align*}
  Here the third and fourth equations follow from~\eqref{eq:CROSS}, 
  and the last 
  equation
  follows from~\eqref{eq:phi} and~\eqref{eq:assocV}. %
  This proves that the formulas~\eqref{eq:tcW} and~\eqref{eq:tcW4}
  agree. %

  \smallbreak

  We prove~\itref{It_tcW1}. %
  By~\eqref{eq:tcW4} we have
  \begin{equation}\label{eq:xuvw}
    \begin{split}
      \inner{x}{u\times v\times w}
      &=
        x_0\phi(u_1,v_1,w_1) + \psi(x_1,u_1,v_1,w_1) \\ &\quad
        - u_0\phi(x_1,v_1,w_1) - v_0\phi(x_1,w_1,u_1) \\ &\quad
        - w_0\phi(x_1,u_1,v_1)
    \end{split}
  \end{equation}
  for $x,u,v,w\in W$. %
  Here we have used ${\phi(u_1,v_1,w_1)=\inner{u_1}{v_1\times w_1}}$
  and
  \begin{equation*}
    -\inner{x_1}{[u_1,v_1,w_1]} 
    =
      -\psi(u_1,v_1,w_1,x_1) = \psi(x_1,u_1,v_1,w_1).
  \end{equation*}
  It follows from the alternating properties of $\phi$ and $\psi$ that
  the right hand side of~\eqref{eq:xuvw} is an alternating $4$--form. %
  Hence, the map~\eqref{eq:tcW} is alternating and
  satisfies~\eqref{eq:tcW1}. %
  For $u,v,w\in V=1^\perp$ equation~\eqref{eq:tcW2} follows from
  \autoref{le:assocphi}. %
  In general, if $u,v,w\in W$ are pairwise orthogonal, it follows
  from~\eqref{eq:W2} and~\eqref{eq:W4} that
  \begin{equation*}
    (u\vbar)w = -(u\wbar)v = (w\ubar)v = -(w\vbar)u.
  \end{equation*}
  This shows that 
  \begin{equation}
    \label{eq:orthogonalW}
    \inner{u}{v}
    = \inner{v}{w}
    = \inner{w}{u} = 0
      \qquad\implies\qquad
    u\times v\times w
    = u(\vbar w)
  \end{equation}
  and, hence, by~\eqref{eq:NORM}, we have
  $\abs{u\times v\times w}=\abs{u\wedge v\wedge w}$ 
  in the orthogonal case. %
  This equation continues to hold in general by Gram--Schmidt. %
  This proves that the triple cross product satisfies~\eqref{eq:tcW2}.

  We prove~\eqref{eq:tcW3}.  The second equation in~\eqref{eq:W4}
  asserts that
  $ (yz)\zbar = \abs{z}^2y $
  for all $y,z\in W$. %
  Hence, by~\eqref{eq:orthogonalW}, we have
  \begin{align*}
    \inner{e\times u\times v}{e\times w\times x}
    &= 
      \inner{u\times v\times e}{w\times x\times e} \\
    &= 
      \inner{(u\vbar)e}{(w\xbar)e} \\
    &= 
      \inner{u\vbar}{((w\xbar)e)\ebar} \\
    &= 
      \abs{e}^2\inner{u\vbar}{w\xbar} \\
    &= 
      \abs{e}^2\inner{(u\vbar)x}{w} \\
    &= 
      \abs{e}^2\inner{u\times v\times x}{w} \\
    &= 
      -\abs{e}^2\inner{x}{u\times v\times w}
  \end{align*}
  whenever $e,u,v,w,x\in W$ are pairwise orthogonal. %
  Thus the triple cross product~\eqref{eq:tcW}
  satisfies~\eqref{eq:tcW3}. %
  This proves~\itref{It_tcW1}.

  We prove~\itref{It_tcW2} and~\itref{It_tcW3}. %
  That $\Phi$ is a $4$--form follows from~\itref{It_tcW1}. %
  That it satisfies equation~\eqref{eq:Phiphi} follows directly from
  the definition of $\Phi$ and equation~\eqref{eq:xuvw}.  %
  That $\Phi$ is self-dual with respect to the orientation of
  \autoref{rmk:orientW} follows from~\eqref{eq:Phiphi} and
  \autoref{le:psi}. %
  Equation~\eqref{eq:tcW4} was proved above.

  We prove~\itref{It_tcW4}.  %
  By~\eqref{eq:uvcross} and~\eqref{eq:tcW}, we have
  \begin{equation*}
    u_1\times v_1 = \tfrac12\bigl(uv-vu\bigr) = u\times 1\times v.
  \end{equation*}
  Hence, it follows from~\eqref{eq:uv} that
  \begin{align*}
    uv 
    &=
      u_0v_0 - \inner{u_1}{v_1} + u_0v_1+v_0u_1+u_1\times v_1 \\
    &=
      -u_0v_0 - \inner{u_1}{v_1} + u_0v + v_0u + u_1\times v_1 \\
    &=
      -\inner{u}{v} + \inner{u}{1}v + \inner{v}{1}u + u\times 1\times v.
  \end{align*}
  This proves~\eqref{eq:tcW5} and \autoref{thm:tcW}.
\end{proof}

\begin{example}\label{ex:O}
  If $W=\R^8=\R\oplus\R^7$ with coordinates $x_0,x_1,\dots,x_7$ and
  the cross product of \autoref{ex:cross7} on $\R^7$, then the
  associated Cayley calibration is given by
  \begin{align*}
    \Phi_0
    &= 
      e^{0123} - e^{0145} - e^{0167} - e^{0246} + e^{0257} - e^{0347} - e^{0356} \\ &\quad                                                     
      + e^{4567} - e^{2367} - e^{2345}
      - e^{1357} + e^{1346} - e^{1256} - e^{1247}.
  \end{align*}
  Thus 
  \begin{equation*}
    \Phi_0\wedge\Phi_0 = 14\dvol. 
  \end{equation*}
  (See the proof of \autoref{le:psi}.)
\end{example}

\begin{definition}\label{def:four}
  Let $W$ be an $8$--dimensional normed algebra. %
  The 
  \defined{fourfold cross product} on $W$ is the alternating multi-linear map
  $
  W^4\to W\co (u,v,w,x)\mapsto u\times v\times w\times x
  $
  defined by
  \begin{equation}\label{eq:four}
    4x\times u\times v\times w
    :=(u\times v\times w)\xbar
    - (v\times w\times x)\ubar 
    + (w\times x\times u)\vbar 
    - (x\times u\times v)\wbar.
  \end{equation}
\end{definition}

\begin{theorem}\label{thm:four}
  Let $W$ be an $8$--dimensional normed algebra with 
  triple cross product~\eqref{eq:tcW}, Cayley calibration 
  $\Phi\in\Lambda^4W^*$, and fourfold cross product~\eqref{eq:four}. %
  Then, for all $x,u,v,w\in W$, we have
  \begin{equation}
    \label{eq:four1}
    \abs{x\times u\times v\times w}
    = \abs{x\wedge u\wedge v\wedge w}
  \end{equation}
  and
  \begin{equation}\label{eq:four2}
    \begin{split}
      \Re (x\times u\times v\times w)
      &= 
        \Phi(x,u,v,w), \\
      \Im (x\times u\times v\times w)
      &=
        [x_1,u_1,v_1,w_1] - x_0[u_1,v_1,w_1] \\ &\quad
        + u_0[v_1,w_1,x_1] - v_0[w_1,x_1,u_1] \\ &\quad
        +w_0[x_1,u_1,v_1],
    \end{split}
  \end{equation}
  where the last five terms use the associator and coassociator brackets 
  on ${V:=1^\perp}$ defined by~\eqref{eq:associator}
  and~\eqref{eq:coass}. %
  In particular,
  \begin{equation}\label{eq:four3}
    \Phi(x,u,v,w)^2 + \abs{\Im(x\times u\times v\times w)}^2
    = \abs{x\wedge u\wedge v\wedge w}^2.
  \end{equation}
\end{theorem}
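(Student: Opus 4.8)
The plan is to reduce everything to \autoref{thm:tcW}, above all to the formula \eqref{eq:tcW4} for the triple cross product and the identity $\Phi = 1^*\wedge\phi + \psi$ in \eqref{eq:Phiphi}. I would first dispatch the real part of \eqref{eq:four2}. Since $\Re(a\bar b) = \inner{a}{b}$ by \eqref{eq:W1}, the four summands of \eqref{eq:four} have real parts $\inner{u\times v\times w}{x} = \Phi(x,u,v,w)$, $\inner{v\times w\times x}{u} = \Phi(u,v,w,x)$, $\inner{w\times x\times u}{v} = \Phi(v,w,x,u)$ and $\inner{x\times u\times v}{w} = \Phi(w,x,u,v)$; because $\Phi$ is alternating (\autoref{thm:tcW}), a one--step cyclic permutation of its arguments flips the sign, so after inserting the signs of \eqref{eq:four} all four contributions equal $\Phi(x,u,v,w)$ and hence $\Re(x\times u\times v\times w) = \Phi(x,u,v,w)$.

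For the imaginary part, observe that both sides of the second line of \eqref{eq:four2} are $4$--linear in $(x,u,v,w)$ and both change sign under the cyclic permutation $(x,u,v,w)\mapsto(u,v,w,x)$: for the left side this is read off from \eqref{eq:four}, and for the right side it follows from the alternating property of the coassociator bracket (\autoref{le:coass}) together with a reordering of the remaining terms. A routine reduction using $4$--linearity, the decomposition $W = \R 1\oplus V$ with $V = 1^\perp$, the cyclic symmetry, and the alternating properties of the triple cross product and of $\Phi$ then leaves only two cases. If $x,u,v,w\in V$, then \eqref{eq:tcW4} reads $u\times v\times w = \phi(u,v,w) - [u,v,w]$, and substituting this together with the Cayley--Dickson product \eqref{eq:uv} restricted to $V$ and the identity $\inner{[\cdot,\cdot,\cdot]}{\cdot} = \psi(\cdot,\cdot,\cdot,\cdot)$ from \eqref{eq:psi} into \eqref{eq:four} and collecting terms by means of the two expressions for the coassociator bracket in \eqref{eq:coass} and the identity \eqref{eq:associator1} reduces the right side of \eqref{eq:four} to $4\bigl(\psi(x,u,v,w) + [x,u,v,w]\bigr)$, which is $4\bigl(\Phi(x,u,v,w) + [x_1,u_1,v_1,w_1]\bigr)$ in this case. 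If $x = 1$ and $u,v,w\in V$, the triple cross products in \eqref{eq:four} that carry a factor $1$ are evaluated from \eqref{eq:tcW4}, the identity \eqref{eq:associator1} gives $1\times u\times v\times w = \phi(u,v,w) - [u,v,w]$, and \eqref{eq:Phiphi} with $1\perp V$ identifies $\Phi(1,u,v,w) = \phi(u,v,w)$; this matches the asserted formula, since here $[x_1,u_1,v_1,w_1] = 0$ and $x_0[u_1,v_1,w_1] = [u,v,w]$.

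Next, \eqref{eq:four1}. By the first two steps $x\times u\times v\times w = \Phi(x,u,v,w) + \Im(x\times u\times v\times w)$ is a sum of an alternating $4$--form and the alternating $V$--valued $4$--form on the right of \eqref{eq:four2}, hence alternating; therefore both sides of \eqref{eq:four1} are unchanged when a multiple of one argument is added to another, and by Gram--Schmidt it suffices to treat pairwise orthogonal $x,u,v,w$. For such arguments \eqref{eq:orthogonalW} rewrites each triple cross product in \eqref{eq:four} as a single octonion product, and a chain of manipulations with \eqref{eq:W2}, \eqref{eq:W3} and \eqref{eq:W4} --- of the kind used in the derivation of \eqref{eq:tcW3} --- shows that the four summands of \eqref{eq:four} coincide, so that $x\times u\times v\times w = (u\times v\times w)\bar x$ when $x,u,v,w$ are pairwise orthogonal. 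Now \eqref{eq:NORM} together with \eqref{eq:tcW2} gives $\abs{x\times u\times v\times w} = \abs{u\times v\times w}\,\abs{x} = \abs{u\wedge v\wedge w}\,\abs{x} = \abs{x\wedge u\wedge v\wedge w}$, and the general case follows by Gram--Schmidt.

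Finally \eqref{eq:four3} is immediate from the previous steps: $\Phi(x,u,v,w)^2 + \abs{\Im(x\times u\times v\times w)}^2 = \abs{\Re(x\times u\times v\times w)}^2 + \abs{\Im(x\times u\times v\times w)}^2 = \abs{x\times u\times v\times w}^2 = \abs{x\wedge u\wedge v\wedge w}^2$. The two places where I expect genuine work are the sign--bookkeeping in the second step, where \eqref{eq:four} must be collapsed onto the coassociator bracket, and the normed--algebra computation in the third step verifying that the four summands of \eqref{eq:four} agree on orthogonal inputs (equivalently, evaluating the cross terms in $\abs{x\times u\times v\times w}^2$ via \eqref{eq:W1}, \eqref{eq:W4} and \eqref{eq:tcW3}); neither is deep, but both are somewhat intricate.
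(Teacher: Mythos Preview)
Your plan is correct and follows essentially the same route as the paper: reduce~\eqref{eq:four2} to the two cases $x=1$ and $x,u,v,w\in V$ via multilinearity and cyclic symmetry (invoking \autoref{le:coass} for the latter), and reduce~\eqref{eq:four1} to the orthogonal case where the four summands of~\eqref{eq:four} coincide. The only organizational difference is that the paper establishes~\eqref{eq:four1} first---using that the alternating property is already asserted in \autoref{def:four}---and then~\eqref{eq:four2}, whereas you run the argument in the opposite order; your direct treatment of the real part via $\Re(a\bar b)=\inner{a}{b}$ is a small streamlining over the paper's case analysis.
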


\begin{proof}
  That the fourfold cross product is alternating is obvious from the
  definition and the alternating property of the triple cross
  product.  %
  We prove that it satisfies~\eqref{eq:four1}. %
  For this it suffices to assume that $u,v,w,x$ are pairwise
  orthogonal. %
  Then $u\times v\times w=(u\vbar)w$ and hence
  \begin{equation*}
    (u\times v\times w)\xbar 
    = ((u\vbar)w)\xbar
    = -((u\vbar)x)\wbar
    = -(u\times v\times x)\wbar.
  \end{equation*}
  Here we have used~\eqref{eq:W2} and~\eqref{eq:W4}. %
  Using the alternating property of the triple cross product we obtain
  that the four summands in~\eqref{eq:four} agree in the orthogonal
  case.  Hence, $x\times u\times v\times w=((u\vbar)w)\xbar$ and so
  equation~\eqref{eq:four1} follows from~\eqref{eq:NORM}. %

  We prove~\eqref{eq:four2}. %
  Since $u\times 1\times v=u_1\times v_1$, we have
  \begin{align*}
    1\times u\times v\times w 
    &= 
      \tfrac14\Bigl(
        u\times v\times w + (v_1\times w_1)\ubar
        + (w_1\times u_1)\vbar + (u_1\times v_1)\wbar
      \Bigr) \\
    &=
      \tfrac14\Bigl(
        u\times v\times w + u_0(v_1\times w_1)
      + v_0(w_1\times u_1) + w_0(u_1\times v_1)
      \Bigr) \\ &\quad
      +\tfrac14\Bigl(
        \inner{v_1\times w_1}{u_1}
        + \inner{w_1\times u_1}{v_1}
        + \inner{u_1\times v_1}{w_1}
      \Bigr) \\
    &\quad
      -\tfrac14\Bigl(
        (v_1\times w_1)\times u_1
        + (w_1\times u_1)\times v_1
        + (u_1\times v_1)\times w_1
      \Bigr) \\
    &=
      \phi(u_1,v_1,w_1) - [u_1,v_1,w_1].
  \end{align*}
  The last equation follows from~\eqref{eq:tcW4} and the definition of
  the associator bracket in~\eqref{eq:associator}. %
  This proves~\eqref{eq:four2} in the case $x_1=0$. %
  Using the alternating property we may now assume that
  $x,u,v,w\in V:=1^\perp$. %
  If $x,u,v,w$ are orthogonal to $1$ it follows from~\eqref{eq:tcW4}
  that $ u\times v\times w = \phi(u,v,w)-[u,v,w] $ and
  $ \Phi(x,u,v,w) = - \inner{x}{[u,v,w]} = \psi(x,u,v,w)$. %
  Moreover, $\xbar =-x$ and similarly for $u,v,w$. %
  Hence,
  \begin{align*}
    &
    4x\times u\times v\times w \\
    &= 
      -(u\times v\times w)x + (v\times w\times x)u 
      - (w\times x\times u)v + (x\times u\times v)w  \\
    &=
      [u,v,w]x - [v,w,x]u + [w,x,u]v - [x,u,v]w \\ &\quad
      - \phi(u,v,w)x + \phi(v,w,x)u - \phi(w,x,u)v + \phi(x,u,v)w \\
    &=
      -\inner{[u,v,w]}{x} + \inner{[v,w,x]}{u} 
      - \inner{[w,x,u]}{v} + \inner{[x,u,v]}{w} \\ &\quad
      + [u,v,w]\times x - [v,w,x]\times u
      + [w,x,u]\times v - [x,u,v]\times w \\ &\quad
      - \phi(u,v,w)x + \phi(v,w,x)u - \phi(w,x,u)v + \phi(x,u,v)w \\
    &=
      -4\psi(u,v,w,x) - 4[u,v,w,x] \\
    &=
      4\Phi(x,u,v,w) + 4[x,u,v,w].
  \end{align*}
  Here the last but one equation follows from \autoref{le:coass}. %
  Thus we have proved \eqref{eq:four2} and \autoref{thm:four}.
\end{proof}



\section{Triple cross products} \label{sec:TCP}  

In this section we show how to recover the normed algebra structure on
$W$ from the triple cross product. %
In fact we shall see that every unit vector in $W$ can be used as a
unit for the algebra structure. %
We assume throughout that $W$ is a finite dimensional real Hilbert
space.

\begin{definition}\label{def:tc}
  An alternating multi-linear map
  \begin{equation}
    \label{eq:tc}
    W\times W\times W\to W\co (u,v,w)\mapsto u\times v\times w
  \end{equation}
  is called a \defined{triple cross product} if it satisfies
  \begin{gather}
    \label{eq:tc1}
    \inner{u\times v\times w}{u}
    = \inner{u\times v\times w}{v}
    = \inner{u\times v\times w}{w}
    = 0, \\
    \label{eq:tc2}
    \abs{u\times v\times w}
    = \abs{u\wedge v\wedge w}
  \end{gather}
  for all $u,v,w\in W$.
\end{definition}

A multi-linear map~\eqref{eq:tc} that satisfies~\eqref{eq:tc2}
also satisfies $u\times v\times w=0$ whenever $u,v,w \in W$ are
linearly dependent, and hence is necessarily alternating.

\begin{lemma}
  \label{le:tc1}
  Let~\eqref{eq:tc} be an alternating multi-linear map. %
  Then~\eqref{eq:tc1} holds if and only if, 
  for all ${x,u,v,w\in W}$, we have
  \begin{equation}\label{eq:tc3}
    \inner{x}{u\times v\times w} + \inner{u\times v\times x}{w} = 0.
  \end{equation}
\end{lemma}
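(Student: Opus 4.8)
The plan is to treat \eqref{eq:tc3} as a polarized version of the orthogonality condition \eqref{eq:tc1}, in direct analogy with the way \autoref{le:phi} relates \eqref{eq:orthogonal} to \eqref{eq:frobenius}. Both implications will be elementary: the reverse direction uses only that the map is alternating, and the forward direction is a single polarization identity obtained by feeding a sum into one slot.

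For the implication \eqref{eq:tc3} $\Rightarrow$ \eqref{eq:tc1}, I would argue as follows. Fix $u,v,w\in W$ and put $x:=u$ in \eqref{eq:tc3}. Since $\times$ is alternating, $u\times v\times u=0$, so \eqref{eq:tc3} collapses to $\inner{u}{u\times v\times w}=0$. Putting $x:=v$ and using $u\times v\times v=0$ gives $\inner{v}{u\times v\times w}=0$. Finally $x:=w$ yields $2\inner{w}{u\times v\times w}=0$, hence $\inner{w}{u\times v\times w}=0$. Together these are exactly \eqref{eq:tc1}.

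For the implication \eqref{eq:tc1} $\Rightarrow$ \eqref{eq:tc3}, I would polarize in the third argument against the outer vector simultaneously. Fix $u,v,w,x\in W$ and apply \eqref{eq:tc1} to the triple $(u,v,x+w)$, i.e.\ use that $u\times v\times(x+w)$ is orthogonal to $x+w$. Expanding by multilinearity gives
\begin{equation*}
0 = \inner{x+w}{u\times v\times(x+w)} = \inner{x}{u\times v\times x} + \inner{x}{u\times v\times w} + \inner{w}{u\times v\times x} + \inner{w}{u\times v\times w}.
\end{equation*}
By \eqref{eq:tc1} applied to the triples $(u,v,x)$ and $(u,v,w)$ the first and last terms vanish, and symmetry of the inner product rewrites the remaining identity as $\inner{x}{u\times v\times w} + \inner{u\times v\times x}{w} = 0$, which is \eqref{eq:tc3}.

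I do not anticipate any real obstacle here; the computation is short enough to be essentially the whole proof. The only point worth flagging is that \autoref{le:tc1} already assumes the map is alternating, so the vanishings $u\times v\times u = u\times v\times v = 0$ used in the reverse direction are available without invoking \eqref{eq:tc2}, and the simultaneous polarization in the ``third factor'' slot and the ``outer'' slot is precisely what produces the two cross terms appearing in \eqref{eq:tc3}.
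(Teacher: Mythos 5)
Your proof is correct and matches the paper's argument: the paper also derives \eqref{eq:tc1} from \eqref{eq:tc3} via the alternating property, and for the converse expands $\inner{u\times v\times(w+x)}{w+x}$ and cancels the diagonal terms using \eqref{eq:tc1} — precisely your polarization. You have simply spelled out the two steps in a bit more detail.
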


\begin{proof}
  If~\eqref{eq:tc3} holds, then~\eqref{eq:tc1} follows 
  directly
  from the alternating property of the map~\eqref{eq:tc}. %
  To prove the converse, expand the expression
  $\inner{u\times v\times(w+x)}{w+x}$ and use~\eqref{eq:tc1} to
  obtain~\eqref{eq:tc3}.
\end{proof}

\begin{lemma}\label{le:tc2}
  Let~\eqref{eq:tc} be an alternating multi-linear map
  satisfying~\eqref{eq:tc1}. %
  Then equation~\eqref{eq:tc2} holds if and only if, for all ${u,v,w\in W}$, we
  have
  \begin{equation}
    \label{eq:tc4}
    \begin{split}
      &u\times v\times (u\times v\times w) + \abs{u\wedge v}^2w \\
      &= 
      \left(\abs{v}^2\inner{u}{w}-\inner{u}{v}\inner{v}{w}\right)u
      + \left(\abs{u}^2\inner{v}{w}-\inner{v}{u}\inner{u}{w}\right)v.
    \end{split}
  \end{equation} 
\end{lemma}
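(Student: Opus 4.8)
The plan is to run the argument in close analogy with the proof of \autoref{le:area}. Fix $u,v\in W$ and let $T\co W\to W$ be the linear map $T(w):=u\times v\times w$. By~\eqref{eq:tc1} its image lies in $P^\perp$, where $P:=\SPAN\{u,v\}$, and by the alternating property $T$ annihilates $P$ (since $u\times v\times u=u\times v\times v=0$); hence $T$ restricts to an endomorphism of $P^\perp$. By \autoref{le:tc1} the identity~\eqref{eq:tc3} holds, which says exactly that $T$ is skew-adjoint on $W$. Consequently $-T^2$ is self-adjoint and nonnegative, and $\abs{T(w)}^2=-\inner{w}{T^2(w)}$ for all $w\in W$.

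First I would reduce~\eqref{eq:tc4} to a statement about $P^\perp$. Both sides of~\eqref{eq:tc4} are linear in $w$; the right-hand side depends on $w$ only through $\inner{u}{w}$ and $\inner{v}{w}$, hence only through the component $w_P$ of $w$ in $P$, while $T^2(w)=T^2(w_\perp)$ depends only on the component $w_\perp$ in $P^\perp$. Writing $w=w_P+w_\perp$ and comparing $P$- and $P^\perp$-components, the $P$-part of~\eqref{eq:tc4} becomes the planar identity $\abs{u\wedge v}^2 w_P=\bigl(\abs{v}^2\inner{u}{w_P}-\inner{u}{v}\inner{v}{w_P}\bigr)u+\bigl(\abs{u}^2\inner{v}{w_P}-\inner{v}{u}\inner{u}{w_P}\bigr)v$, which holds automatically (check it on $w_P=u$ and on $w_P=v$), and the $P^\perp$-part becomes $T^2(w_\perp)=-\abs{u\wedge v}^2 w_\perp$. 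Thus, when $u$ and $v$ are linearly independent, \eqref{eq:tc4} is equivalent to $T^2=-\abs{u\wedge v}^2\id$ on $P^\perp$; when $u$ and $v$ are linearly dependent, both sides of~\eqref{eq:tc4} vanish identically and there is nothing to prove.

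Granting this reduction, the equivalence with~\eqref{eq:tc2} is short. For the direction from~\eqref{eq:tc2} to~\eqref{eq:tc4}: if $w\in P^\perp$ then the Gram matrix of $u,v,w$ is block-diagonal, so $\abs{u\wedge v\wedge w}^2=\abs{u\wedge v}^2\abs{w}^2$; combining this with~\eqref{eq:tc2} and skew-adjointness gives $\inner{w}{(-T^2)w}=\abs{T(w)}^2=\abs{u\wedge v\wedge w}^2=\abs{u\wedge v}^2\inner{w}{w}$ for every $w\in P^\perp$, and since $-T^2$ is self-adjoint on $P^\perp$ it is determined by its quadratic form (polarization), so $-T^2=\abs{u\wedge v}^2\id$ there. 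For the direction from~\eqref{eq:tc4} to~\eqref{eq:tc2}: both $\abs{u\times v\times w}$ and $\abs{u\wedge v\wedge w}$ are unchanged when one of $u,v,w$ is replaced by itself plus a linear combination of the other two (the former by the alternating property, the latter because this is an elementary row-and-column operation on the Gram matrix), so by Gram--Schmidt we may assume $u,v,w$ pairwise orthogonal; then $w\in P^\perp$ and $\abs{u\times v\times w}^2=-\inner{w}{T^2(w)}=\abs{u\wedge v}^2\abs{w}^2=\abs{u\wedge v\wedge w}^2$.

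The only mildly delicate point is the component bookkeeping in the reduction step, together with the elementary fact that a self-adjoint endomorphism of a real inner product space is determined by its associated quadratic form; the rest is a direct transcription of the structure used for \autoref{le:area}.
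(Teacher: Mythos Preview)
Your proof is correct and follows essentially the same approach as the paper's: both fix $u,v$, exploit the skew-adjointness of $w\mapsto u\times v\times w$ from~\eqref{eq:tc3}, reduce~\eqref{eq:tc4} to the statement $T^2=-\abs{u\wedge v}^2\id$ on $\{u,v\}^\perp$, and pass between this and~\eqref{eq:tc2} via polarization and Gram--Schmidt. Your presentation is slightly more explicit in naming the operator $T$ and in verifying the $P$-part identity directly, but the underlying argument is the same.
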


\begin{proof}
  If~\eqref{eq:tc4} holds and $w$ is orthogonal to $u$ and $v$, then
  \begin{equation*}
    u\times v\times (u\times v\times w) = - \abs{u\wedge v}^2w.
  \end{equation*}
  Taking the inner product with $w$ and using~\eqref{eq:tc3} we
  obtain~\eqref{eq:tc2} under the assumption
  $\inner{u}{w}=\inner{v}{w}=0$. %
  Since both sides of equation~\eqref{eq:tc2} remain unchanged if we
  add to $w$ a linear combination of $u$ and $v$, this proves
  that~\eqref{eq:tc4} implies~\eqref{eq:tc2}.
  
  To prove the converse we assume~\eqref{eq:tc2}. %
  If $w$ is orthogonal to $u$ and $v$, we have
  $
  \abs{u\times v\times w}^2 = \abs{u\wedge v}^2\abs{w}^2.
  $
  Replacing $w$ by $w+x$ we obtain
  \begin{equation}
    \label{eq:tc5}
    w,x\in u^\perp\cap v^\perp
      \quad\implies\quad
    \inner{u\times v\times w}{u\times v\times x}
    = \abs{u\wedge v}^2\inner{w}{x}.
  \end{equation}
  Using~\eqref{eq:tc3} we obtain~\eqref{eq:tc4} for every vector
  $w\in u^\perp\cap v^\perp$. %
  Replacing a general vector $w$ by its projection onto the orthogonal
  complement of the subspace spanned by $u$ and $v$ we deduce
  that~\eqref{eq:tc4} holds in general. %
  This proves \autoref{le:tc2}.
\end{proof}

Let~\eqref{eq:tc} be a triple cross product. %
If $e\in W$ is a unit vector, then the subspace
$
V_e:=e^\perp
$ 
carries a cross product $(u,v)\mapsto u\times_ev$ defined by 
$
u\times_ev:=u\times e\times v.
$
Hence, by \autoref{thm:cross}, the dimension of $V_e$ is $0$, $1$,
$3$, or $7$. %

It follows that the dimension of $W$ is $0$, $1$, $2$, $4$, or $8$. %

\begin{lemma}\label{le:tc3}
  Assume $\dim W=8$ and let~\eqref{eq:tc} be a triple cross product. %
  Then there is a number $\eps\in\{\pm1\}$ such that 
  \begin{equation}
    \label{eq:tc6}
    e\times u\times(e\times v\times w)
    = \eps\abs{e}^2u\times v\times w
  \end{equation}
  whenever $e,u,v\in W$ are pairwise orthogonal and $w\in W$ is
  orthogonal to $e,u,v,$ and $e\times u\times v$. 
\end{lemma}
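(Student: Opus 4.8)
I would fix a unit vector $e$ and push everything into the $7$-dimensional Hilbert space $V_e:=e^\perp$, which carries the cross product $u\times_e v:=u\times e\times v=-\,e\times u\times v$ and hence all the structure of Sections~2 and~3: an associative calibration $\phi_e$ with $\phi_e(u,v,w)=\inner{u\times_e v}{w}$, and an associator bracket $[\cdot,\cdot,\cdot]_e$. By homogeneity — both sides of \eqref{eq:tc6} are homogeneous of degree two in $e$ and of degree one in each of $u,v,w$, and they vanish if any argument is zero — it suffices to treat the case $\abs{e}=\abs{u}=\abs{v}=\abs{w}=1$. Using only the alternating property one computes $e\times u\times(e\times v\times w)=u\times_e(v\times_e w)$, and, when $u,v,w$ are pairwise orthogonal, \eqref{eq:AREA} together with \eqref{eq:associator} gives $u\times_e(v\times_e w)=-[u,v,w]_e$. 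Thus the left-hand side of \eqref{eq:tc6} is already understood, and the entire content of the lemma is that $u\times v\times w=\pm[u,v,w]_e$ for a sign independent of the tuple.

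Next I would record what the hypotheses give. Since $w\perp e\times u\times v=-(u\times_e v)$ we get $\phi_e(u,v,w)=0$; by \eqref{eq:tc3} this also yields $\inner{u\times v\times w}{e}=-\inner{w}{u\times v\times e}=\inner{w}{u\times_e v}=0$, so $u\times v\times w\in V_e$; and $\abs{u\times v\times w}=\abs{u\wedge v\wedge w}=1$ by \eqref{eq:tc2}. The key step is to show that $u\times v\times w$ is orthogonal to each of the six vectors $u,v,w,\ v\times_e w,\ w\times_e u,\ u\times_e v$. Orthogonality to $u,v,w$ is \eqref{eq:tc1}. For the three cross products, write $\{p,q,r\}=\{u,v,w\}$; permuting arguments shows $p\times_e q=\pm(p\times q\times e)$ and $u\times v\times w=\pm(p\times q\times r)$, so the polarized norm identity \eqref{eq:tc5}, applied with common pair $p,q$ and the two remaining vectors $r,e$ — both of which are orthogonal to $p$ and to $q$ — gives $\inner{u\times v\times w}{p\times_e q}=\pm\abs{p\wedge q}^2\inner{r}{e}=0$, using $e\perp r$.

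Now \autoref{le:assocphi} gives $\abs{[u,v,w]_e}^2=\abs{u\wedge v\wedge w}^2-\phi_e(u,v,w)^2=1\neq0$, so by \autoref{rmk:assbrack} the six listed vectors are linearly independent in the $7$-dimensional space $V_e$; their orthogonal complement in $V_e$ is therefore a line, which is spanned by $[u,v,w]_e$, and since $u\times v\times w$ is a unit vector lying in that same line we conclude $u\times v\times w=\pm[u,v,w]_e$. Finally, the sign is globally constant: the set of normalized tuples $(e,u,v,w)$ satisfying the hypotheses is connected — it is a tower of Stiefel/sphere bundles, with $e$ in the unit sphere of $W$, then $u,v$ in the unit spheres of the orthogonal complements of the vectors already chosen, then $w$ in the unit sphere of $\bigl(\mathrm{span}\{e,u,v,e\times u\times v\}\bigr)^\perp$ — while $\inner{u\times v\times w}{[u,v,w]_e}\in\{\pm1\}$ is continuous on it and hence constant. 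Letting $\eps$ be the negative of this constant, the first paragraph gives $e\times u\times(e\times v\times w)=-[u,v,w]_e=\eps\,(u\times v\times w)$ for normalized tuples, and the general case follows by homogeneity. The main obstacle is the middle step — assembling enough orthogonality relations to force $u\times v\times w$ onto the line $\R\,[u,v,w]_e$; beyond routine permutation bookkeeping, the only genuine inputs there are the polarization \eqref{eq:tc5} of the norm identity and the linear‑independence statement of \autoref{rmk:assbrack}.
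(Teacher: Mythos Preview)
Your argument is correct and proceeds along a genuinely different line from the paper's proof. The paper works on the $4$--dimensional space $H:=\mathrm{span}(e,u,v,e\times u\times v)^\perp$: it shows that $Iw:=e\times u\times w$, $Jw:=e\times v\times w$, $Kw:=u\times v\times w$ are orthogonal complex structures on~$H$, obtains $IJ+JI=0$ (and likewise $IK+KI=JK+KJ=0$) by polarizing \eqref{eq:tc5}, and then invokes the fact that on a $4$--dimensional Euclidean space the orthogonal complex structures fall into two $2$--sphere components, forcing $K=\eps IJ$. Your route instead pushes everything into the $7$--dimensional space $V_e$: you identify the left side of \eqref{eq:tc6} with $-[u,v,w]_e$, then use \eqref{eq:tc5} together with the linear independence statement of \autoref{rmk:assbrack} to pin $u\times v\times w$ onto the line $\R[u,v,w]_e$. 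The paper's approach is more self-contained within \autoref{sec:TCP} and brings out the quaternionic structure on~$H$ that reappears in \autoref{le:cayley-sub}; your approach leans more heavily on the $7$--dimensional machinery of Sections~\ref{sec:cross}--\ref{sec:assoc} and makes the link to the associator bracket explicit from the outset, which nicely anticipates \eqref{eq:tcW4}. Both use the same connectedness argument to globalize~$\eps$.
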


\begin{proof}  
  It suffices to assume that the vectors $e,u,v\in W$ are orthonormal.
  Then the subspace 
  \begin{equation*}
    H := \mathrm{span}(e,u,v,e\times u \times v)^\perp
  \end{equation*}
  has dimension four. %
  It follows from~\eqref{eq:tc3} and~\eqref{eq:tc5} 
  that the formulas
  \begin{equation*}
      Iw := e\times u\times w,\qquad
      Jw := e\times v\times w,\qquad
      Kw := u\times v\times w,
  \end{equation*}
  define endomorphisms $I,J,K$ of $H$. %
  Moreover, by~\eqref{eq:tc3}, these 
  operators are skew adjoint and, by~\eqref{eq:tc5}, they are complex 
  structures on $H$. %
  It follows also from~\eqref{eq:tc5} that
  $
  e\times x\times(e\times x\times w) = -\abs{x}^2w
  $
  whenever $e,x,w$ are pairwise orthogonal and $\abs{e}=1$. %
  Assuming $w\in H$ and using this identity with $x=u+v$ we obtain
  $
  IJ+JI=0.
  $
  This implies that the automorphisms of $H$ of the form
  $aI+bJ+cIJ$ with $a^2+b^2+c^2 = 1$ belong to the space
  $\cJ$ of orthogonal complex structures on $H$. %
  They form one of the two components of $\cJ$ and $K$ belongs to this
  component because it anticommutes with $I$ and~$J$. %
  Hence, $K = \eps IJ$ with $\eps = \pm 1$. %
  Since the space of orthonormal triples in $W$ is connected, 
  and the constant $\eps$ depends continuously on the 
  triple $e,u,v$, we have proved~\eqref{eq:tc6} under the assumption 
  that $e,u,v$ are orthonormal and $w$ is orthogonal to the
  vectors ${e,u,v,e\times u\times v}$. %
  Hence, the assertion 
  follows by scaling. %
  This proves \autoref{le:tc3}.
\end{proof}

\begin{definition}
  \label{def:tc+}
  Assume $\dim W=8$. %
  A triple cross product~\eqref{eq:tc} is called \defined{positive} if
  it satisfies~\eqref{eq:tc6} with $\eps=1$ and is called
  \defined{negative} if it satisfies~\eqref{eq:tc6} with $\eps=-1$.
\end{definition}

\begin{definition}
  \label{def:Phi}
  Assume $\dim W=8$ and let~\eqref{eq:tc} be a triple cross product.
  Then, by \autoref{le:tc1}, the map
  $
  \Phi\co W\times W\times W\times W\to\R
  $ 
  defined by 
  \begin{equation}
    \label{eq:Phi}
    \Phi(x,u,v,w)
    := \inner{x}{u\times v\times w}
  \end{equation}
  is an alternating $4$--form. %
  It is called the \defined{Cayley calibration} of $W$.
\end{definition}

\begin{theorem}
  \label{thm:cayley}
  Assume $\dim W=8$ and let~\eqref{eq:tc} be a triple cross product
  with Cayley calibration ${\Phi\in\Lambda^4W^*}$ given
  by~\eqref{eq:Phi}. %
  Let $e\in W$ be a unit vector.
  \begin{enumerate}[(i)]
  \item
    \label{It_Cayley1}
    Define the map $\psi_e\co W^4\to\R$ by
    \begin{equation}\label{eq:psie}
      \begin{split}
        \psi_e(u,v,w,x)
        &:=
          \inner{e\times u\times v}{e\times w\times x} \\ &\quad
          - \bigl(\inner{u}{w}-\inner{u}{e}\inner{e}{w}\bigr)
          \bigl(\inner{v}{x}-\inner{v}{e}\inner{e}{x}\bigr) \\ &\quad
          + \bigl(\inner{u}{x}-\inner{u}{e}\inner{e}{x}\bigr)
         \bigl(\inner{v}{w}-\inner{v}{e}\inner{e}{w}\bigr).
      \end{split}
    \end{equation}
    Then $\psi_e\in \Lambda^4W^*$ and 
    \begin{equation}\label{eq:cayley}
      \Phi
      =
        e^*\wedge\phi_e+\eps\psi_e,\qquad
      \phi_e
      :=
        \iota(e)\Phi\in\Lambda^3W^*,
    \end{equation}
    where $\eps\in\{\pm1\}$ is as in \autoref{le:tc3}.

  \item
    \label{It_Cayley2}
    The subspace $V_e:=e^\perp$ carries a cross product
    \begin{equation}
      \label{eq:crosse}
      V_e\times V_e\to V_e\co
      (u,v)\mapsto u\times_ev := u\times e\times v,
    \end{equation}
    the restriction of $\phi_e$ to $V_e$ is the associative
    calibration of~\eqref{eq:crosse}, and the restriction of $\psi_e$
    to $V_e$ is the coassociative calibration of~\eqref{eq:crosse}.

  \item
    \label{It_Cayley3}
    The space $W$ is a normed algebra with unit $e$ and multiplication
    and conjugation given by
    \begin{equation}\label{eq:NDA}
      uv
      :=
        u\times e\times v+\inner{u}{e}v+\inner{v}{e}u-\inner{u}{v}e,\qquad 
      \bar u
      :=
        2\inner{u}{e}e-u.
    \end{equation}
    If the triple cross product is positive, then
    $(u\vbar)w-(w\vbar)u=2u\times v\times w$.
  \end{enumerate}
\end{theorem}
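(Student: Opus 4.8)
The plan is to pull everything back to the seven‑dimensional theory of Sections~\ref{sec:cross}--\ref{sec:assoc}, applied to the cross product $\times_e$ on $V_e:=e^\perp$, the two genuinely eight‑dimensional inputs being \eqref{eq:tc4} (\autoref{le:tc2}) and \autoref{le:tc3}. Since $V_e$ carries the cross product $\times_e$ (as noted before \autoref{le:tc3}) and $\dim W=8$, we have $\dim V_e=7$; write $\pi_e\co W\to V_e$ for the orthogonal projection, $u_0:=\inner{u}{e}$, $u_1:=\pi_e u$, and let $[\cdot,\cdot,\cdot]_e$ be the associator bracket of $(V_e,\times_e)$. Two routine reductions come first. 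For $u,v,w\in V_e$ the alternating property and \eqref{eq:tc3} give $\phi_e(u,v,w)=\Phi(e,u,v,w)=\inner{u\times e\times v}{w}=\inner{u\times_e v}{w}$, which is the defining formula for the associative calibration of $(V_e,\times_e)$. Next, using $e\times u\times v=e\times u_1\times v_1=-(u_1\times_e v_1)$ one checks that \eqref{eq:psie} is precisely $\psi_e=\pi_e^*\psi$, where $\psi\in\Lambda^4V_e^*$ is the coassociative calibration of $(V_e,\times_e)$ (compare \eqref{eq:psicross}); hence $\psi_e\in\Lambda^4W^*$, $\iota(e)\psi_e=0$, and $\psi_e|_{V_e}=\psi$. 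Together with the cross‑product claim, already established before \autoref{le:tc3}, this proves part~\itref{It_Cayley2}.

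The core of the argument is the identity
\begin{equation*}
  u\times v\times w=\phi_e(u,v,w)\,e-\eps\,[u,v,w]_e\qquad\text{for all }u,v,w\in V_e,
\end{equation*}
with $\eps\in\{\pm1\}$ as in \autoref{le:tc3}. Both sides are alternating and trilinear, so by Gram--Schmidt it suffices to treat an orthonormal triple $u,v,w\in V_e$. Pairing with $e$ and using the first reduction shows the $e$-component of the left‑hand side is $\phi_e(u,v,w)$. For the $V_e$-component put $p:=u\times_e v$ (a unit vector in $V_e$ orthogonal to $u$ and $v$) and split $w=w''+\inner{w}{p}\,p$ with $w''\perp p$, so $w''$ is orthogonal to $e,u,v$ and to $e\times u\times v=-p$. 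Applying \eqref{eq:tc4} with $w$ replaced by $e$, and noting $u\times v\times e=-p$, gives $u\times v\times p=e$; while \autoref{le:tc3}, after rewriting $e\times u\times(e\times v\times w'')=u\times_e(v\times_e w'')$, together with the elementary identity $[u,v,w'']_e=-\,u\times_e(v\times_e w'')$ — valid whenever $w''\perp u,v$, and deduced from \eqref{eq:AREA} and \eqref{eq:associator1} — gives $u\times v\times w''=\eps\,u\times_e(v\times_e w'')=-\eps[u,v,w'']_e$. Since $[u,v,p]_e=0$ (the span of $u,v,p$ is associative) and $\phi_e(u,v,w)=\inner{p}{w}$, adding the two contributions yields the identity. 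This is the step that demands attention: the sign bookkeeping in passing between $\times$ on $W$ and $\times_e$ on $V_e$, and in the iterated use of \eqref{eq:AREA} for the auxiliary identity, is where errors would creep in.

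Granting this identity, part~\itref{It_Cayley1} is immediate. Write $\Phi=e^*\wedge\phi_e+\tilde\psi$ with $\tilde\psi:=\Phi-e^*\wedge\phi_e$, so that $\iota(e)\tilde\psi=0$ and $\tilde\psi|_{V_e}=\Phi|_{V_e}$; since both $\tilde\psi$ and $\psi_e$ are annihilated by $\iota(e)$ they are determined by their restrictions to $V_e$, so it suffices to compare those. For $x,u,v,w\in V_e$ the identity gives $\Phi(x,u,v,w)=\inner{x}{u\times v\times w}=-\eps\inner{x}{[u,v,w]_e}=-\eps\,\psi(u,v,w,x)$, and the alternating property turns this into $\Phi(u,v,w,x)=\eps\,\psi(u,v,w,x)=\eps\,\psi_e(u,v,w,x)$; hence $\tilde\psi=\eps\psi_e$ and $\Phi=e^*\wedge\phi_e+\eps\psi_e$.

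Finally, for part~\itref{It_Cayley3}: expanding \eqref{eq:NDA} with $u=u_0e+u_1$, $v=v_0e+v_1$ and using $u\times e\times v=u_1\times_e v_1$ shows that the product in \eqref{eq:NDA} on $W=\R e\oplus V_e$ coincides with the product \eqref{eq:uv} attached to $(V_e,\times_e)$; thus $W$ is a normed algebra with unit $e$ by \autoref{thm:NDG}~\itref{It_NDG2}, and the stated conjugation is \eqref{eq:ubar}. When $\eps=1$, expanding the core identity in components $u_0,u_1,\dots$ shows that our triple cross product agrees term by term with the formula \eqref{eq:tcW4} for the triple cross product of this normed algebra; comparing with \eqref{eq:tcW} then gives $(u\vbar)w-(w\vbar)u=2\,u\times v\times w$, completing the proof.
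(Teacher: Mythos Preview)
Your proof is correct and rests on the same two inputs the paper uses, namely \autoref{le:tc3} and \eqref{eq:tc4}, but you organise them differently. The paper proves part~\itref{It_Cayley1} first: it verifies directly from the symmetries of~\eqref{eq:psie} that $\psi_e$ is alternating, then derives the scalar identity $\inner{e\times u\times x}{e\times v\times w}=-\eps\inner{x}{u\times v\times w}$ for orthogonal quintuples from \autoref{le:tc3}, and uses it to match $\Phi|_{V_e}$ with $\eps\psi_e|_{V_e}$; parts~\itref{It_Cayley2} and~\itref{It_Cayley3} are then checked by separate direct computations, the normed algebra property by expanding $\abs{uv}^2$ explicitly. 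You instead recognise $\psi_e=\pi_e^*\psi$ via~\eqref{eq:psicross}, which immediately gives both $\psi_e\in\Lambda^4W^*$ and part~\itref{It_Cayley2}, and you replace the paper's scalar identity by the vector-valued ``core identity'' $u\times v\times w=\phi_e(u,v,w)\,e-\eps\,[u,v,w]_e$ on $V_e$, from which~\eqref{eq:cayley} and the comparison with~\eqref{eq:tcW4} both follow at once. Your route reuses earlier machinery (\autoref{thm:NDG}, \eqref{eq:psicross}, \eqref{eq:tcW4}) more heavily and so avoids several direct verifications; the paper's route is more self-contained at the cost of a few more computations. Both are essentially the same argument viewed from slightly different angles.
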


\begin{proof}
  We prove~\itref{It_Cayley1}. %
  If the vectors $e,u,v,w,x$ are pairwise orthogonal, then
  \begin{equation}\label{eq:tc7}
    \inner{e\times u\times x}{e\times v\times w}
    = -\eps\abs{e}^2\inner{x}{u\times v\times w}.
  \end{equation}
  To see this, take the inner product of~\eqref{eq:tc6} with $x$. %
  Then it follows from~\eqref{eq:tc3} that~\eqref{eq:tc7} holds under
  the additional assumption that $w$ is perpendicular to
  $e\times u\times v$. %
  Since $x$ is orthogonal to $e$, this additional condition can be
  dropped, as both sides of the equation remain unchanged if we add to
  $w$ a multiple of $e\times u\times v$. %
  Thus we have proved~\eqref{eq:tc7}.

  Now fix a unit vector $e\in W$. %
  By definition, $\psi_e$ is alternating in the first two and last two
  arguments, and satisfies $ \psi_e(u,v,w,x)=\psi_e(w,x,u,v) $ for all
  $u,v,w,x\in W$. %
  By~\eqref{eq:tc2} we also have $\psi_e(u,v,u,v)=0$. %
  Expanding the identity $\psi_e(u,v+x,u,v+x)=0$ we obtain
  $ \psi_e(u,v,u,x)=0 $ for all $u,v,x\in W$. %
  Using this identity with $u$ replaced by $u+w$ gives
  \begin{equation*}
    \psi_e(u,v,w,x)+\psi_e(w,v,u,x)=0. 
  \end{equation*}
  Hence, $\psi_e$ is also skew-symmetric in the first and third
  argument and so is an alternating $4$--form. %
  To see that it satisfies~\eqref{eq:cayley} it suffices to show that
  $\eps\Phi$ and $\psi_e$ agree on $e^\perp$. %
  Since they are both $4$--forms, it suffices to show that they agree
  on every quadrupel of pairwise orthogonal vectors
  ${u,v,w,x\in e^\perp}$. %
  But in this case we have
  $ \psi_e(u,x,v,w)=-\eps\Phi(x,u,v,w)=\eps\Phi(u,x,v,w), $ by
  equation~\eqref{eq:tc7}. %
  This proves~\itref{It_Cayley1}.

  We prove~\itref{It_Cayley2}. %
  That~\eqref{eq:crosse} is a cross product on $V_e=e^\perp$ follows
  immediately from the definitions. %

  By~\eqref{eq:Phi} we have
  \begin{equation*}
    \inner{u\times_ev}{w} = \Phi(w,u,e,v)=\Phi(e,u,v,w) = \phi_e(u,v,w)
  \end{equation*}
  for $u,v,w\in V_e$, and hence the restriction of $\phi_e$
  to $V_e$ is the associative calibration. %
  Moreover, the
  associator bracket~\eqref{eq:associator} on $V_e$
  is given by 
  \begin{equation*}
    [u,v,w]_e
    = (u\times e\times v)\times e\times w
      + \inner{v}{w}u-\inner{u}{w}v.
  \end{equation*}
  Hence, for all $u,v,w,x\in V_e$, we have 
  \begin{align*}
    \inner{[u,v,w]_e}{x}
    &=
      \inner{e\times w\times(u\times e\times v)}{x} 
      + \inner{v}{w}\inner{u}{x} - \inner{u}{w}\inner{v}{x} \\
    &= 
      \inner{e\times u\times v}{e\times w\times x} 
      - \inner{u}{w}\inner{v}{x} + \inner{u}{x}\inner{v}{w} \\
    &=
      \psi_e(u,v,w,x),
  \end{align*}
  where the last equation follows from~\eqref{eq:psie}. %
  Hence, the restriction of $\psi_e$ to $V_e$ is the coassociative
  calibration and this proves~\itref{It_Cayley2}.

  We prove~\itref{It_Cayley3}. %
  That $e$ is a unit follows directly from the definitions.  To prove
  that the norm of the product is equal to the product of the norms we
  observe that $u\times e\times v$ is orthogonal to $e$, $u$, and $v$,
  by equation~\eqref{eq:tc3}. %
  Hence,
  \begin{align*}
    \abs{uv}^2 
    &=
      \abs{u\times e\times v+ \inner{u}{e}v+ \inner{v}{e}u - \inner{u}{v}e}^2 \\
    &=
      \abs{u\times e\times v}^2 -2\inner{v}{e}\inner{u}{v}\inner{v}{e} \\ &\quad
      +\;\inner{u}{e}^2\abs{v}^2 + \inner{v}{e}^2\abs{u}^2 + \inner{u}{v}^2 \\
    &=
      \abs{u}^2\abs{v}^2.
  \end{align*}
  Here the last equality uses the fact that 
  $\abs{u\times e\times v}^2=\abs{u\wedge e\wedge v}^2$. %
  Thus we have proved that $W$ is a normed algebra with unit $e$.

  If the triple cross product~\eqref{eq:tc} is positive, then $\eps=1$
  and hence equation~\eqref{eq:cayley} asserts that
  $\Phi=e^*\wedge\phi_e+\psi_e$. %
  Hence, it follows from~\eqref{eq:Phiphi} in
  \autoref{thm:tcW} that the Cayley calibration $\Phi_e$ associated to
  the above normed algebra structure is equal to $\Phi$. %
  This implies that the given triple cross product~\eqref{eq:tc}
  agrees with the triple cross product defined by~\eqref{eq:tcW}. %
  This proves~\itref{It_Cayley3} and \autoref{thm:cayley}.
\end{proof}

\begin{remark}\label{rmk:Worient}
  Assume $\dim W=8$ and let~\eqref{eq:tc} be a triple cross product
  with Cayley calibration ${\Phi\in\Lambda^4W^*}$ given
  by~\eqref{eq:Phi}. %
  Then, for every unit vector $e\in W$, the subspace $V_e=e^\perp$ is
  oriented by \autoref{le:imO} and \autoref{thm:cayley}. %
  We orient $W$ as the direct sum $ W = \R e\oplus V_e $. %
  This orientation is independent of the choice of the unit vector
  $e$. %
  With this orientation we have $e^*\wedge\phi_e=*\psi_e$, by
  \autoref{thm:cayley}~\itref{It_Cayley2} and \autoref{le:psi}. %
  Hence, it follows from equation~\eqref{eq:cayley} in
  \autoref{thm:cayley}~\itref{It_Cayley1} that $\Phi\wedge\Phi\ne
  0$. %
  In fact, the triple cross product is positive if and only if
  $\Phi\wedge\Phi>0$ with respect to our orientation and negative if
  and only if $\Phi\wedge\Phi<0$. %
  In the positive case $\Phi$ is self-dual and in the negative case
  $\Phi$ is anti-self-dual. %
\end{remark}

\begin{cor}\label{cor:tctc}
  Assume $\dim W=8$ and let~\eqref{eq:tc} be a triple cross product and
  let $\eps$ be as in \autoref{le:tc3}. %
  Then, for all $e,u,v,w\in W$, we have 
  \begin{equation}\label{eq:tctc}
    \begin{split}
      e\times u\times(e\times v\times w)
      &=
      \eps \abs{e}^2u\times v\times w - \eps \inner{e}{u\times v\times w}e \\
      &\quad
      - \eps\inner{e}{u}e\times v\times w \\
      &\quad
      - \eps\inner{e}{v}e\times w\times u \\
      &\quad
      - \eps\inner{e}{w}e\times u\times v \\
      &\quad
      - \bigl(\abs{e}^2\inner{u}{v}-\inner{e}{u}\inner{e}{v}\bigr)w \\
      &\quad
      + \bigl(\abs{e}^2\inner{u}{w}-\inner{e}{u}\inner{e}{w}\bigr)v \\
      &\quad
      + \bigl(\inner{u}{v}\inner{e}{w} - \inner{u}{w}\inner{e}{v}\bigr)e.
    \end{split}
  \end{equation}
\end{cor}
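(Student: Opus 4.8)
The plan is to reduce the identity to a computation inside the $7$--dimensional space $V_e:=e^\perp$, where the structure theory of \autoref{thm:cayley} applies. Both sides of \eqref{eq:tctc} are homogeneous of degree two in $e$ and vanish at $e=0$, so it suffices to treat the case $\abs{e}=1$; both sides are also multilinear in $u,v,w$. So I would fix a unit vector $e$ and write $u=\inner{u}{e}e+u_1$, $v=\inner{v}{e}e+v_1$, $w=\inner{w}{e}e+w_1$ with $u_1,v_1,w_1\in V_e$. Throughout, $\times_e$ denotes the cross product \eqref{eq:crosse} on $V_e$, and $\phi_e$, $\psi_e$, $[\cdot,\cdot,\cdot]_e$ are as in \autoref{thm:cayley}; in particular the restrictions of $\phi_e$ and $\psi_e$ to $V_e$ are the associative and coassociative calibrations of $\times_e$, and $\iota(e)\psi_e=0$.

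First I would compute the left--hand side. Since a triple cross product vanishes whenever two of its arguments coincide, the $e$--components of $v$ and $w$ drop out of $e\times v\times w$, and the alternating property gives $e\times v\times w=e\times v_1\times w_1=-(v_1\times_e w_1)\in V_e$. Feeding this back in and discarding the $e$--component of $u$ in the same way yields $e\times u\times(e\times v\times w)=-\bigl(e\times u_1\times(v_1\times_e w_1)\bigr)=u_1\times_e(v_1\times_e w_1)$. Now \eqref{eq:associator}, together with the fact that the associator bracket is alternating and hence cyclically invariant, gives $a\times_e(b\times_e c)=-[a,b,c]_e+\inner{a}{c}b-\inner{a}{b}c$ for $a,b,c\in V_e$. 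Hence the left--hand side of \eqref{eq:tctc} equals $-[u_1,v_1,w_1]_e+\inner{u_1}{w_1}v_1-\inner{u_1}{v_1}w_1$.

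Next I would expand the right--hand side using the same ingredients. Multilinearity and the vanishing of triple cross products with a repeated $e$ give $u\times v\times w=-\inner{u}{e}(v_1\times_e w_1)-\inner{v}{e}(w_1\times_e u_1)-\inner{w}{e}(u_1\times_e v_1)+u_1\times v_1\times w_1$, while the decomposition $u_1\times v_1\times w_1=\phi_e(u_1,v_1,w_1)e-\eps[u_1,v_1,w_1]_e$ follows from $\Phi(e,u_1,v_1,w_1)=\phi_e(u_1,v_1,w_1)$ (using $\iota(e)\psi_e=0$) and, for $a\in V_e$, from $\inner{a}{u_1\times v_1\times w_1}=\eps\psi_e(a,u_1,v_1,w_1)=-\eps\inner{[u_1,v_1,w_1]_e}{a}$ together with \autoref{le:psi}; likewise $e\times v\times w=-(v_1\times_e w_1)$, $e\times w\times u=-(w_1\times_e u_1)$ and $e\times u\times v=-(u_1\times_e v_1)$. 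Substituting all of this into the right--hand side of \eqref{eq:tctc}, the terms carrying the factors $\inner{e}{u}$, $\inner{e}{v}$, $\inner{e}{w}$ produced by $\eps\,u\times v\times w$ are cancelled by the three terms $-\eps\inner{e}{u}\,e\times v\times w$, $-\eps\inner{e}{v}\,e\times w\times u$, $-\eps\inner{e}{w}\,e\times u\times v$; the term $\eps\phi_e(u_1,v_1,w_1)e$ is cancelled by $-\eps\inner{e}{u\times v\times w}e$; and the $e$--components of the last three terms of \eqref{eq:tctc} cancel among themselves once one writes $\inner{u}{v}-\inner{e}{u}\inner{e}{v}=\inner{u_1}{v_1}$, and similarly for the other two. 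What remains is exactly $-[u_1,v_1,w_1]_e+\inner{u_1}{w_1}v_1-\inner{u_1}{v_1}w_1$, matching the left--hand side.

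The argument is essentially routine given \autoref{thm:cayley} and the calibration identities. The only delicate point is the bookkeeping in the final cancellation, for which it is convenient to track the $e$--component and the $V_e$--component of each term separately; I do not expect any genuine obstacle.
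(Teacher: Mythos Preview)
Your argument is correct. Both your proof and the paper's rest on the same input, namely \autoref{thm:cayley} and in particular the decomposition $\Phi=e^*\wedge\phi_e+\eps\psi_e$ together with the identification of $\psi_e|_{V_e}$ with the coassociative calibration of $\times_e$.

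The execution differs in one respect worth noting. You decompose $u,v,w$ into their $e$-- and $V_e$--components and carry all the cross terms through, verifying the cancellations by hand. The paper instead observes up front that both sides of \eqref{eq:tctc} are unchanged if one adds to any of $u,v,w$ a multiple of $e$, so one may assume $u,v,w\in V_e$ from the start; it then notes that both sides lie in $V_e$, so it suffices to check the inner product with an arbitrary $x\in V_e$. This reduces the vector identity to the scalar identity $-\psi_e(u,x,v,w)=\eps\Phi(x,u,v,w)$ on $V_e$, which is immediate from $\Phi=e^*\wedge\phi_e+\eps\psi_e$. Your bookkeeping essentially reproves that invariance term by term; the paper's route avoids the bookkeeping entirely but yours has the minor advantage of making the role of the associator formula $a\times_e(b\times_e c)=-[a,b,c]_e+\inner{a}{c}b-\inner{a}{b}c$ explicit.
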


\begin{proof}
  Both sides of the equation remain unchanged if we add to $u$, $v$,
  or~$w$ a multiple of $e$. %
  Hence, it suffices to prove~\eqref{eq:tctc} under the assumption
  that $u,v,w$ are all orthogonal to $e$. %
  Moreover, both sides of the equation are always orthogonal to $e$. %
  Hence, it suffices to prove that the inner products of both sides
  of~\eqref{eq:tctc} with every vector $x\in e^\perp$ agree. %
  It also suffices to assume $\abs{e}=1$. %
  Thus we must prove that, if $e\in W$ is a unit vector and
  $u,v,w,x\in W$ are orthogonal to $e$, then we have
  \begin{equation*}
    \inner{e\times u\times(e\times v\times w)}{x}
    =
      \eps\inner{u\times v\times w}{x} 
      - \inner{u}{v}\inner{w}{x} + \inner{u}{w}\inner{v}{x}
  \end{equation*}
  or equivalently
  \begin{equation}
    \label{eq:tctcx}
    -\inner{e\times u\times x}{e\times v\times w}
    + \inner{u}{v}\inner{x}{w} - \inner{u}{w}\inner{x}{v}
    =
    \eps\inner{x}{u\times v\times w}.
  \end{equation}
  The right hand side of~\eqref{eq:tctcx} is $\eps\Phi(x,u,v,w)$ and,
  by~\eqref{eq:psie}, the left hand side of~\eqref{eq:tctcx} is
  $-\psi_e(u,x,v,w)$. %
  Hence, equation~\eqref{eq:tctcx} is equivalent to the assertion that
  the restriction of $\psi_e$ to $e^\perp$ agrees with $\Phi$. %
  But this follows from equation~\eqref{eq:cayley} in
  \autoref{thm:cayley}. %
  This proves \autoref{cor:tctc}.
\end{proof}

\begin{lemma}\label{le:cayley-sub}
  Assume $\dim W=8$ and let~\eqref{eq:tc} be a triple cross product
  with Cayley calibration ${\Phi\in\Lambda^4W^*}$ given
  by~\eqref{eq:Phi}. %
  Let $H\subset W$ be a $4$--dimensional linear subspace. %
  Then the following are equivalent:
  \begin{enumerate}[(i)]
  \item
    \label{It_CayleySub1}
    If $u,v,w\in H$, then $u\times v\times w\in H$.

  \item
    \label{It_CayleySub2}
    If $u,v\in H$ and $w\in H^\perp$, then
    $u\times v\times w\in H^\perp$.

  \item
    \label{It_CayleySub3}
    If $u\in H$ and $v,w\in H^\perp$, then
    $u\times v\times w\in H$.

  \item
    \label{It_CayleySub4}
    If $u,v,w\in H^\perp$, then
    $u\times v\times w\in H^\perp$.

  \item
    \label{It_CayleySub5}
    If $u,v,w\in H$ and $x\in H^\perp$, then
    $\Phi(x,u,v,w)=0$.

  \item
    \label{It_CayleySub6}
    If $x,u,v,w$ is an orthonormal basis of $H$, then
    $\Phi(x,u,v,w)=\pm1$.

  \item
    \label{It_CayleySub7}
    If $e\in H^\perp$ has norm one, then $H$
    is a coassociative subspace of $V_e:=e^\perp$.

  \item
    \label{It_CayleySub8}
    If $e\in H$ has norm one, then $H\cap V_e$ is an
    associative subspace of $V_e$.
  \end{enumerate}
  A $4$--dimensional subspace that satisfies these equivalent
  conditions is called a \defined{Cayley subspace} of $W$. %
  If the vectors $u,v,w\in W$ are linearly independent, then
  $H:=\mathrm{span}\{u,v,w,u\times v\times w\}$ is a Cayley
  subspace of $W$.
\end{lemma}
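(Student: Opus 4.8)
**The plan is to prove the chain of equivalences by a mixture of Hodge-duality/linear-algebra arguments and reduction to the already-proven $\Gtwo$--theory via a choice of unit vector $e$.** The cleanest route is to pass through the $7$--dimensional results of \autoref{thm:imO}, \autoref{le:coass}, \autoref{le:coassoc} and \autoref{le:assoc}, which have already done the hard work. So I would first establish the two ``anchor'' equivalences \itref{It_CayleySub7} $\Leftrightarrow$ \itref{It_CayleySub1}--\itref{It_CayleySub4} and \itref{It_CayleySub8} $\Leftrightarrow$ \itref{It_CayleySub1}--\itref{It_CayleySub4}, and separately the ``scalar'' characterizations \itref{It_CayleySub5} $\Leftrightarrow$ \itref{It_CayleySub6} $\Leftrightarrow$ \itref{It_CayleySub1}.

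First I would treat \itref{It_CayleySub1}--\itref{It_CayleySub4}: these four are equivalent for any triple cross product by a Hodge-duality argument completely parallel to the one in \autoref{le:assoc}. Concretely, fix a unit vector $e\in H$ (if $H$ is Cayley-related to an associative subspace we can always find one; otherwise argue with $e\in H^\perp$). The operator $w\mapsto e\times \cdot\times w$ restricted to $e^\perp$ is, by \autoref{le:tc2} / \autoref{le:tc3}, essentially a complex structure $J_e$ on each $6$--dimensional fibre, and the four conditions translate into statements about $J_e$ mapping $H\cap e^\perp$, $H^\perp\cap e^\perp$, etc., among themselves; since these subspaces are complementary of matching dimensions, the conditions collapse to a single one. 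Next, for \itref{It_CayleySub5} $\Leftrightarrow$ \itref{It_CayleySub1}: if $u,v,w\in H$ then $u\times v\times w\in H$ iff it is orthogonal to $H^\perp$, i.e. iff $\inner{x}{u\times v\times w}=\Phi(x,u,v,w)=0$ for all $x\in H^\perp$, which is exactly \itref{It_CayleySub5}. The equivalence \itref{It_CayleySub5} $\Leftrightarrow$ \itref{It_CayleySub6} follows because $\Phi$ restricted to the one-dimensional space $\Lambda^4 H^*$ is determined by its value on an orthonormal basis, and \eqref{eq:tc2} forces $|\Phi(x,u,v,w)|\le |x\wedge u\wedge v\wedge w|=1$ with equality iff $x\times u\times v\times w$-type vector (really $u\times v\times w$) lies in $H$ — combined with \itref{It_CayleySub5} this pins the value to $\pm1$.

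The link to the $7$--dimensional theory is the heart of the argument. Given a unit $e\in H^\perp$, \autoref{thm:cayley}\itref{It_Cayley1}--\itref{It_Cayley2} identify $V_e:=e^\perp$ with a $7$--dimensional Hilbert space carrying the cross product $u\times_e v=u\times e\times v$, associative calibration $\phi_e=\iota(e)\Phi|_{V_e}$ and coassociative calibration $\psi_e|_{V_e}$; moreover $\Phi=e^*\wedge\phi_e+\eps\psi_e$. Then for $u,v,w,x\in H\subset V_e$ we have $\Phi(x,u,v,w)=\eps\,\psi_e(x,u,v,w)$ (the $e^*\wedge\phi_e$ term dies since none of $x,u,v,w$ has an $e$-component), so \itref{It_CayleySub5} says precisely that $\psi_e$ vanishes on $H$, i.e. by \autoref{le:coassoc} that $H$ is coassociative in $V_e$ — this is \itref{It_CayleySub7}. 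Dually, for a unit $e\in H$, writing $\Lambda:=H\cap V_e$ (three-dimensional), the associator bracket $[\cdot,\cdot,\cdot]_e$ on $V_e$ computed in the proof of \autoref{thm:cayley}\itref{It_Cayley2} satisfies $\inner{[u,v,w]_e}{x}=\psi_e(u,v,w,x)$, and since $u\times v\times w$ for $u,v,w\in\Lambda$ decomposes (using \eqref{eq:Phiphi}-type identities, or directly $u\times v\times w = \phi_e(u,v,w)\,e + \text{(component in }V_e)$ via \autoref{thm:cayley}) into a multiple of $e$ plus $[u,v,w]_e$ up to sign, condition \itref{It_CayleySub1} is equivalent to $[u,v,w]_e=0$ on $\Lambda$, i.e. $\Lambda$ associative in $V_e$ — this is \itref{It_CayleySub8}, by \autoref{le:assoc}. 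The final sentence, that $\mathrm{span}\{u,v,w,u\times v\times w\}$ is Cayley when $u,v,w$ are independent, follows by choosing a unit $e$ orthogonal to $u,v,w,u\times v\times w$ and applying \autoref{le:assoc}'s last assertion (the span of $a,b,a\times_e b$ is associative) after checking, using \eqref{eq:tc3}, that the four vectors are linearly independent; then invoke the established equivalence \itref{It_CayleySub8} (or \itref{It_CayleySub7}) in reverse.

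**The main obstacle** I expect is the bookkeeping in passing between the triple-cross-product picture on $W$ and the cross-product picture on $V_e$: one must carefully track how $u\times v\times w$ splits into its $\R e$-component $\phi_e(u,v,w)$ and its $V_e$-component, verify that the latter agrees with $-[u,v,w]_e$ up to the sign $\eps$, and make sure the orthogonal decompositions $H=(H\cap\R e)\oplus(H\cap V_e)$ (when $e\in H$) versus $H\subset V_e$ (when $e\in H^\perp$) are handled on the correct side of each implication. Everything else is a routine transcription of the arguments already carried out for \autoref{le:assoc} and \autoref{le:coassoc}; no genuinely new idea is needed, which is why the whole statement can be proved by citing the $\Gtwo$--results plus \autoref{thm:cayley}.
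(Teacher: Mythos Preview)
Your overall strategy---reduce to the $7$--dimensional results via a choice of unit vector $e$ and invoke \autoref{le:assoc} and \autoref{le:coassoc}---is exactly the paper's approach, and your arguments for \itref{It_CayleySub1}$\Leftrightarrow$\itref{It_CayleySub5}, \itref{It_CayleySub1}$\Leftrightarrow$\itref{It_CayleySub6}, and the links to \itref{It_CayleySub7}, \itref{It_CayleySub8} are essentially what the paper does. For the equivalence of \itref{It_CayleySub1}--\itref{It_CayleySub4} the paper is slightly slicker: rather than talking about a complex structure $J_e$, it simply observes that with $e\in H$ a unit vector the cross product $\times_e$ on $V_e$ turns conditions \itref{It_CayleySub1}--\itref{It_CayleySub3} into the conditions of \autoref{le:coassoc} (applied with $H$ there replaced by $H^\perp$), and then gets \itref{It_CayleySub3}$\Leftrightarrow$\itref{It_CayleySub4} by swapping $H\leftrightarrow H^\perp$.

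There is one genuine slip in your plan for the final sentence. You propose to pick $e$ \emph{orthogonal} to $u,v,w,u\times v\times w$, i.e.\ $e\in H^\perp$, but then want to invoke \itref{It_CayleySub8}, which needs $e\in H$; and \autoref{le:assoc}'s last assertion produces associative (not coassociative) $3$--planes, so it does not directly feed into \itref{It_CayleySub7} either. The fix is easy: take $e\in H$, say $e=u/\abs{u}$, and after Gram--Schmidt note that $H\cap V_e=\mathrm{span}\{v,w,u\times v\times w\}$ while $v\times_e w=v\times u\times w=-u\times v\times w$, so $H\cap V_e=\mathrm{span}\{v,w,v\times_e w\}$ is associative by \autoref{le:assoc}, giving \itref{It_CayleySub8} for this $e$ and hence \itref{It_CayleySub6}. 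The paper instead avoids this detour entirely: it verifies \itref{It_CayleySub1} directly using the identity~\eqref{eq:tc4}, which shows that any triple cross product of basis vectors of $H=\mathrm{span}\{u,v,w,u\times v\times w\}$ lands back in $H$.
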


\begin{proof}
  We prove that~\itref{It_CayleySub1} is equivalent
  to~\itref{It_CayleySub5}. %
  If~\itref{It_CayleySub1} holds and $u,v,w\in H$,
  $x\in H^\perp$, then $u\times v\times w\in H$ and, hence,
  $\Phi(x,u,v,w)=\inner{x}{u\times v\times w}=0$.
  Conversely, if~\itref{It_CayleySub5} holds and $u,v,w\in H$,
  then $\inner{x}{u\times v\times w}=\Phi(x,u,v,w)=0$ for every
  $x\in H^\perp$ and hence $u\times v\times w\in H$.

  We prove that~\itref{It_CayleySub1} is equivalent to~\itref{It_CayleySub6}.
  If~\itref{It_CayleySub1} holds and $x,u,v,w$ is an orthonormal basis
  of $H$, then $u\times v\times w$ is orthogonal to $u,v,w$ and
  has norm one. %
  Since $u\times v\times w\in H$, 
  we must have $x=\pm u\times v\times w$.
  Hence $\Phi(x,u,v,w) = \pm\abs{x}^2 = \pm1$.
  Conversely, assume~\itref{It_CayleySub6}, let $u,v,w\in H$ be 
  orthonormal, and choose $x$ such that $x,u,v,w$ 
  form an orthonormal basis of~$H$. %
  Then
  \begin{equation*}
    \inner{x}{u\times v\times w}^2
    = \Phi(x,u,v,w)^2
    = 1 
    = \abs{x}^2\abs{u\times v\times w}^2.
  \end{equation*}
  Hence, $u\times v\times w$ is a real multiple of $x$ and so
  $u\times v\times w\in H$. %
  Since the triple cross product is alternating, the general case can
  be reduced to the orthonormal case by scaling and Gram--Schmidt.

  That~\itref{It_CayleySub6} is equivalent to~\itref{It_CayleySub7}
  follows from \autoref{le:coassoc} and the fact that~$\Phi|_{V_e}$~is
  the coassociative calibration on $V_e$. %
  Likewise, that~\itref{It_CayleySub6} is equivalent
  to~\itref{It_CayleySub8} follows from \autoref{le:assoc} and the
  fact that $\iota(e)\Phi|_{V_e}$ is the associative calibration 
  on~$V_e$.

  Thus we have proved that~\itref{It_CayleySub1},
  \itref{It_CayleySub5}, \itref{It_CayleySub6}, \itref{It_CayleySub7},
  \itref{It_CayleySub8} are equivalent. %
  The equivalence of~\itref{It_CayleySub1}, \itref{It_CayleySub2},
  \itref{It_CayleySub3} for a unit vector $u=e\in H$ follows from
  \autoref{le:coassoc} with $V:=V_e$ and $H$ replaced by $H^\perp$, 
  using the fact that $v\times_ew=-e\times v\times w$ 
  is the cross product on $V_e$. %
  
  The equivalence of~\itref{It_CayleySub3} and~\itref{It_CayleySub4}
  follows from the equivalence of~\itref{It_CayleySub1}
  and~\itref{It_CayleySub2} by interchanging the roles of $\Lambda$
  and $\Lambda^\perp$. %
  Thus we have proved the equivalence of
  conditions~\itref{It_CayleySub1}--\itref{It_CayleySub8}. %
  The last assertion of the lemma follows from~\itref{It_CayleySub1}
  and equation~\eqref{eq:tc4}. %
  This proves \autoref{le:cayley-sub}.
\end{proof}


    
\section{Cayley calibrations}
\label{sec:cayley}  

We assume throughout that $W$ is an $8$--dimensional real vector space.

\begin{definition}
  \label{def:nondeg}
  A $4$--form ${\Phi\in\Lambda^4W^*}$ is called \defined{nondegenerate}
  if for every triple $u,v,w$ of linearly independent vectors in $W$
  there is a vector $x\in W$ such that ${\Phi(u,v,w,x)\ne 0}$. %
  An inner product on $W$ is called \defined{compatible} with a $4$--form
  $\Phi$ if the map $W^3\to W\co (u,v,w)\mapsto u\times v\times w$
  defined by
  \begin{equation}
    \label{eq:crossPhi}
    \inner{x}{u\times v\times w} := \Phi(x,u,v,w)
  \end{equation}
  is a triple cross product. %
  A $4$--form ${\Phi\in\Lambda^4W^*}$ is called a \defined{Cayley-form} if
  it admits a compatible inner product.
\end{definition}

\begin{example}
  \label{ex:cayley}
  The standard Cayley-form on $\R^8$ in coordinates 
  $x_0,x_1,\ldots,x_7$ is given by
  \begin{align*}
    \Phi_0
    &= 
      e^{0123} - e^{0145} - e^{0167} 
      - e^{0246} + e^{0257} - e^{0347} - e^{0356} \\ &\quad
      + e^{4567} - e^{2367} - e^{2345}
      - e^{1357} + e^{1346} - e^{1256} - e^{1247}.
  \end{align*}
  It is compatible with the standard inner product and induces the
  standard triple cross product on $\R^8$ (see \autoref{ex:O}). %
  Note that $\Phi_0\wedge\Phi_0 = 14\,\dvol$.
\end{example}

As in \autoref{sec:imO} we shall see that a compatible inner
product, if it exists, is uniquely determined by $\Phi$. %
However, in contrast to \autoref{sec:imO}, nondegeneracy is, in
the present setting, not equivalent to the existence of a compatible
inner product, but is only a necessary condition. %
The goal in this section is to give an intrinsic characterization of
Cayley-forms. %
In particular, we shall see that every Cayley-form satisfies the
condition
$\Phi\wedge\Phi \ne 0$.
It seems to be an open question whether or not every nondegenerate
$4$--form on $W$ has this property; %
we could not find a counterexample but also did not see how to prove
it. %
We begin by characterizing compatible inner products.

\begin{lemma}
  \label{le:cayley}
  Fix an inner product on $W$ and a $4$--form $\Phi\in\Lambda^4W^*$. %
  Then the following are equivalent:
  \begin{enumerate}[(i)]
  \item
    \label{Lem_Cayley1}
    The inner product is compatible with $\Phi$. 

  \item
    \label{Lem_Cayley2}
    There is a unique orientation on~$W$, with volume form 
    $\dvol\in\Lambda^8W^*$, such that, for all $u,v,w\in W$, we have 
    \begin{equation}
      \label{eq:uvPhi}
      \iota(v)\iota(u)\Phi\wedge\iota(v)\iota(u)\Phi\wedge\Phi
      = 6\abs{u\wedge v}^2\dvol.
    \end{equation}

  \item
    \label{Lem_Cayley3}
    Choose the orientation on~$W$ and the volume
    form $\dvol\in\Lambda^8W^*$ as in~\itref{Lem_Cayley2}.
    Then, for all $u,v,w\in W$, we have 
    \begin{equation}
      \label{eq:uvwPhi}
      \iota(v)\iota(u)\Phi\wedge\iota(w)\iota(u)\Phi\wedge\Phi
      =
      6\left(\abs{u}^2\inner{v}{w}-\inner{v}{u}\inner{u}{w}
      \right)\dvol
    \end{equation}
  \end{enumerate}
  Each of these conditions implies that $\Phi$
  is nondegenerate and $\Phi\wedge\Phi\ne0$. 
\end{lemma}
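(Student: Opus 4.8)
The statement to prove is \autoref{le:cayley}: the equivalence of three characterizations of a compatible inner product on the $8$--dimensional space $W$, together with the assertion that each implies nondegeneracy and $\Phi\wedge\Phi\ne0$. My strategy is to run the same kind of argument as in the proof of \autoref{le:imO}, exploiting that we have already built up the entire theory of triple cross products, Cayley subspaces, and the standard model $\Phi_0$ on $\R^8$. The cleanest route is the cycle \itref{Lem_Cayley1}$\Rightarrow$\itref{Lem_Cayley3}$\Rightarrow$\itref{Lem_Cayley2}$\Rightarrow$\itref{Lem_Cayley1}, with the nondegeneracy and $\Phi\wedge\Phi\ne0$ statements extracted along the way; but in fact \itref{Lem_Cayley2} is just the restriction of \itref{Lem_Cayley3} to the diagonal $w=v$, so really the work is the two implications \itref{Lem_Cayley1}$\Rightarrow$\itref{Lem_Cayley3} and \itref{Lem_Cayley2}$\Rightarrow$\itref{Lem_Cayley1}.

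\textbf{Step 1: the compatible case implies the wedge identities.} Assume \itref{Lem_Cayley1}, so the bilinear map defined by $\inner{x}{u\times v\times w}=\Phi(x,u,v,w)$ is a triple cross product in the sense of \autoref{def:tc}. By \autoref{thm:cayley} (applied with any fixed unit vector $e$, say, or more efficiently by \autoref{le:tc3} and the surrounding discussion) and ultimately by \autoref{thm:imO}/\autoref{thm:tcW}, we may choose an orientation on $W$ so that $(W,\Phi)$ is isometrically isomorphic to $(\R^8,\Phi_0)$ with the standard structure of \autoref{ex:cayley}. Thus it suffices to verify \eqref{eq:uvwPhi} for $\Phi=\Phi_0$ on $\R^8$ — and there, by multilinearity in $u$ and symmetry, it is enough to check it on pairs of standard basis vectors $u=e_i$, $v=e_j$, $w=e_k$. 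This reduces \eqref{eq:uvwPhi} to a finite number of explicit computations of $\iota(e_j)\iota(e_i)\Phi_0\wedge\iota(e_k)\iota(e_i)\Phi_0\wedge\Phi_0$ against $\dvol$, which one checks term by term using the explicit expansion of $\Phi_0$ (this is the computational analogue of the verification $\psi_0=*\phi_0$ in the proof of \autoref{le:psi}). Restricting to $w=v$ gives \eqref{eq:uvPhi}, so \itref{Lem_Cayley1}$\Rightarrow$\itref{Lem_Cayley2} and \itref{Lem_Cayley1}$\Rightarrow$\itref{Lem_Cayley3}; uniqueness of the orientation follows because the right-hand sides are positive definite in $u$ (take $v$ with $\inner{u}{v}=0$, $\abs{v}=1$) and $W\setminus\{0\}$ is connected. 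Nondegeneracy and $\Phi\wedge\Phi\ne0$ are then read off from $\Phi_0\wedge\Phi_0=14\,\dvol$ and from the fact that $\iota(v)\iota(u)\Phi$ restricted to $\{u,v\}^\perp$ is symplectic when $u,v$ are linearly independent.

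\textbf{Step 2: the wedge identity implies compatibility.} Assume \itref{Lem_Cayley2}. First establish nondegeneracy: for $u,v$ linearly independent, \eqref{eq:uvPhi} shows the $8$--form $\iota(v)\iota(u)\Phi\wedge\iota(v)\iota(u)\Phi\wedge\Phi$ is nonzero; evaluating this on a basis whose first vectors are $u,v$ forces some $\Phi(u,v,w,\cdot)$ to be nonzero, hence some $\Phi(u,v,w,x)\ne0$; and $\Phi\wedge\Phi\ne0$ follows similarly. Now define $u\times v\times w$ by \eqref{eq:crossPhi}; it is automatically alternating and orthogonal to $u,v,w$, so by \autoref{le:tc1} and \autoref{le:tc2} it remains to prove the norm identity \eqref{eq:tc2}, equivalently \eqref{eq:tc4}. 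By \autoref{le:tc2} it is enough to show $\abs{u\times v\times w}=\abs{u\wedge v}\,\abs{w}$ whenever $w\perp u,v$ and $\abs{w}=1$ — i.e.\ that $\iota(v)\iota(u)\Phi|_{\{u,v\}^\perp}$ is, up to the scalar $\abs{u\wedge v}$, a compatible (almost) complex structure. I would mimic the five-step argument in the proof of \autoref{le:imO}: write $A:=$ the skew-adjoint endomorphism $w\mapsto u\times v\times w$ on the $6$--dimensional space $\{u,v\}^\perp$ (or work first with $u,v$ orthonormal), diagonalize $A^2$ into blocks with eigenvalues $-\lambda_i^2$, use \eqref{eq:uvPhi} to identify $\prod\lambda_i$ with a volume and hence with $\pm1$ (positive, so $=1$), and then use the dimension of the isotropy group $\rG:=\{g\in\Aut(W):g^*\Phi=\Phi\}$ — which acts on spheres and Stiefel manifolds and has $\dim\rG\ge\dim\Aut(W)-\dim\Lambda^4W^*=64-70$... so here the dimension count is \emph{not} immediately favorable. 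This is the main obstacle: unlike in dimension $7$ where $49-35=14$ gave a clean bound, here $64-70<0$, so one cannot directly bound the isotropy group from below. The fix is to exploit \itref{Lem_Cayley3} (the full polarized identity, which I would derive from \itref{Lem_Cayley2} by polarizing in $u$, or rather by first polarizing \eqref{eq:uvPhi} in its defining vector and then restricting): \eqref{eq:uvwPhi} with $w=v$ is \eqref{eq:uvPhi}, and together these say that for fixed $u$ the map $v\mapsto\iota(v)\iota(u)\Phi$ is an isometry (up to the factor $\abs{u}$) from $\{u\}^\perp$ onto its image, which is precisely the statement that the $\lambda_i$ are all equal. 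So the eigenvalue-equality step comes \emph{for free} from \itref{Lem_Cayley3} rather than from a representation-dimension argument, and the product-equals-one step from \eqref{eq:uvPhi} then forces all $\lambda_i=1$; this yields $A^2=-\id$ on $\{u,v\}^\perp$ (for $u,v$ orthonormal), hence \eqref{eq:tc4}, hence \eqref{eq:tc2}, hence \itref{Lem_Cayley2}$\Rightarrow$\itref{Lem_Cayley1}. Finally \itref{Lem_Cayley2}$\Leftrightarrow$\itref{Lem_Cayley3}: \itref{Lem_Cayley3}$\Rightarrow$\itref{Lem_Cayley2} is the specialization $w=v$, and \itref{Lem_Cayley2}$\Rightarrow$\itref{Lem_Cayley3} follows either by the polarization just described or, having proved \itref{Lem_Cayley1}, directly from Step 1. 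The one genuinely delicate point to get right is the bookkeeping in the polarization of \eqref{eq:uvPhi} — one must keep track of which vector is being polarized (the free vector common to both factors $\iota(v)\iota(u)\Phi$, namely $u$) versus the vectors $v$, $w$ that index the two factors — so I would state and prove the polarization identity carefully as a preliminary, or sidestep it by proving \itref{Lem_Cayley1}$\Leftrightarrow$\itref{Lem_Cayley3} first via the standard model and then noting \itref{Lem_Cayley2} is squeezed in between.
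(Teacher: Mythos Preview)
Your Step~1 (the implication \itref{Lem_Cayley1}$\Rightarrow$\itref{Lem_Cayley3} via the standard model) is correct, though the paper's argument is more direct: rather than reducing to $\Phi_0$ and computing, it uses the triple cross product axioms to show that on each $W_{u,v}:=\{u,v\}^\perp$ the $2$--form $\om_{u,v}(x,w):=\Phi(x,u,v,w)/\abs{u\wedge v}$ and the map $J_{u,v}w:=-u\times v\times w/\abs{u\wedge v}$ form a compatible symplectic/complex pair inducing the given inner product; then $\om_{u,v}^3=6\dvol_{u,v}$ yields~\eqref{eq:uvPhi} immediately, and polarizing in $v$ gives~\eqref{eq:uvwPhi}.

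Your Step~2 has a genuine gap. You correctly observe that the dimension count from the proof of \autoref{le:imO} fails here, but your proposed fix does not work. Condition~\itref{Lem_Cayley3} gives the wedge identity $\iota(v)\iota(u)\Phi\wedge\iota(w)\iota(u)\Phi\wedge\Phi=6\abs{u}^2\inner{v}{w}\dvol$ for $v,w\perp u$; this is \emph{not} the statement that $v\mapsto\iota(v)\iota(u)\Phi$ is an isometry into $\Lambda^2W^*$ (wedging with $\Phi$ is not the $\Lambda^2$ inner product until you already know $\Phi$ is a Cayley form). And even if it were, an isometry statement about \emph{varying} $v$ says nothing about the symplectic eigenvalues $\lambda_i$ of $A=u\times v\times\cdot$ for \emph{fixed} $u,v$; these are two different objects.

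The paper's key idea, which you are missing, is to \emph{reduce to dimension seven}. Fix a unit vector $e\in W$, set $V_e:=e^\perp$, $\phi_e:=\iota(e)\Phi|_{V_e}$, and $\dvol_e:=\iota(e)\dvol|_{V_e}$. Contracting~\eqref{eq:uvwPhi} with $e$ and restricting to $V_e$ gives exactly $\iota(u)\phi_e\wedge\iota(v)\phi_e\wedge\phi_e=6\inner{u}{v}\dvol_e$, which is the hypothesis~\eqref{eq:uvphi} of \autoref{le:imO}. That lemma (where the hard eigenvalue work has already been done) then shows $\phi_e$ is compatible with the inner product on $V_e$, so $u\times_e v:=u\times e\times v$ is a genuine cross product; hence $\abs{u\times e\times v}=\abs{u\wedge v}$ for $u,v\perp e$, and Gram--Schmidt plus scaling give~\eqref{eq:tc2} in general. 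This sidesteps the dimension-count obstruction entirely.
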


\begin{proof}
  We prove that~\itref{Lem_Cayley1} implies~\itref{Lem_Cayley2}. %
  Assume the inner product is compatible with $\Phi$ and let
  $W^3\to W\co (u,v,w)\mapsto u\times v\times w$ be the triple cross
  product on $W$ defined by~\eqref{eq:crossPhi}. %
  Assume $u,v\in W$ are linearly independent. %
  Then the subspace
  \begin{equation*}
    W_{u,v} := \left\{w\in W : \inner{u}{w}=\inner{v}{w}=0\right\}
  \end{equation*}
  carries a symplectic form $\om_{u,v}\co W_{u,v}\times W_{u,v}\to\R$
  and a compatible complex structure $J_{u,v}\co W_{u,v}\to W_{u,v}$
  given by
  \begin{equation*}
    \om_{u,v}(x,w)
    :=
    \frac{\Phi(x,u,v,w)}{\abs{u\wedge v}},
    \qquad
    J_{u,v}w
    :=
    -\frac{u\times v\times w}{\abs{u\wedge v}}.
  \end{equation*}
  Equation~\eqref{eq:tc2} asserts that $J_{u,v}$ is an isometry on
  $W_{u,v}$ and equation~\eqref{eq:tc3} asserts that $J_{u,v}$ is skew
  adjoint. %
  Hence, $J_{u,v}$ is a complex structure on $W_{u,v}$ and
  equation~\eqref{eq:crossPhi} shows that, for all $x,w\in W_{u,v}$,
  we have
  \begin{equation*}
    \om_{u,v}(x,w)
    = \frac{\inner{x}{u\times v\times w}}{\abs{u\wedge v}}
    = -\inner{x}{J_{u,v}w}.
  \end{equation*}
  Thus the inner product $\om_{u,v}(\cdot,J_{u,v}\cdot)$ on $W_{u,v}$
  is the one inherited from $W$. %
  It follows that
  \begin{equation}
    \label{eq:sympuv}
    \om_{u,v}\wedge\om_{u,v}\wedge\om_{u,v}
    =
    6\dvol_{u,v},
  \end{equation}
  where $\dvol_{u,v}\in\Lambda^6W_{u,v}^*$ denotes the volume form on
  $W_{u,v}$ with the symplectic orientation. %
  Since the space of linearly independent pairs $u,v\in W$ is
  connected, there is a unique orientation on $W$ such that, for every
  pair $u,v$ of linearly independent vectors in $W$ and every
  symplectic basis $e_1,\dots,e_6$ of $W_{u,v}$, the basis
  $u,v,e_1,\dots,e_6$ of $W$ is positively oriented. %
  Let $\dvol\in\Lambda^8W^*$ be the volume form of $W^*$ for this
  orientation. %
  Then
  \begin{equation*}
    \dvol_{u,v} = \frac{1}{\abs{u\wedge v}}\iota(v)\iota(u)\dvol|_{W_{u,v}}
  \end{equation*}
  and, hence, equation~\eqref{eq:uvPhi} follows
  from~\eqref{eq:sympuv}. %
  This shows that~\itref{Lem_Cayley1} implies~\itref{Lem_Cayley2}. %
  That~\itref{Lem_Cayley2} implies~\itref{Lem_Cayley3} follows by
  using~\eqref{eq:uvPhi} with $v$ replaced by~${v+w}$.

  We prove that~\itref{Lem_Cayley3} implies~\itref{Lem_Cayley1}.
  Assume there is an orientation on $W$ such that~\eqref{eq:uvwPhi}
  holds, and define the map
  $W^3\to W\co (u,v,w)\mapsto u\times v\times w$
  by~\eqref{eq:crossPhi}. %
  That this map is alternating and satisfies~\eqref{eq:tc1} is
  obvious. %
  We prove that it satisfies~\eqref{eq:tc2}. %
  Fix a unit vector $e\in W$ and denote
  \begin{equation*}
    V_e
    :=
      \left\{v\in W : \inner{e}{v}=0\right\},\quad
    \phi_e
    :=
      \iota(e)\Phi|_{V_e},\quad
    \dvol_e
    :=
      \iota(e)\dvol|_{V_e}.
  \end{equation*}
  Then equation~\eqref{eq:uvwPhi} asserts that 
  \begin{equation*}
    \iota(u)\phi_e\wedge\iota(v)\phi_e\wedge\phi_e 
    =
    6\inner{u}{v}\dvol_e
  \end{equation*}
  for every $u\in V_e$. %
  Hence, $\phi_e$ satisfies condition~\itref{Lem_Cayley1} in
  \autoref{le:imO} and therefore is compatible with the inner
  product. %
  This means that the bilinear map
  $V_e\times V_e\to V_e\co (u,v)\mapsto u\times_ev$ defined by
  $ \inner{u\times_ev}{w} := \phi_e(u,v,w) $ is a cross product on
  $V_e$. %
  Since
  $\phi_e(u,v,w) = \Phi(w,u,e,v) = \inner{u\times e\times v}{w}$, we
  have $ u\times_ev=u\times e\times v$. %
  This implies $\abs{u\times e\times v}=\abs{u\wedge v}$ whenever $u$
  and $v$ are orthogonal to $e$ and $e$ has norm one. %
  Using Gram--Schmidt and scaling, we deduce that our map
  $(u,v,w)\mapsto u\times v\times w$ satisfies~\eqref{eq:tc2} and,
  hence, is a triple cross product. %
  Thus we have proved that~\itref{Lem_Cayley1}, \itref{Lem_Cayley2},
  and~\itref{Lem_Cayley3} are equivalent. %
  Moreover, condition~\itref{Lem_Cayley2} implies that $\Phi$ is
  nondegenerate and~\itref{Lem_Cayley1} implies that
  $\Phi\wedge\Phi\ne 0$, by \autoref{rmk:Worient}. %
  This proves \autoref{le:cayley}.
\end{proof}

We are now in a position to characterize Cayley-forms intrinsically. %
A $4$--form $\Phi$ is nondegenerate if and only if the $2$--form
$\iota(v)\iota(u)\Phi\in\Lambda^2W^*$ descends to a symplectic form on
the quotient $W/\mathrm{span}\{u,v\}$ or, equivalently, the $8$--form
$\iota(v)\iota(u)\Phi\wedge\iota(v)\iota(u)\Phi\wedge\Phi$ is nonzero
whenever $u,v$ are linearly independent. %
The question to be adressed is under which additional condition we can
find an inner product on $W$ that satisfies~\eqref{eq:uvPhi}.

\begin{theorem}
  \label{thm:CAYLEY}  
  A $4$--form $\Phi\in\Lambda^4W^*$ admits a compatible inner product
  if and only if it satisfies the following condition.
  \begin{equation}    
    \tag{C}
    \label{Eq_C}
    \left\{~
    \parbox{\dimexpr\linewidth-4em}{%
      ~\\
      $\Phi$ is nondegenerate and,
      if $u,v,w\in W$ are linearly independent and
      \begin{equation}
        \label{eq:cayley1}
        \iota(v)\iota(u)\Phi\wedge\iota(w)\iota(u)\Phi\wedge\Phi
        = \iota(u)\iota(v)\Phi\wedge\iota(w)\iota(v)\Phi\wedge\Phi
        =0,
      \end{equation}
      then, for all $x\in W$, we have
      \begin{equation}
        \label{eq:cayley2}
        \begin{split}
          &\iota(w)\iota(u)\Phi\wedge\iota(x)\iota(u)\Phi\wedge\Phi=0 \\
          &\qquad\qquad
          \;\iff\;
          \iota(w)\iota(v)\Phi\wedge\iota(x)\iota(v)\Phi\wedge\Phi=0.
        \end{split}
      \end{equation}}\right.
  \end{equation}
  If this holds, then the compatible inner product 
  is uniquely determined by $\Phi$.
\end{theorem}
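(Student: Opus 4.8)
The plan is to prove the two implications separately. That a compatible inner product forces \eqref{Eq_C}, and the uniqueness statement, are routine consequences of \autoref{le:cayley}; the converse is the substance.

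\emph{The easy direction and uniqueness.} Suppose $g$ is compatible with $\Phi$. By \autoref{le:cayley}, $\Phi$ is nondegenerate and satisfies the polarized identity \eqref{eq:uvwPhi}. If $u,v,w$ are linearly independent and \eqref{eq:cayley1} holds, then applying \eqref{eq:uvwPhi} to the triples $(u,v,w)$ and $(v,u,w)$ gives $\abs{u}^2\inner{v}{w}=\inner{u}{v}\inner{u}{w}$ and $\abs{v}^2\inner{u}{w}=\inner{u}{v}\inner{v}{w}$; eliminating $\inner{v}{w}$ and using the strict Cauchy--Schwarz inequality $\abs{u}^2\abs{v}^2>\inner{u}{v}^2$ forces $\inner{u}{w}=\inner{v}{w}=0$. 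Now \eqref{eq:uvwPhi} shows the two sides of \eqref{eq:cayley2} are $6\abs{u}^2\inner{w}{x}\dvol$ and $6\abs{v}^2\inner{w}{x}\dvol$, each of which vanishes exactly when $\inner{w}{x}=0$; hence \eqref{Eq_C} holds. For uniqueness, any two compatible inner products produce, via \eqref{eq:uvwPhi}, the same $\Lambda^8W^*$--valued form $\iota(v)\iota(u)\Phi\wedge\iota(w)\iota(u)\Phi\wedge\Phi$, hence the same quantity $6(\abs{u}^2\abs{v}^2-\inner{u}{v}^2)\dvol$; a scaling argument exactly as in the proof of \autoref{thm:imO}~\itref{It_UniqueG} (compare \autoref{Rmk_Hitchin}) then pins down $\dvol$, and with it the inner product.

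\emph{Reduction to dimension seven.} Assume now \eqref{Eq_C}. For $e\in W\setminus\{0\}$ the $3$--form $\iota(e)\Phi$ satisfies $\iota(e)\iota(e)\Phi=0$, so it descends to a $3$--form $\bar\theta_e$ on the $7$--dimensional quotient $W/\langle e\rangle$, and nondegeneracy of $\Phi$ makes $\bar\theta_e$ nondegenerate there (given linearly independent $\bar u,\bar v$, pick lifts $u,v$; since $e,u,v$ are independent in $W$ there is $x$ with $\Phi(e,u,v,x)=\bar\theta_e(\bar u,\bar v,\bar x)\ne0$). By \autoref{thm:imO} there is a unique inner product $g_e$ on $W/\langle e\rangle$ compatible with $\bar\theta_e$, with an induced cross product, orientation, and coassociative calibration $\psi_e=*_{g_e}\bar\theta_e$. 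Fix a complement $H_e$ of $\langle e\rangle$ and $e^*\in W^*$ with $e^*(e)=1$, and write $\Phi=e^*\wedge\iota(e)\Phi+\tilde\Phi_e$ with $\iota(e)\tilde\Phi_e=0$; transported to $W/\langle e\rangle$, the two summands become $\bar\theta_e$ and some $4$--form $\bar{\tilde\Phi}_e$.

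\emph{The main step.} It remains to choose $e$ (and the complement) so that $\bar{\tilde\Phi}_e=\psi_e$. Granting this, $\Phi=e^*\wedge\bar\theta_e+\psi_e$ is by \autoref{thm:tcW}~\itref{It_tcW3} the Cayley calibration of the $8$--dimensional normed algebra $\R e\oplus(W/\langle e\rangle)$ built from the cross product on $W/\langle e\rangle$, so the inner product on $W$ defined by $\abs{e}=1$, $e\perp H_e$, and $g_e$ on $H_e$ is compatible with $\Phi$ by \autoref{thm:tcW}~\itref{It_tcW1}--\itref{It_tcW2} (the map \eqref{eq:crossPhi} it defines is the triple cross product of that normed algebra, and hence satisfies \eqref{eq:tc1} and \eqref{eq:tc2}), which produces the required compatible inner product. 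To obtain such an $e$, the plan is to use \eqref{Eq_C} first to verify that $\bar{\tilde\Phi}_e$ satisfies condition \itref{It_Psi2} of \autoref{thm:psi}, so that (after possibly replacing $\Phi$ by $-\Phi$) it is the coassociative calibration of some nondegenerate $3$--form on $W/\langle e\rangle$, and then to use \eqref{Eq_C} again to identify that $3$--form with $\pm\bar\theta_e$, whereupon \autoref{thm:phipsi} yields $\bar{\tilde\Phi}_e=\psi_e$. This is where the real work lies, and where \eqref{Eq_C} is indispensable: expanding the $8$--forms in \eqref{eq:cayley1}--\eqref{eq:cayley2} through $\Phi=e^*\wedge\iota(e)\Phi+\tilde\Phi_e$ produces precisely the mixed products coupling the ``associative part'' $\iota(e)\Phi$ to the ``coassociative part'' $\tilde\Phi_e$, and \eqref{Eq_C} constrains their vanishing in exactly the way needed to force $\tilde\Phi_e$ to be the Hodge dual of $\iota(e)\Phi$. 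Carrying this out will require expanding those mixed products with the identities of \autoref{le:phipsi} (and, for the standard model, \autoref{ex:O}), choosing suitable test vectors in $H_e$, and applying \eqref{Eq_C} repeatedly, presumably inductively along a basis of $H_e$; it is also at this stage that positivity of the resulting inner product --- equivalently $\Phi\wedge\Phi\ne0$ --- must be extracted, since, as noted above, nondegeneracy alone is not known to imply it.
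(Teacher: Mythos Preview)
Your easy direction and uniqueness are essentially the paper's argument and are fine.

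For the hard direction you take a genuinely different route from the paper, but you have not actually proved anything: the ``main step'' is only a plan. You fix $e$, get a nondegenerate $3$--form $\bar\theta_e$ on $W/\langle e\rangle$, obtain the inner product $g_e$ and its coassociative calibration $\psi_e$ from \autoref{thm:imO}, and then need $\bar{\tilde\Phi}_e=\psi_e$. You say you will use \eqref{Eq_C} to verify that $\bar{\tilde\Phi}_e$ satisfies \autoref{thm:psi}~\itref{It_Psi2} and then again to identify the resulting $3$--form with $\pm\bar\theta_e$, but you do neither; ``expanding those mixed products \ldots\ presumably inductively along a basis'' is not an argument. There is a further gap you pass over: the decomposition $\Phi=e^*\wedge\iota(e)\Phi+\tilde\Phi_e$ depends on the choice of complement $H_e$ (equivalently of $e^*$), and so does the inner product you build by declaring $e\perp H_e$. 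For the construction to work you must show that \eqref{Eq_C} singles out the correct $H_e$; you have not indicated how.

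The paper avoids all of this by a purely algebraic lemma (\autoref{le:inner}): given the $S^2\Lambda^2W^*$--valued map
\[
g(u,v;w,x)\;=\;\frac{\iota(v)\iota(u)\Phi\wedge\iota(x)\iota(w)\Phi\wedge\Phi}{6\,\Omega}
\]
(for any positive volume form $\Omega$; nondegeneracy of $\Phi$ makes $g(u,v;u,v)>0$ for $u,v$ independent), condition~\eqref{Eq_C} is literally condition~\itref{Lem_Inner1} of \autoref{le:inner}. That lemma then produces, by explicit determinantal formulas, a unique inner product with $g(u,v;u,v)=\abs{u\wedge v}^2$; a one--parameter scaling of $\Omega$ matches the induced volume form, and \autoref{le:cayley} finishes. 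No choice of $e$ or complement is made, and no $7$--dimensional structure theorem is invoked. If you want to push your approach through, you would effectively have to reprove \autoref{le:inner} in disguise, which is considerably harder than the route the paper takes.
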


\begin{proof}
  See page~\pageref{proof:CAYLEY}.
\end{proof}

To understand condition~\eqref{Eq_C} geometrically, assume $\Phi$
satisfies~\eqref{eq:uvwPhi} for some inner product on $W$. %
Then $\iota(v)\iota(u)\Phi\wedge\iota(w)\iota(u)\Phi\wedge\Phi=0$ if
and only if $\abs{u}^2\inner{v}{w}-\inner{v}{u}\inner{u}{w}=0$. %
Hence, if $u$, $v$, $w$ are linearly independent,
equation~\eqref{eq:cayley1} asserts that $w$ is orthogonal to $u$ and
$v$. %
Under this assumption both conditions in~\eqref{eq:cayley2} assert
that $w$ and $x$ are orthogonal.

Every Cayley-form $\Phi$ induces two orientations on~$W$. %
First, since the $8$--form
$\iota(v)\iota(u)\Phi\wedge\iota(v)\iota(u)\Phi\wedge\Phi$ is nonzero
for every linearly independent pair $u,v\in W$ and the space of
linearly independent pairs in $W$ is connected, there is a unique
orientation on $W$ such that
$\iota(v)\iota(u)\Phi\wedge\iota(v)\iota(u)\Phi\wedge\Phi>0$ whenever
$u,v\in W$ are linearly independent. %
The second orientation of $W$ is induced by the $8$--form
$\Phi\wedge\Phi$. %
This leads to the following definition.

\begin{definition}
  \label{def:cayleypos}
  A Cayley-form $\Phi\in\Lambda^4W^*$ is called \defined{positive} if the
  $8$--forms $\Phi\wedge\Phi$ and
  $\iota(v)\iota(u)\Phi\wedge\iota(v)\iota(u)\Phi\wedge\Phi$ induce
  the same orientation whenever $u,v\in W$ are linearly
  independent. %
  It is called \defined{negative} if it is not positive.
\end{definition}

Thus $\Phi$ is negative if and only if $-\Phi$ is positive. %
Moreover, it follows from \autoref{rmk:Worient} 
that a Cayley-form $\Phi\in\Lambda^4W^*$ is positive 
if and only if the associated triple cross product is positive.

\begin{theorem}
  \label{thm:CAYLEY1}
  If $\Phi,\Psi\in\Lambda^4W^*$ are two positive Cayley-forms, then
  there is an automorphism $g\in\Aut(W)$ such that $g^*\Phi=\Psi$.
\end{theorem}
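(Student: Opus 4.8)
The strategy is to reduce the statement to the classification of $8$--dimensional normed algebras in \autoref{thm:NDG}, in the same way that \autoref{thm:imO}~\itref{It_Transitive} reduces the corresponding uniqueness statement for nondegenerate $3$--forms to the uniqueness of the $7$--dimensional cross product.

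First I would convert the two Cayley-forms into normed algebra structures on $W$. Since $\Phi$ is a positive Cayley-form it admits a compatible inner product $\inner{\cdot}{\cdot}_\Phi$, which is unique by \autoref{le:cayley}, and the sign $\eps$ of \autoref{le:tc3} for the associated triple cross product equals $+1$ because $\Phi$ is positive. Fix a $\inner{\cdot}{\cdot}_\Phi$--unit vector $e\in W$. By \autoref{thm:cayley}~\itref{It_Cayley3} this makes $W$ into an $8$--dimensional normed algebra with unit $e$ whose triple cross product is the one defined by~\eqref{eq:tcW}; since $\eps=+1$, equation~\eqref{eq:cayley} reads $\Phi=e^*\wedge\phi_e+\psi_e$, so by~\eqref{eq:Phiphi} the Cayley calibration of this normed algebra is exactly $\Phi$. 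Performing the same construction for $\Psi$, with compatible inner product $\inner{\cdot}{\cdot}_\Psi$ and a unit vector $e'\in W$, equips $W$ with a second $8$--dimensional normed algebra structure whose Cayley calibration is $\Psi$.

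Next I would produce the automorphism. Set $V:=\{v\in W:\inner{v}{e}_\Phi=0\}$ and $V':=\{v\in W:\inner{v}{e'}_\Psi=0\}$, each a $7$--dimensional real Hilbert space carrying the cross product obtained from its product via~\eqref{eq:CROSS}. By the last assertion of \autoref{thm:cross} --- equivalently, by \autoref{thm:imO}~\itref{It_Transitive} applied to the two associative calibrations --- there is an orthogonal isomorphism $h\co V'\to V$ intertwining the two cross products. Define $g\in\Aut(W)$ by $g(e'):=e$ and $g|_{V'}:=h$. Since $e,e'$ are unit vectors and $h$ is orthogonal, $g$ carries $\inner{\cdot}{\cdot}_\Psi$ to $\inner{\cdot}{\cdot}_\Phi$; by the product formula~\eqref{eq:uv} it intertwines the two products; and since $g(e')=e$ it intertwines the conjugations~\eqref{eq:ubar}. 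Hence $g$ intertwines the triple cross products~\eqref{eq:tcW}, and therefore, for all $x,u,v,w\in W$,
\[
  (g^*\Phi)(x,u,v,w)
  = \inner{g(x)}{g(u)\times g(v)\times g(w)}_\Phi
  = \inner{x}{u\times v\times w}_\Psi
  = \Psi(x,u,v,w),
\]
using that $g$ is an isometry intertwining the triple cross products and that $\Phi,\Psi$ are the Cayley calibrations of the two structures. Thus $g^*\Phi=\Psi$.

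The only genuinely delicate point is the bookkeeping: keeping the two compatible inner products apart and verifying that the single map $g$ is simultaneously a linear isometry between them and an isomorphism of normed algebras, so that intertwining of products and units really does force intertwining of the triple cross products and hence $g^*\Phi=\Psi$. Everything substantive has already been done --- that a positive Cayley-form induces a normed algebra having that very form as its Cayley calibration is \autoref{thm:cayley}, and that the $7$--dimensional cross product is unique up to orthogonal isomorphism is the last assertion of \autoref{thm:cross}, proved via \autoref{thm:imO}.
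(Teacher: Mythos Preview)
Your proof is correct and rests on the same idea as the paper's: pick a unit vector, reduce to the uniqueness of the $7$--dimensional cross product (\autoref{thm:imO}~\itref{It_Transitive}), and extend back. The execution differs slightly. The paper works with the splitting $\Phi=e^*\wedge\phi_e+\psi_e$ directly: it pulls $\phi_e$ back to the standard $\phi_0$ on $\R^7$ by \autoref{thm:imO}, observes that $\psi_e=*_{V_e}\phi_e$ then automatically pulls back to $\psi_0$, and concludes $g^*\Phi=\Phi_0$; comparing two Cayley-forms to the same standard model gives the result. You instead pass through the full normed algebra structure of \autoref{thm:cayley}~\itref{It_Cayley3}, intertwine the products via~\eqref{eq:uv}, and then climb back up through~\eqref{eq:tcW} to the triple cross product. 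Both routes work; the paper's is shorter because it avoids the detour through the algebra product and conjugation, using only that the coassociative calibration is determined by the associative one.
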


\begin{proof}
  See page~\pageref{proof:CAYLEY1}.
\end{proof}

\begin{lemma}
  \label{le:inner}
  Let $W$ be a real vector space and 
  $g\co W^4\to\R$ be a multi-linear map satisfying
  \begin{equation}
    \label{eq:g1}
    g(u,v;w,x)
    = g(w,x;u,v)
    = - g(v,u;w,x)
  \end{equation}
  for all $u,v,w,x\in W$ and 
  \begin{equation}
    \label{eq:g2}
    g(u,v;u,v) > 0
  \end{equation}
  whenever $u,v\in W$ are linearly independent. %
  Then the matrices
  \begin{equation*}
    \Lambda_u(v,w)
    := 
    \begin{pmatrix}
      g(u,v;u,v) & g(u,v;u,w) \\
      g(u,w;u,v) & g(u,w;u,w)
    \end{pmatrix}
    \in\R^{2\times 2}, \qand
  \end{equation*}
  \begin{equation*}
    A(u,v,w)
    :=
    \begin{pmatrix}
      g(v,w;v,w) & g(v,w;w,u) & g(v,w;u,v) \\
      g(w,u;v,w) & g(w,u;w,u) & g(w,u;u,v) \\
      g(u,v;v,w) & g(u,v;w,u) & g(u,v;u,v)
    \end{pmatrix} \in\R^{3\times 3}
  \end{equation*}
  are positive definite whenever $u,v,w\in W$ are linearly
  independent. %
  Moreover, the following are equivalent:
  \begin{enumerate}[(i)]
  \item 
    \label{Lem_Inner1}
    If $u,v,w$ are linearly independent and $g(u,v;w,u)=g(v,w;u,v)=0$,
    then, for all $x\in W$, we have
    \begin{equation}
      \label{eq:vanish}
      g(u,w;u,x) = 0
      \qquad\iff\qquad
      g(v,w;v,x) = 0.
    \end{equation}

  \item 
    \label{Lem_Inner2}
    If $u,v,w$ and $u,v,w'$ are linearly independent, then
    \begin{equation}
      \label{eq:detLa}
      \frac{\det(\Lambda_u(v,w))}{\det(\Lambda_v(u,w))}
      = \frac{\det(\Lambda_u(v,w'))}{\det(\Lambda_v(u,w'))}.
    \end{equation}

  \item 
    \label{Lem_Inner3}
    If $u,v,w$ and $u,v',w'$ are linearly independent, then
    \begin{equation}
      \label{eq:detLA}
      \frac{\det(\Lambda_u(v,w))}{\sqrt{\det(A(u,v,w))}}
      = \frac{\det(\Lambda_u(v',w'))}{\sqrt{\det(A(u,v',w'))}}.
    \end{equation}

  \item 
    \label{Lem_Inner4}
    There is an inner product on $W$ such that
    \begin{equation}
      \label{eq:inner}
      g(u,v;u,v)
      = \abs{u}^2\abs{v}^2 - \inner{u}{v}^2
    \end{equation} 
    for all $u,v\in W$.
  \end{enumerate}
  If these equivalent conditions are satisfied, then the inner product
  in~\itref{Lem_Inner4} is uniquely determined by $g$ and it satisfies
  \begin{equation}
    \label{eq:detA}
    \begin{split}
      \det(\Lambda_u(v,w)) 
      &= 
        \abs{u}^2\abs{u\wedge v\wedge w}^2,\\
      \det(A(u,v,w)) 
      &= 
        \abs{u\wedge v\wedge w}^4.
    \end{split}
  \end{equation}
\end{lemma}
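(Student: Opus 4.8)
The plan is to treat Lemma~\ref{le:inner} as a linear-algebra statement about the bilinear form $g$ on $\Lambda^2W$ and to bootstrap from the two-dimensional case. First I would observe that, by \eqref{eq:g1}, $g$ descends to a symmetric bilinear form on $\Lambda^2W$, and that \eqref{eq:g2} says $g(\omega,\omega)>0$ on decomposable $2$--vectors $\omega=u\wedge v\ne 0$. Restricting this form to the three-dimensional subspace $\Lambda^2U\subset\Lambda^2W$ for a three-dimensional $U=\mathrm{span}\{u,v,w\}$ gives precisely the Gram matrix $A(u,v,w)$ (in the basis $v\wedge w$, $w\wedge u$, $u\wedge v$), and $\Lambda_u(v,w)$ is the Gram matrix of the restriction to the plane spanned by $u\wedge v$ and $u\wedge w$. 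Positive definiteness of $A(u,v,w)$ follows because every nonzero element of $\Lambda^2U$ is decomposable (as $\dim U=3$), so $g|_{\Lambda^2U}$ is positive definite by \eqref{eq:g2}; positive definiteness of $\Lambda_u(v,w)$ is then automatic as a principal submatrix.

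The heart of the proof is the cycle of implications \itref{Lem_Inner4}$\Rightarrow$\itref{Lem_Inner3}$\Rightarrow$\itref{Lem_Inner2}$\Rightarrow$\itref{Lem_Inner1}$\Rightarrow$\itref{Lem_Inner4}. For \itref{Lem_Inner4}$\Rightarrow$\itref{Lem_Inner3} I would verify the identities \eqref{eq:detA} directly: once $g(u,v;u,v)=\abs u^2\abs v^2-\inner uv^2$, polarization in each slot forces $g(u,v;u,w)=\abs u^2\inner vw-\inner uv\inner uw$, and then $\det\Lambda_u(v,w)$ and $\det A(u,v,w)$ are classical Gram-determinant computations giving $\abs u^2\abs{u\wedge v\wedge w}^2$ and $\abs{u\wedge v\wedge w}^4$ respectively; dividing, the ratio in \eqref{eq:detLA} equals $\abs u^2$, manifestly independent of $v',w'$. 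The implications \itref{Lem_Inner3}$\Rightarrow$\itref{Lem_Inner2} and \itref{Lem_Inner2}$\Rightarrow$\itref{Lem_Inner1} are short: \itref{Lem_Inner3} lets one write $\det\Lambda_u(v,w)=c(u)\sqrt{\det A(u,v,w)}$ with $c(u)$ depending only on $u$, and since $A(u,v,w)=A(v,w,u)$ is symmetric in its arguments, dividing the relations for $u$ and for $v$ gives \eqref{eq:detLa}; and when $g(u,v;w,u)=g(v,w;u,v)=0$ the matrices $\Lambda_u(v,w)$ and $\Lambda_v(u,w)$ become $2\times 2$ with a shared off-diagonal structure, so \eqref{eq:detLa} together with the expansion of $\det\Lambda_u(v,w)$ and the Cauchy--Schwarz-type positivity yields the biconditional \eqref{eq:vanish}.

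The main obstacle, and the step I would spend the most care on, is \itref{Lem_Inner1}$\Rightarrow$\itref{Lem_Inner4}: constructing the inner product. Here I would argue as follows. Condition \itref{Lem_Inner1} is an orthogonality-consistency statement; the natural candidate is to \emph{define} a quadratic form on $W$ by picking an auxiliary vector and normalizing, then show it is independent of choices. Concretely, fix a vector $e$ with $g$-``norm'' pinned down (e.g.\ scale so that $g(e,v;e,v)$ behaves like $\abs v^2$ on $e^\perp_g$), use \itref{Lem_Inner1} to show that $g(e,\cdot;e,\cdot)$ restricted to the $g$-orthogonal complement of $e$ is a multiple of a well-defined inner product, and then propagate to all of $W$ by varying $e$ and invoking connectedness of the set of linearly independent pairs (as is done repeatedly elsewhere in the paper, e.g.\ in the proof of \autoref{le:imO}). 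One must check that the resulting bilinear form is symmetric, positive definite, and reproduces $g(u,v;u,v)=\abs u^2\abs v^2-\inner uv^2$; the compatibility of the local normalizations is exactly what \itref{Lem_Inner1} (equivalently \itref{Lem_Inner2}) buys. Finally, uniqueness: if two inner products both satisfy \eqref{eq:inner}, then for each fixed $u$ the function $v\mapsto \abs u^2\abs v^2-\inner uv^2$ determines $\inner uv$ up to sign by polarization, and the sign ambiguity is eliminated by comparing three vectors, so the inner product is forced; the formulas \eqref{eq:detA} then follow from the \itref{Lem_Inner4}$\Rightarrow$\itref{Lem_Inner3} computation already carried out.
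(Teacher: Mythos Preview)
Your cycle runs in the opposite direction from the paper's: the paper proves \itref{Lem_Inner1}$\Rightarrow$\itref{Lem_Inner2}$\Rightarrow$\itref{Lem_Inner3}$\Rightarrow$\itref{Lem_Inner4}$\Rightarrow$\itref{Lem_Inner1}, whereas you propose \itref{Lem_Inner4}$\Rightarrow$\itref{Lem_Inner3}$\Rightarrow$\itref{Lem_Inner2}$\Rightarrow$\itref{Lem_Inner1}$\Rightarrow$\itref{Lem_Inner4}. Your \itref{Lem_Inner4}$\Rightarrow$\itref{Lem_Inner3} and \itref{Lem_Inner3}$\Rightarrow$\itref{Lem_Inner2} are fine (the latter via the symmetry of $\det A$ under permutation of $u,v,w$). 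Your \itref{Lem_Inner2}$\Rightarrow$\itref{Lem_Inner1} sketch is incomplete, though salvageable: you never say how $x$ enters. The clean route is to apply \eqref{eq:detLa} with $w'=w+tx$ and differentiate at $t=0$; under the hypotheses of \itref{Lem_Inner1} both $\Lambda_u(v,w)$ and $\Lambda_v(u,w)$ are diagonal, and the constancy of the ratio forces $g(u,w;u,x)/g(u,w;u,w)=g(v,w;v,x)/g(v,w;v,w)$, which gives \eqref{eq:vanish}. Your $\Lambda^2U$ argument for positive definiteness of $A(u,v,w)$ is a nice alternative to the paper's Gram--Schmidt reduction.

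The genuine gap is your \itref{Lem_Inner1}$\Rightarrow$\itref{Lem_Inner4}. The sketch (``fix $e$, use $g(e,\cdot;e,\cdot)$ on a complement, propagate by varying $e$ and connectedness'') has a circularity: $g(e,v;e,w)$ equals $\abs{e}^2\inner{v}{w}$ on $e^\perp$ only after you already know the scale $\abs{e}^2$, and checking that different choices of $e$ give consistent normalizations is exactly the content of \itref{Lem_Inner2}/\itref{Lem_Inner3}. The paper handles this by running the cycle the other way. The key observation is \itref{Lem_Inner1}$\Rightarrow$\itref{Lem_Inner2}: condition \itref{Lem_Inner1} says the linear functionals $x\mapsto g(u,w;u,x)$ and $x\mapsto g(v,w;v,x)$ have the same kernel, hence are proportional with ratio $g(v,w;v,w)/g(u,w;u,w)$; this means the map $w\mapsto g(u,w;u,w)/g(v,w;v,w)$ has vanishing differential on $W_{u,v}$, hence is constant, which is \eqref{eq:detLa}. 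Then \itref{Lem_Inner2}$\Rightarrow$\itref{Lem_Inner3} by polarizing (replace $u$ by $u+v$) and reducing to the diagonal case. Finally \itref{Lem_Inner3}$\Rightarrow$\itref{Lem_Inner4} gives the explicit formula
\[
\abs{u}^2:=\frac{\det\Lambda_u(v,w)}{\sqrt{\det A(u,v,w)}},
\]
well-defined by \itref{Lem_Inner3}, and one verifies the parallelogram law directly. This formula also immediately gives uniqueness and \eqref{eq:detA}; your uniqueness argument via ``polarization up to sign'' does not work as stated, since \eqref{eq:inner} alone does not determine $\abs{u}$ without an auxiliary choice. If you insist on closing the cycle at \itref{Lem_Inner1}$\Rightarrow$\itref{Lem_Inner4}, you will end up rederiving these steps inside that argument.
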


\bigbreak

\begin{proof}  
  Let $u,v,w\in W$ be linearly independent. %
  We prove that the matrices $\Lambda_u(v,w)$ and $A(u,v,w)$ are
  positive definite. %
  By~\eqref{eq:g2} they have positive diagonal entries.  %
  Since the determinant of $\Lambda_u(v,w)$ agrees with the
  determinant of the lower right $2\times 2$ block of $A(u,v,w)$, it
  suffices to prove that both matrices have positive determinants. %
  To see this, we observe that the determinants of $\Lambda_u(v,w)$
  and $A(u,v,w)$ remain unchanged if we add to $v$ a multiple of $u$
  and to $w$ a linear combination of $u$ and $v$. %
  With the appropriate choices both matrices become diagonal and thus
  have positive determinants. %
  Hence, $\Lambda_u(v,w)$ and $A(u,v,w)$ are positive definite, as
  claimed.

  We prove that~\itref{Lem_Inner4} implies~\eqref{eq:detA}. %
  The matrix $\Lambda_u(v,w)$ and
  $\abs{u\wedge v\wedge w}^2$ remain unchanged if we add to $v$ and
  $w$ multiples of $u$. %
  Hence, we may assume that $v$ and $w$ are orthogonal to $u$. %
  In this case
  \begin{equation*}
    \Lambda_u(v,w)
    =
    \abs{u}^2
    \begin{pmatrix}
      \abs{v}^2 & \inner{v}{w} \\
      \inner{w}{v} & \abs{w}^2
    \end{pmatrix}
  \end{equation*}
  and this implies the first equation in~\eqref{eq:detA}. %
  Since the determinant of the matrix $A(u,v,w)$ remains unchanged if
  we add to $v$ a multiple of $u$ and to $w$ a linear combination of
  $u$ and $v$, we may assume that $u,v,w$ are pairwise orthogonal. %
  In this case the second equation in~\eqref{eq:detA} is obvious. %
  Thus we have proved that~\itref{Lem_Inner4}
  implies~\eqref{eq:detA}. %
  By~\eqref{eq:detA} the inner product is uniquely
  determined by $g$.

  We prove that~\itref{Lem_Inner1} implies~\itref{Lem_Inner2}. %
  Fix two linearly independent vectors $u,v\in W$. %
  Then the subspace
  \begin{equation*}
    W_{u,v}
    := \left\{
      w\in W : g(u,v;w,u)=g(v,w;u,v)=0
    \right\}
  \end{equation*}
  has codimension two and $W=W_{u,v}\oplus\mathrm{span}\{u,v\}$. %
  Now fix an element $w\in W_{u,v}$.  %
  Then~\eqref{eq:vanish} asserts that the linear functionals
  $x\mapsto g(u,w;u,x)$ and $x\mapsto g(v,w;v,x)$ on $W$ have the same
  kernel. %
  Hence, there exists a constant $\lambda\in\R$ such that
  $g(v,w;v,x)=\lambda g(u,w;u,x)$ for all $x\in W$.  %
  With $x=w$ we obtain $\lambda=g(v,w;v,w)/g(u,w;u,w)$ and hence
  \begin{equation*}
    g(u,w;u,x)g(v,w;v,w) = g(u,w;u,w)g(v,w;v,x)
    \qquad\mbox{for all }x\in W.
  \end{equation*}
  This equation asserts that the differential of the map
  \begin{equation*}
    W_{u,v}\setminus\{0\}\to\R \co
    w\mapsto \frac{g(u,w;u,w)}{g(v,w;v,w)}
  \end{equation*}
  vanishes 
  and so the map is constant. %
  This proves~\eqref{eq:detLa} for all
  $w,w'\in W_{u,v}\setminus\{0\}$. %
  Since adding to $w$ a linear combination of $u$ and $v$ does not
  change the determinants of $\Lambda_u(v,w)$ and $\Lambda_v(u,w)$,
  equation~\eqref{eq:detLa} continues to hold for all $w,w'\in W$ that
  are linearly independent of~$u$ and~$v$. %
  Thus we have proved that~\itref{Lem_Inner1}
  implies~\itref{Lem_Inner2}.

  We prove that~\itref{Lem_Inner2} implies~\itref{Lem_Inner3}. %
  It follows from~\eqref{eq:detLa} that
  \begin{equation*}
    w,w'\in W_{u,v}\setminus\{0\}
      \qquad\implies\qquad
    \frac{g(u,w;u,w)}{g(v,w;v,w)}
    = \frac{g(u,w';u,w')}{g(v,w';v,w')}.
  \end{equation*}
  Using this identity with $u$ replaced by $u+v$ we obtain
  \begin{equation*}
    w,w'\in W_{u,v}\setminus\{0\}
      \qquad\implies\qquad
    \frac{g(u,w;v,w)}{g(v,w;v,w)}
    = \frac{g(u,w';v,w')}{g(v,w';v,w')}.
  \end{equation*}
  Now let $w,w'\in W_{u,v}$ and assume that $g(u,w;v,w)=0$. %
  Then we also have $g(u,w';v,w')=0$ and 
  so it follows from the definition of $W_{u,v}$ that
  all off-diagonal terms in the matrices 
  $\Lambda_u(v,w)$, $\Lambda_u(v,w')$, $A(u,v,w)$, and $A(u,v,w')$ vanish.  %
  Hence,
  \begin{align*}
    \frac{\det(\Lambda_u(v,w))^2}{\det(A(u,v,w))}
    &= 
      \frac{g(u,v;u,v)g(u,w;u,w)}{g(v,w;v,w)} \\
    &= 
      \frac{g(u,v;u,v)g(u,w';u,w')}{g(v,w';v,w')} 
      =
      \frac{\det(\Lambda_u(v,w'))^2}{\det(A(u,v,w'))}.
  \end{align*}
  Thus we have proved~\eqref{eq:detLA} under the assumption
  that
  $w,w'\in W_{u,v}\setminus\{0\}$ and $g(w,u;w,v)=0$. %
  Since the determinants of $\Lambda_u(v,w)$ and $A(u,v,w)$ remain
  unchanged if we add to $w$ a linear combination of $u$ and $v$ and
  if we add to $v$ a multiple of $u$, equation~\eqref{eq:detLA}
  continues to hold when $v=v'$. %
  If $u,v,w$ and $u,v,w'$ and $u,v',w'$ are all linearly independent
  triples we obtain
  \begin{equation*}
    \frac{\det(\Lambda_u(v,w))^2}{\det(A(u,v,w))}
    = \frac{\det(\Lambda_u(v,w'))^2}{\det(A(u,v,w'))}
    = \frac{\det(\Lambda_u(v',w'))^2}{\det(A(u,v',w'))}.
  \end{equation*}
  Here the last equation follows from the first 
  by symmetry in $v$ and $w$. %
  This proves equation~\eqref{eq:detLA} under the additional assumption 
  that $u,v,w'$ is a linearly independent triple. %
  This assumption can be dropped by continuity. %
  Thus we have proved that~\itref{Lem_Inner2}
  implies~\itref{Lem_Inner3}.

  We prove that~\itref{Lem_Inner3} implies~\itref{Lem_Inner4}. %
  Define a function $W\to[0,\infty)\co u\mapsto\abs{u}$ by
  $\abs{u}:=0$ for $u=0$ and by
  \begin{equation}
    \label{eq:Wnorm}
    \abs{u}^2
    := \frac{g(u,w;u,w)g(u,v;u,v)-g(u,v;u,w)^2}{\sqrt{\det(A(u,v,w))}}
  \end{equation}
  for $u\ne 0$, where $v,w\in W$ are chosen such that $u,v,w$ are
  linearly independent. %
  By~\eqref{eq:detLA} the right hand side of~\eqref{eq:Wnorm} is
  independent of $v$ and $w$. %
  It follows from~\eqref{eq:Wnorm} with $u$ replaced by $u+v$ that
  \begin{equation*}
    \abs{u+v}^2-\abs{u}^2-\abs{v}^2
    = 2\frac{g(u,w;v,w)g(u,v;u,v)-g(u,v;u,w)g(u,v;v,w)}{\sqrt{\det(A(u,v,w))}}.
  \end{equation*}
  Replacing $v$ by $-v$ gives
  ${\abs{u+v}^2+\abs{u-v}^2 = 2\abs{u}^2+2\abs{v}^2}$. %
  Thus the map $W\to[0,\infty)\co u\mapsto\abs{u}$ is continuous,
  satisfies the parallelogram identity, and vanishes only for $u=0$. %
  Hence, it is a norm on $W$ and the associated inner product of two
  linearly independent vectors $u,v\in W$ is given by
  \begin{equation}
    \label{eq:Winner}
    \inner{u}{v}
    := \frac{g(u,w;v,w)g(u,v;u,v)-g(u,v;u,w)g(u,v;v,w)}
    {\sqrt{\det(A(u,v,w))}}
  \end{equation}
  whenever $w\in W$ is chosen such that $u,v,w$ are linearly
  independent.  %
  That this inner product satisfies~\eqref{eq:inner} for every pair of
  linearly independent vectors follows from~\eqref{eq:Wnorm}
  and~\eqref{eq:Winner} with $w\in W_{u,v}$.  %
  This proves that~\itref{Lem_Inner3} implies~\itref{Lem_Inner4}.

  We prove that~\itref{Lem_Inner4} implies~\itref{Lem_Inner1}. %
  Replacing $v$ in equation~\eqref{eq:inner} by $v+w$ we obtain
  \begin{equation*}
    g(u,v;u,w) = \abs{u}^2\inner{v}{w} - \inner{u}{v}\inner{u}{w}.
  \end{equation*}
  for all $u,v,w\in W$. %
  Hence,
  \begin{equation*}
    g(u,v;w,u)=g(v,w;u,v)=0\qquad\iff\qquad \inner{u}{w}=\inner{v}{w}=0.
  \end{equation*}
  If $w\in W$ is orthogonal to $u$ and $v$, then we have
  $g(u,w;u,x)=\abs{u}^2\inner{w}{x}$ and
  $g(v,w;v,x)=\abs{v}^2\inner{w}{x}$. %
  This implies~\eqref{eq:vanish} and proves \autoref{le:inner}.
\end{proof}

\begin{proof}[Proof of \autoref{thm:CAYLEY}]
  \label{proof:CAYLEY}
  If $\Phi$ is nondegenerate and $u\in W$ is nonzero, then
  $\iota(u)\Phi$ descends to a nondegenerate $3$--form on the
  $7$--dimensional quotient space $W/\R u$. %
  By \autoref{le:imO} this implies that
  $\iota(v)\iota(u)\Phi\wedge\iota(v)\iota(u)\Phi\wedge\iota(u)\Phi$
  descends to a nonzero $7$--form on $W/\R u$ for every vector
  $v\in W\setminus\R u$. %
  Hence, the $8$--form
  $\iota(v)\iota(u)\Phi\wedge\iota(v)\iota(u)\Phi\wedge\Phi$ on $W$ is
  nonzero whenever $u,v$ are linearly independent. %
  The orientation on $W$ induced by this form is independent of the
  choice of the pair~${u,v}$. %
  Choose any volume form $\Omega\in\Lambda^8W^*$ compatible with this
  orientation and, for $\lambda>0$, define a multi-linear function
  $g_\lambda\co W^4\to\R$ by
  \begin{equation}
    \label{eq:gla}
    g_\lambda(u,v;w,x)
    := \frac{\iota(v)\iota(u)\Phi\wedge\iota(x)\iota(w)\Phi\wedge\Phi}
    {6\lambda^4\Omega}
  \end{equation}
  This function satisfies~\eqref{eq:g1} and~\eqref{eq:g2} and, if
  $\Phi$ satisfies~\eqref{Eq_C}, it also satisfies~\eqref{eq:vanish}. %
  Hence, it follows from \autoref{le:inner} that there is a unique
  inner product $\inner{\cdot}{\cdot}_\lambda$ on $W$ such that, for
  all $u,v\in W$, we have
  \begin{equation}
    \label{eq:glambda}
    g_\lambda(u,v;u,v) 
    = \abs{u}_\lambda^2\abs{v}_\lambda^2 
    - \inner{u}{v}_\lambda^2.
  \end{equation}
  Let $\dvol_\lambda$ be the volume form associated to the inner
  product and the orientation. %
  Then there is a constant $\mu(\lambda)>0$ such that
  $$
  \dvol_\lambda=\mu(\lambda)^2\Omega. 
  $$
  We have $g_\lambda=\lambda^{-4}g_1$, hence
  $\abs{u}_\lambda=\lambda^{-1}\abs{u}_1$ for every $u\in W$, and
  hence $\dvol_\lambda=\lambda^{-8}\dvol_1$. %
  Thus $\mu(\lambda)=\lambda^{-4}\mu(1)$. %
  With $\lambda:=\mu(1)^{1/6}$ we obtain
  $\mu(\lambda)=\lambda^{-4}\mu(1)=\mu(1)^{1/3}=\lambda^2$. %
  With this value of $\lambda$ we have
  $\lambda^4\Omega=\dvol_\lambda$. %
  Hence, it follows from~\eqref{eq:gla} and~\eqref{eq:glambda} that
  \begin{equation*}
    \iota(v)\iota(u)\Phi\wedge\iota(v)\iota(u)\Phi\wedge\Phi
    = 6\left(\abs{u}_\lambda^2\abs{v}_\lambda^2 
      - \inner{u}{v}_\lambda^2\right)\dvol_\lambda.
  \end{equation*}
  Hence, by \autoref{le:cayley}, $\Phi$ is compatible with the inner
  product $\inner{\cdot}{\cdot}_\lambda$. %
  This shows that every $4$--form $\Phi\in\Lambda^4W^*$ that
  satisfies~\eqref{Eq_C} is compatible with a unique inner product.

  Conversely, suppose that $\Phi$ is compatible with an inner
  product. %
  Then, by \autoref{le:cayley}, there is an orientation on $W$ such
  that the associated volume form $\dvol\in\Lambda^8W^*$
  satisfies~\eqref{eq:uvPhi}. %
  Define $g\co W^4\to\R$ by
  \begin{equation*}
    g(u,v;w,x)
    := \frac{\iota(v)\iota(u)\Phi\wedge\iota(x)\iota(w)\Phi\wedge\Phi}
    {6\dvol}.
  \end{equation*}
  By~\eqref{eq:uvPhi} this map satisfies condition~\itref{Lem_Inner4}
  in \autoref{le:inner} and it obviously satisfies~\eqref{eq:g1}
  and~\eqref{eq:g2}. %
  Hence, it satisfies condition~\itref{Lem_Inner1} in
  \autoref{le:inner} and this implies that $\Phi$
  satisfies~\eqref{Eq_C}.  This proves \autoref{thm:CAYLEY}.
\end{proof}

\begin{proof}[Proof of \autoref{thm:CAYLEY1}]
  \label{proof:CAYLEY1}
  Let $\Phi\in\Lambda^4W^*$ be a positive Cayley-form with
  the associated inner product, orientation, and triple cross product. %
  Let $\phi_0\in\Lambda^3(\R^7)^*$ and $\psi_0\in\Lambda^4(\R^7)^*$ be
  the standard associative and coassociative calibrations defined in
  \autoref{ex:cross7} and in the proof of \autoref{le:psi}. %
  Then $ \Phi_0:=1^*\wedge\phi_0+\psi_0 \in\Lambda^4(\R^8)^* $ is the
  standard Cayley-form on $\R^8$.

  Choose a unit vector $e\in W$ and denote 
  \begin{equation*}
    V_e:=e^\perp,\qquad 
    \phi_e := \iota(e)\Phi|_{V_e}\in\Lambda^3V_e^*,\qquad 
    \psi_e := \Phi|_{V_e}\in\Lambda^4V_e^*. 
  \end{equation*}
  Then $\phi_e$ is a nondegenerate $3$--form on $V_e$ and, hence, by
  \autoref{thm:imO}, there is an isomorphism $g\co \R^7\to V_e$ such
  that $g^*\phi_e = \phi_0$. %
  It follows also from \autoref{thm:imO} that $g$ identifies the
  standard inner product on $\R^7$ with the unique inner product on
  $V_e$ that is compatible with $\phi_e$, and the standard orientation
  on $\R^7$ with the orientation determined by $\phi_e$ via
  \autoref{le:imO}. %
  Hence, it follows from \autoref{le:psi} that $g$ also identifies
  the two coassociative calibrations, i.e., $g^*\psi_e = \psi_0$.  %
  Since $\Phi$ is a positive Cayley-form, we have
  $$
  \Phi=e^*\wedge\phi_e+\psi_e.  
  $$
  Hence, if we extend $g$ to an
  isomorphism $\R^8=\R\oplus\R^7\to W$, which is still denoted by $g$
  and sends $e_0=1\in\R\subset\R^8$ to $e$, we obtain
  $g^*\Phi = \Phi_0$ and this proves \autoref{thm:CAYLEY1}.
\end{proof}

\begin{remark}
  The space $S^2\Lambda^2W^*$ of symmetric bilinear forms on
  $\Lambda^2W$ can be identified with the space of multi-linear maps
  $g\co W^4\to\R$ that satisfy~\eqref{eq:g1}. %
  Denote by $S^2_0\Lambda^2W^*\subset S^2\Lambda^2W^*$ the subspace of
  all $g\in S^2\Lambda^2W^*$ that satisfy the algebraic Bianchi
  identity
  \begin{equation}
    \label{eq:bianchi}
    g(u,v;w,x)+g(v,w;u,x)+g(w,u;v,x)=0
  \end{equation}
  for all $u,v,w,x\in W$. %
  Then there is a direct sum decomposition
  \begin{equation*}
    S^2\Lambda^2W^* = \Lambda^4W^*\oplus S^2_0\Lambda^2W^*
  \end{equation*}
  and the projection 
  \begin{equation*}
    \Pi\co S^2\Lambda^2 W^*\to\Lambda^4W^*
  \end{equation*}
  is given by 
  \begin{equation*}
    (\Pi g)(u,v,w,x) 
    := \tfrac{1}{3}\bigl(g(u,v;w,x)+g(v,w;u,x)+g(w,u;v,x)\bigr).
  \end{equation*}
  Note that 
  \begin{gather}
    \dim\,\Lambda^2W = 28,\qquad \dim\,S^2\Lambda^2W = 406, \\
    \dim\,\Lambda^4W = 70,\qquad \dim\,S^2_0\Lambda^2W = 336.
  \end{gather}
  Moreover, there is a natural quadratic map
  $q^\Lambda\co S^2W^*\to S^2_0\Lambda^2W^*$ given by 
  \begin{equation*}
    \bigl(q^\Lambda(\gamma)\bigr)(u,v;x,y) 
    := \gamma(u,x)\gamma(v,y)-\gamma(u,y)\gamma(v,x)
  \end{equation*}
  for $\gamma\in S^2W^*$ and $u,v,x,y\in W$. %
  \autoref{le:inner} asserts, in particular, that the restriction of
  this map to the subset of inner products is injective and, for each
  element $g\in S^2\Lambda^2W^*$, it gives a necessary and sufficient
  condition for the existence of an inner product $\gamma$ on $W$ such
  that
  \begin{equation*}
    g-\Pi g=q^\Lambda(\gamma).  
  \end{equation*}
  We shall see in \autoref{cor:uvxy} below that, if
  $\Phi\in\Lambda^4W^*$ is a positive Cayley-form and
  $g=g_\Phi\in S^2\Lambda^2W^*$ is given by
  \begin{equation*}
    g_\Phi(u,v;x,y)
    := 
      \frac{\iota(v)\iota(u)\Phi\wedge\iota(y)\iota(x)\Phi\wedge\Phi}
      {\dvol},\qquad
    \dvol
    :=
      \frac{\Phi\wedge\Phi}{14},
  \end{equation*}
  then 
  \begin{equation*}
    g_\Phi=6q^\Lambda(\gamma)+7\Phi
  \end{equation*}
  for a unique inner product $\gamma\in S^2W^*$, 
  and the volume form of $\gamma$ is indeed $\dvol$. %
  Thus, in particular, we have $\Pi g_\Phi=7\Phi$. 
\end{remark}

\begin{remark}
  The space $S^2S^2W^*$ of symmetric bilinear forms on $S^2W$ can be identified with the space of multi-linear maps $\sigma\co W^4\to\R$ that satisfy
  \begin{equation}
    \label{eq:g3}
    \sigma(u,v;x,y)
    = \sigma(x,y;u,v)
    = \sigma(v,u;x,y).
  \end{equation}
  Denote by $S^2_0S^2W^*$ the subspace of all $\sigma\in S^2S^2W^*$
  that satisfy the algebraic Bianchi identity~\eqref{eq:bianchi}.
  Then 
  \begin{equation*}
    S^2S^2W^*=S^4W^*\oplus S^2_0S^2W^*,
  \end{equation*}
  where
  \begin{gather}
    \dim\,S^2W = 36,\qquad \dim\,S^2S^2W = 666, \\
    \dim\,S^4W = 330,\qquad \dim\,S^2_0S^2W = 336.
  \end{gather}
  The projection $\Pi\co S^2S^2 W^*\to S^4W^*$ is given by 
  the same formula as above. %
  Thus 
  \begin{equation*}
    (\sigma-\Pi\sigma)(u,v;x,y)
    = \tfrac{2}{3}\sigma(u,v;x,y) - \tfrac{1}{3}\sigma(v,x;u,y) - \tfrac{1}{3}\sigma(x,u;v,y).
  \end{equation*}
  There is a natural quadratic map $q^S\co S^2W^*\to S^2S^2W^*$ given by
  \begin{equation*}
    \bigl(q^S(\gamma)\bigr)(u,v;x,y):=\gamma(u,v)\gamma(x,y).
  \end{equation*}
  Polarizing the quadratic map
  $q^\Lambda\co S^2W^*\to S^2\Lambda^2W^*$ one obtains a linear map
  $T\co S^2S^2W^*\to S^2\Lambda^2W^*$ given by
  \begin{equation*}
    \bigl(T\sigma\bigr)(u,v;x,y) := \sigma(u,x;v,y)-\sigma(u,y;v,x)
  \end{equation*}
  such that $q^\Lambda=T\circ q^S$. %
  The image of $T$ is the subspace $S^2_0\Lambda^2W^*$ of solutions of
  the algebraic Bianchi identity~\eqref{eq:bianchi} and its kernel is
  the subspace $S^4W^*$. %
  A pseudo-inverse of $T$ is the map $S\co S^2\Lambda^2W^*\to S^2S^2W^*$
  given by
  \begin{equation*}
    \bigl(Sg\bigr)(u,v;x,y) := \tfrac{1}{3}\bigl(g(u,x;v,y)+g(u,y;v,x)\bigr)
  \end{equation*}
  whose kernel is $\Lambda^4W^*$ and whose image is $S^2_0S^2W^*$. %
  Thus
  \begin{equation*}
    TSg = g-\Pi g,\qquad ST\sigma = \sigma-\Pi\sigma
  \end{equation*}
  for $g\in S^2\Lambda^2W^*$ and $\sigma\in S^2S^2W^*$. %
  Given $g\in S^2\Lambda^2W^*$ and $\gamma\in S^2W^*$, we have
  \begin{equation*}
    g-\Pi g=q^\Lambda(\gamma)\qquad\iff\qquad Sg=(\one-\Pi)q^S(\gamma).
  \end{equation*}
  Namely, if $q^\Lambda(\gamma)=g-\Pi g$, then
  $Sg= S(g-\Pi g)=Sq^\Lambda(\gamma)=q^S(\gamma)-\Pi q^S(\gamma)$, and
  if $(\one-\Pi)q^S(\gamma)=Sg$, then
  $(\one-\Pi)g=TSg=T(\one-\Pi)q^S(\gamma) =q^\Lambda(\gamma)$.
\end{remark}


    
\section{The group \texorpdfstring{$\Gtwo$}{G2}}
\label{sec:G2}  

Let $V$ be a $7$--dimensional real Hilbert space equipped with a cross
product and let $\phi\in\Lambda^3V^*$ be the associative calibration
defined by~\eqref{eq:phi}. %
We orient $V$ as in \autoref{le:imO} and denote by
$*\co \Lambda^kV^*\to\Lambda^{7-k}V^*$ the associated Hodge
$*$--operator and by $\psi:=*\phi\in\Lambda^4V^*$ the coassociative
calibration. %
Recall that $V$ is equipped with an associator bracket
via~\eqref{eq:associator}, related to $\psi$ via~\eqref{eq:psi}, and
with a coassociator bracket~\eqref{eq:coass}.

The group of automorphisms of $\phi$ 
will be denoted by 
\begin{equation*}
  \rG(V,\phi) 
  := \left\{
    g\in\GL(V) : g^*\phi=\phi
  \right\}.
\end{equation*}
By \autoref{le:Vphi}, we have $\rG(V,\phi)\subset\SO(V)$
and hence, by~\eqref{eq:phi}, 
\begin{equation*}
  \rG(V,\phi) 
  = \left\{
    g\in\SO(V) : 
    gu\times gv=g(u\times v)\;\forall\,u,v\in V
  \right\}.
\end{equation*}
For the standard structure $\phi_0$ on $\R^7$ in \autoref{ex:cross7}
we denote the structure group by $\Gtwo:=\rG(\R^7,\phi_0)$. %
By \autoref{thm:imO}, the group $\rG(V,\phi)$ is isomorphic to $\Gtwo$
for every nondegenerate $3$--form on a $7$--dimensional vector space.

\begin{theorem}
  \label{thm:G2}
  The group $\rG(V,\phi)$ is a $14$--dimensional simple, connected,
  simply connected Lie group. %
  It acts transitively on the unit sphere and, for every unit vector
  $u\in V$, the isotropy subgroup
  $ \rG_u:=\left\{g\in\rG(V,\phi) : gu=u\right\} $ is isomorphic to
  $\SU(3)$. %
  Thus there is a fibration
  \begin{equation*}
    \SU(3)\hookrightarrow \Gtwo \longrightarrow S^6. 
  \end{equation*}
\end{theorem}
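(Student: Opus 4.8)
The plan is to prove the statements about $\rG := \rG(V,\phi)$ in a sequence that bootstraps from the transitivity on the sphere. By \autoref{thm:imO}~\itref{It_Transitive}, applied with $\phi' = \phi$ and a fixed orthonormal pair $u,v$ with $\phi(u,v,u\times v)$ arbitrary (more precisely: given two unit vectors $u,u'$, complete each to a suitable orthonormal triple and use the transitivity statement), $\rG$ acts transitively on the unit sphere $S^6 \subset V$; the isotropy of a unit vector $u$ is the group $\rH = \rG_u$ of \autoref{Pf_ImO4}, and the orbit-stabilizer fibration $\rH \hookrightarrow \rG \to S^6$ is immediate once we know $\rG$ is a Lie group acting smoothly. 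Smoothness and the Lie group structure are automatic: $\rG$ is the stabilizer of a tensor under the smooth action of $\GL(V)$, hence a closed (algebraic) subgroup of $\GL(V)$, hence a Lie group by Cartan's theorem, and by \autoref{le:Vphi} it lies in $\SO(V)$, so it is compact.

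The key step is to identify $\rG_u \cong \SU(3)$. First I would recall from \autoref{le:phisymp}~\itref{It_Symplectic} that $W_u := u^\perp$ carries the symplectic form $\om_u(v,w) = \inner{u}{v\times w}$ and the compatible complex structure $J_u v = u\times v$, with the inherited inner product; equivalently $(W_u, J_u)$ is a Hermitian $\C^3$. An element $g \in \rG_u$ fixes $u$ and preserves $\phi$, hence by \autoref{le:imO}~\itref{It_Volume2} preserves the induced inner product and orientation, so $g|_{W_u} \in \SO(6)$; moreover $g$ commutes with the cross product, hence with $J_u$, so $g|_{W_u} \in \U(3)$. It remains to see that $\det_\C g|_{W_u} = 1$: this follows because $g$ preserves the $3$--form $\iota(u)\phi$ on $W_u$, which one checks (using a standard basis as in \autoref{rmk:psi}, or the formula $\phi = \pi_\Lambda^*\dvol_\Lambda - \sum \pi_\Lambda^*\alpha_i \wedge \pi_H^*\om_i$ of \autoref{rmk:LaH}) is, up to a real constant, the real part of the complex volume form $\Omega \in \Lambda^{3,0}W_u^*$; a unitary map rescales $\Omega$ by its complex determinant, so preserving $\RE\,\Omega$ forces $\det_\C = 1$. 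Conversely, every $g_0 \in \SU(W_u)$ extends by $g_0 \oplus \id_{\R u}$ to an element of $\rG$: one verifies directly that it preserves $\phi$, using the decomposition $\phi = u^* \wedge \om_u + \RE\,\Omega$ (valid because, for $v,w,x \in W_u$, $\phi(v,w,x) = \RE\,\Omega(v,w,x)$, while $\phi(u,v,w) = \om_u(v,w)$), both summands being $\SU(W_u)$-invariant. Thus $\rG_u \cong \SU(3)$, giving the fibration $\SU(3) \hookrightarrow \Gtwo \to S^6$.

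From the fibration the topological assertions follow from the long exact sequence of homotopy groups. Since $S^6$ is $5$-connected and $\SU(3)$ is connected and simply connected, $\rG$ is connected and simply connected; and $\dim \rG = \dim \SU(3) + \dim S^6 = 8 + 6 = 14$. For the dimension one can alternatively invoke \autoref{Pf_ImO4}, which already gives $\dim\rG \ge 14$, together with $\dim\rG = \dim\rG_u + 6 = \dim\SU(3) + 6 = 14$ from the identification just made. Finally, simplicity: $\rG$ is a compact connected $14$-dimensional Lie group; its Lie algebra $\g$ is $14$-dimensional, and one argues that $\g$ has no proper nonzero ideals. The cleanest route is to observe that $\rG$ acts irreducibly on $V \cong \R^7$ (any invariant subspace would, via the cross product, force a smaller-dimensional cross-product space inside $V$, contradicting transitivity on $S^6$), so $\g$ is semisimple with no center; then a dimension/rank count — a $14$-dimensional compact semisimple Lie algebra with an irreducible $7$-dimensional real representation and containing a copy of $\su(3)$ cannot be a nontrivial product (the only factors available by classification in low dimension are $\su(2)$'s and the dimension $14$ is not a sum of such together with the representation-theoretic constraint) — pins down $\g$ as simple (indeed of type $G_2$).

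The step I expect to be the main obstacle is the clean identification $\rG_u = \SU(3)$, specifically making rigorous that invariance of $\phi$ under $g$ fixing $u$ is \emph{equivalent} to $g|_{W_u} \in \SU(W_u)$ — the "$\det_\C = 1$" direction requires exhibiting the Calabi–Yau-type decomposition $\iota(u)\phi = \RE\,\Omega$ and tracking how $\U(3)$ acts on it, which is a genuine (if standard) computation best done in the explicit basis of \autoref{rmk:psi}. The simplicity argument is the other delicate point: rather than a full structure-theory proof one should either cite that a connected $14$-dimensional compact Lie group with these properties is $G_2$, or give the short irreducibility-plus-dimension-count argument sketched above; I would present the latter, keeping it brief.
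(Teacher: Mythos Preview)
Your proposal is correct and covers all the claims, but the logical order differs from the paper's in a way worth noting.

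For transitivity on $S^6$ you invoke \autoref{thm:imO}~\itref{It_Transitive} directly. The paper instead \emph{derives} transitivity at the end: it first shows $\rG_u \subset \SU(W_u,\om_u,J_u)$ by exhibiting the invariant $(3,0)$--form $\theta_u(x,y,z) := \phi(x,y,z) - i\psi(u,x,y,z)$, then uses the a priori bound $\dim\rG \ge 49-35 = 14$ from \autoref{Pf_ImO4} to force $\dim\rG_u = 8$, hence $\rG_u = \SU(3)$ (by connectedness) and $\dim\rG = 14$; transitivity then follows because the $\rG$--orbit of $u$ is open in the connected sphere. Your route is shorter since you have \autoref{thm:imO}~\itref{It_Transitive} available; the paper's avoids the explicit reverse inclusion $\SU(W_u)\hookrightarrow\rG_u$ that you construct. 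One slip in your write-up: you say ``$g$ preserves the $3$--form $\iota(u)\phi$ on $W_u$,'' but $\iota(u)\phi$ is the $2$--form $\om_u$; you mean $\phi|_{W_u}$, which is indeed $\RE\,\theta_u$. The reference to \autoref{rmk:LaH} is also misplaced---that remark decomposes $\phi$ along an associative $3$--plane, not along $\R u$; the decomposition you want is simply $\phi = u^*\wedge\om_u + \phi|_{W_u}$, verified directly.

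For simplicity, the paper gives two short arguments, either of which is cleaner than your classification sketch: (a) the homotopy exact sequence of the fibration gives $\pi_3(\rG)\cong\pi_3(\SU(3))\cong\Z$, and a compact simply connected Lie group with $\pi_3=\Z$ has a single simple factor; (b) a direct ideal argument: every $\xi\in\g$ has nontrivial kernel (being skew on an odd-dimensional space), hence lies in some $\g_u\cong\su(3)$; a nonzero ideal $I$ therefore meets some $\g_u$ nontrivially, contains it by simplicity of $\su(3)$, and then one shows $I$ surjects onto $\g/\g_u$ using transitivity. Your irreducibility-plus-dimension route works but requires the case analysis you allude to; I would swap it for one of these.
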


\begin{proof}
  As we have observed in \autoref{Pf_ImO4} in the proof of
  \autoref{le:imO}, the group $\rG=\rG(V,\phi)$ has dimension at least
  $14$, as it is an isotropy subgroup of the action of the
  $49$--dimensional group $\GL(V)$ on the $35$--dimensional
  space~$\Lambda^3V^*$. %
  Since $\rG\subset\SO(V)$, by \autoref{le:Vphi}, the group acts on
  the unit sphere
  \begin{equation*}
    S:=\left\{u\in V : \abs{u}=1\right\}.
  \end{equation*}
  Thus, for every $u\in S$, the isotropy 
  subgroup $\rG_u$ has dimension at least~$8$. %
  By \autoref{le:phisymp}, the group $\rG_u$ preserves 
  the subspace $W_u:=u^\perp$, the symplectic form 
  $\om_u$, and the complex structure $J_u$ on $W_u$ given 
  by $\om_u(v,w)=\inner{u}{v\times w}$ and $J_uv=u\times v$. %
  Hence, $\rG_u$ is isomorphic to a subgroup 
  of $\U(W_u,\om_u,J_u)\cong\U(3)$. %
  Now consider the complex valued $3$--form
  $\theta_u\in\Lambda^{3,0}W_u^*$ given by 
  \begin{equation*}
    \theta_u(x,y,z)
    :=
      \phi(x,y,z)-\i\phi(u\times x,y,z)
    =
      \phi(x,y,z)-\i\psi(u,x,y,z)
  \end{equation*}
  for $x,y,z\in W_u$. %
  (See~\eqref{eq:associator} and~\eqref{eq:psi} for the last
  equality.)  %
  This form is nonzero and is preserved by $\rG_u$. %
  Hence, $\rG_u$ is isomorphic to a subgroup of
  $\SU(W_u,\om_u,J_u)$. %
  Since $\SU(W_u,\om_u,J_u)\cong\SU(3)$ is a connected Lie group of
  dimension $8$ and $\rG_u$ has dimension at least $8$, it follows
  that
  \begin{equation*}
    \rG_u\cong \SU(W_u,\om_u,J_u)\cong\SU(3).
  \end{equation*}
  In particular, $\dim\rG_u=8$ and so
  $\dim\rG\le\dim\rG_u+\dim\,S=14$. %
  This implies $\dim\rG=14$ and, since $S$ is connected, $\rG$ acts
  transitively on $S$. %
  Thus we have proved that there is a fibration
  $\SU(3)\hookrightarrow\rG\to S$. %
  It follows from the homotopy exact sequence of this fibration that
  $\rG$ is connected and simply connected and that
  $\pi_3(\rG)\cong\Z$. %
  Hence, $\rG$ is simple.

  Here is another proof that $\rG$ is simple. %
  Let $\g:=\Lie(\rG)$ denote its Lie algebra and, for every $u\in S$,
  let $\g_u:=\Lie(\rG_u)$ denote the Lie algebra of the isotropy
  subgroup. %
  Then, for every $\xi\in\g$, we have $\xi\in\g_u$ if and only if
  $u\in\ker\xi$.  %
  Since every $\xi\in\g$ is skew-adjoint, it has a nontrivial kernel
  and hence belongs to $\g_u$ for some $u\in S$.

  Now let $I\subset\g$ be a nonzero ideal. %
  Then, by what we have just observed, there is an element $u\in S$
  such that $I\cap\g_u\ne\{0\}$. %
  Thus $I\cap\g_u$ is a nonzero ideal in $\g_u$ and, since $\g_u$ is
  simple, this implies $\g_u\subset I$.  %
  Next we claim that, for every $v\in u^\perp$, there is an element
  $\xi\in I$ such that $\xi u=v$. %
  To see this, choose any element $\eta\in\g_u\subset I$ such that
  $\ker\eta=\langle u\rangle$. %
  Then there is a unique element $w\in u^\perp$ such that $\eta
  w=v$. %
  Since $\rG$ acts transitively on~$S$ there is an element
  $\zeta\in\g$ such that $\zeta u=w$. %
  Hence, $\xi=[\eta,\zeta]\in I$ and $\xi u=\eta\zeta u=\eta w=v$.
  This proves that $\dim(I/\g_u)\ge 6$; hence, $\dim\,I\ge 14$, and
  hence $I=\g$. %
  This proves \autoref{thm:G2}.
\end{proof}

We examine the action of the group $\rG(V,\phi)$ on the space
\begin{equation*}
  \sS:=\left\{(u,v,w)\in V\,:\,
    \begin{array}{l}
      \abs{u}=\abs{v}=\abs{w}=1,\\
      \inner{u}{v}=\inner{u}{w}=\inner{v}{w}=\inner{u\times v}{w}=0
    \end{array}
  \right\}.
\end{equation*}
Let $S \subset V$ denote the unit sphere. %
Then each tangent space $T_u S = u^\perp$ carries a natural complex
structure $v \mapsto u \times v$. %
The space $\sS$ is a bundle over $S$ whose fiber over $u$ is the space
of Hermitian orthonormal pairs in $T_u S$. %
Hence, $\sS$ is a bundle of $3$--spheres over a bundle of $5$--spheres
over a $6$--sphere and therefore is a compact connected simply
connected $14$--dimensional manifold.

\begin{theorem}
  \label{thm:G2S}
  The group $\rG(V,\phi)$ acts freely and transitively on $\sS$.
\end{theorem}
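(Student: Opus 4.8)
The plan is to derive transitivity directly from \autoref{thm:imO} and freeness from \autoref{thm:G2}. For transitivity, observe that $\sS$ is by definition the set of orthonormal triples $(u,v,w)$ with $\phi(u,v,w)=\inner{u\times v}{w}=0$, and $\phi$ is nondegenerate by \autoref{le:imO}. Applying \autoref{thm:imO}~\itref{It_Transitive} with $\phi'=\phi$ to two elements $(u,v,w),(u',v',w')\in\sS$ produces $g\in\Aut(V)$ with $g^*\phi=\phi$, hence $g\in\rG(V,\phi)$, and $g(u)=u'$, $g(v)=v'$, $g(w)=w'$. That is precisely transitivity of the action.

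For freeness, I would take $g\in\rG(V,\phi)$ fixing some $(u,v,w)\in\sS$ and show $g=\one$ by working inside the complex structure on $u^\perp$. Since $g(u)=u$, \autoref{thm:G2} puts $g$ in the isotropy group $\rG_u\cong\SU(W_u,\om_u,J_u)$, where $W_u=u^\perp$ and $J_ux:=u\times x$; thus $g|_{W_u}$ commutes with $J_u$ (because $g$ respects the cross product and fixes $u$), preserves the Hermitian metric, and has complex determinant $1$. From $g(v)=v$ it follows that $g$ fixes the complex line $\mathrm{span}\{v,u\times v\}$ pointwise and preserves its complex orthogonal complement $W'\subset W_u$, a complex plane on which $g$ is again special unitary. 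The three relations $\inner{u}{w}=\inner{v}{w}=\inner{u\times v}{w}=0$ built into the definition of $\sS$ say exactly that $w\in W'$, so the same reasoning with $w$ in place of $v$ shows $g$ fixes $\mathrm{span}\{w,u\times w\}$ pointwise and acts with complex determinant $1$ on the remaining complex line $W''\subset W'$, whence $g|_{W''}=\one$. Since $W_u=\mathrm{span}\{v,u\times v\}\oplus\mathrm{span}\{w,u\times w\}\oplus W''$ and $g(u)=u$, we conclude $g=\one$; combined with transitivity this proves \autoref{thm:G2S}.

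The step needing care is the final determinant argument, because it is the only place where anything beyond orthogonality enters: an automorphism fixing $u,v,w$ is automatically the identity on the five--dimensional subspace $\mathrm{span}\{u,v,u\times v,w,u\times w\}$, but a priori only a rotation of the leftover complex line $W''$, and it is the condition $\det_{\C}g=1$ — inherited from $\rG_u$ preserving the complex volume form $\theta_u$ in the proof of \autoref{thm:G2} — that forces this rotation to be trivial. I would therefore be careful to track that $w$ lands first in the complex orthogonal complement of $\mathrm{span}\{v,u\times v\}$ and then in the final complex line, since that is where the defining equation $\inner{u\times v}{w}=0$ of $\sS$ is used; the remaining manipulations are routine bookkeeping with orthogonal complex direct sums.
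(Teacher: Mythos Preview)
Your argument is correct. For freeness you are giving exactly the paper's first proof, spelled out in more detail: the stabilizer of $u$ is $\SU(W_u,\om_u,J_u)\cong\SU(3)$, and an element of $\SU(3)$ fixing the Hermitian orthonormal pair $(v,w)$ must be the identity. For transitivity you invoke \autoref{thm:imO}~\itref{It_Transitive}, whose proof is precisely the explicit basis construction the paper repeats as its second proof of \autoref{thm:G2S}; so this is the same argument, one level of citation removed.

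The only genuine difference is in how transitivity is obtained in the paper's \emph{first} proof: rather than building $g$ explicitly, the paper observes that $\rG(V,\phi)$ and $\sS$ are compact connected manifolds of the same dimension~$14$, so a free action is automatically transitive. Your route via \autoref{thm:imO}~\itref{It_Transitive} is more self-contained (no dimension count or connectedness of $\sS$ needed), while the paper's dimension argument is shorter and avoids re-invoking the basis construction. Both are perfectly acceptable, and your care with the determinant step on the last complex line $W''$ is exactly the point that makes freeness work.
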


\begin{proof}
  We give two proofs of this result. %
  The first proof uses the fact that the isotropy subgroup
  $\rG_u\subset\rG:=\rG(V,\phi)$ of a unit vector $u\in V$ is
  isomorphic to $\SU(3)$ and the isotropy subgroup in $\SU(3)$ of a
  Hermitian orthonormal pair is the identity. %
  Hence, $\rG$ acts freely on $\sS$. %
  Since $\rG$ and $\sS$ are compact connected manifolds of the same
  dimension, this implies that $\rG$ acts transitively on $\sS$.

  For the second proof we assume that $\phi = \phi_0$ is the standard
  structure on $V=\R^7$. %
  Given $(u,v,w) \in \sS$, define $g\co \R^7\to\R^7$ by
  \begin{gather*}
    g e_1 = u, \quad
    g e_2 = v, \quad
    g e_3 = u \times v, \quad
    g e_4 = w \\
    g e_5 = w \times u, \quad
    g e_6 = w \times v, \quad
    g e_7 = w \times (u \times v).
  \end{gather*}
  By construction $g$ preserves the cross product and the inner
  product. %
  Hence, $g \in \Gtwo$. %
  Moreover, $g$ is the unique element of $\Gtwo$ that maps the triple
  $(e_1,e_2,e_4)$ to $(u,v,w)$. %
  This proves \autoref{thm:G2S}.
\end{proof}

\begin{cor}
  \label{cor:assoctrans}
  The group $\rG(V,\phi)$ acts transitively on the space of
  associative subspaces of $V$ and on the space of coassociative
  subspaces of $V$.
\end{cor}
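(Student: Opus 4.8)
The plan is to reduce the transitivity statement to the already-established transitivity on suitable configurations of orthonormal vectors, using the characterizations of associative and coassociative subspaces from \autoref{le:assoc} and \autoref{le:coassoc}.

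First, consider associative subspaces. Let $\Lambda, \Lambda' \subset V$ be two associative subspaces. Choose an orthonormal basis $u_1, u_2$ of a $2$--plane in $\Lambda$ and set $u_3 := u_1 \times u_2$; by \autoref{le:assoc}~\itref{It_Assoc3} we have $u_3 \in \Lambda$, and since $u_3$ is orthogonal to $u_1, u_2$ and has norm one, $(u_1, u_2, u_3)$ is an orthonormal basis of $\Lambda$. Do the same for $\Lambda'$ to get an orthonormal basis $(u_1', u_2', u_3')$ with $u_3' = u_1' \times u_2'$. Now extend $(u_1, u_2)$ to a triple $(u_1, u_2, w) \in \sS$ by choosing $w$ to be any unit vector orthogonal to $u_1, u_2$, and $u_1 \times u_2 = u_3$ --- such a $w$ exists since $\dim V = 7$ and the span of $u_1, u_2, u_3$ is $3$--dimensional. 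Likewise extend $(u_1', u_2')$ to $(u_1', u_2', w') \in \sS$. By \autoref{thm:G2S} there is a (unique) $g \in \rG(V,\phi)$ with $g(u_1, u_2, w) = (u_1', u_2', w')$. Since $g$ preserves the cross product, $g(u_3) = g(u_1 \times u_2) = u_1' \times u_2' = u_3'$, so $g$ maps the orthonormal basis $(u_1, u_2, u_3)$ of $\Lambda$ to the orthonormal basis $(u_1', u_2', u_3')$ of $\Lambda'$, hence $g\Lambda = \Lambda'$.

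For coassociative subspaces, I would use \autoref{le:coassoc}~\itref{It_Coassoc7}: a $4$--dimensional subspace $H \subset V$ is coassociative if and only if $H^\perp$ is associative. Moreover, any $g \in \rG(V,\phi) \subset \SO(V)$ satisfies $g(H^\perp) = (gH)^\perp$ since $g$ is an orthogonal isomorphism. So given two coassociative subspaces $H, H'$, their orthogonal complements $H^\perp, H'^\perp$ are associative; by the first part there is $g \in \rG(V,\phi)$ with $g(H^\perp) = H'^\perp$, and then $g H = g((H^\perp)^\perp) = (g(H^\perp))^\perp = (H'^\perp)^\perp = H'$, as desired.

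The only point requiring a little care --- and the main obstacle, such as it is --- is the verification that the triple $(u_1, u_2, w)$ one constructs genuinely lies in $\sS$, i.e.\ that all the required orthogonality conditions $\inner{u_1}{u_2} = \inner{u_1}{w} = \inner{u_2}{w} = \inner{u_1 \times u_2}{w} = 0$ hold; but the first three hold by choice of $w$ and the last is exactly the requirement that $w \perp u_3 = u_1 \times u_2$, which is part of how $w$ was selected. Since the orthogonal complement of $\mathrm{span}\{u_1, u_2, u_3\}$ is $4$--dimensional, such a unit vector $w$ certainly exists. Everything else is a direct application of \autoref{thm:G2S} together with the fact that elements of $\rG(V,\phi)$ preserve both the cross product and (by \autoref{le:Vphi}) the inner product and orientation.
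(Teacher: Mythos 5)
Your proof is correct and is exactly the unpacking of the paper's one-line argument, which simply cites \autoref{thm:G2S}, \autoref{le:assoc}, and \autoref{le:coassoc}. The reduction of the coassociative case to the associative case via orthogonal complements, and the construction of a triple in $\sS$ from an associative $3$--plane, are precisely the content those citations were meant to encode.
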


\begin{proof}
  This follows from \autoref{thm:G2S}, \autoref{le:assoc}, and
  \autoref{le:coassoc}.
\end{proof}

\begin{remark}
  \label{rmk:assocgrass}
  Let $\Lambda\subset V$ be an associative subspace and define
  $H:=\Lambda^\perp$ and
  ${\rG_\Lambda:=\{g\in\rG(V,\phi):g\Lambda=\Lambda\}}$.  %
  Then every $h\in\SO(H)$ extends uniquely to an element
  $g\in\rG_\Lambda$ (choose $(u,v,w)\in\sS$ such that $u,v,w\in H$)
  and the action of $g$ on $\Lambda$ is induced by the action of $h$
  on $\Lambda^+H^*$ under the isomorphism in \autoref{rmk:LaH}.  %
  Hence the map $\rG_\Lambda\to\SO(H):g\mapsto g|_H$ is an isomorphism
  and so the \defined{associative Grassmannian}
  $\sL := \left\{\Lambda\subset V:\Lambda\mbox{ is an associative
      subspace}\right\}$
  is diffeomorphic to the homogeneous space
  $\rG(V,\phi)/\SO(H)\cong\rG_2/\SO(4)$, by
  \autoref{cor:assoctrans}. %
  Since $\Lambda \subset V$ is associative if and only if
  $H:= \Lambda^\perp$ is coassociative (see \autoref{le:coassoc}),
  $\sL$ also is the \defined{coassociative Grassmannian}.
\end{remark}

\begin{theorem}
  \label{thm:form7}
  There are orthogonal splittings
  \begin{align*}
    \Lambda^2V^* 
    &=
      \Lambda^2_7\oplus\Lambda^2_{14},\\
    \Lambda^3V^*
    &=
      \Lambda^3_1\oplus\Lambda^3_7\oplus\Lambda^3_{27},
  \end{align*}
  where $\dim\Lambda^k_d=d$ and
  \begin{align*}
    \Lambda^2_7 
    &:= 
      \left\{\iota(u)\phi : u\in V\right\} 
       = 
      \left\{\omega\in\Lambda^2V^* : 
      *(\phi\wedge\omega)=2\om\right\},\\
    \Lambda^2_{14} 
    &:= 
      \left\{\omega\in\Lambda^2V^* : \psi\wedge\omega=0\right\} 
       = 
      \left\{\omega\in\Lambda^2V^* : 
      *(\phi\wedge\omega)=-\om\right\},\\
    \Lambda^3_1
    &:=
      \langle\phi\rangle, \\
    \Lambda^3_7
    &:= 
      \left\{\iota(u)\psi : u\in V\right\},\\
    \Lambda^3_{27} 
    &:= 
      \left\{\omega\in\Lambda^3V^* : 
      \phi\wedge\omega=0,\;\psi\wedge\omega=0\right\}.
  \end{align*}
  Each of the spaces $\Lambda^k_d$ is an irreducible 
  representation of $\rG(V,\phi)$ and the representations 
  $\Lambda^2_7$ and $\Lambda^3_7$ are both isomorphic to $V$,
  $\Lambda^2_{14}$ is isomorphic to the Lie algebra 
  $\g(V,\phi):=\Lie(\rG(V,\phi))\cong\g_2$, and $\Lambda^3_{27}$
  is isomorphic to the space of traceless symmetric endomorphisms of
  $V$. %
  The orthogonal projections $\pi_7\co \Lambda^2V^*\to\Lambda^2_7$
  and $\pi_{14}\co \Lambda^2V^*\to\Lambda^2_{14}$ are given by 
  \begin{align}
    \label{eq:pi7}
    \pi_7(\om)
    &=
      \tfrac{1}{3}\om + \tfrac{1}{3}*(\phi\wedge\om) 
    =
      \tfrac{1}{3}*\bigl(\psi\wedge*(\psi\wedge\om)\bigr),\\
    \label{eq:pi14}
    \pi_{14}(\om) 
    &=
      \tfrac{2}{3}\om - \tfrac{1}{3}*(\phi\wedge\om)
    =
      \om - \tfrac{1}{3}*\bigl(\psi\wedge*(\psi\wedge\om)\bigr).
  \end{align}
\end{theorem}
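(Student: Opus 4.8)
The plan is to reduce everything to the standard model $V = \R^7$ with $\phi = \phi_0$ and $\psi = \psi_0$ as in \autoref{ex:cross7} and the proof of \autoref{le:psi}, which is legitimate by \autoref{thm:imO}~\itref{It_Transitive}: all the maps appearing in the statement ($\iota(\cdot)\phi$, $\iota(\cdot)\psi$, wedge with $\phi$ or $\psi$, the Hodge $*$) are $\rG(V,\phi)$--equivariant, so the splittings and projection formulas are invariant under an orthogonal isomorphism $(V,\phi)\to(\R^7,\phi_0)$. First I would verify the two displayed descriptions of $\Lambda^2_7$ and $\Lambda^2_{14}$ coincide. On the one hand $\{\iota(u)\phi : u\in V\}$ has dimension $7$ since $u\mapsto\iota(u)\phi$ is injective (nondegeneracy of $\phi$), and by~\eqref{eq:phipsi12} it is contained in the $(+2)$--eigenspace of the symmetric operator $T(\om):=*(\phi\wedge\om)$ on $\Lambda^2V^*$. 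On the other hand $\{\om : \psi\wedge\om = 0\}$ equals $\{\om : *(\phi\wedge\om) = -\om\}$: this is exactly the content of~\eqref{eq:phipsi23}, which reads $*(\psi\wedge*(\psi\wedge\om)) = \om + *(\phi\wedge\om)$, so $\psi\wedge\om = 0$ forces $*(\phi\wedge\om) = -\om$, and conversely if $*(\phi\wedge\om) = -\om$ then $*(\psi\wedge*(\psi\wedge\om)) = 0$, hence $\psi\wedge*(\psi\wedge\om) = 0$; contracting with a suitable $1$--form and using~\eqref{eq:phipsi20}-type identities one recovers $\psi\wedge\om = 0$ (alternatively, argue in the standard model). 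Since $T$ is symmetric with $T^2 = T + 2$ on all of $\Lambda^2V^*$ by~\eqref{eq:phipsi24}, its only eigenvalues are $2$ and $-1$, the two eigenspaces are orthogonal and span $\Lambda^2V^*$; counting dimensions ($7 + 14 = 21$) gives $\Lambda^2_7\perp\Lambda^2_{14}$ with the stated dimensions, and the projection formulas~\eqref{eq:pi7}--\eqref{eq:pi14} are just the spectral projections $\tfrac13(T+\one)$ and $\one - \tfrac13(T+\one)$, rewritten via~\eqref{eq:phipsi23} for the second expression in each line.

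For $\Lambda^3V^*$ I would argue similarly. The space $\Lambda^3_7 := \{\iota(u)\psi : u\in V\}$ is $7$--dimensional ($u\mapsto\iota(u)\psi$ is injective since $|\iota(u)\psi|^2 = 4|u|^2$ by~\eqref{eq:phipsi7}) and is orthogonal to $\Lambda^3_1 = \langle\phi\rangle$: indeed $\inner{\phi}{\iota(u)\psi}$ equals a multiple of $\phi\wedge*\iota(u)\psi = \phi\wedge(\psi\wedge u^*)$ up to sign by~\eqref{eq:phipsi5}, and $\phi\wedge\psi\wedge u^*$ is an $8$--form on a $7$--dimensional space, hence zero. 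Both are orthogonal to $\Lambda^3_{27} := \{\om : \phi\wedge\om = 0,\ \psi\wedge\om = 0\}$ by definition of the latter together with $\inner{\phi}{\om}\dvol = \phi\wedge*\om$ and $\inner{\iota(u)\psi}{\om}\dvol = \iota(u)\psi\wedge*\om = \pm u^*\wedge\phi\wedge*\om$, both of which vanish when $\phi\wedge\om = 0$. Since $\dim\Lambda^3V^* = 35 = 1 + 7 + 27$, it remains to check that $\Lambda^3_1\oplus\Lambda^3_7\oplus\Lambda^3_{27}$ is all of $\Lambda^3V^*$, equivalently that $\Lambda^3_{27}$ has dimension exactly $27$; this follows once one shows the two linear maps $\om\mapsto\phi\wedge\om\in\Lambda^6V^*\cong V^*$ and $\om\mapsto\psi\wedge\om\in\Lambda^7V^*\cong\R$ have ranks $7$ and $1$ respectively and that their common kernel meets $\Lambda^3_1\oplus\Lambda^3_7$ trivially — a short rank count, easiest in the standard model.

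The representation-theoretic identifications come last. That $\Lambda^2_7\cong V\cong\Lambda^3_7$ is immediate from the equivariant isomorphisms $u\mapsto\iota(u)\phi$ and $u\mapsto\iota(u)\psi$. That $\Lambda^2_{14}\cong\g(V,\phi)$: the Lie algebra $\g\subset\so(V)\cong\Lambda^2V^*$ (via the inner product) is cut out by $\cL_A\phi = 0$, i.e.\ $\frac13\iota(u)\psi = 0$ for the corresponding $\om$ under~\eqref{eq:uphi}, which by~\eqref{eq:phipsi20}-type manipulation is equivalent to $\psi\wedge\om = 0$; since both $\g$ and $\Lambda^2_{14}$ have dimension $14$ they coincide. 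Irreducibility of $\Lambda^2_7$, $\Lambda^3_7\cong V$ and of $\Lambda^2_{14}\cong\g_2$ follows because $\rG(V,\phi)\cong\Gtwo$ acts irreducibly on its standard $7$--dimensional representation and on its (simple, by \autoref{thm:G2}) adjoint representation. For $\Lambda^3_{27}$ I would exhibit an explicit $\rG$--equivariant isomorphism with the space $S^2_0 V^*$ of traceless symmetric bilinear forms, $h\mapsto \gamma_\phi(h)$ where $\gamma_\phi(h)$ is the infinitesimal variation $\frac{d}{dt}\big|_0 \phi_{h_t}$ of $\phi$ under the deformation of the inner product by $h$ (equivalently $h\mapsto\sum_i (he_i)^*\wedge\iota(e_i)\phi$ for an orthonormal basis), check it lands in $\Lambda^3_{27}$ by~\eqref{eq:phipsi21}-style identities, is injective, and is equivariant; then irreducibility of $\Lambda^3_{27}$ follows from irreducibility of $S^2_0V^*$ as a $\Gtwo$--module (the $27$ is well known to be irreducible). \textbf{The main obstacle} I expect is this last identification: pinning down the equivariant isomorphism $\Lambda^3_{27}\cong S^2_0 V^*$ cleanly, and in particular verifying that the natural candidate map is injective (rather than merely nonzero) without a brute-force computation — one wants to see that a traceless symmetric $h$ with $\gamma_\phi(h) = 0$ must vanish, which can be done by testing $\gamma_\phi(h)$ against the $\iota(u)\phi$'s and recovering $h$, but the bookkeeping is delicate and is the step most likely to require the explicit $\phi_0$ to make fully rigorous.
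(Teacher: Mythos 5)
Your spectral argument for the $\Lambda^2$ splitting — using \eqref{eq:phipsi24} as the identity $T^2 = T + 2\cdot\one$ for the self-adjoint operator $T\om := *(\phi\wedge\om)$, which pins down the eigenvalues $\{2,-1\}$ at a stroke and makes the projection formulas fall out as spectral projections — is a clean variant of what the paper does (the paper instead establishes the $-1$ eigenvalue via either a trace argument or a prior Lie-algebra decomposition of $\so(V)$). Your treatment of the $\Lambda^3$ split and your honest flagging of the $\Lambda^3_{27}\cong S^2_0V^*$ identification as the hard step, with the map $S\mapsto\cL_S\phi$, also line up with the paper (that identification is \autoref{thm:27}).

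There is, however, a genuine gap in your identification $\Lambda^2_{14}\cong\g(V,\phi)$. The step "since both $\g$ and $\Lambda^2_{14}$ have dimension $14$ they coincide" is not a valid inference — two $14$-dimensional subspaces of the $21$-dimensional $\so(V)$ need not be equal — and the intermediate equivalence you lean on, that $\cL_\xi\phi = 0$ is equivalent to $\psi\wedge\om_\xi = 0$, is only supported by \eqref{eq:uphi} for $\xi$ of the special form $A_u$, not for arbitrary $\xi\in\so(V)$; you have no containment in either direction. What closes this in the paper is its Step 1: $\so(V)=\g\oplus\h$ (with $\h := \{A_u\}$) is shown to be an \emph{orthogonal} decomposition, using the simplicity of $\g$ (the graph of any $\rG$-equivariant linear map $\g^\perp\to\g$ has image an ideal, hence zero, so $\h=\g^\perp$); then the isometry $\xi\mapsto\om_\xi$ sends $\h$ onto $\Lambda^2_7$, whence $\g$ onto $(\Lambda^2_7)^\perp=\Lambda^2_{14}$. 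Your plan needs some version of this orthogonality argument, or else a direct computation of $\cL_\xi\phi$ for general $\xi\in\so(V)$. Two smaller points to tighten: to get $\dim E_2 = 7$ your "$7+14=21$" is circular; note instead that $R^*R = T+\one$ makes $\om\mapsto *(\psi\wedge\om)$ injective on $E_2$ with $7$-dimensional codomain. And in the $\Lambda^3_1\perp\Lambda^3_7$ check, $*\iota(u)\psi = \phi\wedge u^*$ is a $4$-form (not $\psi\wedge u^*$, which is a $5$-form); the correct vanishing is $\phi\wedge*\iota(u)\psi = \phi\wedge\phi\wedge u^* = 0$ because $\phi$ has odd degree — your $8$-form rationale doesn't match the degrees, though the conclusion is right.
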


\begin{proof}
  For $u\in V$ denote by $A_u\in\so(V)$ the endomorphism
  $A_uv := u\times v$. %
  Then the Lie algebra $\g :=\Lie(\rG)$ of $\rG=\rG(V,\phi)$ is given
  by
  \begin{equation*}
    \g = \left\{\xi\in\End(V) : 
      \xi+\xi^*=0,\,A_{\xi u}+[A_u,\xi]=0
      \;\forall\,u\in V\right\}.
  \end{equation*}

  \setcounter{step}{0}
  \begin{step}
    \label{Pf_form71}
    There is an orthogonal decomposition
    \begin{equation*}
      \so(V)
      =
        \g\oplus\h,\qquad
      \h
      :=
        \left\{A_u : u\in V\right\}
    \end{equation*}
    with respect to the inner product 
    $\inner{\xi}{\eta}:=-\tfrac{1}{2}\tr(\xi\eta)$ on $\so(V)$.
  \end{step}

  The group $\rG$ acts on the space $\so(V)$ of skew-adjoint
  endomorphisms by conjugation and this action preserves the inner
  product. %
  Both subspaces $\g$ and $\h$ are invariant under this action,
  because $gA_ug^{-1}=A_{gu}$ for all $u\in V$ and $g\in\rG$. %
  If $\xi=A_u\in\g\cap\h$, then $0=\cL_{A_u}\phi=3\iota(u)\psi$ (see
  equation~\eqref{eq:upsi}) and hence $u=0$. %
  This shows that $\g\cap\h=\{0\}$. %
  Since $\dim\,\g=14$, $\dim\,\h=7$, and $\dim\,\so(V)=21$, we have
  $\so(V)=\g\oplus\h$. %
  Moreover, $\g^\perp$ is another $\rG$--invariant complement of
  $\g$. %
  Hence $\h$ is the graph of a $\rG$--equivariant linear map
  $\g^\perp\to\g$. %
  The image of this map is an ideal in $\g$ and hence must be zero. %
  This shows that $\h=\g^\perp$.

  \begin{step}
    \label{Pf_form72}
    $\Lambda^2_{14}$ is the orthogonal complement of $\Lambda^2_7$
  \end{step}

  By equation~\eqref{eq:phipsi4} in \autoref{le:phipsi} we have
  $u^*\wedge\psi = *\iota(u)\phi$ for all $u\in V$. %
  Hence, $u^*\wedge\om\wedge\psi=\om\wedge*\iota(u)\phi$ and this
  proves \autoref{Pf_form72}.

  \begin{step}
    \label{Pf_form73}
    The isomorphism
    $\so(V)\to\Lambda^2V^*\co
    \xi\mapsto\om_\xi:=\inner{\cdot}{\xi\cdot}$
    is an $\SO(V)$--equivariant isometry and maps $\g$ onto
    $\Lambda^2_{14}$
  \end{step}

  That the isomorphism $\xi\mapsto\om_\xi$ is an $\SO(V)$--equivariant
  isometry follows directly from the definitions. %
  The image of $\h$ under this isomorphism is obviously the subspace
  $\Lambda^2_7$.  %
  Hence, by \autoref{Pf_form71}, the orthogonal complement of
  $\Lambda^2_7$ is the image of $\g$ under this isomorphism. %
  Hence, the assertion follows from \autoref{Pf_form72}.

  \begin{step}
    \label{Pf_form74}
    Let $\om\in\Lambda^2V^*$. %
    Then $\psi\wedge\om=0$ if and only if $*(\phi\wedge\om)=-\om$.
  \end{step}

  Define the operators $Q\co \Lambda^2V^*\to\Lambda^2V^*$ and
  $R\co \Lambda^2V^*\to\Lambda^1V^*$ by
  \begin{equation*}
    Q\om := *(\phi\wedge\om),\qquad
    R\om := *(\psi\wedge\om)
  \end{equation*}
  for $\om\in\Lambda^2V^*$. %
  Then $Q$ is self-adjoint and $R^*\co \Lambda^1V^*\to\Lambda^2V^*$ is
  given by the same formula $R^*\alpha=*(\psi\wedge\alpha)$ for
  $\alpha\in\Lambda^1V^*$. %
  Both operators are $\rG$--equivariant. %
  Moreover, $R^*R=Q+\id$ by equation~\eqref{eq:phipsi23} in
  \autoref{le:phipsi}.  %
  Hence, $R\om=0$ if and only if $Q\om=-\om$. %
  (Note also that the operator $R^*R$ vanishes on $\Lambda^2_{14}$ by
  equation~\eqref{eq:phipsi23} and has eigenvalue $3$ on $\Lambda^2_7$
  by~\eqref{eq:phipsi15}.) %
  This proves \autoref{Pf_form74}.

  One can rephrase this argument more geometrically as follows.  The
  action of $\rG$ on $\Lambda^2_{14}$ is irreducible by \autoref{Pf_form73}. %
  Hence, $\Lambda^2_{14}$ is (contained in) an eigenspace of the
  operator $Q$. %
  Moreover, the operator $Q$ is traceless. %
  To see this, let $e_1,\dots,e_7$ be an orthonormal basis of $V$ and
  denote by $e^1,\dots,e^7$ the dual basis of $V^*$. %
  Then the $2$--forms $e^{ij}:=e^i\wedge e^j$ with $i<j$ form an
  orthonormal basis of $\Lambda^2V^*$ and we have
  \begin{equation*}
    \sum_{i<j}\inner{e^{ij}}{*(\phi\wedge e^{ij})}
    = \sum_{i<j} (e^{ij}\wedge e^{ij}\wedge\phi)(e_1,\dots,e_7)
    = 0.
  \end{equation*}
  By equation~\eqref{eq:phipsi12} in \autoref{le:phipsi}, the operator
  $Q$ has eigenvalue $2$ on the $7$--dimensional subspace
  $\Lambda^2_7$. %
  Since $\dim\,\Lambda^2V^*=21$, it follows that $Q$ has eigenvalue
  $-1$ on the $14$--dimensional subspace $\Lambda^2_{14}$. %
  This gives rise to another proof of equation~\eqref{eq:phipsi23} and
  completes the second proof of \autoref{Pf_form74}.

  \begin{step}
    \label{Pf_form75}
    The subspaces $\Lambda^3_1$, $\Lambda^3_7$, and $\Lambda^3_{27}$
    form an orthogonal decomposition of $\Lambda^3V^*$ and
    $\dim\,\Lambda^3_d=d$.
  \end{step}

  That $\dim\Lambda^3_d=d$ for $d=1,7$ is obvious. %
  Since $*\iota(u)\psi = - u^*\wedge\phi$, it follows that
  $\Lambda^3_1$ is orthogonal to $\Lambda^3_7$. %
  Moreover, for every $\om\in\Lambda^3V^*$, we have
  \begin{equation*}
    \phi\wedge\om=0
      \quad\iff\quad
    u^*\wedge\phi\wedge\om=0\;\forall u\in V
      \quad\iff\quad
    \om\perp\Lambda^3_7
  \end{equation*}
  and 
  \begin{equation*}
    \psi\wedge\om=0
    \quad\iff\quad
    \om\perp\Lambda^3_1.
  \end{equation*}
  Hence, $\Lambda^3_{27}$ is the orthogonal complement of
  $\Lambda^3_1\oplus\Lambda^3_7$. %
  Since $\dim\,\Lambda^3V^*=35$, this proves \autoref{Pf_form75}.

  \begin{step}
    \label{Pf_form76}
    The subspaces $\Lambda^2_7$, $\Lambda^2_{14}$, $\Lambda^3_1$, 
    $\Lambda^3_7$, $\Lambda^3_{27}$ are irreducible representations 
    of the group $\rG=\rG(V,\phi)$.
  \end{step}

  The irreducibility of $\Lambda^3_1$ and
  $\Lambda^2_7\cong\Lambda^3_7$ is obvious and for $\Lambda^2_{14}$ it
  follows from \autoref{Pf_form73}. %
  We also point out that $\Lambda^3_7$ is the tangent space of the
  orbit of~$\phi$ under the action of~$\SO(V)$. %
  The space $\Lambda^3_{27}$ can be identified with the space of
  traceless symmetric endomorphisms ${S\co V\to V}$ via
  $S \mapsto\cL_S\phi$ by \autoref{thm:27} below. %
  That it is an irreducible representation of $\rG(V,\phi)$
  is shown in~\cite{Bryant1987}. %
  This proves \autoref{Pf_form76}. %
  Equations~\eqref{eq:pi7} and~\eqref{eq:pi14} follow 
  directly from the definitions and~\eqref{eq:phipsi23}. %
  This proves \autoref{thm:form7}.
\end{proof}

\begin{theorem}
  \label{thm:27}
  The linear map 
  \begin{equation*}
    \End(V)\to\Lambda^3V^*\co
    A\mapsto\cL_A\phi
  \end{equation*}
  (see \autoref{rmk:uphipsi}) restricts to a $\rG(V,\phi)$--equivariant 
  isomorphism from the space of traceless symmetric 
  endomorphisms of $V$ onto $\Lambda^3_{27}$.  
\end{theorem}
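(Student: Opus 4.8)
The plan is to realize the map $A\mapsto\cL_A\phi$, restricted to the $27$--dimensional space $\Sym_0(V)$ of traceless self-adjoint endomorphisms of $V$, as an injective $\rG(V,\phi)$--equivariant map into $\Lambda^3V^*$, and then to identify its image with $\Lambda^3_{27}$ by a dimension count based on the decomposition in \autoref{thm:form7}. Equivariance is immediate from \eqref{eq:cLA}: for $g\in\rG:=\rG(V,\phi)$ one has $g^*(\cL_A\phi)=\cL_{g^{-1}Ag}(g^*\phi)=\cL_{g^{-1}Ag}\phi$, so $A\mapsto\cL_A\phi$ intertwines the conjugation action of $\rG$ on $\End(V)$ with the natural action on $\Lambda^3V^*$. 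Since $\rG\subset\SO(V)$ by \autoref{le:Vphi}, conjugation by an element of $\rG$ preserves self-adjointness and trace, so the map restricts to a $\rG$--equivariant linear map $\Sym_0(V)\to\Lambda^3V^*$.

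The one step that carries real content is injectivity, i.e.\ the statement that no nonzero traceless self-adjoint endomorphism annihilates $\phi$; I would argue as follows. Let $A=A^*$ satisfy $\cL_A\phi=0$ and put $g_t:=\exp(tA)\in\GL(V)$. Then $\tfrac{d}{dt}g_t^*\phi=g_t^*(\cL_A\phi)=0$, so $g_t^*\phi=\phi$ for all $t$, i.e.\ $g_t\in\rG$; hence each $g_t$ is orthogonal by \autoref{le:Vphi}. But $g_t$ is also self-adjoint and positive definite, so $g_t^2=g_t^*g_t=\id$ and therefore $g_t=\id$ for every $t$, which forces $A=0$. Thus $A\mapsto\cL_A\phi$ is injective on $\Sym_0(V)$ and its image is a $27$--dimensional $\rG$--invariant subspace of $\Lambda^3V^*$. (A more computational alternative: check directly that $\phi\wedge\cL_A\phi=0$ and that $\psi\wedge\cL_A\phi$ is a multiple of $\tr(A)\,\dvol$ for all $A\in\End(V)$ — the first by Schur's lemma applied to the equivariant map $A\mapsto\phi\wedge\cL_A\phi\colon\End(V)\to\Lambda^6V^*\cong V$, using that $\Sym_0(V)$ contains no copy of $V$; this places $\cL_A\phi$ inside $\{\omega\in\Lambda^3V^*:\phi\wedge\omega=0=\psi\wedge\omega\}=\Lambda^3_{27}$ for traceless $A$, after which injectivity and the dimension count conclude.)

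Finally, by \autoref{thm:form7} the space $\Lambda^3V^*$ splits $\rG$--equivariantly as $\Lambda^3_1\oplus\Lambda^3_7\oplus\Lambda^3_{27}$ into pairwise non-isomorphic irreducible representations of dimensions $1$, $7$, and $27$. The $\rG$--invariant subspaces of this multiplicity-free module are exactly the partial sums of the three summands, whose dimensions are $0,1,7,8,27,28,34,35$; the only one equal to $27$ is $\Lambda^3_{27}$. Hence the image of $\Sym_0(V)$ under $A\mapsto\cL_A\phi$ equals $\Lambda^3_{27}$, and the map is the asserted $\rG(V,\phi)$--equivariant isomorphism. The only point I expect to need care is the injectivity assertion, but the short argument above (via $\rG\subset\SO(V)$) handles it cleanly; everything else is bookkeeping with \autoref{thm:form7}.
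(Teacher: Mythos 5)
Your proof is correct, but it follows the \emph{alternative} route the paper sketches after its main proof of \autoref{thm:27}, not the main proof itself. The paper's primary argument is computational and deliberately avoids invoking irreducibility: it constructs the explicit adjoint map $\eta\mapsto S_\eta$ from \eqref{eq:Seta}, proves the identity $S_{\cL_A\phi}=\tfrac12(A+A^*)+\tfrac12\tr(A)\one$, establishes $*\cL_S\phi=\tr(S)\psi-\cL_S\psi$, and computes the inner products $\inner{\cL_S\phi}{\cL_T\phi}=2\tr(ST)+\tr(S)\tr(T)$ and $\inner{\iota(u)\psi}{\cL_S\phi}=0$, from which it reads off directly that $\cL_S\phi\in\Lambda^3_{27}$ for traceless symmetric $S$ and that the map is injective. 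Your Schur-plus-dimension-count argument is shorter and cleaner, and your injectivity argument via $g_t=\exp(tA)$ being simultaneously orthogonal and positive-definite self-adjoint is a nice elementary phrasing of the observation that $\ker(\cL_\bullet\phi)=\Lie(\rG)\subset\so(V)$. What the paper's computational route buys is logical independence from the irreducibility claims in \autoref{thm:form7}, which the paper states but does not prove.

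One point to be careful about: you invoke the irreducibility of $\Lambda^3_{27}$ from \autoref{thm:form7}, but if you look at \autoref{Pf_form76} in the proof of that theorem, the irreducibility of $\Lambda^3_{27}$ is established there precisely by identifying $\Lambda^3_{27}\cong\End^\sym_0(V)$ \emph{via \autoref{thm:27}} and then quoting Bryant for irreducibility of $\End^\sym_0(V)$. So citing irreducibility of $\Lambda^3_{27}$ inside a proof of \autoref{thm:27} is circular as written. The fix is small and is exactly what the paper's alternative proof does: use irreducibility of the \emph{domain} $\End^\sym_0(V)$ (from Bryant, independent of the identification) together with irreducibility of $\Lambda^3_1$ and $\Lambda^3_7$ (which are proved directly in \autoref{thm:form7}), and conclude by Schur that the compositions with the projections onto $\Lambda^3_1$ and $\Lambda^3_7$ vanish. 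Your parenthetical alternative in the middle of the proposal — checking $\phi\wedge\cL_A\phi=0$ and $\psi\wedge\cL_A\phi\propto\tr(A)\dvol$ — actually sidesteps this issue and is closer in spirit to the paper's main proof, so developing that line would also work.
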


\begin{proof}
  We follow the exposition of Karigiannis in~\cite{Karigiannis2009a}*{Section~2}. %
  Define the linear map $\Lambda^3V^*\to\End(V)\co \eta\mapsto S_\eta$ by
  \begin{equation}\label{eq:Seta}
    \inner{u}{S_\eta v}
    := \frac{\iota(u)\phi\wedge\iota(v)\phi\wedge\eta}{4\dvol}
  \end{equation}
  for $\eta\in\Lambda^3V^*$ and $u,v\in V$. %
  This map has the following properties.

  \setcounter{step}{0}
  \begin{step}
    \label{Pf_271}
    Let $A\in\End(V)$. %
    Then
    \begin{equation}
      \label{eq:SLA}
      S_{\cL_A\phi} = \tfrac12(A^*+A) + \tfrac12\tr(A)\one.
    \end{equation}
    In particular, $S_\phi=\tfrac{3}{2}\one$.
  \end{step}

  For $t\in\R$ define 
  $g_t := e^{At}$ and $\phi_t :=g_t^*\phi$.
  Then $\phi_t\in\Lambda^3V^*$ is a nondegenerate $3$--form 
  compatible with the inner product
  \begin{equation*}
    \inner{u}{v}_t:=\inner{g_tu}{g_tv}
  \end{equation*}
  on $V$ and the volume form $\dvol_t\in\Lambda^7V^*$ given by
  \begin{equation*}
    \dvol_t := g_t^*\dvol = \det(g_t)\dvol.
  \end{equation*}
  Hence,
  \begin{equation*}
    \iota(u)\phi_t\wedge\iota(u)\phi_t\wedge\phi_t 
    = 6\abs{u}_t^2\dvol_t
  \end{equation*}
  for all $u\in V$ and all $t\in\R$. %
  Differentiate this equation with respect to $t$ at $t=0$ 
  and use the identity 
  $0=\iota(u)(\iota(u)\phi\wedge\phi\wedge\eta)
  = \iota(u)\phi\wedge\iota(u)\phi\wedge\eta
  - \iota(u)\phi\wedge\phi\wedge\iota(u)\eta$ 
  for $\eta\in\Lambda^3V^*$ 
  to obtain
  \begin{equation*}
    3\iota(u)\phi\wedge\iota(u)\phi\wedge\cL_A\phi 
    = 12\inner{u}{Au}\dvol + 6\abs{u}^2\tr(A)\dvol.
  \end{equation*}
  Divide this equation by $12\dvol$ and use the definition of $S_{\cL_A\phi}$
  in equation~\eqref{eq:Seta} to obtain 
  \begin{equation*}
    \inner{u}{S_{\cL_A\phi}u} = \inner{u}{Au} + \tfrac{1}{2}\tr(A)\abs{u}^2.
  \end{equation*}
  Since $S_{\cL_A\phi}$ is a symmetric endomorphism,
  this proves equation~\eqref{eq:SLA}. %
  Now take $A=\one$ and use the identities
  $\cL_\one\phi=3\phi$ and $\tr(\one)=7$ to obtain
  $S_{3\phi}=S_{\cL_\one\phi}=\tfrac{9}{2}\one$. 
  This proves \autoref{Pf_271}.

  \begin{step}
    \label{Pf_272}
    Let $v\in V$. %
    Then $S_{\iota(v)\psi}=0$.
  \end{step}

  It follows from equation~\eqref{eq:phipsi10} in \autoref{le:phipsi}
  that
  \begin{equation}
    \label{eq:phipsialu}
    \frac{\iota(u)\phi\wedge\alpha\wedge\psi}{\dvol} 
    = \frac{\alpha\wedge *u^*}{\dvol}
    = 3\alpha(u)
  \end{equation}
  for all $u\in V$ and all $\alpha\in V^*$. %
  Take $\alpha:=\iota(w)\iota(v)\phi=\phi(v,w,\cdot)$ to obtain
  \begin{equation}
    \label{eq:uvw1}
    3\phi(u,v,w)
    = \frac{\iota(u)\phi\wedge\iota(w)\iota(v)\phi\wedge\psi}{\dvol}.
  \end{equation}
  Interchange $u$ and $v$ to obtain 
  \begin{equation}
    \label{eq:uvw2}
    -3\phi(u,v,w)
    = \frac{\iota(w)\iota(u)\phi\wedge\iota(v)\phi\wedge\psi}{\dvol}.
  \end{equation}
  Now contract the vector $w$ with the $8$--form
  $\iota(u)\phi\wedge\iota(v)\phi\wedge\psi=0$ to obtain
  \begin{align*}
    0
    &=
      \iota(w)\bigl(\iota(u)\phi\wedge\iota(v)\phi\wedge\psi\bigr) \\
    &=
      \iota(w)\iota(u)\phi\wedge\iota(v)\phi\wedge\psi \\ &\quad
       + \iota(u)\phi\wedge\iota(w)\iota(v)\phi\wedge\psi \\ &\quad
       + \iota(u)\phi\wedge\iota(v)\phi\wedge\iota(w)\psi \\
    &=
      \iota(u)\phi\wedge\iota(v)\phi\wedge\iota(w)\psi.
  \end{align*}
  Here the last step follows from~\eqref{eq:uvw1}
  and~\eqref{eq:uvw2}. %
  Thus we have proved that
  \begin{equation}
    \label{eq:phipsiuvw}
    \iota(u)\phi\wedge\iota(v)\phi\wedge\iota(w)\psi
    = 0\qquad\mbox{for all }u,v,w\in V.
  \end{equation}
  Hence, $S_{\iota(w)\psi}=0$ for all $w\in V$ by definition of
  $S_\eta$. %
  This proves \autoref{Pf_272}.

  \begin{step}
    \label{Pf_273}
    Let $S=S^*\in\End(V)$ be a self-adjoint endomorphism. %
    Then
    \begin{equation}
      \label{eq:*LS}
      *\cL_S\phi = \tr(S)\psi-\cL_S\psi.
    \end{equation}
  \end{step}

  It suffices to prove this for self-adjoint rank $1$ endomorphisms. %
  Let $u\in V$ and define $S:=uu^*$. %
  Then $\tr(S)=\abs{u}^2$ and $\cL_S\phi=u^*\wedge\iota(u)\phi$. %
  Hence,
  \begin{align*}
    *\cL_S\phi 
    &=
      *\bigl(u^*\wedge\iota(u)\phi) \\
    &=
      *\big(u^*\wedge *(u^*\wedge\psi)\bigr) \\
    &=
      \iota(u)(u^*\wedge\psi) \\
    &=
      \abs{u}^2\psi - u^*\wedge\iota(u)\psi \\
    &=
      \tr(S)\psi - \cL_S\psi.
  \end{align*}
  Here the third step uses the identity
  $u^*\wedge*\alpha=(-1)^{k-1}*\iota(u)\alpha$ in \autoref{rmk:star}
  with $k=5$ and $\alpha=u^*\wedge\psi$. %
  This proves \autoref{Pf_273}.

  \begin{step}
    \label{Pf_274}
    Let $S=S^*\in\End(V)$ and $T=T^*\in\End(V)$ be self-adjoint
    endomorphisms. %
    Then
    \begin{equation}
      \label{eq:LSLT}
      \inner{\cL_S\phi}{\cL_T\phi} 
      = 2\tr(ST) + \tr(S)\tr(T).
    \end{equation}
  \end{step}

  It suffices to prove this for self-adjoint rank $1$ endomorphisms. %
  Let $u,v\in V$ and define $S := uu^*$ and $T:=vv^*$. %
  Then $\tr(S) = \abs{u}^2$, $\tr(T) = \abs{v}^2$,
  $\tr(ST) = \inner{u}{v}^2$, $\cL_S\phi=u^*\wedge\iota(u)\phi$,
  $\cL_T\phi=v^*\wedge\iota(v)\phi$. %
  Hence, by \autoref{Pf_273},
  \begin{align*}
    \inner{\cL_S\phi}{\cL_T\phi}\dvol
    &=
      \cL_S\phi\wedge *\cL_T\phi \\
    &=
      \cL_S\phi\wedge \bigl(\tr(T)\psi-\cL_T\psi\bigr) \\
    &=
      \abs{v}^2u^*\wedge\iota(u)\phi\wedge\psi
        - u^*\wedge\iota(u)\phi\wedge v^*\wedge\iota(v)\psi \\
    &=
      \abs{v}^2\iota(u)\phi\wedge*\iota(u)\phi
        - u^*\wedge v^*\wedge\iota(u)\phi\wedge\iota(v)\psi \\ 
    &=
      \bigl(3\abs{u}^2\abs{v}^2 - 2\abs{u\times v}^2\bigr)\dvol \\
    &=
      \bigl(\abs{u}^2\abs{v}^2 + 2\inner{u}{v}^2\bigr)\dvol. 
  \end{align*}
  Here the fourth step follows from~\eqref{eq:phipsi4} and the
  fifth step follows from~\eqref{eq:phipsi6}
  and~\eqref{eq:phipsi19}. %
  This proves \autoref{Pf_274}.

  \begin{step}
    \label{Pf_275}
    Let $S=S^*\in\End(V)$ be a self-adjoint endomorphism and let
    $u\in V$. %
    Then $\inner{\iota(u)\psi}{\cL_S\phi} = 0$.
  \end{step}

  It suffices to prove this for rank $1$ endomorphisms. %
  Let $v\in V$ and define $S:=vv^*$. %
  Then $*\cL_S\phi = \tr(S)\psi-\cL_S\psi 
  = \abs{v}^2\psi-v^*\wedge\iota(v)\psi$ by~\autoref{Pf_273},
  so
  \begin{equation*}
    \iota(u)\psi\wedge*\cL_S\phi 
    = \abs{v}^2\iota(u)\psi\wedge\psi 
    - \iota(u)\psi\wedge v^*\wedge\iota(v)\psi
    = 0.
  \end{equation*}
  Here the last equation follows from~\eqref{eq:phipsi3}
  and~\eqref{eq:phipsi11}.

  \begin{step}
    \label{Pf_276}
    Define 
    \begin{equation*}
      \End^\sym_0(V) := \left\{S\in\End(V) : S=S^*,\,\tr(S)=0\right\}.
    \end{equation*}
    Then the map $A\mapsto\cL_A\phi$ restricts to 
    $\rG(V,\phi)$--equivariant isomorphism
    \begin{equation*}
      \End^\sym_0(V)\to\Lambda^3_{27}\co S\mapsto\cL_S\phi.
    \end{equation*}
  \end{step}

  That the map $A\mapsto\cL_A\phi$ is $\rG(V,\phi)$--equivariant
  follows directly from the definitions. %
  Now let $S\in\End^\sym_0(V)$. %
  Then by \autoref{Pf_274}
  \begin{equation*}
    \frac{\cL_S\phi\wedge\psi}{\dvol}
    = \inner{\cL_S\phi}{\phi} 
    = \tfrac13\inner{\cL_S\phi}{\cL_\one\phi}
    = \tr(S)
    = 0.
  \end{equation*}
  Moreover, $*\iota(u)\psi=-u^*\wedge\phi$ by~\eqref{eq:phipsi5} and
  so $u^*\wedge\cL_S\phi\wedge\phi=-\inner{\cL_S\phi}{\iota(u)\psi}=0$
  for all $u\in V$ by \autoref{Pf_275}. %
  This shows that $\cL_S\phi\wedge\phi=0$ and $\cL_S\phi\wedge\psi=0$,
  and so $\cL_S\phi\in\Lambda^3_{27}$. %
  Moreover, $S_{\cL_S\phi}=S$ for all $S\in\End^\sym_0(V)$ by
  \autoref{Pf_271}. %
  Thus the map $\End^\sym_0(V)\to\Lambda^3_{27}\co S\mapsto\cL_S\phi$
  is injective. %
  Since $\End^\sym_0(V)$ and $\Lambda^3_{27}$ both have dimension
  $27$, this proves \autoref{Pf_276} and \autoref{thm:27}.
\end{proof}

The above proof of~\autoref{thm:27} does not use the fact 
that the $\rG(V,\phi)$--repre\-sen\-ta\-tion $\End^\sym_0(V)$,
and hence also $\Lambda^3_{27}$, is irreducible. %
Moreover, we have not included a proof of this fact in these 
notes (although it is stated in~\autoref{thm:form7}).  %
Assuming irreducibility, the proof of~\autoref{thm:27} 
can be simplified as follows. %

\begin{proof}[Proof of~\autoref{thm:27} assuming $\End^\sym_0(V)$ is irreducible]
  Since
  \begin{equation*}
    \cL_A \phi = \left.\frac{\rd}{\rd t}\right|_{t=0} \exp(t A)^*\phi,
  \end{equation*}
  it is clear that the map $\End(V)\to\Lambda^3V^*:A\mapsto\cL_A\phi$ 
  is $\rG(V,\phi)$--equivariant. %
  Its kernel is $\Lie(\rG(V,\phi))$ and hence its restriction to 
  $\End^\sym_0(V)$ is injective. %
  Now the composition of the map $\End^\sym_0(V)\to\Lambda^3V^*\co A\to\cL_A\phi$ 
  with the orthogonal projection onto $\Lambda^3_1$, respectively $\Lambda^3_7$, 
  is $\rG(V,\phi)$--equivariant by~\autoref{Pf_form75} in the proof of~\autoref{thm:form7}.
  This composition cannot be an isomorphism for dimensional 
  reasons, and hence must vanish by Schur's Lemma, 
  because the $\rG(V,\phi)$--representations $\End^\sym_0(V)$, 
  $\Lambda^3_1$, and $\Lambda^3_7$ are all irreducible.
  Thus the image of $\End^\sym_0(V)$ under the map $A\mapsto\cL_A\phi$
  is perpendicular to $\Lambda^3_1$ and $\Lambda^3_7$,
  and hence is equal to $\Lambda^3_{27}$.
\end{proof}

We close this section with the proof of a well-known formula for the
differential of the map that assigns to a nondegenerate $3$--form its
coassociative calibration. %
Let $V$ be a seven-dimensional real vector space, abbreviate
$\Lambda^k:=\Lambda^kV^*$ for $k=0,1,\dots,7$, and define
\begin{equation*}
  \cP
  = \cP(V)
  := \left\{\phi\in\Lambda^3\,\big|\,
    \phi\mbox{ is nondegenerate}\right\}.
\end{equation*}
This is an open subset of $\Lambda^3$ and it is diffeomorphic to the
homogeneous space $\GL(7,\R)/\Gtwo$. %
Namely, if $\phi_0\in\cP$ is any nondegenerate $3$--form then the map
$\GL(V)\to\cP\co g\mapsto(g^{-1})^*\phi_0$ descends to a
diffeomorphism from the quotient space $\GL(V)/\rG(V,\phi_0)$ to
$\cP$. %
Define the map $\Theta\co \cP\to\Lambda^4$ by
\begin{equation}
  \label{eq:Theta}
  \Theta(\phi) := *_\phi\phi.
\end{equation}
Here $*_\phi\co \Lambda^3\to\Lambda^4$ denotes the Hodge $*$--operator
associated to the inner product and orientation determined by $\phi$.

\begin{theorem}
  \label{thm:Theta}
  The map $\Theta\co \cP\to\Lambda^4$ in~\eqref{eq:Theta} is a
  $\GL(V)$--equivariant local diffeomorphism, it restricts to a
  diffeomorphism onto its image on each connected component of $\cP$,
  and its derivative at $\phi\in\cP$ is given by
  \begin{equation}
    \label{eq:dTheta}
    d\Theta(\phi)\eta
    = *_\phi\left(
      \tfrac{4}{3}\pi_1(\eta)
      +\pi_7(\eta)
      -\pi_{27}(\eta)\right)
  \end{equation}
  for $\eta\in\Lambda^3$. %
  Here $\pi_d\co \Lambda^3\to\Lambda^3_d$ denotes the projection
  associated to the orthogonal splitting
  $\Lambda^3=\Lambda^3_1\oplus\Lambda^3_7\oplus\Lambda^3_{27}$ in
  \autoref{thm:form7} determined by $\phi$.
\end{theorem}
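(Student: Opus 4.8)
The plan is to reduce the statement to a computation on the standard structure $\phi_0$ on $\R^7$ and then to verify the three eigenvalues $\tfrac43, 1, -1$ of $d\Theta(\phi)$ on the summands $\Lambda^3_1,\Lambda^3_7,\Lambda^3_{27}$ separately. $\GL(V)$--equivariance is immediate from the naturality of the Hodge star: for $g\in\GL(V)$ one has $*_{g^*\phi}(g^*\eta)=g^*(*_\phi\eta)$, since both the inner product and the orientation determined by $g^*\phi$ are the pullbacks under $g$ of those determined by $\phi$ (\autoref{thm:imO}~\itref{It_UniqueG} and \autoref{le:imO}); hence $\Theta(g^*\phi)=g^*\Theta(\phi)$. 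Once the derivative formula \eqref{eq:dTheta} is established at one point it holds everywhere by equivariance and the transitivity of $\GL(V)$ on $\cP$. That $\Theta$ is a local diffeomorphism is then just the statement that the operator $\tfrac43\pi_1+\pi_7-\pi_{27}$ is invertible on $\Lambda^3$, which is clear since all three eigenvalues are nonzero; the global injectivity on each connected component of $\cP$ then follows because $\cP$ is $\GL(V)/\Gtwo$ (stated in the excerpt), $\Theta$ descends to an injective map there (if $*_\phi\phi=*_{\phi'}\phi'$ then by \autoref{thm:phipsi} we get $\phi'=\pm\phi$, and the two signs lie in different components when $7$ is odd — more carefully, $-\mathrm{id}\in\GL(7,\R)$ reverses orientation, so $-\phi$ and $\phi$ are related by an orientation-reversing map and lie in different components of $\cP$), and an injective local diffeomorphism onto its image is a diffeomorphism onto its image.

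The core is the derivative computation, and here I would differentiate the defining identity for the metric and volume form. Recall from \autoref{le:imO} that the inner product $g_\phi$ and volume $\dvol_\phi$ are characterized by $\iota(u)\phi\wedge\iota(v)\phi\wedge\phi = 6\,g_\phi(u,v)\,\dvol_\phi$, equivalently by Hitchin's formula $\dvol_\phi=(\det i_{G})^{1/9}$ and $g_\phi = G/\dvol_\phi$ where $G(u,v)=\tfrac16\iota(u)\phi\wedge\iota(v)\phi\wedge\phi$ (see \autoref{Rmk_Hitchin}). Write $\phi_t=\phi+t\eta$ and differentiate at $t=0$. The variation of $G$ is $\dot G(u,v)=\tfrac16\bigl(2\,\iota(u)\eta\wedge\iota(v)\phi\wedge\phi + \iota(u)\phi\wedge\iota(v)\phi\wedge\eta\bigr)$, from which one reads off, at the standard point with $g_\phi$ the Euclidean metric, both the variation $\dot g$ of the metric and the variation $\dot{(\dvol)}=\tfrac19\mathrm{tr}_{g}(\dot g_{\mathrm{raw}})\cdot$(something) of the volume — more precisely $\dot{(\log\dvol_\phi)}=\tfrac19 \mathrm{tr}(\dot i_G i_G^{-1})$. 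Then $\Theta(\phi_t)=*_{\phi_t}\phi_t$, and $\tfrac{d}{dt}\big|_0 = (*_\phi\eta) + (\dot *)\phi$, where $\dot *$ is the variation of the Hodge operator on $3$-forms under the metric/orientation change. The variation of $*$ under a metric variation is a standard formula; applying it to $\phi$ (which is $*_\phi$-related to $\psi$) and assembling the scalar (volume) contribution and the trace-free symmetric contribution, one obtains exactly the combination $\tfrac43\pi_1(\eta)+\pi_7(\eta)-\pi_{27}(\eta)$ after applying $*_\phi$.

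To keep the bookkeeping honest I would do the three summands separately rather than in one stroke. On $\Lambda^3_1$: take $\eta=\phi$, so $\phi_t=(1+t)\phi$; then $g_{\phi_t}=(1+t)^{2/3}g_\phi$ (from the homogeneity degree $2$ of $G$ versus degree $3$ of $\dvol$, as computed in the proof of \autoref{le:imO}), hence $*_{\phi_t}=(1+t)^{\,(7-2\cdot3)/3}\cdots$ acting on $3$-forms scales $*$ by $(1+t)^{(7-6)/3}=(1+t)^{1/3}$, and $\Theta(\phi_t)=(1+t)^{1/3}(1+t)\,*_\phi\phi=(1+t)^{4/3}\psi$, giving derivative $\tfrac43\psi=\tfrac43*_\phi\pi_1(\phi)$. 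On $\Lambda^3_7$: write $\eta=\iota(u)\psi=\cL_{A_u}\psi\cdot(\text{const})$ — better, note $\Lambda^3_7$ is the tangent space to the $\SO(V)$-orbit of $\phi$ (\autoref{Pf_form76}), so $\phi_t = g_t^*\phi$ for a path $g_t\in\SO(V)$, whence $g_{\phi_t}=g_\phi$ and $\Theta(\phi_t)=g_t^*\psi$; differentiating gives $d\Theta(\phi)\eta=\cL_{\dot g_0}\psi$, and one checks this equals $*_\phi\eta$ on $\Lambda^3_7$ using $\cL_{A_u}\phi=3\iota(u)\psi$ and $\cL_{A_u}\psi=-3u^*\wedge\phi=*_\phi(3\iota(u)\psi)\cdot(\mp)$ from \autoref{rmk:uphipsi} and \autoref{rmk:star} — this is the eigenvalue $+1$. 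On $\Lambda^3_{27}$: by \autoref{thm:27} every $\eta\in\Lambda^3_{27}$ is $\cL_S\phi$ for a unique traceless symmetric $S$, so $\phi_t=g_t^*\phi$ with $\dot g_0=S$; then the variation of the metric is (twice) $S$, the volume is unchanged (trace-free), and the variation of $*_\phi\phi=\psi$ is $\tfrac{d}{dt}\big|_0 g_t^*\psi = \cL_S\psi$; using \autoref{Pf_273}, $*_\phi\cL_S\phi=\mathrm{tr}(S)\psi-\cL_S\psi=-\cL_S\psi$, so $d\Theta(\phi)(\cL_S\phi)=\cL_S\psi = -*_\phi(\cL_S\phi) = -*_\phi\pi_{27}(\eta)$, the eigenvalue $-1$. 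Combining the three cases and using linearity and the orthogonal decomposition of \autoref{thm:form7} yields \eqref{eq:dTheta}.

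The main obstacle is the $\Lambda^3_1$ and $\Lambda^3_{27}$ bookkeeping — specifically, getting the volume-form exponent right (the degree-counting $7/3$ versus $6/3$ that produces the $\tfrac13$ in $\tfrac43=1+\tfrac13$) and correctly invoking \autoref{Pf_273} to pin down the sign $-1$ on $\Lambda^3_{27}$; the $\Lambda^3_7$ piece is essentially forced by equivariance. Everything else is routine once those signs and scalings are fixed.
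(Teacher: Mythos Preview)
Your proposal is correct and follows essentially the same line as the paper: both compute $d\Theta(\phi)$ on each summand $\Lambda^3_1,\Lambda^3_7,\Lambda^3_{27}$ by realizing $\eta$ as $\cL_A\phi$ for a suitable $A\in\End(V)$ (respectively $A=\lambda\one$, $A=A_u$, $A=S$), invoking the equivariance identity $d\Theta(\phi)\cL_A\phi=\cL_A\psi$, and then using $\cL_\one\psi=4\psi$, the relation $\cL_{A_u}\psi=*_\phi\cL_{A_u}\phi$ from \autoref{rmk:uphipsi}, and \eqref{eq:*LS} for the three eigenvalues. The only cosmetic difference is that on $\Lambda^3_1$ you recover the factor $\tfrac43$ by the explicit scaling $\Theta((1+t)\phi)=(1+t)^{4/3}\psi$ rather than via $\cL_\one$, and your initial Hitchin-formula detour is unnecessary.
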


\begin{proof}
  That $\cP$ has two connected components distinguished by the 
  orientation of $V$ follows from the fact that $\GL(V)$ 
  has two connected components.  That the restriction of 
  $\Theta$ to each connected component of $\cP$ 
  is bijective follows from \autoref{thm:phipsi}
  and that it is a diffeomorphism then follows from 
  equation~\eqref{eq:dTheta} and the inverse function theorem.

  Thus it remains to prove~\eqref{eq:dTheta}.
  Since $\Theta$ is $\GL(V)$--equivariant, it satisfies 
  \begin{equation}
    \label{eq:gTheta}
    \Theta(g^*\phi)=g^*\Theta(\phi)
  \end{equation}
  for $\phi\in\cP$ and $g\in\GL(V)$. %
  Fix a nondegenerate $3$--form $\phi\in\cP$, denote by
  $\psi:=\Theta(\phi)=*_\phi\phi$ its coassociative calibration, and
  differentiate equation~\eqref{eq:gTheta} at $g=\one$ in the
  direction $A\in\End(V)$ to obtain
  \begin{equation}
    \label{eq:ATheta}
    d\Theta(\phi)\cL_A\phi=\cL_A\psi.
  \end{equation}
  Now let $\eta\in\Lambda^3$ and denote $\eta_d:=\pi_d(\eta)$ for
  $d=1,7,27$. %
  By \autoref{thm:form7} and \autoref{thm:27} there exists a real
  number $\lambda$, a vector $u\in V$, and a traceless symmetric
  endomorphism $S\co V\to V$ such that
  \begin{equation*}
    \eta_1 = 3\lambda\phi,\qquad
    \eta_7 = 3\iota(u)\psi,\qquad
    \eta_{27} = \cL_S\phi.
  \end{equation*}
  Since $\cL_\one\phi=3\phi$ and $\cL_\one\psi=4\psi$,
  it follows from equation~\eqref{eq:ATheta} that
  \begin{equation}
    \label{eq:dTheta1}
    d\Theta(\phi)\eta_1
    = \lambda d\Theta(\phi)\cL_\one\phi
    = \lambda \cL_\one\psi
    = 4\lambda\psi
    = \tfrac{4}{3}*_\phi(3\lambda\phi)
    = \tfrac{4}{3}*_\phi\eta_1.
  \end{equation}
  Now define $A_u\in\End(V)$ by $A_uv:=u\times v$ for $v\in V$. %
  Then
  \begin{equation*}
    \cL_{A_u}\phi=3\iota(u)\psi = \eta_7,\qquad
    \cL_{A_u}\psi=*_\phi(3\iota(u)\psi) = *_\phi\eta_7
  \end{equation*}
  by~\eqref{eq:uphi} and~\eqref{eq:upsi}. %
  Hence, it follows from equation~\eqref{eq:ATheta} that
  \begin{equation}
    \label{eq:dTheta7}
    d\Theta(\phi)\eta_7
    = d\Theta(\phi)\cL_{A_u}\phi
    = \cL_{A_u}\psi
    = *_\phi\eta_7.
  \end{equation}
  Moreover it follows from equations~\eqref{eq:*LS} 
  and~\eqref{eq:ATheta}
  \begin{equation}
    \label{eq:dTheta27}
    d\Theta(\phi)\eta_{27}
    = d\Theta(\phi)\cL_S\phi
    = \cL_S\psi
    = -*_\phi\cL_S\phi
    = -*_\phi\eta_{27}.
  \end{equation}
  With this understood, equation~\eqref{eq:dTheta} follows
  from~\eqref{eq:dTheta1}, \eqref{eq:dTheta7},
  and~\eqref{eq:dTheta27}. %
  This proves~\autoref{thm:Theta}.
\end{proof}


    
\section{The group \texorpdfstring{$\Spin(7)$}{Spin(7)}}
\label{sec:Spin7}  

Let $W$ be an $8$--dimensional real Hilbert space equipped with a
positive triple cross product and let $\Phi\in\Lambda^4W^*$ be the
Cayley calibration defined by~\eqref{eq:Phi}. %
We orient $W$ so that 
$$
\Phi\wedge\Phi>0
$$
and denote by $*\co\Lambda^kW^*\to\Lambda^{8-k}W^*$ 
the associated Hodge $*$--operator. %
Then $\Phi$ is self-dual, by \autoref{rmk:Worient}. %
Recall that, for every unit vector $e\in W$, the subspace
$$
V_e:=e^\perp
$$ 
is equipped with a cross product
$$ 
u\times_ev:=u\times e\times v 
$$ 
and that
\begin{equation*}
  \Phi=e^*\times\phi_e+\psi_e,\quad
  \phi_e:=\iota(e)\Phi\in\Lambda^3W^*,\quad
  \psi_e:=*(e^*\wedge\phi_e)\in\Lambda^4W^*,
\end{equation*} 
(see \autoref{thm:cayley}). %
The orientation of $W$ is compatible with the decomposition
$W=\langle e\rangle\oplus V_e$ (see \autoref{rmk:Worient}).

The group of automorphisms of $\Phi$ will be denoted by
\begin{equation*}
  \rG(W,\Phi) 
  := \left\{g\in\GL(W) : g^*\Phi=\Phi\right\}.
\end{equation*}
By \autoref{thm:CAYLEY}, we have $\rG(W,\Phi)\subset\SO(W)$ and hence
\begin{equation*}
  \rG(W,\Phi) 
  = \left\{g\in\SO(W) : 
    gu\times gv\times gw=g(u\times v\times w)\;
    \forall\,u,v,w\in W\right\}.
\end{equation*}
For the standard structure $\Phi_0$ on $\R^8$ in \autoref{ex:O} we
denote the structure group by $\Spin(7):=\rG(\R^8,\Phi_0)$. %
By \autoref{thm:CAYLEY1}, the group $\rG(W,\Phi)$ is isomorphic to
$\Spin(7)$ for every positive Cayley-form on an $8$--dimensional
vector space.

\begin{theorem}
  \label{thm:Spin7}
  The group $\rG(W,\Phi)$ is a $21$--dimensional simple, connected,
  simply connected Lie group. %
  It acts transitively on the unit tangent bundle of the unit sphere
  and, for every unit vector $e\in W$, the isotropy subgroup
  $\rG_e:=\left\{g\in\rG(W,\Phi) : ge=e\right\}$ is isomorphic to
  $\rG_2$. %
  Thus there is a fibration
  \begin{equation*}
    \rG_2\hookrightarrow \Spin(7) \longrightarrow S^7. 
  \end{equation*}
\end{theorem}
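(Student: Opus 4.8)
The plan is to follow the strategy used for \autoref{thm:G2}, with the isotropy subgroup of a unit vector as the key object, but replacing the dimension count (useless here, since $\dim\GL(W)-\dim\Lambda^4W^*=64-70<0$) by a uniformity argument over all unit vectors. Write $\rG:=\rG(W,\Phi)$ and $\g:=\Lie(\rG)$. By \autoref{thm:CAYLEY} we have $\rG\subset\SO(W)$, and $\rG$ is a closed subgroup of the compact group $\SO(W)$ (being cut out by the equations $g^*\Phi=\Phi$), hence a compact Lie group; it acts on the unit sphere $S:=\{w\in W:\abs{w}=1\}$.

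First I would compute the isotropy subgroup $\rG_e:=\{g\in\rG:ge=e\}$ of a unit vector $e\in W$. Put $V_e:=e^\perp$, $\phi_e:=\iota(e)\Phi|_{V_e}$ and $\psi_e:=\Phi|_{V_e}$, so that $\Phi=e^*\wedge\phi_e+\psi_e$ (the triple cross product is positive, so $\eps=1$ in \autoref{thm:cayley}~\itref{It_Cayley1}), and $\psi_e=*_{V_e}\phi_e$ by \autoref{thm:cayley}~\itref{It_Cayley2} and \autoref{le:psi}. An element $g\in\rG_e$ is orthogonal and fixes $e$, hence preserves $V_e$ and $e^*$, and therefore fixes $\phi_e=\iota(e)\Phi$ and $\psi_e=\Phi-e^*\wedge\phi_e$; thus $g|_{V_e}\in\rG(V_e,\phi_e)\cong\Gtwo$ (\autoref{thm:imO}, \autoref{thm:G2}). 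Conversely, any $h\in\rG(V_e,\phi_e)$ extends to $\hat h\in\GL(W)$ with $\hat he:=e$; since $h$ preserves the inner product and orientation on $V_e$ determined by $\phi_e$ (\autoref{thm:imO}, \autoref{le:Vphi}) it commutes with $*_{V_e}$ and so fixes $\psi_e$, whence $\hat h^*\Phi=\Phi$ and $\hat h\in\rG_e$. These maps are mutually inverse, so $\rG_e\cong\rG(V_e,\phi_e)\cong\Gtwo$; in particular $\dim\rG_e=14$, and under this identification $\rG_e$ acts on $V_e$ as the standard, irreducible $\Gtwo$--representation (\autoref{thm:form7}).

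Next I would show $\rG$ is transitive on $S$. The orbit $\rG\cdot e$ is a compact submanifold of $S$ with tangent space $\{\xi e:\xi\in\g\}$ at $e$, and this subspace of $V_e=T_eS$ is invariant under the isotropy action of $\rG_e$; by irreducibility it is $\{0\}$ or all of $V_e$. The first case is impossible: it would give $\g=\g_e:=\Lie(\rG_e)$ (since $\g_e=\{\xi\in\g:\xi e=0\}$), hence $\dim\rG=\dim\rG_e=14$; but the isotropy computation applies verbatim to every unit vector $e'$, so $\dim\rG_{e'}=14=\dim\rG$, forcing $(\rG_{e'})^0=\rG^0$ (a connected subgroup of full dimension in the connected group $\rG^0$) and hence $\rG^0\subset\rG_{e'}$; then $\rG^0$ fixes every unit vector, so $\rG^0=\{\id\}$, contradicting $\dim\rG^0=14$. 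Therefore $\{\xi e:\xi\in\g\}=V_e$, so $\rG\cdot e$ is open in $S$; being also compact and nonempty in the connected manifold $S=S^7$, it is all of $S^7$. Thus $\rG$ is transitive on $S^7$ with isotropy $\rG_e\cong\Gtwo$, $\dim\rG=7+14=21$, and $\Gtwo\hookrightarrow\rG\to S^7$ is a fibration; transitivity on the unit tangent bundle of $S^7$ follows because $\rG_e\cong\Gtwo$ already acts transitively on the unit sphere of $V_e=T_eS^7$ (\autoref{thm:G2}). The long exact homotopy sequence of the fibration, with $\pi_1(S^7)=\pi_2(S^7)=0$ and $\Gtwo$ connected and simply connected, gives $\pi_0(\rG)=\pi_1(\rG)=0$; using in addition $\pi_3(S^7)=\pi_4(S^7)=0$ and $\pi_3(\Gtwo)\cong\Z$ gives $\pi_3(\rG)\cong\Z$, and a compact connected simply connected Lie group with $\pi_3\cong\Z$ has a single simple factor and no torus factor, hence is simple. (Alternatively one may argue simplicity as in the second proof of \autoref{thm:G2}: a nonzero ideal $I\trianglelefteq\g$ has $\{\xi e:\xi\in I\}\in\{\{0\},V_e\}$ for each unit $e$ by irreducibility, the locus where it equals $V_e$ is $\rG$--invariant, open, and — by transitivity — all of $S^7$, so $I$ meets each $\g_e$ nontrivially, whence $I\supset\g_e$ for all $e$ and $I=\g$.)

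I expect the only genuinely new point, compared with \autoref{thm:G2}, to be the exclusion of the degenerate case $\{\xi e:\xi\in\g\}=\{0\}$: the direct dimension count is no longer available, and the argument instead leverages that the isotropy computation is uniform over all unit vectors. Everything else is a transcription of the $\Gtwo$ argument together with standard facts about fibrations and compact Lie groups.
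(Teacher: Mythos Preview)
Your main line is correct: the identification $\rG_e\cong\rG(V_e,\phi_e)\cong\Gtwo$, transitivity on $S^7$, the fibration, and simplicity via $\pi_3(\rG)\cong\Z$ all go through as you say. The route to transitivity, however, is considerably more elaborate than the paper's. There one simply observes that, given two unit vectors $u,v\in W$, one can choose a unit vector $e$ orthogonal to both (since $\dim W=8\ge3$); then $u,v\in V_e$, and because $\rG_e\cong\Gtwo$ acts transitively on the unit sphere of $V_e$ (\autoref{thm:G2}), some $g\in\rG_e\subset\rG$ carries $u$ to $v$. This two-line argument replaces your orbit/irreducibility/uniformity manoeuvre and, by one further application of \autoref{thm:G2} inside $V_e$, immediately yields transitivity on orthonormal pairs as well. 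Your approach has the virtue of being a general template (it works whenever the isotropy representation is irreducible and the stabilizer has positive dimension), but here the direct trick is available and much shorter.

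One genuine gap, though it does not affect your overall proof: the parenthetical alternative argument for simplicity does not go through as written. From $\{\xi e:\xi\in I\}=V_e$ you conclude ``$I$ meets each $\g_e$ nontrivially'', but the kernel of $I\to V_e,\ \xi\mapsto\xi e$, has dimension $\dim I-7$, so this requires $\dim I\ge 8$, which you have not established. The second proof of \autoref{thm:G2} obtained an element of $I\cap\g_u$ from the fact that a skew-adjoint endomorphism in \emph{odd} dimension has nontrivial kernel; that step is unavailable in dimension eight. Since your primary $\pi_3$ argument already gives simplicity, you should simply drop the alternative (or supply the missing bound on $\dim I$).
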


\begin{proof}
  The isotropy subgroup $\rG_e$ is obviously isomorphic to
  $\rG(V_e,\phi_e)$ and hence to $\rG_2$. %
  We prove that $\rG(W,\Phi)$ acts transitively on the unit sphere. %
  Let $u,v\in W$ be two unit vectors and choose a unit vector $e\in W$
  which is orthogonal to $u$ and $v$. %
  By \autoref{thm:G2}, the isotropy subgroup $\rG_e$ acts transitively
  on the unit sphere in $V_e$. %
  Hence, there is an element $g\in\rG_e$ such that $gu=v$. That
  $\rG(W,\Phi)$ acts transitively on the set of pairs of orthonormal
  vectors now follows immediately from \autoref{thm:G2}. %
  In particular, there is a fibration
  $\rG_2\hookrightarrow \Spin(7) \longrightarrow S^7$. %
  It follows from the homotopy exact sequence of this fibration and
  \autoref{thm:G2} that $\Spin(7)$ is connected and simply connected,
  and that $\pi_3(\Spin(7))\cong\Z$. %
  Hence, $\Spin(7)$ is simple. %
  This proves \autoref{thm:Spin7}.
\end{proof}

\begin{lemma}
  \label{le:spin7}
  Abbreviate 
  \begin{equation*}
    \rG := \rG(W,\Phi),\qquad
    \g := \Lie(\rG)\subset\so(W).
  \end{equation*}
  The homomorphism $\rho\co \rG(W,\Phi)\to\SO(\g^\perp)$ is a
  nontrivial double cover. %
  Hence, $\Spin(7)$ is isomorphic to the universal cover of $\SO(7)$.
\end{lemma}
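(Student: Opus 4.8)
The plan is to identify $\rho$ with the universal covering projection of $\SO(7)$.

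First I would check that $\rho$ is well defined. By \autoref{thm:CAYLEY} the group $\rG$ lies in $\SO(W)$, so it acts on $\so(W)$ by conjugation; this action is orthogonal for the inner product $\inner{\xi}{\eta}:=-\tfrac12\tr(\xi\eta)$, and it preserves the subspace $\g$ (conjugation by $\rG$ is the adjoint action on $\g=\Lie(\rG)$), hence it also preserves $\g^\perp$. Since $\rG$ is connected by \autoref{thm:Spin7}, the resulting homomorphism takes values in $\SO(\g^\perp)$. As $\dim\so(W)=28$ and $\dim\g=21$ by \autoref{thm:Spin7}, we have $\dim\g^\perp=7$, so $\SO(\g^\perp)\cong\SO(7)$, which again has dimension $21$. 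Differentiating the $\rG$--action shows $\ad(\xi)(\g^\perp)\subset\g^\perp$ for every $\xi\in\g$; this will be used below.

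The main point is that $d\rho\co\g\to\so(\g^\perp)$ is injective. Write $\kk:=\ker d\rho=\{\xi\in\g:[\xi,\eta]=0\text{ for all }\eta\in\g^\perp\}$. Using the Jacobi identity together with $\ad(\g)(\g^\perp)\subset\g^\perp$, one checks that $\kk$ is an ideal of $\g$; and $\g$ is simple by \autoref{thm:Spin7}, so $\kk=0$ or $\kk=\g$. If $\kk=\g$ then $[\g,\g^\perp]=0$, hence $[\so(W),\g]=[\g\oplus\g^\perp,\g]\subset\g$, making $\g$ a nonzero proper ideal of the simple Lie algebra $\so(W)=\so(8)$ --- impossible. (Alternatively: a nonzero skew-adjoint endomorphism commuting with the irreducibly acting $\g\cong\spin(7)$ on $W=\R^8$ would have to vanish.) Hence $\kk=0$, and since $\dim\g=21=\dim\so(\g^\perp)$ the map $d\rho$ is a linear isomorphism.

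From here everything is formal. Being a local diffeomorphism near the identity and a homomorphism, $\rho$ is an open map, so $\rho(\rG)$ is an open --- hence closed --- subgroup of the connected group $\SO(\g^\perp)$, i.e. $\rho$ is surjective; a surjective local diffeomorphism between compact manifolds is a covering with finite fibres. Thus $\rho\co\Spin(7)=\rG\to\SO(\g^\perp)\cong\SO(7)$ is a finite cover, and since $\rG$ is simply connected by \autoref{thm:Spin7} it is the universal cover, with $\abs{\ker\rho}=\abs{\pi_1(\SO(7))}=2$ by the standard fact $\pi_1(\SO(7))\cong\Z/2$. (In particular $\ker\rho\ne\{1\}$, as it must be, since a bijective homomorphism of Lie groups is an isomorphism, while $\Spin(7)$ is simply connected and $\SO(7)$ is not.) This shows that $\rho$ is a nontrivial double cover and that $\rG(W,\Phi)=\Spin(7)$ is the universal cover of $\SO(7)$. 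The only non-formal ingredient is the injectivity of $d\rho$, which rests on the simplicity of $\g$ (already available) and of $\so(8)$ (standard); I expect this to be the one genuine obstacle.
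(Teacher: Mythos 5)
Your proof is correct and is essentially the paper's argument. The ideal you call $\kk=\ker d\rho=\{\xi\in\g:[\xi,\g^\perp]=0\}$ coincides with the paper's $I=\{\xi\in\g:[\xi,\so(W)]\subset\g\}$ (since $\ad(\xi)$ preserves $\g^\perp$ for $\xi\in\g$, the two conditions are equivalent), and both proofs then run the same simplicity argument for $\g$ and $\so(8)$, followed by the identical covering-space formalism.
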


\begin{proof}
  Define 
  $$
  I := \left\{\xi\in\g : [\xi,\so(W)]\subset\g\right\}.
  $$
  If $\xi\in I$ and $\eta\in\g$, then
  $[[\xi,\eta],\zeta]=-[[\eta,\zeta],\xi]-[[\zeta,\xi],\eta]\in\g$
  for all $\zeta\in\so(W)$, and so $[\xi,\eta]\in I$. %
  Thus $I$ is an ideal in $\g$. %
  Since $\so(W)$ is simple, we have $I\subsetneq\g$. %
  Since $\g$ is simple, we have $I=\{0\}$. %
  This implies $\im\,\ad(\xi)\not\subset\g$ for $0\ne\xi\in\g$. %
  Since $\ad(\xi)\co \so(W)\to\so(W)$ is skew-adjoint, this implies
  $\g^\perp\not\subset\ker\,\ad(\xi)$ for $0\ne\xi\in\g$. %
  This means that the infinitesimal adjoint action defines an
  isomorphism $\g\to\so(\g^\perp)$. %
  Hence, the adjoint action gives rise to a covering map
  $\rG\to\SO(\g^\perp)$. %
  Since $\rG$ is connected and simply connected, this implies that
  $\rG$ is the universal cover of $\SO(\g^\perp)\cong\SO(7)$ and this
  proves \autoref{le:spin7}.
\end{proof}

We examine the action of the group $\rG(W,\Phi)$ on the space
\begin{equation*}
  \sS:=\left\{(u,v,w,x)\in W\,\big|\,
  u,v,w,u\times v\times w,x\mbox{ are orthonormal}\right\}.
\end{equation*}
The space $\sS$ is a bundle of $3$--spheres over a bundle of
$5$--spheres over a bundle of $6$--spheres over a $7$--sphere. %
Hence, it is a compact connected simply connected $21$--dimensional
manifold.

\begin{theorem}
  \label{thm:Spin7S}
  The group $\rG(W,\Phi)$ acts freely and transitively on $\sS$.
\end{theorem}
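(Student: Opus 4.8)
The plan is to reduce everything to the analogous statement for $\rG_2$ that has already been proved (\autoref{thm:G2S}), using the structural facts about $\rG(W,\Phi)$ from \autoref{thm:Spin7}: the group acts transitively on the unit sphere $S\subset W$, and for every unit vector $e$ the isotropy subgroup $\rG_e$ is, via restriction $g\mapsto g|_{V_e}$, isomorphic to $\rG(V_e,\phi_e)$, where $V_e=e^\perp$ carries the cross product $u\times_e v:=u\times e\times v$ and the associative calibration $\phi_e=\iota(e)\Phi|_{V_e}$ of \autoref{thm:cayley}~\itref{It_Cayley2}. I would organize the argument as: first a reduction lemma, then freeness, then transitivity.

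The reduction lemma asserts that if $(u,v,w,x)\in\sS$, then $u,v,w$ and $u\times v\times w$ all lie in $V_x=x^\perp$ (orthogonality to $x$ is built into the definition of $\sS$), and moreover the triple $(u,v,w)$ lies in the space $\sS$ of \autoref{thm:G2S} attached to the $7$--dimensional Hilbert space $(V_x,\phi_x)$. Here $u,v,w$ form an orthonormal triple in $V_x$ directly by definition of $\sS$, so the only thing to check is the last condition $\inner{u\times_x v}{w}=0$. This follows from $u\times_x v = u\times x\times v = -\,u\times v\times x$ together with \eqref{eq:tc3}, which gives $\inner{u\times v\times x}{w} = -\inner{x}{u\times v\times w} = -\Phi(x,u,v,w)$; and $\Phi(x,u,v,w)=\inner{x}{u\times v\times w}=0$ because $x$ is orthogonal to $u\times v\times w$ in $\sS$.

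Granting the lemma, freeness is immediate: if $g\in\rG(W,\Phi)$ fixes $(u,v,w,x)$, then $gx=x$, so $g\in\rG_x$, and under the isomorphism $\rG_x\cong\rG(V_x,\phi_x)$ of \autoref{thm:Spin7} the restriction $g|_{V_x}$ fixes the triple $(u,v,w)$, which by the lemma lies in the relevant $\sS$ for $V_x$. By \autoref{thm:G2S} we get $g|_{V_x}=\id$, and since also $gx=x$ and $W=\R x\oplus V_x$, we conclude $g=\id$. For transitivity, given $(u,v,w,x),(u',v',w',x')\in\sS$, use \autoref{thm:Spin7} to pick $g_1\in\rG(W,\Phi)$ with $g_1x=x'$; then $g_1\cdot(u,v,w,x)=(g_1u,g_1v,g_1w,x')$ and, by the lemma, both $(g_1u,g_1v,g_1w)$ and $(u',v',w')$ lie in $\sS$ for $(V_{x'},\phi_{x'})$. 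By the transitivity half of \autoref{thm:G2S} there is $g_2\in\rG_{x'}\cong\rG(V_{x'},\phi_{x'})$ carrying the first triple to the second; since $g_2$ fixes $x'$, the composite $g_2g_1$ sends $(u,v,w,x)$ to $(u',v',w',x')$. Combined with freeness this gives a simply transitive action. (Alternatively, once freeness is established, transitivity is automatic: $\rG(W,\Phi)$ and $\sS$ are compact connected manifolds of the same dimension $21$, so a free orbit is an open and closed submanifold, hence all of $\sS$.)

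The only genuinely delicate point is the bookkeeping in the reduction lemma: getting the signs right in $u\times_x v=-u\times v\times x$ and recognizing that the extra orthogonality built into $\sS$ for $W$ (namely $x\perp u\times v\times w$) is \emph{exactly} the condition needed to place $(u,v,w)$ in the $7$--dimensional $\sS$. Everything after that is a formal consequence of \autoref{thm:Spin7} and \autoref{thm:G2S}; no new analysis is required, and the second, more computational proof that mirrors the explicit construction in \autoref{thm:G2S} can be obtained the same way by first normalizing $x$ to $e_0\in\R^8$ via \autoref{thm:Spin7} and then invoking the explicit $\rG_2$--element on $\R^7=V_{e_0}$.
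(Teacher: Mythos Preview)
Your proposal is correct and follows essentially the same approach as the paper: reduce to \autoref{thm:G2S} via the fibration $\rG_2\hookrightarrow\Spin(7)\to S^7$ from \autoref{thm:Spin7}. The paper's proof is a one-line ``follows immediately from \autoref{thm:G2S}'', whereas you have carefully spelled out the reduction lemma (in particular the verification that $x\perp u\times v\times w$ is equivalent to $\inner{u\times_x v}{w}=0$) and the freeness/transitivity arguments; your alternative dimension-counting remark for transitivity also mirrors the first proof of \autoref{thm:G2S}.
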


\begin{proof}
  Since $\Spin(7)$ acts transitively on $S^7$ with isotropy subgroup
  $\rG_2$, the result follows immediately from \autoref{thm:G2S}.
\end{proof}

\begin{cor}
\label{cor:cayley}
  The group $\rG(W,\Phi)$ acts transitively on the space of Cayley
  subspaces of $W$.  
\end{cor}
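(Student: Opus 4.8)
The plan is to mimic the proof of \autoref{cor:assoctrans}, deducing the transitivity from the transitivity of $\rG(W,\Phi)$ on the configuration space $\sS$ established in \autoref{thm:Spin7S}, together with the characterisation of Cayley subspaces in \autoref{le:cayley-sub}.

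First I would unpack a Cayley subspace $H\subset W$ into data that $\sS$ can detect. Since $\dim H=4$, choose an orthonormal triple $u,v,w\in H$. By~\eqref{eq:tc1} the vector $u\times v\times w$ is orthogonal to $u$, $v$, $w$, and by~\eqref{eq:tc2} it has norm $\abs{u\wedge v\wedge w}=1$; by \autoref{le:cayley-sub}~\itref{It_CayleySub1} it lies in $H$. Hence $u,v,w,u\times v\times w$ is an orthonormal basis of $H$. Now pick any unit vector $x\in H^\perp$ (this is possible because $\dim W=8>\dim H$); then the five vectors $u,v,w,u\times v\times w,x$ are orthonormal, so $(u,v,w,x)\in\sS$, and $H=\mathrm{span}\{u,v,w,u\times v\times w\}$ is recovered from this point of $\sS$.

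Next, given two Cayley subspaces $H,H'$, produce points $(u,v,w,x)\in\sS$ and $(u',v',w',x')\in\sS$ as above. By \autoref{thm:Spin7S} there is a $g\in\rG(W,\Phi)$ with $gu=u'$, $gv=v'$, $gw=w'$, and $gx=x'$. Since $g$ preserves the triple cross product, $g(u\times v\times w)=gu\times gv\times gw=u'\times v'\times w'$, so $g$ carries the orthonormal basis $u,v,w,u\times v\times w$ of $H$ to the orthonormal basis $u',v',w',u'\times v'\times w'$ of $H'$; therefore $gH=H'$, which proves transitivity on the set of Cayley subspaces.

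I do not expect any genuine obstacle: each step reduces to a result already proved. The only points that deserve a word of care are that $u\times v\times w$ actually lies in $H$ — this is precisely \autoref{le:cayley-sub}~\itref{It_CayleySub1}, the defining property of a Cayley subspace — and that the auxiliary unit vector $x$ always exists, which is immediate from the dimension count; in fact the vector $x'$ could be dropped entirely, since only the existence of a $g\in\rG(W,\Phi)$ mapping the triple $(u,v,w)$ to $(u',v',w')$ is used.
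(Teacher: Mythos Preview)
Your proof is correct and follows exactly the approach indicated in the paper, which simply cites \autoref{le:cayley-sub} and \autoref{thm:Spin7S} without further elaboration; you have correctly unpacked what that citation means.
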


\begin{proof}
  This follows directly from \autoref{le:cayley-sub} and \autoref{thm:Spin7S}.
\end{proof}

\begin{remark}
  \label{rmk:cayley}
  For each Cayley subspace $H\subset W$ choose the orientation such that 
  $$
  \dvol_H:=\Phi|_H
  $$ 
  is a positive volume form and denote by $\Lambda^+H^*$ 
  the space of self-dual $2$-forms (as in  \autoref{rmk:LaH}),
  by $\pi_H:W\to H$ the orthogonal projection, 
  and by 
  \begin{equation*}
  \rG_H:=\left\{g\in\rG(W,\Phi):gH=H\right\}
  \end{equation*} 
  the isotropy subgroup. %
  Fix a Cayley subspace ${H\subset W}$.  %
  Then there is a unique orientation preserving $\rG_H$-equivariant 
  isometric isomorphism 
$$
  T_H:\Lambda^+H^*\to\Lambda^+(H^\perp)^*.
$$ 
  It is given by 
  \begin{equation}\label{eq:TH}
  T_H\om := -\tfrac12\bigl(*(\Phi\wedge\pi_H^*\om)\bigr)|_{H^\perp}
  \qquad\mbox{for }\om\in \Lambda^+H^*
  \end{equation}
  and its inverse is $(T_H)^{-1}=T_{H^\perp}$.  %
  If $\om_1,\om_2,\om_3$ is a standard basis of $\Lambda^+H^*$
  and $\tau_i\in\Lambda^+(H^\perp)^*$
  is defined by $\tau_i:=T_H\om_i$ for $i=1,2,3$, 
  then the Cayley calibration $\Phi$ can be expressed in the form 
  \begin{equation}\label{eq:PhiTH}
  \Phi
  = \pi_H^*\dvol_H+\pi_{H^\perp}^*\dvol_{H^\perp}
  - \sum_{i=1}^3\pi_H^*\om_i\wedge\pi_{H^\perp}^*\tau_i.
  \end{equation}
  To see this, choose a standard basis of $W$ as in \autoref{ex:cayley}
  such that the vectors $e_0,e_1,e_2,e_3$ form a basis of $H$,
  the vectors $e_4,e_5,e_6,e_7$ form a basis of $H^\perp$, and 
  \begin{gather*}
    \omega_1 = e^{01} + e^{23}, \quad
    \omega_2 = e^{02} - e^{13}, \quad
    \omega_3 = e^{03} + e^{12}, \\
    \tau_1 = e^{45} + e^{67}, \quad
    \tau_2 = e^{46} - e^{57}, \quad
    \tau_3 = e^{47} + e^{56}.
  \end{gather*}
That such a basis exists follows 
from \autoref{thm:CAYLEY1} and \autoref{thm:Spin7S}.   
It follows also from  \autoref{thm:Spin7S} that
a pair $(h,h')\in\SO(H)\times\SO(H^\perp)$ belongs 
to the image of the homomorphism $\rG_H\to\SO(H)\times\SO(H^\perp)$
if and only if the induced automorphisms of $\Lambda^+H^*$ 
and $\Lambda^+(H^\perp)^*$ are conjugate under $T_H$. Hence
the map 
\begin{equation*}
  \rG_H\to \SO(H)\times_{\SO(\Lambda^+H^*)}\SO(H^\perp):
  g\mapsto[g|_H,g|_{H^\perp}]
\end{equation*}
is a Lie group isomorphism. %
Hence, $\dim\rG_H=9$ and so the \defined{Cayley Grassmannian}
$$
\sH:=\left\{H\subset W:H\mbox{ is a Cayley subspace}\right\},
$$ 
which is diffeomorphic to the homogeneous 
space $\rG(W,\Phi)/\rG_H$, has dimension~$12$.
\end{remark}

\bigbreak

\begin{theorem}
  \label{thm:form8}
  There are orthogonal splittings
  \begin{align*}
    \Lambda^2W^* 
    &=
      \Lambda^2_7\oplus\Lambda^2_{21},\\
    \Lambda^3W^* 
    &= 
      \Lambda^3_8\oplus\Lambda^3_{48},\\
    \Lambda^4W^* 
    &= 
      \Lambda^4_1\oplus\Lambda^4_7
      \oplus\Lambda^4_{27}\oplus\Lambda^4_{35},
  \end{align*}
  where $\dim\Lambda^k_d=d$ and
  \begin{align*}
    \Lambda^2_7 
    &:= 
      \left\{\om\in\Lambda^2W^* : 
      *(\Phi\wedge\om)=3\om\right\} \\
    &\,= 
      \left\{u^*\wedge v^*-\iota(u)\iota(v)\Phi : 
      u,v\in W\right\},\\
    \Lambda^2_{21} 
    &:=
      \left\{\om_\xi : \xi\in\g\right\} \\
     &\,= 
      \left\{\om\in\Lambda^2W^* : 
      *(\Phi\wedge\om)=-\om\right\}\\
    &\;= 
      \left\{\om\in\Lambda^2W^* : 
      \inner{\om}{\iota(u)\iota(v)\Phi}=\om(u,v)\;
      \forall\,u,v\in W\right\},\\
    \Lambda^3_8
    &:= 
      \left\{\iota(u)\Phi : u\in W\right\},\\
    \Lambda^3_{48} 
    &:= 
      \left\{\omega\in\Lambda^3W^* : 
      \Phi\wedge\omega=0\right\},\\
    \Lambda^4_1 
    &:= 
      \langle\Phi\rangle,\\
    \Lambda^4_7
    &:= 
      \left\{\cL_\xi\Phi : \xi\in\so(W)\right\},\\
    \Lambda^4_{27} 
    &:= 
      \left\{\om\in\Lambda^4W^* : 
      *\om=\om,\,\om\wedge\Phi=0,\,
      \om\wedge\cL_\xi\Phi=0\,\forall\xi\in\so(W)\right\},\\
    \Lambda^4_{35} 
    &:=
      \left\{\om\in\Lambda^4W^* : *\om=-\om\right\}.
  \end{align*}
  Here $\g:=\Lie(\rG(W,\Phi))$ and, for $\xi\in\so(W)$, 
  the $4$--form $\cL_\xi\Phi\in\Lambda^4W^*$ and 
  the $2$--form $\om_\xi\in\Lambda^2W^*$  are defined by
  $\cL_\xi\Phi:=\left.\frac{d}{dt}\right|_{t=0}\exp(t\xi)^*\Phi$
  and $\om_\xi := \inner{\cdot}{\xi\cdot}$.
  Each of the spaces $\Lambda^k_d$ is an irreducible 
  representation of $\rG(W,\Phi)$.
\end{theorem}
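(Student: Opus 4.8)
The plan is to reduce to the standard model $W=\R^8$ with the standard Cayley-form $\Phi_0$ via \autoref{thm:CAYLEY1}, and then to exhibit each summand $\Lambda^k_d$ either as the image or kernel of an explicit $\rG(W,\Phi)$--equivariant operator or as an eigenspace of the self-adjoint $\rG(W,\Phi)$--equivariant operator $Q\co\Lambda^2W^*\to\Lambda^2W^*$, $Q\om:=*(\Phi\wedge\om)$, and its degree-$3$ and degree-$4$ analogues. For $\Lambda^2$, the $\SO(W)$--equivariant isometry $\so(W)\to\Lambda^2W^*$, $\xi\mapsto\om_\xi:=\inner{\cdot}{\xi\cdot}$, carries the orthogonal decomposition $\so(W)=\g\oplus\g^\perp$ (with $\g=\Lie(\rG(W,\Phi))$; use the argument in \autoref{le:spin7}) to $\Lambda^2_{21}\oplus\Lambda^2_7$, where $\Lambda^2_7$ is the span of the $2$--forms $u^*\wedge v^*-\iota(u)\iota(v)\Phi$ (the $8$--dimensional analogue of \eqref{eq:phipsi17}, verified in the standard model). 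One then checks $\tr Q=0$ by evaluating in the orthonormal basis $e^{ij}=e^i\wedge e^j$ of $\Lambda^2$, where $\inner{e^{ij}}{Qe^{ij}}\dvol=\Phi\wedge e^{ij}\wedge e^{ij}=0$; since $\Lambda^2_7$ and $\Lambda^2_{21}$ are irreducible (see below), Schur's lemma forces $Q$ to act as a scalar on each, a single computation in the standard model gives eigenvalue $3$ on $\Lambda^2_7$, and tracelessness then gives $-1$ on $\Lambda^2_{21}$, yielding the stated characterizations.

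For $\Lambda^3$, the map $W\to\Lambda^3W^*$, $u\mapsto\iota(u)\Phi$, is injective because $\Phi$ is nondegenerate, so $\Lambda^3_8$ has dimension $8$; since $*\Phi=\Phi$ and, by \eqref{eq:star}, $*\iota(u)\Phi=-u^*\wedge\Phi$, one gets $\inner{\om}{\iota(u)\Phi}\dvol=u^*\wedge(\om\wedge\Phi)$, whence $(\Lambda^3_8)^\perp=\left\{\om:\Phi\wedge\om=0\right\}=\Lambda^3_{48}$, of dimension $56-8=48$. For $\Lambda^4$, the Hodge star commutes with the $\SO(W)$--action and fixes $\Phi$, so it splits $\Lambda^4W^*=\Lambda^4_+\oplus\Lambda^4_{35}$ with $\Lambda^4_{35}=\Lambda^4_-$ of dimension $35$, and $\Phi,\cL_\xi\Phi\in\Lambda^4_+$ for $\xi\in\so(W)$. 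The map $\xi\mapsto\cL_\xi\Phi$ has kernel exactly $\g$, so $\Lambda^4_7$ has dimension $7$; it is orthogonal to $\Phi$ since $\exp(t\xi)\in\SO(W)$ preserves $\abs{\Phi}$ (so $\inner{\cL_\xi\Phi}{\Phi}=\tfrac12\tfrac{d}{dt}\big|_{0}\abs{\exp(t\xi)^*\Phi}^2=0$); and $\Lambda^4_{27}$ is by definition the orthogonal complement of $\Lambda^4_1\oplus\Lambda^4_7$ inside $\Lambda^4_+$, of dimension $35-1-7=27$. This gives all the orthogonal splittings together with $\dim\Lambda^k_d=d$.

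For irreducibility, I would identify the summands with known $\rG(W,\Phi)\cong\Spin(7)$--representations. The isometry $\xi\mapsto\om_\xi$ gives $\Lambda^2_{21}\cong\g$ and $\Lambda^2_7\cong\g^\perp\cong\Lambda^4_7$; thus $\Lambda^2_{21}$ is the adjoint representation, irreducible because $\rG(W,\Phi)$ is simple (\autoref{thm:Spin7}), while $\g^\perp$ carries the standard representation of $\SO(7)$ pulled back along the nontrivial double cover $\rG(W,\Phi)\to\SO(\g^\perp)$ of \autoref{le:spin7}, hence is irreducible. The map $u\mapsto\iota(u)\Phi$ identifies $\Lambda^3_8\cong W$, irreducible because $\rG(W,\Phi)$ acts transitively on the unit sphere $S^7\subset W$ (\autoref{thm:Spin7}), so no proper nonzero subspace is invariant; and $\Lambda^4_1$ is trivial.

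The main obstacle is the irreducibility of the three large summands $\Lambda^3_{48}$, $\Lambda^4_{27}$, and $\Lambda^4_{35}$. For these I would appeal to Bryant \cite{Bryant1987} (and Fern\'andez--Gray \cite{Fernandez1982}), exactly as is done for $\Lambda^3_{27}$ in the $7$--dimensional case in \autoref{thm:form7}; a self-contained argument can be given by restricting to the isotropy subgroup $\rG_e\cong\rG_2$ and using \autoref{thm:form7} (together with the Hodge stars of $V_e=e^\perp$ and of $W$) to decompose each summand as a $\rG_2$--module into copies of the distinct irreducibles $\mathbf{1}$, $\mathbf{7}$, $\mathbf{14}$, $\mathbf{27}$ with known multiplicities, so that any proper $\rG(W,\Phi)$--invariant subspace restricts to a $\rG_2$--submodule; one then excludes each possibility using that $\rG(W,\Phi)$ has no invariant vector in $\Lambda^4_-$ or in $\Lambda^3_{48}$ (the only $\rG(W,\Phi)$--fixed $4$--form is a multiple of the self-dual $\Phi$, and the only fixed $3$--form is $0$) and that the existence of a $\mathbf{7}$-- or $\mathbf{8}$--subrepresentation would, after the identifications above, contradict the eigenvalue computations for $Q$ and its analogues. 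Disentangling the several copies of $\mathbf{7}$ and $\mathbf{27}$ appearing across the different degrees is the delicate bookkeeping here, and in the interest of brevity one may simply cite the literature.
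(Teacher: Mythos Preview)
Your proposal is correct and follows essentially the same approach as the paper: the isometry $\so(W)\to\Lambda^2W^*$ carrying $\g\oplus\g^\perp$ to the two summands, irreducibility via simplicity of $\rG(W,\Phi)$ and \autoref{le:spin7}, the trace-zero argument for $Q$, the identity $*\iota(u)\Phi=-u^*\wedge\Phi$ for $\Lambda^3$, the self-duality and dimension count for $\Lambda^4$, and the appeal to \cite{Bryant1987} for the large summands. The only cosmetic difference is that the paper computes the eigenvalue $-1$ on $\Lambda^2_{21}$ first (by picking $\xi\in\g$ with $\xi e=0$ and reducing to \autoref{thm:form7} on $V_e=e^\perp$) and then deduces $3$ from tracelessness, whereas you compute $3$ on $\Lambda^2_7$ in the standard model and deduce $-1$; this is the same argument run in the opposite direction.
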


\begin{proof}
  By \autoref{thm:Spin7}, $\rG:=\rG(W,\Phi)$ is simple and so the
  action of $\rG$ on $\g$ by conjugation is irreducible. %
  Hence, the $21$--dimensional subspace $\Lambda^2_{21}$ must be
  contained in an eigenspace of the operator
  ${\om\mapsto*(\Phi\wedge\om)}$ on $\Lambda^2W^*$. %
  We prove that the eigenvalue is $-1$. %
  To see this, we choose a unit vector $e\in W$ and an element
  $\xi\in\g$ with $\xi e=0$. %
  Let
  \begin{equation*}
    V_e:=e^\perp
  \end{equation*} 
  and denote by ${\iota_e\co V_e\to W}$ and ${\pi_e\co W\to V_e}$ the
  inclusion and orthogonal projection and by
  $*_e\co \Lambda^kV_e^*\to\Lambda^{7-k}V_e^*$ the Hodge $*$--operator
  on the subspace. %
  Then
  \begin{equation*}
    *(e^*\wedge\pi_e^*\alpha_e) = \pi_e^**_e\alpha_e\qquad
    \forall\;\alpha_e\in\Lambda^kV_e^*.
  \end{equation*}  
  Moreover, the alternating forms
  \begin{equation*}
    \phi_e:=\iota_e^*(\iota(e)\Phi),\qquad
    \psi_e:=\iota_e^*\Phi
  \end{equation*}
  are the associative and coassociative calibrations of $V_e$. %
  Since $\xi e=0$, we have $\om_\xi=\pi_e^*\iota_e^*\om_\xi$ and, by
  \autoref{thm:form7},
  \begin{equation*}
    \psi_e\wedge\iota_e^*\om_\xi=0,\qquad
    *_e(\phi_e\wedge\iota_e^*\om_\xi)=-\iota_e^*\om_\xi.
  \end{equation*}
  Since $\Phi=e^*\wedge\pi_e^*\phi_e+\pi_e^*\psi_e$, this gives
  \begin{align*}
    *\bigl(\Phi\wedge\om_\xi\bigr) 
    &=
      *\bigl(\left(e^*\wedge\pi_e^*\phi_e+\pi_e^*\psi_e\right)
      \wedge\pi_e^*\iota_e^*\om_\xi\bigr) \\
    &= 
      *\bigl(e^*\wedge\pi_e^*\left(\phi_e\wedge\iota_e^*\om_\xi\right)\bigr)
      + *\pi_e^*\bigl(\psi_e\wedge\iota_e^*\om_\xi\bigr) \\
    &= 
      \pi_e^**_e\left(\phi_e\wedge\iota_e^*\om_\xi\right) 
      =
      -\pi_e^*\iota_e^*\om_\xi 
      =
      -\om_\xi.
  \end{align*}
  By \autoref{le:spin7} the adjoint action of $\rG$ on
  $\g^\perp\subset\so(W)$ is irreducible, and $\g^\perp$ is mapped
  under $\xi\mapsto\om_\xi$ onto the orthogonal complement of
  $\Lambda^2_{21}$. %
  Hence, the $7$--dimensional orthogonal complement of
  $\Lambda^2_{21}$ is also contained in an eigenspace of the operator
  $\om\mapsto*(\Phi\wedge\om)$. 
  Since this operator is self-adjoint and has trace zero, its
  eigenvalue on the orthogonal complement of $\Lambda^2_{21}$ must be
  $3$ and therefore this orthogonal complement is equal to
  $\Lambda^2_7$. %
  It follows that the orthogonal projection of $\om\in\Lambda^2W^*$
  onto $\Lambda^2_7$ is given by
  $
    \pi_7(\om)=\tfrac{1}{4}\left(\om+*(\Phi\wedge\om)\right).
  $
  Hence, for every nonzero vector $e\in W$, we have 
  \begin{align*}
    \Lambda^2_7
    &=
    \left\{e^*\wedge u^*-\iota(e)\iota(u)\Phi : 
      u\in e^\perp\right\}, \\
    \Lambda^2_{21}
    &=
    \left\{\om\in\Lambda^2W^* : 
      \inner{\om}{\iota(e)\iota(u)\Phi}=\om(e,u)\,\forall\,u\in e^\perp
    \right\}.
  \end{align*}
  This proves the decomposition result for $\Lambda^2W^*$. 

\bigbreak

  We verify the decomposition of $\Lambda^3W^*$. %
  For $u\in W$ and $\omega\in\Lambda^3W^*$ we have
  the equation
  \begin{equation*}
    u^*\wedge\Phi\wedge\omega = -\omega\wedge*\iota(u)\Phi.
  \end{equation*}
  Hence, $\Phi\wedge\omega=0$ if and only if $\omega$ is orthogonal to
  $\iota(u)\Phi$ for all ${u\in W}$. %
  This shows that $\Lambda^3_{48}$ is the orthogonal complement of
  $\Lambda^3_8$. %
  Since $\Phi$ is nondegenerate, we have $\dim\,\Lambda^3_8=8$ and,
  since $\dim\,\Lambda^3W^*=56$, it follows that
  $\dim\,\Lambda^3_{48}=48$.

  We verify the decomposition of $\Lambda^4W^*$. %
  The $4$--form $g^*\Phi$ is self-dual for every
  $g\in\rG=\rG(W,\Phi)$, because $\Phi$ is self-dual and
  $\rG\subset\SO(W)$. %
  This implies that $\cL_\xi\Phi$ is self-dual for every
  $\xi\in\g=\Lie(\rG)$. %
  Since $\SO(W)$ has dimension $28$ and the isotropy subgroup $\rG$ of
  $\Phi$ has dimension $21$, it follows that the tangent space
  $\Lambda^4_7$ to the orbit of $\Phi$ under the action of $\rG$ has
  dimension $7$. %
  As $\Lambda^4_1$ has dimension $1$ and the space of self-dual
  $4$--forms has dimension $35$, the orthogonal complement of
  $\Lambda^4_1\oplus\Lambda^4_7$ in the space of self-dual $4$--forms
  has dimension $27$. %
  This proves the dimension and decomposition statements.

  That the action of $\rG$ on $\Lambda^2_{21}\cong\g$ is irreducible
  follows from the fact that $\rG$ is simple. %
  Irreducibility of the action on $\Lambda^4_1$ is obvious. %
  For $\Lambda^3_8\cong W$ it follows from the fact that $\rG$ acts
  transitively on the unit sphere in $W$, and for
  $\Lambda^2_7\cong\g^\perp\cong\Lambda^4_7$ it follows from the fact
  that the istropy subgroup $\rG_e$ of a unit vector $e\in W$ acts
  transitively on the unit sphere in $V_e=e^\perp$. %
  For $\Lambda^4_{27}$, $\Lambda^4_{35}$, and $\Lambda^3_{48}$ we
  refer to~\cite{Bryant1987}. %
  This proves \autoref{thm:form8}.
\end{proof}

\begin{cor}
  \label{cor:uvxy}
  For $u,v\in W$ denote
  $\om_{u,v}:=\iota(v)\iota(u)\Phi=\Phi(u,v,\cdot,\cdot)$. %
  Then, for all $u,v,x,y\in W$ we have
  \begin{gather}
    \label{eq:uvxy1}
    *\left(\Phi\wedge u^*\wedge v^*\right)
      = \om_{u,v},\qquad
    *\left(\Phi\wedge \om_{u,v}\right)
      = 3u^*\wedge v^*+2\om_{u,v},
    \\
    \label{eq:uvxy2}
    \inner{\om_{u,v}}{\om_{x,y}} 
      = 3\bigl(
      \inner{u}{x}\inner{v}{y}-\inner{u}{y}\inner{v}{x}
      \bigr)
    + 2\Phi(u,v,x,y),
    \\
    \label{eq:uvxy3}
    \frac{\om_{u,v}\wedge\om_{x,y}\wedge\Phi}{\dvol}
      = 6\bigl(\inner{u}{x}\inner{v}{y}-\inner{u}{y}\inner{v}{x}\bigr)
      + 7\Phi(u,v,x,y).
  \end{gather}
\end{cor}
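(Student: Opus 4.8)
The plan is to deduce all three identities from a single auxiliary formula,
\[
*\,\om_{u,v} \;=\; u^*\wedge v^*\wedge\Phi \qquad\text{for all }u,v\in W,
\]
together with the self-duality $*\Phi=\Phi$ (\autoref{rmk:Worient}) and the description $\Lambda^2_7=\{u^*\wedge v^*-\iota(u)\iota(v)\Phi\}$ from \autoref{thm:form8}. To prove the auxiliary formula, write $\om_{u,v}=\iota(v)\iota(u)\Phi=\iota(v)\gamma$ with $\gamma:=\iota(u)\Phi\in\Lambda^3W^*$, and apply \eqref{eq:star} of \autoref{rmk:star} twice: since $\gamma$ has degree three, $*\iota(v)\gamma=(-1)^{2}v^*\wedge *\gamma=v^*\wedge *\gamma$, and since $\Phi$ has degree four, $*\gamma=*\iota(u)\Phi=(-1)^{3}u^*\wedge *\Phi=-u^*\wedge\Phi$, using $*\Phi=\Phi$. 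Hence $*\om_{u,v}=-v^*\wedge u^*\wedge\Phi=u^*\wedge v^*\wedge\Phi$. Applying $*$ again and using that $**$ is the identity on $2$--forms in dimension eight gives $\om_{u,v}=*(u^*\wedge v^*\wedge\Phi)=*(\Phi\wedge u^*\wedge v^*)$, which is the first equation in \eqref{eq:uvxy1}.

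For the second equation in \eqref{eq:uvxy1}, I would observe that, by \autoref{thm:form8}, $u^*\wedge v^*+\om_{u,v}=u^*\wedge v^*-\iota(u)\iota(v)\Phi$ lies in $\Lambda^2_7$, so $*(\Phi\wedge(u^*\wedge v^*+\om_{u,v}))=3(u^*\wedge v^*+\om_{u,v})$; subtracting the identity just proved yields $*(\Phi\wedge\om_{u,v})=3u^*\wedge v^*+2\om_{u,v}$. For \eqref{eq:uvxy2} I would compute, using $\sigma\wedge *\tau=\inner{\sigma}{\tau}\dvol$ and the auxiliary formula,
\[
\inner{\om_{u,v}}{\om_{x,y}}\dvol
= \om_{u,v}\wedge *\om_{x,y}
= \om_{u,v}\wedge x^*\wedge y^*\wedge\Phi
= x^*\wedge y^*\wedge\bigl(\Phi\wedge\om_{u,v}\bigr),
\]
and since $\Phi\wedge\om_{u,v}$ is a $6$--form equal to $*(3u^*\wedge v^*+2\om_{u,v})$, this becomes $\bigl(3\inner{x^*\wedge y^*}{u^*\wedge v^*}+2\inner{x^*\wedge y^*}{\om_{u,v}}\bigr)\dvol$; then $\inner{x^*\wedge y^*}{u^*\wedge v^*}=\inner{u}{x}\inner{v}{y}-\inner{u}{y}\inner{v}{x}$ and $\inner{x^*\wedge y^*}{\om_{u,v}}=\om_{u,v}(x,y)=\Phi(u,v,x,y)$ (the general pairing identity recalled in the proof of \autoref{le:phipsi}) finish it. Finally, for \eqref{eq:uvxy3} I would write $\om_{u,v}\wedge\om_{x,y}\wedge\Phi=\om_{u,v}\wedge *\bigl(*(\Phi\wedge\om_{x,y})\bigr)=\om_{u,v}\wedge *(3x^*\wedge y^*+2\om_{x,y})=\bigl(3\inner{\om_{u,v}}{x^*\wedge y^*}+2\inner{\om_{u,v}}{\om_{x,y}}\bigr)\dvol$ and substitute \eqref{eq:uvxy2}.

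I do not expect a genuine obstacle here; the only thing that needs care is keeping the Hodge-star conventions consistent — the sign $(-1)^{k-1}$ in \eqref{eq:star}, the facts that $**$ is the identity on forms of degree $2$ and $6$ in dimension eight, and the self-duality $*\Phi=\Phi$ from \autoref{rmk:Worient} — so that the auxiliary formula $*\om_{u,v}=u^*\wedge v^*\wedge\Phi$ comes out with the right sign; once it is in hand, everything else is a short wedge-and-contract computation. As a sanity check one may set $x=u$, $y=v$: then \eqref{eq:uvxy2} gives $\abs{\om_{u,v}}^2=3\abs{u\wedge v}^2$ and \eqref{eq:uvxy3} reduces to \eqref{eq:uvPhi} of \autoref{le:cayley}, since $\Phi(u,v,u,v)=0$ and $\abs{u\wedge v}^2=\abs{u}^2\abs{v}^2-\inner{u}{v}^2$.
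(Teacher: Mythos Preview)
Your proof is correct and follows essentially the same route as the paper: both derive the first equation in \eqref{eq:uvxy1} from the Hodge-star identity \eqref{eq:star} (the paper simply calls it ``a general statement about the Hodge $*$--operator''), then obtain the second from the eigenspace description of $\Lambda^2_7$ in \autoref{thm:form8}, and finish \eqref{eq:uvxy2} and \eqref{eq:uvxy3} by pairing against $x^*\wedge y^*$ and $\om_{x,y}$. Your explicit two-step application of \eqref{eq:star} to get $*\om_{u,v}=u^*\wedge v^*\wedge\Phi$ is a nice way to make the first step concrete, but otherwise the arguments coincide.
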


\begin{proof}
  The first equation in~\eqref{eq:uvxy1} is a general statement about
  the Hodge $*$--opera\-tor in any dimension. %
  Moreover, by \autoref{thm:form8}, the $2$--form
  $u^*\wedge v^*+\om_{u,v}$ is an eigenvector of the operator
  $\om\mapsto*(\Phi\wedge\om)$ with eigenvalue $3$. %
  Hence, the second equation in~\eqref{eq:uvxy1} follows from the
  first. %
  To prove~\eqref{eq:uvxy2}, take the inner product of the second
  equation in~\eqref{eq:uvxy1} with $x^*\wedge y^*$ and use the
  identities
  \begin{gather}
    \label{eq:uvxy4}
    \inner{\om_{u,v}}{x^*\wedge y^*} 
    =\Phi(u,v,x,y),
    \\
    \label{eq:uvxy5}
    \inner{u^*\wedge v^*}{x^*\wedge y^*} 
    =\inner{u}{x}\inner{v}{y}-\inner{u}{y}\inner{v}{x},
  \end{gather}
  and the fact that the operator $\om\mapsto *(\Phi\wedge\om)$ is
  self-adjoint. %
  To prove~\eqref{eq:uvxy3}, we observe that
  \begin{align*}
    \frac{\om_{u,v}\wedge\om_{x,y}\wedge\Phi}{\dvol}
    &=
      \inner{\om_{u,v}}{*(\Phi\wedge\om_{x,y})} \\
    &=
      \inner{\om_{u,v}}{3x^*\wedge y^*+2\om_{x,y}} \\
    &= 
      6\bigl(\inner{u}{x}\inner{v}{y}-\inner{u}{y}\inner{v}{x}\bigr)
      + 7\Phi(u,v,x,y),
  \end{align*}
  where the second equation follows from~\eqref{eq:uvxy1}
  and the last follows from~\eqref{eq:uvxy2} and~\eqref{eq:uvxy5}. %
  This proves \autoref{cor:uvxy}.
\end{proof}



\section{Spin structures}
\label{sec:spin}  

This section explains how a cross products in dimension seven,
respectively a triple cross products in dimension eight,
gives rise to a spin structure and a unit spinor and how,
conversely, the cross product or triple cross product
can be recovered from these data. %
We begin the discussion with 
spin structures and triple cross products in~\autoref{sec:spintriplecross}
and then move on to cross products in~\autoref{sec:spincross}.


\subsection{Spin structures and triple cross products}
\label{sec:spintriplecross}

Let $W$ be an $8$--dimensional oriented real Hilbert space. %
A spin structure on~$W$ is a pair of $8$--dimensional real Hilbert
spaces $S^\pm$ equipped with a vector space homomorphism
$\gamma\co W\to\Hom(S^+,S^-)$ that satisfies the condition
\begin{equation}
  \label{eq:spindim8-1}
  \gamma(u)^*\gamma(u) = \abs{u}^2\one
\end{equation}
for all $u\in W$ (see~\cite{Salamon1999a}*{Proposition 4.13,
  Definition 4.32, Example 4.48}). %
The sign in $S^\pm$ is determined by the condition
\begin{equation}
  \label{eq:spindim8-2} 
  \gamma(e_7)^*\gamma(e_6)\cdots\gamma(e_1)^*\gamma(e_0)=\one_{S^+}
\end{equation}
for some, and hence every, positively oriented orthonormal basis
$e_0,\dots,e_7$ of $W$ (see~\cite{Salamon1999a}*{page~132}). %
More precisely, consider the $16$--dimensional real Hilbert space
$S := S^+\oplus S^-$ and define the homomorphism
$\Gamma\co W\to\End(S)$ by
\begin{equation*}
  \Gamma(u) := 
  \begin{pmatrix}
    0 & \gamma(u) \\
    -\gamma(u)^* & 0
  \end{pmatrix}
  \qquad\mbox{for }u\in W.
\end{equation*}
Then equation~\eqref{eq:spindim8-1} guarantees that $\Gamma$ extends
uniquely to an algebra isomorphism from the Clifford algebra $\Cl(W)$
to $\End(S)$, still denoted by $\Gamma$. %
The complexification of $S$ gives rise an algebra isomorphism
$\Gamma^c\co\Cl^c(W)\to\End(S^c)$ from the complexified Clifford
algebra $\Cl^c(W):=\Cl(W)\otimes_\R\C$ to the complex endomorphisms of
$S^c:=S\otimes_\R\C$ (see~\cite{Salamon1999a}*{Proposition~4.33}). %

\begin{theorem}
  \label{thm:spintriplecross}
  Let $W$ be an oriented $8$--dimensional real Hilbert space
  and abbreviate $\Lambda^k:=\Lambda^kW^*$ for $k=0,1,\dots,8$. %
  \begin{enumerate}[(i)]
  \item
    \label{spintriplecross1}
    Suppose $W$ is equipped with a positive triple cross
    product~\eqref{eq:tc}, let ${\Phi\in\Lambda^4}$ be the Cayley
    calibration defined by~\eqref{eq:Phi}, and assume that
    ${\Phi\wedge\Phi>0}$. %
    Define the homomorphism $\gamma\co W\to\Hom(S^+,S^-)$ by
    \begin{equation}
      \label{eq:Spm}
      S^+ := \Lambda^0\oplus\Lambda^2_7,\qquad S^- := \Lambda^1
    \end{equation}
    and
    \begin{equation}
      \label{eq:gamma2}
      \gamma(u)(\lambda,\om) := \lambda u^*+2\iota(u)\om
    \end{equation}
    for $u\in W$, $\lambda\in\R$, and $\om\in\Lambda^2_7$. %
    Then $\gamma$ is a spin structure on~$W$, i.e., it
    satisfies~\eqref{eq:spindim8-1} and~\eqref{eq:spindim8-2}. %
    Moreover, the space $S^+=\Lambda^0\oplus\Lambda^2_7$ of positive
    spinors contains a canonical unit vector $s=(1,0)$ and the triple
    cross product can be recovered from the spin structure and the
    unit spinor via the formula
    \begin{equation}
      \label{eq:triplecrossgamma}
      \begin{split}
        \gamma(u\times v\times w)s 
        &= \inner{v}{w}\gamma(u)s - \inner{w}{u}\gamma(v)s + \inner{u}{v}\gamma(w)s \\
        &\quad- \gamma(u)\gamma(v)^*\gamma(w)s
      \end{split}
    \end{equation}
    for $u,v,w\in W$.

  \item
    \label{spintriplecross2}
    Let $\gamma\co W\to\Hom(S^+,S^-)$ be a spin structure 
    and let $s\in S^+$ be a unit vector. %
    Then equation~\eqref{eq:triplecrossgamma}
    defines a positive triple cross product on~$W$ and the associated
    Cayley calibration $\Phi$ satisfies $\Phi\wedge\Phi>0$. %
    Since any two spin structures on $W$ are isomorphic, 
    this shows that there is a one-to-one correspondence between positive 
    unit spinors and positive triple cross products on $W$ 
    that are compatible with the inner product and orientation. %
  \end{enumerate}
\end{theorem}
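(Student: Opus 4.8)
The plan is to establish parts~(i) and~(ii) separately and then read off the correspondence. For part~(i), the first task is to check that the $\gamma$ of~\eqref{eq:gamma2} is a spin structure, i.e.\ that it satisfies~\eqref{eq:spindim8-1} and~\eqref{eq:spindim8-2}. Since $\gamma(u)(\lambda,\om)=\lambda u^*+2\iota(u)\om$ and $\inner{u^*}{\iota(u)\om}=\om(u,u)=0$, condition~\eqref{eq:spindim8-1} is equivalent, after polarisation, to the identity $4\abs{\iota(u)\om}^2=\abs{u}^2\abs{\om}^2$ for all $u\in W$ and $\om\in\Lambda^2_7$. I would prove this either by reducing, via \autoref{thm:CAYLEY1}, to $W=\R^8$ with the standard Cayley-form of \autoref{ex:O}, or---more conceptually---by noting that for a fixed unit vector $e$ the assignment $w\mapsto e^*\wedge w^*-\iota(e)\iota(w)\Phi$ is, by \autoref{thm:form8}, a $\rG_e$--equivariant isomorphism $V_e\to\Lambda^2_7$; since $\rG_e\cong\rG_2$ acts irreducibly on $V_e$ (\autoref{thm:Spin7}, \autoref{thm:G2}), the $\rG_e$--invariant quadratic form $\om\mapsto\abs{\iota(e)\om}^2$ on $\Lambda^2_7$ is a multiple of the inner product, and evaluating it on $\om=e^*\wedge w^*-\iota(e)\iota(w)\Phi$ with $w\in e^\perp$, $\abs w=1$---where $\iota(e)\om=w^*$ and $\abs\om^2=4$ by \autoref{cor:uvxy}---pins the constant at $\tfrac14$; the general $u$ follows by scaling. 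The normalisation~\eqref{eq:spindim8-2} is a sign check, again best done on $\R^8$.

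The heart of part~(i) is the recovery formula~\eqref{eq:triplecrossgamma}. Since $z\mapsto\gamma(z)s$ is an isometry of $W$ onto $S^-=\Lambda^1$ (from $\abs{\gamma(z)s}^2=\abs z^2$), in particular injective, it suffices to apply $\gamma(\cdot)s$ to the defining relation~\eqref{eq:Phi}. With $s=(1,0)$ one has $\gamma(z)s=z^*$, and computing the adjoint gives $\gamma(v)^*(w^*)=(\inner vw,\,2\pi_7(v^*\wedge w^*))$, so that
\[
  \gamma(u)\gamma(v)^*\gamma(w)s=\inner vw\,u^*+4\,\iota(u)\pi_7(v^*\wedge w^*).
\]
Inserting the projection formula $\pi_7(\om)=\tfrac14\bigl(\om+{*}(\Phi\wedge\om)\bigr)$ from the proof of \autoref{thm:form8} together with the identity ${*}(\Phi\wedge v^*\wedge w^*)=\iota(w)\iota(v)\Phi$ of \autoref{cor:uvxy}, the right-hand side of~\eqref{eq:triplecrossgamma} telescopes---the inner-product terms cancelling---to the $1$--form $-\Phi(v,w,u,\cdot\,)$, which equals $\Phi(\cdot,u,v,w)=\inner{u\times v\times w}{\cdot}=\gamma(u\times v\times w)s$ by the alternating property of $\Phi$. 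Tracking the four sign changes is the only delicate point.

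For part~(ii), given a spin structure $\gamma$ and a unit spinor $s\in S^+$, again $z\mapsto\gamma(z)s$ is an isometric isomorphism $W\to S^-$, so~\eqref{eq:triplecrossgamma} does define a multilinear map $(u,v,w)\mapsto u\times v\times w$. Because~\eqref{eq:spindim8-1} makes $S=S^+\oplus S^-$ a Clifford module, one has $\gamma(u)^*\gamma(u)=\abs u^2\one_{S^+}$, $\gamma(u)\gamma(u)^*=\abs u^2\one_{S^-}$, and the anticommutator relations $\gamma(u)^*\gamma(v)+\gamma(v)^*\gamma(u)=2\inner uv\one_{S^+}$ and $\gamma(u)\gamma(v)^*+\gamma(v)\gamma(u)^*=2\inner uv\one_{S^-}$. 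The second anticommutator yields skew-symmetry of $u\times v\times w$ in $u,v$ and the first yields skew-symmetry in $v,w$, so the map is alternating; expanding $\inner{\gamma(x)s}{\gamma(u\times v\times w)s}$ and using the anticommutators gives $\inner x{u\times v\times w}+\inner{u\times v\times x}w=0$, i.e.~\eqref{eq:tc3}, hence~\eqref{eq:tc1} by \autoref{le:tc1}; and for orthonormal $u,v,w$ the right-hand side of~\eqref{eq:triplecrossgamma} reduces to $-\gamma(u)\gamma(v)^*\gamma(w)s$, of norm $\abs u\abs v\abs w=1$, so~\eqref{eq:tc2} holds for orthogonal triples and then, by the alternating property and Gram--Schmidt, in general. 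Thus $u\times v\times w$ is a triple cross product. That it is positive, and that its Cayley calibration satisfies $\Phi\wedge\Phi>0$, follows because the sign $\eps\in\{\pm1\}$ of \autoref{le:tc3} depends continuously on the triple cross product, hence is constant on the connected family obtained by fixing $\gamma$ and letting $s$ range over the sphere in $S^+$; reducing to the standard spin structure on $\R^8$ (all spin structures on $W$ being isomorphic), part~(i) identifies one member of this family with the standard positive triple cross product, so $\eps=1$, and $\Phi\wedge\Phi>0$ by \autoref{rmk:Worient}.

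Finally, the one-to-one correspondence is the combination of~(i) and~(ii): part~(i) assigns to a positive triple cross product the spin structure~\eqref{eq:Spm}--\eqref{eq:gamma2} together with the unit spinor $(1,0)$ and shows the product is recovered by~\eqref{eq:triplecrossgamma}, while part~(ii) shows~\eqref{eq:triplecrossgamma} sends any pair (spin structure, unit spinor) back to a positive triple cross product. Since a spin structure on $W$ is unique up to isomorphism and its only automorphism covering $\id_W$ is $-\one$ (because $\Gamma\co\Cl(W)\to\End(S)$ is onto), and since replacing $s$ by $-s$ leaves~\eqref{eq:triplecrossgamma} unchanged, these assignments are mutually inverse and give a bijection between positive unit spinors, up to sign, and positive triple cross products compatible with the given inner product and orientation. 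The main obstacle I anticipate is part~(i)---verifying that~\eqref{eq:gamma2} really is a spin structure and, above all, the sign bookkeeping in~\eqref{eq:triplecrossgamma}; once these are in place, part~(ii) is essentially formal Clifford algebra.
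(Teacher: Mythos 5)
Your proposal is correct and broadly parallels the paper, but differs substantially in two places. For the relation~\eqref{eq:spindim8-1} the paper introduces an auxiliary product map $m\co W\times W\to S^+$ (\autoref{le:m}, \autoref{le:spinW}), computes $\gamma(u)^*v^*=m(u,v)$, and verifies $\abs{m(u,v)}=\abs u\,\abs v$ from \autoref{cor:uvxy}; you instead reduce to the identity $4\abs{\iota(u)\om}^2=\abs u^2\abs\om^2$ on $\Lambda^2_7$ and prove it by Schur's lemma on the irreducible $\rG_e$--module $\Lambda^2_7\cong V_e$. This is more conceptual, though the explicit formula for $\gamma(v)^*$ is needed anyway for the recovery formula, so the two routes largely converge there. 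For positivity in part~(ii) the paper expands $\gamma(e\times u\times(e\times v\times w))s$ by the Clifford relations and reads off $\eps=+1$ directly; your continuity argument along the connected sphere of unit spinors is also valid, but it makes part~(ii) logically depend on part~(i), and your closing appeal to \autoref{rmk:Worient} for $\Phi\wedge\Phi>0$ is slightly imprecise: that remark compares $\Phi\wedge\Phi$ against the orientation \emph{induced} by the triple cross product, not the given orientation on $W$, so the orientation match itself also needs the continuity argument (the two orientations agree at the base point, hence throughout). You also compress the verification of~\eqref{eq:spindim8-2} to ``a sign check''; the paper's \autoref{le:spinWcross} shows this is a genuine chain of deductions via the recovery formula. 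The rest---the adjoint computation, the telescoping cancellation yielding~\eqref{eq:triplecrossgamma}, and deriving~\eqref{eq:tc1},~\eqref{eq:tc2} from the Clifford relations---coincides with the paper.
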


\begin{proof}
  See page~\pageref{proof:spintriplecross}.
\end{proof}

Assume $W$ is equipped with a positive triple cross
product~\eqref{eq:tc} and that its Cayley calibration
$\Phi\in\Lambda^4W^*$ in~\eqref{eq:Phi} satisfies
$\Phi\wedge\Phi>0$. %
Recall that, for every unit vector $e\in W$, there is a normed algebra
structure on~$W$, defined by~\eqref{eq:NDA}. %
This normed algebra structure can be recovered from an intrinsic
product map
\begin{equation*}
  m\co W\times W\to \Lambda^0\oplus\Lambda^2_7
\end{equation*}
(which does not depend on $e$) and an isomorphism 
$\gamma(e)\co \Lambda^0\oplus\Lambda^2_7\to\Lambda^1$
(which does depend on $e$). %
The product map is given by 
\begin{equation}
  \label{eq:m}
  m(u,v) = \left(\inner{u}{v},
    \tfrac{1}{2}(u^*\wedge v^*+\om_{u,v})\right)
\end{equation}
for $u,v\in W$ and the isomorphism $\gamma(e)$ is given by~\eqref{eq:gamma2}
with $u$ replaced by $e$. %
Here $\om_{u,v}:=\iota(v)\iota(u)\Phi$ as in~\autoref{cor:uvxy}.

\begin{lemma}
  \label{le:m}
  Let $I_W\co W\to W^*$ be the isomorphism induced by the inner
  product, so that $I_W(u)=\inner{u}{\cdot}=u^*$ for $u\in W$. %
  Let $\gamma\co W\to\Hom(S^+,S^-)$ and $m\co W\times W\to S^+$ be
  defined by~\eqref{eq:gamma2} and~\eqref{eq:m}. %
  Then, for all $u,v,e\in W$, we have
  \begin{gather}
    \label{eq:gammam}
    I_W^{-1}(\gamma(e)m(u,v))
    = \inner{u}{v}e + \inner{u}{e}v-\inner{v}{e}u + u\times e\times v, \\
    \label{eq:mnorm}
    \abs{m(u,v)} = \abs{u}\abs{v},\qquad
    \abs{\gamma(e)(\lambda,\om)}^2 
    = \abs{e}^2\left(\abs{\lambda}^2+\abs{\om}^2\right).
  \end{gather}
\end{lemma}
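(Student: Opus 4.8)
The plan is to prove all five identities by direct computation, using only the defining property $\inner{x}{u\times v\times w}=\Phi(x,u,v,w)$ of the Cayley calibration, the Leibniz rule for the interior product, the algebraic identities for $\om_{u,v}:=\iota(v)\iota(u)\Phi$ recorded in \autoref{cor:uvxy}, and the description of $\Lambda^2_7$ in \autoref{thm:form8}. Nothing here needs \autoref{thm:spintriplecross}.

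For \eqref{eq:gammam} I would unwind the definitions: since $m(u,v)=(\inner{u}{v},\tfrac12(u^*\wedge v^*+\om_{u,v}))$ and $\gamma(e)(\lambda,\om)=\lambda e^*+2\iota(e)\om$, we get $\gamma(e)m(u,v)=\inner{u}{v}e^*+\iota(e)(u^*\wedge v^*)+\iota(e)\om_{u,v}$, and $\iota(e)(u^*\wedge v^*)=\inner{e}{u}v^*-\inner{e}{v}u^*$. The only step requiring care is the last term: $(\iota(e)\om_{u,v})(x)=\Phi(u,v,e,x)$, and since the cyclic permutation of the four arguments is odd and one further transposition sends the triple cross product $u\times v\times e$ to $-\,u\times e\times v$, one obtains $\Phi(u,v,e,x)=-\inner{x}{u\times v\times e}=\inner{x}{u\times e\times v}$, i.e.\ $\iota(e)\om_{u,v}=(u\times e\times v)^*$. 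Applying $I_W^{-1}$, which sends $w^*\mapsto w$, then yields \eqref{eq:gammam}.

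For the two norm identities the key point is that $u^*\wedge v^*$ and $\om_{u,v}$ are orthogonal, with $\abs{u^*\wedge v^*}^2=\abs{u\wedge v}^2$ and $\abs{\om_{u,v}}^2=3\abs{u\wedge v}^2$: these are the special cases $(x,y)=(u,v)$ of \eqref{eq:uvxy5}, \eqref{eq:uvxy4}, \eqref{eq:uvxy2} respectively, together with $\Phi(u,v,u,v)=0$. Hence $\abs{u^*\wedge v^*+\om_{u,v}}^2=4\abs{u\wedge v}^2$, so $\abs{m(u,v)}^2=\inner{u}{v}^2+\tfrac14\cdot 4\abs{u\wedge v}^2=\abs{u}^2\abs{v}^2$, which is the first equation in \eqref{eq:mnorm}. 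For the second, scaling in $e$ (using $\gamma(te)=t\,\gamma(e)$) reduces to $\abs{e}=1$, the case $e=0$ being trivial. Given $\om\in\Lambda^2_7$, I would write $\om=e^*\wedge u^*+\om_{e,u}$ with $u\in e^\perp$; this is legitimate because by \autoref{thm:form8} (with $\iota(e)\iota(u)\Phi=-\om_{e,u}$) the linear map $e^\perp\to\Lambda^2_7$, $u\mapsto e^*\wedge u^*+\om_{e,u}$, is injective between spaces of dimension $7$, and contracting with $e$ recovers $u^*$. Then $\iota(e)\om=u^*$, the computation just used gives $\abs{\om}^2=\abs{u}^2+0+3\abs{u}^2=4\abs{u}^2$, and therefore $\abs{\gamma(e)(\lambda,\om)}^2=\abs{\lambda e^*+2u^*}^2=\lambda^2+4\abs{u}^2=\lambda^2+\abs{\om}^2$, using $\abs{e^*}=1$ and $\inner{e}{u}=0$; rescaling restores the factor $\abs{e}^2$.

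The main obstacle, such as it is, is purely bookkeeping: getting the signs right in the permutation identity $\Phi(u,v,e,x)=\inner{x}{u\times e\times v}$ for \eqref{eq:gammam}, and confirming that every $\om\in\Lambda^2_7$ is of the form $e^*\wedge u^*+\om_{e,u}$ for a unique $u\in e^\perp$, which is where the dimension count $\dim\Lambda^2_7=7$ enters. Once these are in place, everything else is a one-line calculation.
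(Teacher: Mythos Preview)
Your proof is correct. For \eqref{eq:gammam} and the first identity in \eqref{eq:mnorm} you follow essentially the paper's approach, only spelling out the details that the paper leaves as ``follows directly from the definitions'' and the one-line norm computation via \autoref{cor:uvxy}.

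For the second identity in \eqref{eq:mnorm} you take a genuinely different route. The paper recycles \eqref{eq:gammam}: setting $u=e$ with $\abs{e}=1$ gives $I_W^{-1}(\gamma(e)m(e,v))=v$, hence $\abs{\gamma(e)m(e,v)}=\abs{v}=\abs{m(e,v)}$; since $v\mapsto m(e,v)$ is a linear bijection $W\to S^+$ (both $8$--dimensional), this shows $\gamma(e)$ is an isometry. Your argument instead parametrizes $\Lambda^2_7$ explicitly as $\{e^*\wedge u^*+\om_{e,u}:u\in e^\perp\}$ and computes the norm directly. The paper's approach is slicker because it reuses work already done; yours is more hands-on but requires invoking the structural description of $\Lambda^2_7$ from \autoref{thm:form8}. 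Both are perfectly valid.
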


\begin{proof}
  Equation~\eqref{eq:gammam} follows directly from the definitions. %
  Moreover, it follows from~\eqref{eq:uvxy2} that
  \begin{equation*}
    \abs{m(u,v)}^2
    = \inner{u}{v}^2 + \tfrac{1}{4}\abs{u^*\wedge v^*}^2
    + \tfrac{1}{4}\abs{\om_{u,v}}^2
    = \inner{u}{v}^2 + \abs{u\wedge v}^2
    = \abs{u}^2\abs{v}^2.
  \end{equation*}
  This proves the first equation in~\eqref{eq:mnorm}. %
  To prove the second equation in~\eqref{eq:mnorm} we observe that
  $\gamma(e)m(e,v)=v$ and, hence,
  $\abs{\gamma(e)m(e,v)} = \abs{v} = \abs{m(e,v)}$ whenever
  $\abs{e}=1$. %
  Since the map $W\to\Lambda^0\oplus\Lambda^2_7\co v\mapsto m(e,v)$ is
  bijective, this proves \autoref{le:m}.
\end{proof}

\begin{remark}
  \label{rmk:prodm}
  If we fix a unit vector $e\in W$ and denote $\vbar:=2\inner{e}{v}e-v$, 
  then the product in~\eqref{eq:NDA} is given by
  \begin{equation*}
    uv
    = -\inner{u}{v}e + \inner{u}{e}v+\inner{v}{e}u + u\times e\times v
    = \overline{I_W^{-1}(\gamma(e)m(u,\vbar))}
  \end{equation*}
  for $u,v\in W$.
\end{remark}

The next lemma shows that the linear map
${\gamma(u)\co \Lambda^0\oplus\Lambda^2_7\to\Lambda^1}$ is dual to the
map $m(u,\cdot)\co W\to\Lambda^0\oplus\Lambda^2_7$ for every $u\in W$
and that it satisfies equation~\eqref{eq:spindim8-1}.

\begin{lemma}
  \label{le:spinW}
  Let $\gamma\co W\to\Hom(S^+,S^-)$ be the homomorphism
  in~\eqref{eq:Spm} and~\eqref{eq:gamma}. %
  Then $\gamma$ satisfies~\eqref{eq:spindim8-1} and
  \begin{equation}
    \label{eq:gammaW1}
    \gamma(u)^*v^*
    = m(u,v)
    = \left(
      \inner{u}{v},
      \tfrac{1}{2}(u^*\wedge v^*+\om_{u,v})
    \right)
  \end{equation}
  for all $u,v\in W$. 
\end{lemma}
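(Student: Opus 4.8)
The plan is to compute the adjoint map $\gamma(u)^*\co\Lambda^1\to\Lambda^0\oplus\Lambda^2_7$ in closed form, read off~\eqref{eq:gammaW1}, and then deduce~\eqref{eq:spindim8-1}. Throughout, the inner products on $S^\pm$ are the ones inherited from the standard inner products on the exterior powers $\Lambda^k=\Lambda^kW^*$; this is exactly what the factor $2$ in~\eqref{eq:gamma2} is calibrated to. For $\beta\in\Lambda^1$, $\lambda\in\R$, and $\om\in\Lambda^2_7$, the defining relation $\inner{\gamma(u)^*\beta}{(\lambda,\om)}=\inner{\beta}{\lambda u^*+2\iota(u)\om}$ together with the standard adjunction $\inner{\iota(u)\om}{\beta}=\inner{\om}{u^*\wedge\beta}$ between interior and exterior multiplication gives at once
\begin{equation*}
  \gamma(u)^*\beta=\bigl(\beta(u),\;2\pi_7(u^*\wedge\beta)\bigr),
\end{equation*}
where $\pi_7\co\Lambda^2W^*\to\Lambda^2_7$ is the orthogonal projection; here one uses that $\om\in\Lambda^2_7$ forces $\inner{\om}{u^*\wedge\beta}=\inner{\om}{\pi_7(u^*\wedge\beta)}$. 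By \autoref{thm:form8} the subspaces $\Lambda^2_7$ and $\Lambda^2_{21}$ are the $3$-- and $(-1)$--eigenspaces of the self-adjoint operator $\om\mapsto*(\Phi\wedge\om)$, so $\pi_7(\om)=\tfrac14\bigl(\om+*(\Phi\wedge\om)\bigr)$.

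To obtain~\eqref{eq:gammaW1} I would specialize to $\beta=v^*$. The $\Lambda^0$--component of $\gamma(u)^*v^*$ is $v^*(u)=\inner{u}{v}$, and for the $\Lambda^2_7$--component I would invoke the identity $*(\Phi\wedge u^*\wedge v^*)=\om_{u,v}$ from \autoref{cor:uvxy} (equation~\eqref{eq:uvxy1}), which yields $2\pi_7(u^*\wedge v^*)=\tfrac12\bigl(u^*\wedge v^*+\om_{u,v}\bigr)$. Together these give $\gamma(u)^*v^*=\bigl(\inner{u}{v},\tfrac12(u^*\wedge v^*+\om_{u,v})\bigr)=m(u,v)$, which is~\eqref{eq:m}.

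For~\eqref{eq:spindim8-1} it suffices, after rescaling and since $\gamma(u)^*\gamma(u)$ is self-adjoint and nonnegative, to show $\abs{\gamma(u)(\lambda,\om)}^2=\abs\lambda^2+\abs\om^2$ when $\abs u=1$. Expanding and using $\om(u,u)=0$,
\begin{equation*}
  \abs{\lambda u^*+2\iota(u)\om}^2
  =\lambda^2\abs u^2+4\lambda\,\om(u,u)+4\abs{\iota(u)\om}^2
  =\lambda^2+4\abs{\iota(u)\om}^2,
\end{equation*}
so everything reduces to the normalization identity $4\abs{\iota(u)\om}^2=\abs\om^2$ for $\om\in\Lambda^2_7$ with $\abs u=1$. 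Here I would use \autoref{thm:form8} once more to write $\om=u^*\wedge w^*-\iota(u)\iota(w)\Phi=u^*\wedge w^*+\om_{u,w}$ for the unique $w\in u^\perp$ with this property; then $\iota(u)\om=w^*$ (because $\iota(u)\iota(w)\iota(u)\Phi=0$), while $\abs{u^*\wedge w^*}^2=\abs w^2$, the cross term $\inner{u^*\wedge w^*}{\om_{u,w}}=\Phi(u,w,u,w)=0$, and $\abs{\om_{u,w}}^2=3\abs w^2$ by equation~\eqref{eq:uvxy2}, whence $\abs\om^2=4\abs w^2=4\abs{\iota(u)\om}^2$. (Alternatively, once the adjoint formula above is in hand one can feed it back into itself: $\gamma(u)^*\gamma(u)(\lambda,\om)=\bigl(\lambda,\,4\pi_7(u^*\wedge\iota(u)\om)\bigr)$ for $\abs u=1$, and the Leibniz rule $u^*\wedge\iota(u)\om=\om-\iota(u)(u^*\wedge\om)$ together with $\pi_7(\om_{u,w})=\tfrac34\om$, again from~\eqref{eq:uvxy1}, reduces the second component to $\om$.)

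The adjunction identities and the Hodge--star bookkeeping are routine. The one point requiring genuine care is the normalization constant---the factor $4$ in $4\abs{\iota(u)\om}^2=\abs u^2\abs\om^2$, equivalently the coefficient $2$ in~\eqref{eq:gamma2}---which is not formal: it rests on the $\Spin(7)$--specific description of $\Lambda^2_7$ in \autoref{thm:form8} and on the quadratic identity~\eqref{eq:uvxy2}. In particular I would verify carefully that every element of $\Lambda^2_7$ has the form $u^*\wedge w^*+\om_{u,w}$ with $w\in u^\perp$ unique (injectivity follows by applying $\iota(u)$), since the rest of the computation hinges on it.
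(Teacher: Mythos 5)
Your proof of~\eqref{eq:gammaW1} is the same computation as the paper's: unwind the adjoint against decomposable $1$--forms, and use the projection formula $\pi_7(u^*\wedge v^*)=\tfrac14(u^*\wedge v^*+\om_{u,v})$, which you derive from the eigenvalues in \autoref{thm:form8} together with $*(\Phi\wedge u^*\wedge v^*)=\om_{u,v}$ from \autoref{cor:uvxy}. So far the two arguments coincide.

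For~\eqref{eq:spindim8-1} you diverge. The paper simply invokes the second identity in~\eqref{eq:mnorm} of \autoref{le:m}, which is proved there by combining $\abs{m(u,v)}=\abs u\abs v$ with the triple-cross-product formula~\eqref{eq:gammam} ($\gamma(e)m(e,v)=v^*$ for $\abs e=1$) and a bijectivity argument. You instead prove the needed normalization $4\abs{\iota(u)\om}^2=\abs\om^2$ for $\om\in\Lambda^2_7$, $\abs u=1$, directly from the parametrization $\om=u^*\wedge w^*+\om_{u,w}$, $w\in u^\perp$ (from the description of $\Lambda^2_7$ in \autoref{thm:form8}), reading off $\iota(u)\om=w^*$, and evaluating the three inner products via~\eqref{eq:uvxy2} and~\eqref{eq:uvxy4}. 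This is correct (I checked the injectivity of $w\mapsto u^*\wedge w^*+\om_{u,w}$ and the norm bookkeeping, including the parenthetical alternative via $4\pi_7(u^*\wedge\iota(u)\om)=\om$). Your route avoids the triple cross product and~\eqref{eq:gammam} entirely, so it is more self-contained and makes visible exactly which $\Spin(7)$ data the normalization constant $2$ in~\eqref{eq:gamma2} depends on; the paper's route is shorter here but front-loads the work into \autoref{le:m}, which it needs anyway for the product structure in \autoref{rmk:prodm}. Both are sound; yours is a legitimate simplification if one only wants \autoref{le:spinW}.

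One small stylistic note: the lemma as stated cites~\eqref{eq:gamma} (the Clifford map from the dimension-count argument in \autoref{sec:cross}); this is a typo for~\eqref{eq:gamma2}, as you correctly assumed.
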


\begin{proof}
  For $u\in W$, $\lambda\in\R$, $\om\in\Lambda^2_7$, and $v\in W$ we
  compute
  \begin{align*}
    \inner{\gamma(u)(\lambda,\om)}{v^*}
    &=
      \inner{\lambda u^*+2\iota(u)\om}{v^*} \\
    &=
      \lambda\inner{u}{v} + 2\inner{\om}{u^*\wedge v^*} \\
    &=
      \lambda\inner{u}{v} + \tfrac{1}{2}\Inner{\om}{\om_{u,v}+u^*\wedge v^*}.
  \end{align*}
  The last equation follows from the fact that
  \begin{equation*}
    {\pi_7(u^*\wedge v^*)
      = \tfrac{1}{4}(u^*\wedge v^*+\om_{u,v})}.
  \end{equation*}
  This proves~\eqref{eq:gammaW1}. %
  With this understood, the formula
  $\gamma(u)^*\gamma(u) = \abs{u}^2\one$ follows directly
  from~\eqref{eq:mnorm}. %
  This proves \autoref{le:spinW}.
\end{proof}

Combining the product map $m$ with the triple cross product 
we obtain an alternating multi-linear map 
$\tau\co W^4\to\Lambda^0\oplus\Lambda^2_7$
defined by 
\begin{equation}
  \label{eq:tau}
  \begin{split}
    \tau(x,u,v,w)
    &=\, \tfrac{1}{4}
    \Bigl(m(u \times v \times w,x)
    - m(v \times w \times x,u) \\
    &\quad\quad
    + m(w \times x \times u,v)
    - m(x \times u \times v, w)\Bigr).
  \end{split}
\end{equation}
This map corresponds to the four-fold cross product (see
\autoref{def:four}) and has the following properties (see
\autoref{thm:four}).

\begin{lemma}
  \label{le:chi}
  Let $\chi\co W^4\to\Lambda^2_7$ denote the second component of
  $\tau$. %
  Then, for all $u,v,w,x\in W$, we have 
  \begin{gather*}
    \tau(x,u,v,w) = \left(\Phi(x,u,v,w),\chi(x,u,v,w)\right), \\
    \Phi(x,u,v,w)^2 + \abs{\chi(x,u,v,w)}^2 = \abs{x\wedge u\wedge v\wedge w}^2.
  \end{gather*}
\end{lemma}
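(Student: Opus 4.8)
The plan is to recognize that $\tau$ is essentially the intrinsic (spinorial) incarnation of the fourfold cross product of \autoref{def:four}, and to transfer the conclusions of \autoref{thm:four} across the dictionary provided by \autoref{le:m} and \autoref{le:spinW}. Concretely, fix a unit vector $e\in W$ and use it to identify $W$ with the $8$--dimensional normed algebra of \autoref{thm:cayley}\itref{It_Cayley3}; under this identification the triple cross product appearing in the definition~\eqref{eq:tau} of $\tau$ is the one studied in \autoref{thm:tcW}, and the fourfold cross product $x\times u\times v\times w$ of \autoref{def:four} is available. The first step is to show that
\begin{equation*}
  I_W^{-1}\bigl(\gamma(e)\tau(x,u,v,w)\bigr) = x\times u\times v\times w
\end{equation*}
for all $x,u,v,w\in W$. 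This follows by applying \autoref{le:m}, equation~\eqref{eq:gammam}, to each of the four terms $m(u\times v\times w,x)$ etc.\ in~\eqref{eq:tau}: the extra inner-product terms produced by~\eqref{eq:gammam} cancel in the alternating sum exactly as the analogous terms cancel in~\eqref{eq:four}, and the triple-cross-product terms assemble into $4\,x\times u\times v\times w$ by the very definition~\eqref{eq:four} (here one uses that $u\times e\times v = u_1\times v_1$ and more generally that the triple cross product defined by~\eqref{eq:tcW} agrees with the given one, which is part of \autoref{thm:cayley}\itref{It_Cayley3}).

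With that identity in hand, the first displayed equation of the lemma is immediate: by \autoref{le:spinW}, the first component of $\gamma(e)^*v^*$ is $\inner{e}{v}$, so the first component of $\tau(x,u,v,w)$ equals $\inner{e}{\,I_W^{-1}(\gamma(e)\tau(x,u,v,w))\,} = \inner{e}{x\times u\times v\times w} = \Re(x\times u\times v\times w)$, which by~\eqref{eq:four2} in \autoref{thm:four} is $\Phi(x,u,v,w)$. Hence $\tau(x,u,v,w)=(\Phi(x,u,v,w),\chi(x,u,v,w))$ with $\chi$ the second ($\Lambda^2_7$) component, as claimed. For the second displayed equation, combine the norm identities: by the second part of~\eqref{eq:mnorm} in \autoref{le:m} we have $\abs{\gamma(e)\tau(x,u,v,w)}^2 = \abs{\Phi(x,u,v,w)}^2 + \abs{\chi(x,u,v,w)}^2$ (since $\abs{e}=1$), while the left-hand side equals $\abs{x\times u\times v\times w}^2 = \abs{x\wedge u\wedge v\wedge w}^2$ by~\eqref{eq:four1}. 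Equating the two gives
\begin{equation*}
  \Phi(x,u,v,w)^2 + \abs{\chi(x,u,v,w)}^2 = \abs{x\wedge u\wedge v\wedge w}^2,
\end{equation*}
which is the assertion; alternatively one may invoke~\eqref{eq:four3} directly once the first displayed equation and the identification of $\chi$ with $\Im(x\times u\times v\times w)$ are established.

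The only real point requiring care is the bookkeeping in the first step: one must check that the auxiliary inner-product contributions coming from~\eqref{eq:gammam}, applied to the four summands of~\eqref{eq:tau}, really do cancel, and that the remaining terms match the coefficient pattern in~\eqref{eq:four} including signs. This is a finite, mechanical verification using only the alternating property of the triple cross product and~\eqref{eq:gammam}; no genuinely new idea is needed. A secondary (entirely routine) point is that the answer must be independent of the auxiliary unit vector $e$ — this is automatic because $\tau$ in~\eqref{eq:tau} is manifestly defined without reference to $e$, so any single choice of $e$ suffices to read off both identities. I would present the proof in exactly this order: establish the transfer identity $I_W^{-1}(\gamma(e)\tau)=x\times u\times v\times w$, deduce the componentwise description from~\eqref{eq:four2}, then deduce the norm relation from~\eqref{eq:four1} and~\eqref{eq:mnorm} (equivalently from~\eqref{eq:four3}).
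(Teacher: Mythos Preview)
Your approach is correct and takes a genuinely different route from the paper. One small correction: the transfer identity should carry a conjugation,
\[
  \overline{I_W^{-1}\bigl(\gamma(e)\,\tau(x,u,v,w)\bigr)} \;=\; x\times u\times v\times w.
\]
This follows cleanly from \autoref{rmk:prodm}, which gives $a\bar b=\overline{I_W^{-1}(\gamma(e)m(a,b))}$; applying it to each summand of~\eqref{eq:four} produces exactly $4\tau$ inside the conjugate, so no separate ``cancellation of inner-product terms'' is needed. Since conjugation preserves both the real part and the norm, your deductions from~\eqref{eq:four2} and~\eqref{eq:four1} (equivalently~\eqref{eq:four3}) go through unchanged.

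The paper proceeds differently: it reads off the first component of $\tau$ as $\Phi$ directly from the definition of $m$, and for the norm identity it shows that when $x,u,v,w$ are pairwise orthogonal the four summands in~\eqref{eq:tau} actually coincide (via a reduction to the standard basis using \autoref{thm:Spin7S}), whence $\tau(x,u,v,w)=m(u\times v\times w,x)$ and $\abs{\tau}=\abs{x}\,\abs{u\times v\times w}=\abs{x\wedge u\wedge v\wedge w}$ by~\eqref{eq:mnorm} and~\eqref{eq:tc2}. Your argument is more conceptual---it identifies $\tau$ as the spinorial avatar of the fourfold cross product and recycles \autoref{thm:four}---while the paper's stays entirely within the spin picture at the cost of a short explicit computation.
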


\begin{proof}
  That the first component of $\tau$ is equal to $\Phi$ follows
  directly from the definitions. %
  Moreover, for $u,v,w,x\in W$, we have
  \begin{equation}
    \label{eq:chi}
    \begin{split}
      2\chi(x,u,v,w)
      =&\,(u \times v \times w)^*\wedge x^*+\om_{u\times v\times w,x} \\
      & - (v \times w \times x)^*\wedge u^*-\om_{v\times w\times x,u} \\
      & + (w \times x \times u)^*\wedge v^*+\om_{w\times x\times u,v} \\
      & - (x \times u \times v)^*\wedge w^*-\om_{x\times u\times v,w}.
    \end{split}
  \end{equation}
  We claim that the four rows on the right agree whenever $u,v,w,x$
  are pairwise orthogonal. %
  Under this assumption the first two rows remain unchanged if we add
  to $x$ a multiple of $u\times v\times w$. %
  Thus we may assume that $x$ is orthogonal to $u$, $v$, $w$, and
  $u\times v\times w$. %
  By \autoref{thm:Spin7S}, we may therefore assume that $W=\R^8$ with
  the standard triple cross product and
  \begin{equation*}
    u = e_0,\qquad v=e_1,\qquad w=e_2,\qquad x=e_4.
  \end{equation*}  
  In this case a direct computation proves that the first two rows
  agree. %
  Thus we have proved that, if $u,v,w,x\in W$ are pairwise orthogonal,
  then
  \begin{equation*}
    \tau(x,u,v,w) = m(u\times v\times w,x).
  \end{equation*}
  In this case it follows from~\eqref{eq:mnorm} that
  \begin{align*}
    \abs{\tau(x,u,v,w)}
    &= \abs{m(u \times v \times w,x)} \\
    &= \abs{x}\abs{u\times v\times w} \\
    &= \abs{x}\abs{u}\abs{v}\abs{w} \\
    &= \abs{x\wedge u\wedge v\wedge w}.
  \end{align*}
  Since $\tau$ is alternating, this proves \autoref{le:chi}.
\end{proof}

\begin{lemma}\label{le:spinWcross}
  Let $\gamma\co W\to\Hom(S^+,S^-)$ be the homomorphism
  in~\eqref{eq:Spm} and~\eqref{eq:gamma2}. %
  Then $\gamma$ satisfies~\eqref{eq:spindim8-2}
  and~\eqref{eq:triplecrossgamma}.
\end{lemma}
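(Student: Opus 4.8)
The plan is to verify the two displayed identities \eqref{eq:spindim8-2} and \eqref{eq:triplecrossgamma} for the explicit homomorphism $\gamma$ defined in \eqref{eq:Spm} and \eqref{eq:gamma2}. By \autoref{thm:CAYLEY1} and \autoref{thm:Spin7S} it suffices to treat the standard structure $W=\R^8$ with the standard Cayley calibration $\Phi_0$ of \autoref{ex:cayley}; both sides of each equation are built tensorially from $\Phi$ and the inner product, so a verification in the standard model establishes the general case. For \eqref{eq:spindim8-2}, the strategy is first to note that by \autoref{le:spinW} the composition $\gamma(u)^*\gamma(u)=\abs{u}^2\one$, so each factor $\gamma(e_i)^*\gamma(e_{i-1})$ and the whole product $\gamma(e_7)^*\gamma(e_6)\cdots\gamma(e_0)$ is (up to sign) an orthogonal involution of $S^+=\Lambda^0\oplus\Lambda^2_7$; it remains to pin down the sign. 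For this I would compute the action of $\gamma(e_0)$ followed by $\gamma(e_1)^*$, and so on, on the canonical vector $s=(1,0)$, using \eqref{eq:gamma2} and \eqref{eq:gammaW1}, and track where $s$ lands after applying all eight factors. Since the expression is manifestly $\rG(W,\Phi)$--invariant and $S^+$ is the sum of the two irreducible representations $\Lambda^0$ and $\Lambda^2_7$, Schur's lemma forces the product to act as a scalar on each summand; checking it on $s$ alone (which generates $\Lambda^0$) together with a single check on one element of $\Lambda^2_7$ determines everything, and the sign comes out $+1$ precisely because $\Phi_0\wedge\Phi_0>0$ fixes the orientation consistently with \eqref{eq:Spm}.

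For \eqref{eq:triplecrossgamma} the plan is more structural. The right-hand side, as a function of $(u,v,w)$ with $s$ fixed, defines an element of $\Lambda^1=S^-$; applying $I_W^{-1}$ and using \eqref{eq:gammam} in \autoref{le:m} together with the identity $\gamma(u)\gamma(v)^*\gamma(w)s = \gamma(u)\,m(v,w)$ valid by \autoref{le:spinW}, the right-hand side becomes (after dualizing via $I_W$)
\begin{equation*}
  \inner{v}{w}\gamma(u)s - \inner{w}{u}\gamma(v)s + \inner{u}{v}\gamma(w)s - I_W\bigl(\inner{v}{w}u+\inner{v}{u}w-\inner{w}{u}v+v\times u\times w\bigr).
\end{equation*}
Using $\gamma(u)s=u^*$ and collecting terms, the inner-product contributions telescope and one is left with $-I_W(v\times u\times w)=I_W(u\times v\times w)=\gamma(u\times v\times w)s$, which is exactly the left-hand side. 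Thus \eqref{eq:triplecrossgamma} reduces, via the already-established \autoref{le:m}, \autoref{le:spinW}, and the alternating/antisymmetry properties of the triple cross product, to the bookkeeping identity $v\times u\times w=-u\times v\times w$ and the cancellation of the scalar terms.

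The main obstacle I expect is the sign verification in \eqref{eq:spindim8-2}: the ordering convention in \eqref{eq:spindim8-2} (reading the product right-to-left, starting with $\gamma(e_0)$ mapping $S^+\to S^-$, then $\gamma(e_1)^*$ mapping back, etc.) must be matched carefully against the convention in \eqref{eq:gamma2} for $\iota(u)\om$ with $\om\in\Lambda^2_7$, and against the orientation induced by $\Phi_0\wedge\Phi_0>0$; a sign error here would flip $S^+\leftrightarrow S^-$. The clean way to handle this is to observe that the Clifford-algebra extension $\Gamma$ from \autoref{thm:spintriplecross}'s setup is already known (by general spin-geometry, cf.\ the reference to~\cite{Salamon1999a}) to satisfy \eqref{eq:spindim8-2} for \emph{some} assignment of the $\pm$ labels, so the only real content is that the present concrete model $S^+=\Lambda^0\oplus\Lambda^2_7$, $S^-=\Lambda^1$ with $\gamma$ as in \eqref{eq:gamma2} is the one with the labels in the order stated; this I would settle by the single explicit computation of $\gamma(e_7)^*\gamma(e_6)\cdots\gamma(e_1)^*\gamma(e_0)\,s$ in the standard basis, which is a finite (if tedious) calculation using \eqref{eq:phipsi0} and \eqref{eq:uphipsi}. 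The rest is routine once \autoref{le:m}, \autoref{le:spinW}, and \autoref{le:chi} are in hand.
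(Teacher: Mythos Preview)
Your proposal is correct and, for \eqref{eq:spindim8-2}, essentially identical to the paper's proof: both reduce to the standard basis via \autoref{thm:CAYLEY1}, use that the product $\gamma(e_7)^*\cdots\gamma(e_0)$ is known to be $\pm\one_{S^+}$, and pin down the sign by tracking the canonical spinor $s$ through the factors.

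For \eqref{eq:triplecrossgamma} your route differs slightly from the paper's. You compute $\gamma(u)\gamma(v)^*\gamma(w)s=\gamma(u)m(v,w)$ and then apply the explicit formula \eqref{eq:gammam} from \autoref{le:m}, after which the scalar terms cancel and $-(v\times u\times w)^*=(u\times v\times w)^*=\gamma(u\times v\times w)s$ drops out. The paper instead rewrites \eqref{eq:triplecrossgamma} in the equivalent scalar form \eqref{eq:triplecrossPhi} and evaluates $\inner{\gamma(u)^*\gamma(x)s}{\gamma(v)^*\gamma(w)s}$ using \eqref{eq:gammaW1} together with the inner-product identity \eqref{eq:uvxy2} from \autoref{cor:uvxy}. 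Both arguments are short and purely algebraic; yours is arguably more direct since it stays with vectors rather than passing through the bilinear form, and it does not actually require the preliminary reduction to the standard model that you propose at the outset (the computation via \autoref{le:m} and \autoref{le:spinW} is valid on any $W$).
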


\begin{proof}
  It follows from~\eqref{eq:spindim8-1} that
  $\inner{\gamma(u)s}{\gamma(v)s}=\inner{u}{v}$ for all $u,v\in W$. %
  Hence, equation~\eqref{eq:triplecrossgamma} is equivalent to
  \begin{equation}\label{eq:triplecrossPhi}
    \begin{split}
      \Phi(x,u,v,w) 
      &= \inner{x}{u}\inner{v}{w} - \inner{x}{v}\inner{w}{u} + \inner{x}{w}\inner{u}{v} \\
      &\quad - \inner{\gamma(u)^*\gamma(x)s}{\gamma(v)^*\gamma(w)s}
    \end{split}
  \end{equation}
  for all $x,u,v,w\in W$.  %
  Since $s=(1,0)\in S^+=\Lambda^0\oplus\Lambda^2_7$, we have
  \begin{equation*}
    \gamma(u)^*\gamma(x)s
    =\gamma(u)^*x^*
    =\left(\inner{u}{x},\tfrac12(u^*\wedge x^*+\om_{u,x})\right)
  \end{equation*} 
  for all $u,x\in W$ by \autoref{le:spinW}. %
  Hence,
  \begin{equation*}
    \begin{split}
      &\inner{\gamma(u)^*\gamma(x)s}{\gamma(v)^*\gamma(w)s} \\
      &\quad= 
      \inner{u}{x}\inner{v}{w} 
      + \tfrac14\inner{u^*\wedge x^*+\om_{u,x}}{v^*\wedge w^*+\om_{v,w}} \\
      &\quad= 
      \inner{u}{x}\inner{v}{w} + \inner{u}{v}\inner{x}{w} - \inner{u}{w}\inner{x}{v} 
      + \Phi(u,x,v,w).
    \end{split}
  \end{equation*}
  Here the last equation follows from \autoref{cor:uvxy}. %
  This shows that 
  the homomorphism
  $\gamma$ satisfies~\eqref{eq:triplecrossPhi} 
  and hence also~\eqref{eq:triplecrossgamma}. %
  
  We prove that $\gamma$ satisfies~\eqref{eq:spindim8-2}. %
  Choose an orthonormal basis $e_0,\dots,e_7$ 
  of $W$ 
  in which $\Phi$ has the standard form of \autoref{ex:cayley}. %
  Such a basis exists by \autoref{thm:CAYLEY1} because $\Phi$ is a
  positive Cayley form, and it is positive because
  $\Phi\wedge\Phi>0$. %
  Moreover, for any quadruple of integers $0\le i<j<k<\ell\le7$, the
  following are equivalent.
  \begin{enumerate}[(a)]
  \item
    \label{It_SWCa}
    The term $\pm e^{ijk\ell}$ appears in the standard basis.
  \item
    \label{It_SWCb}
    $\Phi(e_i,e_j,e_k,e_\ell)=\pm1$.
  \item
    \label{It_SWCc}
    $e_k\times e_j\times e_i=\pm e_\ell$. 
  \item
    \label{It_SWCd}
    $-\gamma(e_k)\gamma(e_j)^*\gamma(e_i)s=\pm\gamma(e_\ell)s$.
  \end{enumerate}
  Here the equivalence of~\itref{It_SWCa} and~\itref{It_SWCb} is
  obvious, the equivalence of~\itref{It_SWCb} and~\itref{It_SWCc}
  follows from the fact that
  $$
    \Phi(e_i,e_j,e_k,e_\ell)=\Phi(e_\ell,e_k,e_j,e_i)=\inner{e_k\times
    e_j\times e_i}{e_\ell}
  $$
  by~\eqref{eq:crossPhi}, and the equivalence of~\itref{It_SWCc}
  and~\itref{It_SWCd} follows from
  equation~\eqref{eq:triplecrossgamma}. %
  Examining the relevant terms in \autoref{ex:cayley} we find that
  $$
  \gamma(e_2)\gamma(e_1)^*\gamma(e_0)s=-\gamma(e_3)s, 
  $$
  hence
  $$
  \gamma(e_4)\gamma(e_3)^*\gamma(e_2)\gamma(e_1)^*\gamma(e_0)s=-\gamma(e_4)s,
  $$
  hence
  \begin{equation*}
    \gamma(e_6)\gamma(e_5)^*\gamma(e_4)\gamma(e_3)^*\gamma(e_2)\gamma(e_1)^*\gamma(e_0)s
    =-\gamma(e_6)\gamma(e_5)^*\gamma(e_4)s
    =\gamma(e_7)s, 
  \end{equation*}
  and hence 
  \begin{equation*}
    \gamma(e_7)^*\gamma(e_6)\gamma(e_5)^*\gamma(e_4)
    \gamma(e_3)^*\gamma(e_2)\gamma(e_1)^*\gamma(e_0)s
    =s.  
  \end{equation*}
  Hence, $\gamma$ satisfies~\eqref{eq:spindim8-2} and this proves \autoref{le:spinWcross}.
\end{proof}

\begin{proof}[Proof of~\autoref{thm:spintriplecross}]
  \label{proof:spintriplecross}
  Part~\itref{spintriplecross1} follows 
  from \autoref{le:spinW} and \autoref{le:spinWcross}. %
  To prove part~\itref{spintriplecross2}
  assume~${\gamma\co W\to\Hom(S^+,S^-)}$ is a spin structure,
  let~${s\in S^+}$ be a unit vector, and define the multilinear map
  \begin{equation}\label{eq:triplecross-spin}
    W^3\to W\co (u,v,w)\mapsto u\times v\times w
  \end{equation}
  by~\eqref{eq:triplecrossgamma}. %
  Then $u\times v\times w=0$ whenever two of the three vectors agree.
  Hence, it suffices to verify~\eqref{eq:tc1} and~\eqref{eq:tc2} under
  the assumption that $u,v,w$ are pairwise orthogonal. %
  In this case we compute
  \begin{align*}
    \inner{u\times v\times w}{u}
    &=
      \inner{\gamma(u\times v\times w)s}{\gamma(u)s} \\
    &=
      - \inner{\gamma(u)\gamma(v)^*\gamma(w)s}{\gamma(u)s} \\
    &=
      - \abs{u}^2\inner{\gamma(v)^*\gamma(w)s}{s} \\
    &=
      - \abs{u}^2\inner{v}{w} = 0.
  \end{align*}
  and
  \begin{align*}
    \abs{u\times v\times w}^2
    &=
      \abs{\gamma(u\times v\times w)s}^2 \\
    &= 
      \abs{\gamma(u)\gamma(v)^*\gamma(w)s}^2 \\
    &=
      \abs{u}^2\abs{v}^2\abs{w}^2 \\
    &=
      \abs{u\wedge v\wedge w}^2.
  \end{align*}
  This shows that the map~\eqref{eq:triplecross-spin} 
  is a triple cross product.  %
  To prove that it is positive, choose a quadruple of pairwise 
  orthogonal vectors $e,u,v,w\in W$ such that $w$ is also
  orthogonal to $e\times u\times v$.  %
  Then 
  \begin{align*}
    \gamma(e\times u\times(e\times v\times w))s
    &= - \gamma(e)\gamma(u)^*\gamma(e\times v\times w)s \\
    &= \gamma(e)\gamma(u)^*\gamma(e)\gamma(v)^*\gamma(w)s \\
    &= - \gamma(e)\gamma(e)^*\gamma(u)\gamma(v)^*\gamma(w)s \\
    &= - \abs{e}^2\gamma(u)\gamma(v)^*\gamma(w)s \\
    &= \abs{e}^2\gamma(u\times v\times w)s.
  \end{align*}
  Here the first, second, and fifth equalities follow
  from~\eqref{eq:triplecrossgamma} and the third and fourth equalities
  follow from~\eqref{eq:spindim8-1}.  %
  Thus we have proved
  that the triple cross product~\eqref{eq:triplecross-spin} is positive. %
  That the associated Cayley calibration $\Phi$ satisfies
  ${\Phi\wedge\Phi>0}$ follows by using a standard basis and reversing
  the argument in the proof of \autoref{le:spinWcross}. %
  This proves \autoref{thm:spintriplecross}.
\end{proof}


\subsection{Spin structures and cross products}
\label{sec:spincross}

Let $V$ be a $7$--dimensional oriented real Hilbert space. %
A spin structure on $V$ is an $8$-dimensional real Hilbert space $S$
equipped with a vector space homomorphism $\gamma:V\to\End(S)$
that satisfies the conditions 
\begin{equation}
  \label{eq:spindim7-1}
  \gamma(u)^*+\gamma(u)=0,\qquad
  \gamma(u)^*\gamma(u) = \abs{u}^2\one
\end{equation}
for all $u\in V$ (see~\cite{Salamon1999a}*{Definition~4.32})
and 
\begin{equation}
  \label{eq:spindim7-2}
  \gamma(e_7)\gamma(e_6)\cdots\gamma(e_1) 
  = -\one.
\end{equation}
for some, and hence every, positive orthonormal basis $e_1,\dots,e_7$
of $V$. %
Equation~\eqref{eq:spindim7-1} guarantees that the 
linear map $\gamma\co V\to\End(S)$ extends uniquely to an 
algebra homomorphism $\gamma\co \Cl(V)\to\End(S)$
(see~\cite{Salamon1999a}*{Proposition~4.33}). %
It follows from~\eqref{eq:spindim7-2} that the kernel of this extended
homomorphism is given by $\set{x\in\Cl(V):\eps x=x}$, where
$\eps:=e_7\cdots e_1\in \Cl_7(V)$ for a positive orthonormal basis
$e_1,\dots,e_7$ of~$V$ (see~\cite{Salamon1999a}*{Proposition~3.34}). %
Since $\eps$ is an odd element of $\Cl(V)$, this implies that the
restrictions of $\gamma$ to both $\Cl^\ev(V)$ and $\Cl^\odd(V)$ are
injective. %
Since $\dim\,\Cl^\ev(V)=\dim\,\Cl^\odd(V)=\dim\End(S)=64$, it follows
that $\gamma$ restricts to an algebra isomorphism from $\Cl^\ev(V)$ to
$\End(S)$ and to a vector space isomorphism from $\Cl^\odd(V)$ to
$\End(S)$. %

\begin{theorem}\label{thm:spincross}
  Let $V$ be an oriented $7$--dimensional real Hilbert space.
  \begin{enumerate}[(i)]
  \item\label{spincross1}
    Suppose $V$ is equipped with a cross product and define the 
    homomorphism $\gamma\co V\to\End(S)$ by 
    \begin{equation}
      \label{eq:spin7gamma}
      S:=\R\times V,\qquad 
      \gamma(u)(\lambda,v) := \left(-\inner{u}{v},\lambda u+u\times v\right)
    \end{equation}
    for $\lambda\in\R$ and $u,v\in V$. %
    Then $\gamma$ is a spin structure on~$V$, i.e., it
    satisfies~\eqref{eq:spindim7-1} and~\eqref{eq:spindim7-2}. %
    Moreover, the space $S=\R\times V$ contains a canonical unit
    vector $s=(1,0)$ and the cross product can be recovered from the
    spin structure and the unit spinor via the formula
    \begin{equation}
      \label{eq:spin7cross}
      \gamma(u\times v)s =  \gamma(u)\gamma(v)s + \inner{u}{v}s
      \qquad\mbox{for }u,v\in V.
    \end{equation}

  \item\label{spincross2} Let $\gamma\co V\to\End(S)$ be a spin
    structure and let $s\in S$ be a unit vector. %
    Then equation~\eqref{eq:spin7cross} defines a cross product on~$V$
    that is compatible with the inner product and orientation. %
    Since any two spin structures on $V$ are isomorphic, this shows
    that there is a one-to-one correspondence between unit spinors and
    cross products on $V$ that are compatible with the
    inner product and orientation.
  \end{enumerate}
\end{theorem}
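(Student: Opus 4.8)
The plan is to mirror the structure of the proof of \autoref{thm:spintriplecross}, splitting into the two constructions and then verifying they are mutually inverse. For part~\itref{spincross1}, I would first check directly from~\eqref{eq:spin7gamma} that $\gamma(u)^*+\gamma(u)=0$ and $\gamma(u)^*\gamma(u)=\abs{u}^2\one$: skew-adjointness follows because $(\lambda,v)\mapsto(-\inner{u}{v},\lambda u+u\times v)$ has the block form $\begin{pmatrix}0&-u^*\\u&A(u)\end{pmatrix}$ with $A(u)v=u\times v$, and $A(u)$ is skew-adjoint by~\eqref{eq:frobenius}; the norm identity is exactly~\eqref{eq:gammau} from the proof of \autoref{thm:cross} (using $A(u)^2=uu^*-\abs{u}^2\one$, i.e.\ \autoref{le:area}~\itref{It_CrossSquare}). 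This already shows $\gamma$ extends to $\Cl(V)\to\End(S)$. For the orientation normalization~\eqref{eq:spindim7-2}, by \autoref{thm:imO} it suffices to check $\gamma(e_7)\cdots\gamma(e_1)=-\one$ for the standard cross product on $\R^7$ of \autoref{ex:cross7}; this is a finite computation (or can be reduced to computing $\gamma(e_1)\gamma(e_2)=\gamma(e_3)$ on $s$ and iterating, exactly as in the proof of \autoref{le:spinWcross}). Finally, the recovery formula~\eqref{eq:spin7cross} is immediate from the definition: $\gamma(u)\gamma(v)s=\gamma(u)(0,v\text{-part of }\gamma(v)s)$; more precisely $\gamma(v)s=\gamma(v)(1,0)=(0,v)$, so $\gamma(u)\gamma(v)s=\gamma(u)(0,v)=(-\inner{u}{v},u\times v)=\gamma(u\times v)s-\inner{u}{v}s$, where the last equality uses $\gamma(u\times v)s=(0,u\times v)$.

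For part~\itref{spincross2}, suppose $\gamma\co V\to\End(S)$ is a spin structure and $s\in S$ a unit vector; define $u\times v$ by~\eqref{eq:spin7cross}. I would first observe that $u\times v$ is well-defined as an element of $V$: from~\eqref{eq:spin7cross}, $\gamma(u\times v)s=\gamma(u)\gamma(v)s+\inner{u}{v}s$, and since $\gamma|_{\Cl^{\odd}(V)}$ is a vector space isomorphism onto $\End(S)$ and $s\ne 0$, the map $w\mapsto\gamma(w)s$ is a linear isomorphism $V\to \{s\}^{\perp}$ (it lands in $s^\perp$ because $\inner{\gamma(w)s}{s}=-\inner{s}{\gamma(w)s}$ by skew-adjointness); one then checks the right-hand side of~\eqref{eq:spin7cross} lies in $s^\perp$, which holds because $\inner{\gamma(u)\gamma(v)s}{s}=-\inner{\gamma(v)s}{\gamma(u)s}$ and $\inner{\gamma(v)s}{\gamma(u)s}=\inner{u}{v}$ (the latter from $\gamma(u)^*\gamma(v)+\gamma(v)^*\gamma(u)=2\inner{u}{v}\one$, obtained by polarizing~\eqref{eq:spindim7-1}). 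Bilinearity is clear. To get the cross product axioms it suffices, since $w\mapsto\gamma(w)s$ is an isometry up to normalization — in fact an isometry, because $\abs{\gamma(w)s}^2=\inner{\gamma(w)^*\gamma(w)s}{s}=\abs{w}^2$ — to verify~\eqref{eq:orthogonal} and~\eqref{eq:area} after applying $\gamma(\cdot)s$. For~\eqref{eq:orthogonal}: $\inner{u\times v}{u}=\inner{\gamma(u\times v)s}{\gamma(u)s}=\inner{\gamma(u)\gamma(v)s}{\gamma(u)s}+\inner{u}{v}\inner{s}{\gamma(u)s}=\abs{u}^2\inner{\gamma(v)s}{s}+0=0$, and similarly in $v$. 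For~\eqref{eq:area}, compute $\abs{u\times v}^2=\abs{\gamma(u\times v)s}^2=\abs{\gamma(u)\gamma(v)s+\inner{u}{v}s}^2$; expanding and using $\abs{\gamma(u)\gamma(v)s}^2=\abs{u}^2\abs{v}^2$ together with $\inner{\gamma(u)\gamma(v)s}{s}=-\inner{v}{u}$ gives $\abs{u}^2\abs{v}^2-\inner{u}{v}^2$, as required. Compatibility with the orientation then follows because the cross product is unique up to orthogonal isomorphism (\autoref{thm:cross}) and the sign is pinned down by transporting to $\R^7$ via \autoref{thm:imO} and using~\eqref{eq:spindim7-2}, which is preserved by the construction — or more directly, the construction in~\itref{spincross1} applied to this cross product reproduces $\gamma$ up to isomorphism of spin structures.

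That the two constructions are mutually inverse, and hence that unit spinors correspond bijectively to compatible cross products, follows by combining the computations above with the fact that any two spin structures on $V$ are isomorphic (stated in the preamble to the theorem, cf.\ the discussion preceding it): given a spin structure and a unit spinor, the cross product from~\eqref{eq:spin7cross} fed back into~\eqref{eq:spin7gamma} yields a spin structure which, by construction of the iso $w\mapsto\gamma(w)s$, is identified with the original one sending the canonical $s=(1,0)$ to the given unit vector. I expect the main obstacle to be the orientation bookkeeping in~\eqref{eq:spindim7-2}: one must be careful about the sign conventions linking $\gamma$, the chirality element $\eps=e_7\cdots e_1$, and the choice made in~\eqref{eq:spin7gamma}, and verify the single explicit computation on $\R^7$ consistently. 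The algebraic identities~\eqref{eq:orthogonal} and~\eqref{eq:area} are routine once the key observation — that $w\mapsto\gamma(w)s$ is an isometry onto $s^\perp$ intertwining the two structures — is in place.
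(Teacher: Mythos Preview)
Your proposal is correct and follows essentially the same approach as the paper: the block-matrix/Clifford verification for part~\itref{spincross1} (the paper packages~\eqref{eq:spindim7-1} and~\eqref{eq:spin7cross} into a single identity $\gamma(u)\gamma(v)(\mu,w)+\inner{u}{v}(\mu,w)=\gamma(u\times v)(\mu,w)-2(0,[u,v,w])$, but the content is identical), and the isometry $w\mapsto\gamma(w)s$ onto $s^\perp$ for part~\itref{spincross2}. The only minor divergence is that you verify~\eqref{eq:area} directly by expanding $\abs{\gamma(u)\gamma(v)s+\inner{u}{v}s}^2$, whereas the paper instead checks~\eqref{eq:crosscomplex} and invokes \autoref{le:area}; both are equally short.
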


\begin{proof}
  We prove part~\itref{spincross1}. %
  Thus assume $V$ is equipped with a cross product that is compatible
  with the inner product and orientation, and let
  $\gamma\co V\to\End(S)$ be given by~\eqref{eq:spin7gamma}. %
  Then, for $u,v,w\in V$ and $\lambda,\mu\in\R$, we have
  \begin{equation*}
    \inner{(\lambda,v)}{\gamma(u)(\mu,w)}
    = \mu\inner{u}{v}-\lambda\inner{u}{w} +\phi(v,u,w).
  \end{equation*}
  This expression is skew-symmetric in $(\lambda,v)$ and $(\mu,w)$
  and so $\gamma(u)$ is skew-adjoint. %
  Moreover, for $u,v,w\in V$ and $\mu\in\R$, we have
  \begin{align*}
    &
      \gamma(u)\gamma(v)(\mu,w) 
      + \inner{u}{v}(\mu,w)\\
    &=
      \left(-\inner{u}{\mu v+v\times w},
      -\inner{v}{w}u+u\times (\mu v+ v\times w)\right) 
      + \inner{u}{v}(\mu,w) \\
    &= 
      \left(-\inner{u\times v}{w},\mu(u\times v) 
      + u\times(v\times w) - \inner{v}{w}u+\inner{u}{v}w\right)  \\
    &= 
      \gamma(u\times v)(\mu,w)
      +\left(0,-(u\times v)\times w - \inner{v}{w}u +\inner{u}{w}v\right) \\
    &\quad
      +\left(0,-(v\times w)\times u - \inner{u}{w}v + \inner{u}{v}w \right)  \\
    &= 
      \gamma(u\times v)(\mu,w) - 2(0,[u,v,w]).
  \end{align*}
  Here the last equation follows from~\eqref{eq:associator}. %
  This proves~\eqref{eq:spindim7-1} by taking $v=-u$ 
  and~\eqref{eq:spin7cross} by taking $\mu=1$ and $w=0$. %
  For the proof of~\eqref{eq:spindim7-2} it is convenient to use 
  the standard basis for the standard cross product 
  on $V=\R^7$ in \autoref{ex:cross7}. %
  The left hand side of~\eqref{eq:spindim7-2} is independent of the
  choice of the positive orthonormal basis and we know from general
  principles that the composition $\gamma(e_7)\cdots\gamma(e_1)$ must
  equal $\pm\one$ (see~\cite{Salamon1999a}*{Prop~4.34}). %
  The sign can thus be determined by evaluating the composition of the
  $\gamma(e_j)$ on a single nonzero vector. %
  We leave the verification to the reader. %
  This proves part~\itref{spincross1}.

  We prove part~\itref{spincross2}. %
  Thus assume that $\gamma\co V\to\End(S)$ is a spin structure compatible
  with the orientation and let $s\in S$ be a unit vector. %
  Then the map
  \begin{equation}
    \label{eq:Xi}
    \R\times V\to S\co (\lambda,v)\mapsto \Xi(\lambda,v) := \lambda s+\gamma(v)s
  \end{equation}
  is an isometric isomorphism, because
  $\abs{\lambda s+\gamma(v)s}^2=\abs{\lambda}^2+\abs{v}^2$
  by~\eqref{eq:spindim7-1} and both spaces have the same dimension. %
  For $u,v\in V$ the first coordinate of $\Xi^{-1}\gamma(u)\gamma(v)s$
  is $\inner{s}{\gamma(u)\gamma(v)s}=-\inner{u}{v}$ and so the second
  coordinate is the vector $u\times v\in V$ that
  satisfies~\eqref{eq:spin7cross}. %
  The map $V\times V\to V\co (u,v)\mapsto u\times v$ is obviously
  bilinear and it is skew symmetric because
  $\gamma(u)\gamma(v)+\gamma(v)\gamma(u) = - 2\inner{u}{v}\one$
  by~\eqref{eq:spindim7-1}. %
  It satisfies~\eqref{eq:orthogonal} and ~\eqref{eq:crosscomplex}
  because
  \begin{equation*}
    \begin{split}
      \inner{u}{u\times v}
      &= \inner{\gamma(u)s}{\gamma(u\times v)s}  
      = \inner{\gamma(u)s}{\gamma(u)\gamma(v)s+\inner{u}{v}s}  
      = 0, \\
      \gamma(u\times(u\times v))s
      &= \gamma(u)\gamma(u\times v)s 
      = \gamma(u)\bigl(\gamma(u)\gamma(v)s+\inner{u}{v}s\bigr) \\
      &= \gamma\bigl(\inner{u}{v}u - \abs{u}^2v\bigr)s.
    \end{split}
  \end{equation*}
  for all $u,v\in V$. %
  Hence, it is a cross product by \autoref{le:area}. %
  That it is compatible with the orientation can be proved 
  by choosing a standard basis as in \autoref{ex:cross7}. %
  This proves \autoref{thm:spincross}.
\end{proof}

We close this section with some useful identities.

\begin{lemma}\label{le:spincross}
  Fix a spin structure $\gamma\co V\to\End(S)$ that is compatible with
  the orientation and a unit vector $s\in S$, 
  let $V\times V\to V\co (u,v)\mapsto u\times v$ be the
  cross product determined by~\eqref{eq:spin7cross}, and let
  $\Xi\co \R\times V\to S$ be the isomorphism in~\eqref{eq:Xi}.  Then
  the following hold:
  \begin{enumerate}[(i)]
  \item
    \label{le:spincross1}
    The spin structure $\gamma$ is isomorphic 
    to the spin structure in~\eqref{eq:spin7gamma} via $\Xi$, 
    i.e., for all $\lambda\in\R$ and all $u,v\in V$, we have
    \begin{equation}\label{eq:spin7Xi}
      \Xi^{-1}\gamma(u)\Xi(\lambda,v) = (-\inner{u}{v},\lambda u+u\times v)
    \end{equation}
  \item
    \label{le:spincross2}
    For all $u,v,w\in V$ we have
    \begin{equation}\label{eq:spin7assocbrack}
      \begin{split}
        &\gamma([u,v,w])s + \phi(u,v,w)s + \gamma(u)\gamma(v)\gamma(w)s \\
        &\quad= - \inner{v}{w}\gamma(u)s + \inner{w}{u}\gamma(v)s - \inner{u}{v}\gamma(w)s.
      \end{split}
    \end{equation}
  \item
    \label{le:spincross3}
    The associative calibration $\phi\in\Lambda^3V^*$ is given by 
    \begin{equation}\label{eq:spin7assoc}
      \phi(u,v,w) = -\inner{s}{\gamma(u)\gamma(v)\gamma(w)s}
    \end{equation}
    and the coassociative calibration $\psi=*\phi\in\Lambda^4V^*$ is given by
    \begin{equation}\label{eq:spin7coassoc}
      \begin{split}
        \psi(u,v,w,x)
        &= - \inner{s}{\gamma(u)\gamma(v)\gamma(w)\gamma(x)s} \\
        &\quad + \inner{v}{w}\inner{u}{x} - \inner{w}{u}\inner{v}{x} + \inner{u}{v}\inner{w}{x}.
      \end{split}
    \end{equation}
  \end{enumerate}  
\end{lemma}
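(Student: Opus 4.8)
The plan is to deduce all three parts from the defining relation~\eqref{eq:spin7cross}, the Clifford identities~\eqref{eq:spindim7-1}, and two elementary consequences of the latter: that $\inner{s}{\gamma(u)s}=0$ for all $u\in V$ (skew-adjointness), and that $\inner{\gamma(u)s}{\gamma(v)s}=\inner{u}{v}$ for all $u,v\in V$, hence $\inner{s}{\gamma(u)\gamma(v)s}=-\inner{u}{v}$ (polarize $\gamma(u)^*\gamma(u)=\abs{u}^2\one$ and use $\abs{s}=1$). Throughout I write the Clifford relation in the form $\gamma(u)\gamma(v)+\gamma(v)\gamma(u)=-2\inner{u}{v}\one$, which follows from~\eqref{eq:spindim7-1}.

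For part~\itref{le:spincross1}: given $\lambda\in\R$ and $u,v\in V$, apply $\gamma(u)$ to $\Xi(\lambda,v)=\lambda s+\gamma(v)s$ to get $\gamma(u)\Xi(\lambda,v)=\lambda\gamma(u)s+\gamma(u)\gamma(v)s$. The $\R$--component of $\Xi^{-1}$ of this element is its inner product with $s$, namely $\lambda\inner{s}{\gamma(u)s}+\inner{s}{\gamma(u)\gamma(v)s}=-\inner{u}{v}$. Subtracting $-\inner{u}{v}\,s$ and using~\eqref{eq:spin7cross} in the form $\gamma(u)\gamma(v)s+\inner{u}{v}s=\gamma(u\times v)s$, the remainder equals $\lambda\gamma(u)s+\gamma(u\times v)s=\gamma(\lambda u+u\times v)s$, so the $V$--component of $\Xi^{-1}$ is $\lambda u+u\times v$. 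This is~\eqref{eq:spin7Xi}; in particular $\Xi$ intertwines $\gamma$ with the model spin structure~\eqref{eq:spin7gamma}.

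For part~\itref{le:spincross2}: use~\eqref{eq:spin7cross} twice, $\gamma(v)\gamma(w)s=\gamma(v\times w)s-\inner{v}{w}s$ and then $\gamma(u)\gamma(v\times w)s=\gamma(u\times(v\times w))s-\inner{u}{v\times w}s$, to obtain $\gamma(u)\gamma(v)\gamma(w)s=\gamma(u\times(v\times w))s-\phi(u,v,w)s-\inner{v}{w}\gamma(u)s$, where $\inner{u}{v\times w}=\phi(u,v,w)$ by~\eqref{eq:phi}. Next I rewrite $u\times(v\times w)$ via the associator bracket: applying~\eqref{eq:associator} to the cyclically permuted triple $(v,w,u)$ and using that $[\cdot,\cdot,\cdot]$ is alternating gives $(v\times w)\times u=[u,v,w]-\inner{u}{w}v+\inner{u}{v}w$, hence $u\times(v\times w)=-[u,v,w]+\inner{u}{w}v-\inner{u}{v}w$. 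Substituting and rearranging yields precisely~\eqref{eq:spin7assocbrack}.

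For part~\itref{le:spincross3}: pairing~\eqref{eq:spin7assocbrack} with $s$ kills $\gamma([u,v,w])s$ and all of $\gamma(u)s,\gamma(v)s,\gamma(w)s$ (each orthogonal to $s$), leaving $\phi(u,v,w)+\inner{s}{\gamma(u)\gamma(v)\gamma(w)s}=0$, which is~\eqref{eq:spin7assoc}. For $\psi$, start from $\psi(u,v,w,x)=\inner{[u,v,w]}{x}$ (equation~\eqref{eq:psi}), which equals $\inner{\gamma([u,v,w])s}{\gamma(x)s}$ by the polarization identity; solve~\eqref{eq:spin7assocbrack} for $\gamma([u,v,w])s$, and take the inner product with $\gamma(x)s$, using $\inner{s}{\gamma(x)s}=0$ and $\inner{\gamma(a)s}{\gamma(x)s}=\inner{a}{x}$. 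The remaining term $\inner{\gamma(u)\gamma(v)\gamma(w)s}{\gamma(x)s}$ is handled by skew-adjointness, $\inner{\gamma(u)\gamma(v)\gamma(w)s}{\gamma(x)s}=-\inner{s}{\gamma(w)\gamma(v)\gamma(u)\gamma(x)s}$, followed by three applications of the Clifford relation which turn $\gamma(w)\gamma(v)\gamma(u)$ into $-\gamma(u)\gamma(v)\gamma(w)-2\inner{v}{w}\gamma(u)+2\inner{u}{w}\gamma(v)-2\inner{u}{v}\gamma(w)$; pairing with $\gamma(x)s$ and collecting all the scalar contributions gives~\eqref{eq:spin7coassoc}.

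There is no conceptual obstacle; the argument is entirely formal. The only points demanding care are the two rewriting steps: expressing $u\times(v\times w)$ through the associator bracket in part~\itref{le:spincross2}, and reversing the three Clifford factors $\gamma(w)\gamma(v)\gamma(u)$ back to the order $\gamma(u)\gamma(v)\gamma(w)$ with the correct inner-product corrections in part~\itref{le:spincross3}. A convenient sanity check for the latter is that the right-hand side of~\eqref{eq:spin7coassoc} is visibly alternating: swapping two arguments of $\inner{s}{\gamma(u)\gamma(v)\gamma(w)\gamma(x)s}$ changes it by a sign plus a term of the form $2\inner{\cdot}{\cdot}\inner{\cdot}{\cdot}$ (from the Clifford relation), and this excess is exactly cancelled by the corresponding change in the quadratic expression $\inner{v}{w}\inner{u}{x}-\inner{w}{u}\inner{v}{x}+\inner{u}{v}\inner{w}{x}$.
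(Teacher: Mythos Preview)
Your proof is correct and, for parts~\itref{le:spincross1} and~\itref{le:spincross3}, essentially coincides with the paper's (the paper merely says ``by direct calculation'' and ``by taking the inner product with $s$, respectively with $\gamma(x)s$'', so you are in fact spelling out what the paper leaves implicit, including the Clifford reordering needed to put the $\psi$--formula in the stated form).

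For part~\itref{le:spincross2} the routes differ slightly. The paper invokes the identity $\gamma(u)\gamma(v)\gamma(w)s+\inner{u}{v}\gamma(w)s=\gamma(u\times v)\gamma(w)s-2\gamma([u,v,w])s$, which it obtained in the proof of \autoref{thm:spincross} for the model spin structure~\eqref{eq:spin7gamma} and transports via~\itref{le:spincross1}, and then manipulates with three Clifford swaps until the term $\gamma(u)\gamma(v)\gamma(w)s$ reappears on the right. Your argument is more self-contained: you iterate~\eqref{eq:spin7cross} twice and feed the result into the defining formula~\eqref{eq:associator} for the associator bracket. Both are short formal computations; yours has the advantage of not relying on a displayed formula from another proof, while the paper's has the advantage of reusing work already done.
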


\begin{proof}
  Part~\itref{le:spincross1} follows from~\eqref{eq:spin7cross} by
  direct calculation. %
  By~\itref{le:spincross1} the second displayed formula in the proof
  of \autoref{thm:spincross} with $\mu=0$ can be expressed as
  \begin{equation*}
    \begin{split}
      &\gamma(u)\gamma(v)\gamma(w)s + \inner{u}{v}\gamma(w)s \\
      &= 
      \gamma(u\times v)\gamma(w)s - 2\gamma([u,v,w])s \\
      &= 
      - 2\inner{u\times v}{w}s - 2\gamma([u,v,w])s - \gamma(w)\gamma(u\times v)s \\
      &= 
      - 2\phi(u,v,w)s - 2\gamma([u,v,w])s
      - \gamma(w)\gamma(u)\gamma(v)s - \inner{u}{v}\gamma(w)s \\
      &= 
      - 2\phi(u,v,w)s - 2\gamma([u,v,w])s \\
      &\quad
      + \gamma(u)\gamma(w)\gamma(v)s + 2\inner{w}{u}\gamma(v)s
      - \inner{u}{v}\gamma(w)s \\
      &= 
      - 2\phi(u,v,w)s - 2\gamma([u,v,w])s \\
      &\quad
      - \gamma(u)\gamma(v)\gamma(w)s 
      - 2\inner{v}{w}\gamma(u)s + 2\inner{w}{u}\gamma(v)s
      - \inner{u}{v}\gamma(w)s 
    \end{split}
  \end{equation*}
  for all $u,v,w\in V$ and this proves~\itref{le:spincross2}.
  Part~\itref{le:spincross3} follows from~\itref{le:spincross2} by
  taking the inner product with $s$, respectively with $\gamma(x)s$
  (see \autoref{le:psi}). %
  This proves Lemma~\ref{le:spincross}.
\end{proof}



\section{Octonions and complex linear algebra}
\label{sec:SU}

Let $W$ be a $2n$--dimensional real vector space. %
An \defined{$\SU(n)$--structure on $W$} is a triple $(\om,J,\theta)$
consisting of a nondegenerate $2$--form $\om\in\Lambda^2W^*$, an
$\om$--compatible complex structure $J\co W\to W$ (so that
$\inner{\cdot}{\cdot}:=\om(\cdot,J\cdot)$ is an inner product), and a
complex multi-linear map $\theta\co W^n\to\C$ which has norm $2^{n/2}$
with respect to the metric determined by $\om$ and $J$. %
The archetypal example is $W=\C^n$ with the standard symplectic form
\begin{equation*}
  \om:=\sum_jdx_j\wedge dy_j,
\end{equation*}
the standard complex structure $J:=i$, and the standard
$(n,0)$--form
\begin{equation*}
  \theta:=dz_1\wedge\cdots\wedge dz_n.
\end{equation*} 
In this section we examine the relation between 
$\SU(3)$--structures and cross products and between 
$\SU(4)$--structures and triple cross products. %
We also explain the decompositions of \autoref{thm:form7} 
and \autoref{thm:form8} in this setting.

\begin{theorem}
  \label{thm:form7SU3}
  Let $W$ be 
  a
  $6$--dimensional real vector space equipped with an
  $\SU(3)$--structure $(\omega,J,\theta)$. %
  Then the space $V:=\R\oplus W$ carries a natural cross product
  defined by
  \begin{equation}
    \label{eq:crossSU3}
    v\times w:=\left(\om(v_1,w_1),
      v_0Jw_1-w_0Jv_1+v_1\times_\theta w_1\right)
  \end{equation}
  for $u=(u_0,u_1),v=(v_0,v_1)\in\R\oplus W$, where
  $v_1\times_\theta w_1\in V$ is defined by
  $\inner{u_1}{v_1\times_\theta w_1}:=\RE\,\theta(u_1,v_1,w_1)$ for
  all $u_1\in W$. %
  The associative calibration of this cross product is
  \begin{equation}
    \label{eq:phiSU3}
    \phi := e^0 \wedge \omega + \RE\,\theta\in\Lambda^3V^*
  \end{equation}
  and the coassociative calibration is
  \begin{equation}
    \label{eq:psiSU3}
    \psi := *\phi = \tfrac{1}{2} \omega\wedge\omega - e^0 \wedge \IM\,\theta
    \in\Lambda^4V^*.
  \end{equation}
  Moreover, the subspaces $\Lambda^k_d\subset\Lambda^kV^*$ in 
  \autoref{thm:form7} are given by
  \begin{align*}
    \Lambda^2_7 
    & = \R \omega
      \oplus \{ e^0\wedge u^* - \iota(u) \IM\,\theta  :  u \in W \}, \\
    \Lambda^2_{14} 
    &= \left\{\tau- e^0 \wedge *_W(\tau \wedge \RE\,\theta) : 
      \tau \in \Lambda^2 W^*,\;\tau\wedge\omega\wedge\omega = 0 \right\}, \\
    \Lambda^3_7 
    &= \R\cdot\IM\,\theta\oplus 
      \left\{u^*\wedge\om-e^0\wedge\iota(u)\RE\,\theta : u\in W\right\}, \\
    \Lambda^3_{27} 
    &= \R\cdot\left(3\RE\,\theta-4e^0\wedge\om\right)  \\
    &\quad\oplus
      \left\{e^0\wedge\tau : \tau\in\Lambda^{1,1}W^*,\,
      \tau\wedge\om\wedge\om=0\right\} \\
    &\quad\oplus
      \left\{\beta\in\Lambda^{2,1}W^*+\Lambda^{1,2}W^* : 
      \beta\wedge\om=0\right\} \\
    &\quad\oplus
      \left\{u^*\wedge\om+e^0\wedge\iota(u)\RE\,\theta : u\in W\right\}.
  \end{align*}
\end{theorem}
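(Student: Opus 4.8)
The strategy is to reduce everything to a computation with the standard model on $V=\R^7$, using \autoref{thm:imO} to transport the result along an isomorphism. First I would verify that the bilinear map defined by~\eqref{eq:crossSU3} is a cross product on $V=\R\oplus W$. The quickest route is to observe that $W=\C^3$ with its standard $\SU(3)$--structure sits inside $\Oc=\R^8$ as the imaginary part of a splitting $\Oc=\C\oplus\C^3$: identifying $\R\oplus W$ with $1^\perp\subset\Oc$ and checking that the octonionic cross product on $1^\perp$ restricted through this identification matches~\eqref{eq:crossSU3}. Alternatively, and perhaps more cleanly for an expository paper, I would check directly that~\eqref{eq:crossSU3} satisfies~\eqref{eq:orthogonal} and~\eqref{eq:area}: orthogonality of $v\times w$ to $v$ and $w$ follows from the skew-symmetry of $\om$, the $J$--invariance of the inner product, and the alternating property of $\RE\,\theta$; the norm identity~\eqref{eq:area} then reduces, after expanding $|v\times w|^2$, to the two facts that $|v_1\times_\theta w_1|^2=|v_1|^2|w_1|^2-\inner{v_1}{w_1}^2-\om(v_1,w_1)^2$ (a standard $\SU(3)$ identity, equivalent to the statement that $\RE\,\theta$ restricted to a complex line's orthogonal complement has the right norm) and that $v_1\times_\theta w_1$ is orthogonal to $v_1$, $w_1$, $Jv_1$, $Jw_1$. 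Since by~\autoref{thm:cross} a $7$--dimensional cross product is unique up to orthogonal isomorphism, this already pins down the structure.

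\textbf{Identifying the calibrations.} Once~\eqref{eq:crossSU3} is known to be a cross product, formula~\eqref{eq:phiSU3} for $\phi$ is immediate from the definition~\eqref{eq:phi}: evaluating $\phi(u,v,w)=\inner{u\times v}{w}$ and separating the $e^0$--component from the $W$--component gives exactly $e^0\wedge\om+\RE\,\theta$. For $\psi=*\phi$ I would use \autoref{le:psi} (which guarantees $\psi=*\phi$) together with the observation that the Hodge star on $V=\R\oplus W$ decomposes as $*(e^0\wedge\alpha)=*_W\alpha$ and $*\beta=e^0\wedge *_W\beta$ for $\alpha,\beta$ pulled back from $W$; combined with the standard identities $*_W\om=\tfrac12\om\wedge\om$ and $*_W\RE\,\theta=\IM\,\theta$ (valid for any $\SU(3)$--structure, provable by reduction to $\C^3$), this yields~\eqref{eq:psiSU3}. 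The sign bookkeeping here needs care, since $*_W(\RE\,\theta)=\IM\,\theta$ versus $-\IM\,\theta$ depends on orientation conventions; I would fix the orientation of $W$ (hence of $V=\R\oplus W$) so that $\theta$ is of type $(3,0)$ in the standard sense and check the sign against \autoref{ex:cross7} / \autoref{rmk:psi}.

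\textbf{The decomposition of the exterior algebra.} For the last part I would first recall the $\SU(3)$--module decompositions of $\Lambda^kW^*$ for $k=2,3$: $\Lambda^2W^*=\R\om\oplus\Lambda^{1,1}_0W^*\oplus(\Lambda^{2,0}\oplus\Lambda^{0,2})W^*$ and $\Lambda^3W^*=\R\,\RE\,\theta\oplus\R\,\IM\,\theta\oplus(\Lambda^{2,1}_0\oplus\Lambda^{1,2}_0)W^*\oplus W^*\wedge\om$, where the primitive pieces are characterized by wedging with $\om$ or $\om\wedge\om$. Then every $k$--form on $V=\R\oplus W$ is uniquely $e^0\wedge\alpha+\beta$ with $\alpha\in\Lambda^{k-1}W^*$, $\beta\in\Lambda^kW^*$, so $\Lambda^kV^*$ inherits a bigrading. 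I would then match each $\rG(V,\phi)$--isotypic summand $\Lambda^k_d$ from \autoref{thm:form7} against these pieces. The most efficient way is to use the \emph{defining} characterizations in \autoref{thm:form7}: e.g. $\Lambda^2_7=\{\iota(u)\phi:u\in V\}$, and writing $u=(u_0,u_1)$ and contracting~\eqref{eq:phiSU3} gives $\iota(u)\phi=u_0\om+e^0\wedge\iota(u_1)\om-\iota(u_1)\RE\,\theta$; using $\iota(u_1)\RE\,\theta$ versus $\iota(u_1)\IM\,\theta$ and the identity $*_W(\iota(u_1)\om)=\ldots$ one rewrites this as $u_0\om+(e^0\wedge u_1^*-\iota(u_1)\IM\,\theta)$ modulo normalization, matching the claimed $\Lambda^2_7$. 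Similarly $\Lambda^2_{14}=\{\om:\psi\wedge\om=0\}$ translates, via~\eqref{eq:psiSU3}, into the condition $\tfrac12\om\wedge\om\wedge\tau=0$ on the $\Lambda^2W^*$--component and a linkage $e^0\wedge(\ldots)$ term, giving the stated form. The cubic pieces $\Lambda^3_1=\langle\phi\rangle$, $\Lambda^3_7=\{\iota(u)\psi\}$, $\Lambda^3_{27}=\{\om:\phi\wedge\om=0,\psi\wedge\om=0\}$ are handled the same way; $\Lambda^3_{27}$ is the one requiring the most work, since one must see that the four listed summands (the line through $3\RE\,\theta-4e^0\wedge\om$, the primitive $(1,1)$--forms times $e^0$, the primitive $(2,1)+(1,2)$--forms, and $W^*\wedge\om+e^0\wedge\iota(\cdot)\RE\,\theta$) exhaust the $27$--dimensional space and each lies in the kernel of wedging with both $\phi$ and $\psi$.

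\textbf{Main obstacle.} The genuine difficulty is the bookkeeping in the $\Lambda^3_{27}$ decomposition: one must track how wedging with $\phi=e^0\wedge\om+\RE\,\theta$ and with $\psi=\tfrac12\om^2-e^0\wedge\IM\,\theta$ acts on each $\SU(3)$--isotypic component of $\Lambda^3V^*$, which mixes $e^0\wedge\alpha$ and $\beta$ terms and requires the $\SU(3)$ identities $\om\wedge\RE\,\theta=0$, $\om\wedge\IM\,\theta=0$, $\RE\,\theta\wedge\IM\,\theta=\tfrac23\om^3$ (up to normalization), together with the action of $\iota(u_1)$ and $*_W$ on $(p,q)$--forms. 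Everything is finite-dimensional representation theory once the normalizations are fixed, but getting all the constants and signs consistent—so that, e.g., $3\RE\,\theta-4e^0\wedge\om$ is exactly the primitive combination killed by both $\phi\wedge(\cdot)$ and $\psi\wedge(\cdot)$—is where the care lies. I would organize this by dimension count: verify each listed subspace is $\rG(V,\phi)$--invariant and annihilated by the relevant wedge operators, then check the dimensions sum to $d$, which forces equality by \autoref{thm:form7}.
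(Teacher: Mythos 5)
Your plan is essentially the paper's proof: direct verification of the cross-product axioms via the $\SU(3)$ norm identity $\abs{v_1\times_\theta w_1}^2 + \om(v_1,w_1)^2 + \inner{v_1}{w_1}^2 = \abs{v_1}^2\abs{w_1}^2$, derivation of~\eqref{eq:phiSU3} from~\eqref{eq:phi}, of~\eqref{eq:psiSU3} from the Hodge identities $*_W\om=\tfrac12\om^2$ and $\IM\,\theta=*_W\RE\,\theta$, and then matching each $\Lambda^k_d$ using the defining characterizations in \autoref{thm:form7} (as spans of $\iota(v)\phi$, $\iota(v)\psi$, or as kernels of wedging with $\phi$ and $\psi$) together with a dimension count for $\Lambda^3_{27}$. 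The sign issue you flag for $*_V\beta$ versus $-e^0\wedge*_W\beta$ on odd-degree forms is indeed the one place to be careful, and the paper resolves it exactly as you propose, by fixing the orientation and checking against a standard model.
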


\begin{proof}
  For $v,w\in W$ we define $\alpha_{v,w}\in\Lambda^1W^*$ by
  $\alpha_{v,w}:=\RE\,\theta(\cdot,v,w)$. %
  Then $\abs{\alpha_{v,w}}=\abs{\theta(u,v,w)}=\abs{v}\abs{w}$
  whenever $u,Ju,v,Jv,w,Jw$ are pairwise orthogonal and $\abs{u}=1$. %
  This implies
  \begin{equation}
    \label{eq:normSU3}
    \abs{\alpha_{v,w}}^2 + \om(v,w)^2 
    + \inner{v}{w}^2 = \abs{v}^2\abs{w}^2
  \end{equation}
  for all $v,w\in W$. %
  (Add to $w$ a suitable linear combination of $v$ and $Jv$.) %
  It follows from~\eqref{eq:normSU3} by direct computation that the
  formula~\eqref{eq:crossSU3} defines a cross product on $\R\times
  W$. %
  By~\eqref{eq:crossSU3} and~\eqref{eq:phiSU3}, we have
  ${\phi(u,v,w)=\inner{u}{v\times w}}$ so that $\phi$ is the
  associative calibration of~\eqref{eq:crossSU3} as claimed. %
  That $\phi$ is compatible with the orientation of $\R\oplus W$
  follows from the fact that $\iota(e_0)\phi=\om$ and
  $\om\wedge \RE\,\theta=0$ so that
  $\iota(e_0)\phi\wedge\iota(e_0)\phi\wedge\phi =
  e^0\wedge\om^3=6\dvol$. %
  The formula~\eqref{eq:psiSU3} for $\psi:=*\phi$ follows from the
  fact that $\om\wedge\theta=0$ and $\IM\,\theta=*\RE\,\theta$ so that
  $\RE\,\theta\wedge\IM\,\theta=4\dvol_W$. %
  It remains to examine the subspaces $\Lambda^k_d\subset\Lambda^kV^*$
  introduced in \autoref{thm:form7}.

  The formula for $\Lambda^2_7$ follows directly from the formula for
  $\phi$ in~\eqref{eq:phiSU3} and the fact that $\Lambda^2_7$ consists
  of all $2$--forms $\iota(v)\phi$ for $v\in\R\oplus W$. %
  With $v=(1,0)$ we obtain $\iota(v)\phi=\om$ and with $v=(0,Ju)$ we
  obtain
  \begin{equation*}
    \iota(v)\phi
    = -e^0\wedge\iota(Ju)\om + \iota(Ju)\RE\,\theta
    = e^0\wedge u^* - \iota(u)\IM\,\theta.
  \end{equation*} 
  Similarly, the formula for $\Lambda^3_7$ follows directly from the
  formula for $\psi$ in~\eqref{eq:phiSU3} and the fact that
  $\Lambda^3_7$ consists of all $3$--forms $\iota(v)\psi$ for
  $v\in\R\oplus W$. %
  With $v=(-1,0)$ we obtain $\iota(v)\psi=\IM\,\theta$ and with
  $v=(0,-Ju)$ we obtain
  \begin{equation*}
    \iota(v)\psi 
    = -(\iota(Ju)\om)\wedge\om
    - e^0\wedge\iota(Ju)\IM\,\theta 
    = u^*\wedge\om - e^0\wedge\iota(u)\RE\,\theta.
  \end{equation*}
  To prove the formula for $\Lambda^2_{14}$ we choose 
  $\alpha\in\Lambda^1W^*$ and $\tau\in\Lambda^2W^*$.
  Then $\tau+e^0\wedge\alpha\in\Lambda^2_{14}$ if and only 
  if $(\tau+e^0\wedge\alpha)\wedge\psi=0$. %
  By~\eqref{eq:psiSU3}, we have
  \begin{align*}
    (e^0 \wedge \alpha + \tau) \wedge \psi
    &= \left(e^0 \wedge \alpha + \tau\right)
      \wedge \left(\tfrac{1}{2} \omega\wedge\omega 
      - e^0\wedge \IM\,\theta\right) \\
    &= e^0 \wedge 
      \left(
      \tfrac{1}{2}\om\wedge\om\wedge\alpha 
      - \tau\wedge\IM\,\theta
      \right)
      + \tfrac{1}{2}\tau\wedge\omega\wedge\omega.
  \end{align*}
  The expression on the right vanishes if and only if
  $\tau\wedge\om\wedge\om=0$ and 
  $\om\wedge\om\wedge\alpha=2\IM\,\theta\wedge\tau$. %
  Since $\alpha\circ J = \tfrac{1}{2}*_W(\om\wedge\om\wedge\alpha)$, 
  the last equation is equivalent to 
  $\alpha=-\left(*_W(\IM\,\theta\wedge\tau)\right)\circ J
  = - *_W(\RE\,\theta\wedge\tau)$. 

  To prove the formula for $\Lambda^3_{27}$ we choose 
  $\tau \in \Lambda^2 W^*$ and $\beta \in \Lambda^3 W^*$. %
  Then
  \begin{align*}
    \left(\beta+e^0 \wedge \tau\right)\wedge \phi
    &= e^0 \wedge \left(\tau\wedge\RE\,\theta-\beta\wedge\om\right)
      + \beta \wedge \RE\,\theta, \\
    \left(\beta + e^0 \wedge \tau\right)\wedge \psi
    &= e^0 \wedge \left(\tfrac{1}{2}\tau \wedge \omega \wedge \omega
      + \beta \wedge \IM\,\theta\right).
  \end{align*}
  Both terms vanish simultaneously if and only if
  \begin{gather*}
    \tau \wedge \RE\,\theta
    = \beta \wedge \omega,\quad
    \beta\wedge\RE\,\theta
    = 0, \quad
    \beta\wedge\IM\,\theta 
    = -\frac12 \tau \wedge \omega \wedge \omega.
  \end{gather*}
  These equations hold in the following four cases.

  \begin{enumerate}[(a)]
  \item
    \label{Pf_form7SU3a}
    $\beta=3\lambda\RE\,\theta$ and $\tau=-4\lambda\om$ with
    $\lambda\in\R$.

  \item
    \label{Pf_form7SU3b}
    $\beta=0$ and $\tau\in\Lambda^{1,1}W^*$ with
    $\tau\wedge\om\wedge\om=0$.

  \item
    \label{Pf_form7SU3c}
    $\beta\in\Lambda^{1,2}W^*+\Lambda^{2,1}W^*$ with
    $\beta\wedge\om=0$ and $\tau=0$.

  \item
    \label{Pf_form7SU3d}
    $\beta = u^*\wedge\om$ and $\tau = \iota(u)\RE\,\theta$ with
    $u\in W$.
  \end{enumerate}

  In case~\itref{Pf_form7SU3d} this follows from
  $(\iota(u)\Re\,\theta)\wedge\RE\,\theta = 2 * (Ju)^* =
  u^*\wedge\om\wedge\om $. %
  The subspaces determined by these conditions are pairwise orthogonal
  and have dimensions $1$ in case~(\itref{Pf_form7SU3a}, $8$ in
  case~\itref{Pf_form7SU3b}, $12$ in case~\itref{Pf_form7SU3c}, and
  $6$ in case~\itref{Pf_form7SU3d}. %
  Thus, for dimensional reasons, their direct sum is the space
  $\Lambda^3_{27}$. %
  This proves \autoref{thm:form7SU3}.
\end{proof}

\begin{theorem}
  \label{thm:form8SU4}
  Let $W$ be an $8$--dimensional real vector space equipped with an
  $\SU(4)$--structure $(\Omega,J,\Theta)$. %
  Then the alternating multi-linear map
  \begin{equation*}
    \Phi := \tfrac{1}{2}\Omega\wedge\Omega + \RE\,\Theta\in\Lambda^4W^*
  \end{equation*}
  is a positive Cayley calibration, compatible with the complex
  orientation and the inner product. %
  Moreover, in the notation of \autoref{thm:form8}, we have
  \begin{align*}
    \Lambda^2_7
    &= \R \Omega
      \oplus\left\{\tau\in\Lambda^{2,0}+\Lambda^{0,2} : 
      *\left(\RE\,\Theta\wedge\tau\right)=2\tau\right\}, \\
    \Lambda^2_{21} 
    &= \left\{\tau\in\Lambda^{1,1} : \tau\wedge\Om^3=0\right\}
      \oplus 
      \left\{\tau\in\Lambda^{2,0}+\Lambda^{0,2} : 
      *\left(\RE\,\Theta\wedge\tau\right) = -2\tau\right\}.
  \end{align*}
\end{theorem}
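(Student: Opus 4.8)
The plan is to reduce to the standard model, identify $\Phi$ with the standard Cayley calibration, and then deduce the type decomposition of $\Lambda^2W^\ast$ from \autoref{thm:form8} together with elementary Kähler linear algebra. First I would note that the whole assertion is invariant under linear isomorphisms and that every $\SU(4)$-structure on an $8$-dimensional real vector space is linearly isomorphic to the standard one on $\C^4$: by the linear Darboux theorem and a unitary change of basis one standardizes $\Omega$ and $J$, and since $\Theta$ is then a unit-modulus multiple of $dz_1\wedge\cdots\wedge dz_4$, a rotation $z_1\mapsto e^{i\alpha}z_1$ (which preserves $\Omega$ and $J$) standardizes $\Theta$. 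Hence I may assume $(W,\Omega,J,\Theta)=(\C^4,\Omega_0,i,\Theta_0)$. Next I would identify $\Phi:=\tfrac12\Omega_0\wedge\Omega_0+\RE\Theta_0$ with the octonionic Cayley form: writing $\C^4=\C\oplus\C^3$, the $\C^3$-factor carries the standard $\SU(3)$-structure $(\om,J,\theta)$, so by \autoref{thm:form7SU3} the associated cross product on $\R\oplus\C^3\cong\R^7$ has associative and coassociative calibrations $\phi_7=e^0\wedge\om+\RE\theta$ and $\psi_7=\ast\phi_7=\tfrac12\om\wedge\om-e^0\wedge\IM\theta$. Regarding $\R^8=\R\oplus\R^7$ as the octonions, \autoref{thm:tcW} gives its Cayley calibration as $\Phi_{\Oc}=1^\ast\wedge\phi_7+\psi_7$. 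Combining the two distinguished real directions (the $\R$-summand of $\R^8$ and the $\R$-summand of $\R^7$) into the complex coordinate $z_0$ of the $\C$-factor, so that $\Omega_0=1^\ast\wedge e^0+\om$ and $\Theta_0=(1^\ast+i\,e^0)\wedge\theta$, one expands $\tfrac12\Omega_0\wedge\Omega_0=1^\ast\wedge e^0\wedge\om+\tfrac12\om\wedge\om$ and $\RE\Theta_0=1^\ast\wedge\RE\theta-e^0\wedge\IM\theta$ and reads off $\tfrac12\Omega_0\wedge\Omega_0+\RE\Theta_0=\Phi_{\Oc}$. Thus $\Phi$ is a Cayley-form compatible with the metric determined by $(\Omega,J)$; and since a bidegree count gives $\Omega\wedge\Omega\wedge\RE\Theta=0$ while $(\RE\Theta)^2=\tfrac12\Theta\wedge\bar\Theta=8\,\dvol_\C$ and $\tfrac14\Omega^4=6\,\dvol_\C$, we get $\Phi\wedge\Phi=14\,\dvol_\C>0$, so $\Phi$ is positive and its complex orientation agrees with the orientation of \autoref{rmk:Worient}.

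For the decomposition of $\Lambda^2W^\ast$ I would use that, by \autoref{thm:form8}, $\Lambda^2W^\ast=\Lambda^2_7\oplus\Lambda^2_{21}$ is the eigenspace splitting of the self-adjoint operator $Q\om:=\ast(\Phi\wedge\om)$ for the eigenvalues $3$ and $-1$, with $\dim\Lambda^2_7=7$ and $\dim\Lambda^2_{21}=21$. Split $\Lambda^2W^\ast$ into types, $\Lambda^2W^\ast=\Lambda^{1,1}_\R\oplus(\Lambda^{2,0}\oplus\Lambda^{0,2})_\R$ (real dimensions $16$ and $12$); since $\Phi=\tfrac12\Omega^2+\RE\Theta$, a bidegree count shows $Q$ preserves each summand. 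On $\Lambda^{1,1}_\R$ one has $\RE\Theta\wedge\tau=0$, so $Q\tau=\ast(\tfrac12\Omega^2\wedge\tau)$; the Kähler identity $\ast\Omega^3=6\Omega$ gives $Q\Omega=3\Omega$, while the standard formula for $\ast$ on primitive $(p,q)$-forms gives $\tfrac12\Omega^2\wedge\tau=-\ast\tau$, hence $Q\tau=-\tau$, for $\tau$ primitive. As a $(1,1)$-form is primitive iff $\tau\wedge\Omega^3=0$, this yields $\Lambda^2_7\cap\Lambda^{1,1}_\R=\R\Omega$ and $\Lambda^2_{21}\cap\Lambda^{1,1}_\R=\{\tau\in\Lambda^{1,1}:\tau\wedge\Omega^3=0\}$, of dimensions $1$ and $15$. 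On $(\Lambda^{2,0}\oplus\Lambda^{0,2})_\R$ every form is automatically primitive, so the same formula gives $\tfrac12\Omega^2\wedge\tau=\ast\tau$, whence $Q\tau=\tau+\ast(\RE\Theta\wedge\tau)$, and a bidegree count shows $\tau\mapsto\ast(\RE\Theta\wedge\tau)$ again preserves this $12$-dimensional space. Hence on it $Q=\id+R$ with $R\tau:=\ast(\RE\Theta\wedge\tau)$; since $Q$ is self-adjoint with spectrum contained in $\{3,-1\}$, $R$ is self-adjoint with spectrum contained in $\{2,-2\}$, and the counts $7=1+6$ and $21=15+6$ force both eigenspaces of $R$ to be $6$-dimensional. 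Reading off the $+2$- and $-2$-eigenspaces of $R$ and adjoining $\R\Omega$, respectively the primitive $(1,1)$-forms, gives exactly the stated formulas.

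The delicate point is the identification $\tfrac12\Omega_0\wedge\Omega_0+\RE\Theta_0=\Phi_{\Oc}$: one must keep track of the ordering of the real coordinates underlying $z_0,\dots,z_3$, the sign of the phase absorbed into $\Theta$, and the orientation conventions, so as to land on a positive Cayley-form rather than $-\Phi_{\Oc}$, $1^\ast\wedge\phi_7-\psi_7$, or an anti-self-dual form; the computation $\Phi\wedge\Phi=14\,\dvol_\C>0$ is the clean numerical confirmation that the complex orientation is the correct one. (Alternatively, one can bypass the chain $\C^4=\C\oplus\C^3$ and expand $\tfrac12\Omega_0\wedge\Omega_0$ and $\RE\Theta_0$ directly in the standard $e^{ijk\ell}$-basis, comparing with \autoref{ex:cayley}.) By contrast, the $\Lambda^2$-decomposition is essentially formal once $\Phi$ is identified, using only \autoref{thm:form8}, the identity $\ast(\Omega^k/k!)=\Omega^{4-k}/(4-k)!$, the Hodge star formula for primitive $(p,q)$-forms, and elementary type counting.
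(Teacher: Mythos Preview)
Your argument is correct. For the first assertion you take a different route from the paper's main proof: the paper verifies directly that $\Phi$ satisfies the criterion of \autoref{le:cayley}, namely $\om_{u,v}\wedge\om_{u,v}\wedge\Phi=\tfrac14|u\wedge v|^2\Omega^4$, by expanding $\om_{u,v}=\iota(v)\iota(u)\Phi$ and checking an auxiliary vanishing identity in special cases. Your reduction to the standard model and identification $\tfrac12\Omega_0^2+\RE\Theta_0=1^*\wedge\phi_7+\psi_7$ via the splitting $\C^4=\C\oplus\C^3$ and \autoref{thm:form7SU3} is cleaner and more conceptual; the paper in fact acknowledges a variant of this shortcut in \autoref{rmk:standardSU4} (matching basis vectors directly rather than going through the $\SU(3)$ chain). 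For the $\Lambda^2$--decomposition your argument is essentially the paper's: both split by type, compute $*(\Phi\wedge\cdot)$ on each piece, and reduce the $(2,0)+(0,2)$ part to the $\pm2$ eigenspaces of $\tau\mapsto *(\RE\Theta\wedge\tau)$. The only minor difference is that for primitive $(1,1)$--forms you invoke the Weil formula $*\tau=-\tfrac12\Omega^2\wedge\tau$, whereas the paper observes that $\SU(W)\subset\rG(W,\Phi)$ and hence the image of $\su(W)$ under $\xi\mapsto\om_\xi$ lands in $\Lambda^2_{21}$.
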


\begin{proof}
  We prove that $\Phi$ is compatible with the inner product
  $ \inner{\cdot}{\cdot}:=\Om(\cdot,J\cdot) $ and the complex
  orientation on $W$. %
  The associated volume form is $\tfrac{1}{24}\Om^4$.
  Hence, by
  \autoref{le:cayley}, we must show that
  \begin{equation}
    \label{eq:PhiSU4}
    \om_{u,v}\wedge\om_{u,v}\wedge\Phi
    = \tfrac{1}{4}\abs{u\wedge v}^2\Om^4
  \end{equation}
  for all $u,v\in W$, where
  \begin{equation*}
    \om_{u,v}:=\iota(v)\iota(u)\Phi
    = \Om(u,v)\Om - \iota(u)\Om\wedge\iota(v)\Om
    + \iota(v)\iota(u)\RE\,\Theta.
  \end{equation*}
  To see this, we observe that
  \begin{equation}
    \label{eq:ThOm}
    \iota(v)\iota(u)\RE\,\Theta 
    \wedge\iota(u)\Om\wedge\iota(v)\Om
    \wedge\Om^2 
    = \left(\iota(v)\iota(u)\RE\,\Theta\right)^2 
    \wedge\RE\,\Theta = 0.
  \end{equation}
  If $v=Ju$, then~\eqref{eq:ThOm} follows from the fact that
  $\iota(u)\Om\wedge\iota(Ju)\Om$ is a $(1,1)$--form and
  $\iota(Ju)\iota(u)\RE\,\Theta=0$. %
  If $v$ is orthogonal to $u$ and $Ju$, then~\eqref{eq:ThOm} follows
  from the explicit formulas in \autoref{rmk:standardSU4} below. %
  The general case follows from the special cases by adding to $v$ a
  linear combination of $u$ and $Ju$. %
  Using~\eqref{eq:ThOm} and the identity
  $ \iota(u)\Om\wedge\iota(v)\Om\wedge\Om^3 =\tfrac{1}{4}\Om(u,v)\Om^4
  $ we obtain
  \begin{align*}
    \om_{u,v}\wedge\om_{u,v}\wedge\Phi
    &=
      \tfrac{1}{2}\Om(u,v)^2\Om^4 
      + \tfrac{1}{2}\iota(v)\iota(u)\RE\,\Theta
        \wedge\iota(v)\iota(u)\RE\,\Theta\wedge\Om^2 \\ &\quad
      - \Om(u,v)\iota(u)\Om\wedge\iota(v)\Om\wedge\Om^3 \\ &\quad
      - 2\iota(v)\iota(u)\RE\,\Theta \wedge
        \iota(u)\Om\wedge\iota(v)\Om\wedge\RE\,\Theta \\
    &=
      \tfrac{1}{4}\Om(u,v)^2\Om^4
      + \tfrac{1}{2}\iota(v)\iota(u)\RE\,\Theta
        \wedge\iota(v)\iota(u)\RE\,\Theta\wedge\Om^2 \\ &\quad
      - 2\iota(v)\iota(u)\RE\,\Theta \wedge
        \iota(u)\Om\wedge\iota(v)\Om\wedge\RE\,\Theta.
  \end{align*}
  One can now verify equation~\eqref{eq:PhiSU4} by first 
  considering the case $v=Ju$ and using $\iota(Ju)\iota(u)\RE\,\Theta=0$
  (here the last two terms on the right vanish). %
  Next one can verify~\eqref{eq:PhiSU4} in the case where 
  $v$ is orthogonal to $u$ and $Ju$ by using the $\SU(4)$--symmetry
  and the explicit formulas in \autoref{rmk:standardSU4} below
  (here the first term on the right vanishes). %
  Finally, one can reduce the general case to the special cases
  by adding to $v$ a linear combination of $u$ and $Ju$.

  Now recall from \autoref{thm:form8} that, 
  for every $\tau\in\Lambda^2W^*$, we have
  \begin{equation*}
    \tau\in\Lambda^2_7
    \qquad\iff\qquad
    *(\Phi\wedge\tau)=3\tau,
  \end{equation*}
  \begin{equation*}
    \tau\in\Lambda^2_{21}
    \qquad\iff\qquad
    *(\Phi\wedge\tau)=-\tau.
  \end{equation*}
  Since $\RE\,\Theta\wedge\Om=0$, we have 
  \begin{equation*}
    *\left(\Phi\wedge\Om\right) 
    = \tfrac{1}{2}*\left(\Om\wedge\Om\wedge\Om\right) 
    = 3\Om
  \end{equation*}
  and, hence, $\R\Om\subset\Lambda^2_7$. %
  Moreover, $\Lambda^2_{21}$ is the image of the Lie algebra $\g$ of
  $\rG(W,\Phi)$ under the isomorphism
  \begin{equation*}
    \so(W)\to\Lambda^2W^*\co \xi\mapsto\om_\xi
  \end{equation*} 
  given by $\om_\xi(u,v):=\inner{u}{\xi v}$. %
  The image of $\su(W)$ under this inclusion is the subspace
  $\left\{\tau\in\Lambda^{1,1}W^* : \tau\wedge\Om^3=0\right\}$ and,
  since $\SU(W)\subset\rG(W,\Phi)$, this space is contained in
  $\Lambda^2_{21}$. %
  By considering the standard structure on $\C^4$ we obtain
  $$
  *(\Omega\wedge\Omega\wedge\tau)=2\tau
  $$ 
  for $\tau\in \Lambda^{2,0}+\Lambda^{0,2}$. %
  Hence,
  \begin{align*}
    *(\Phi\wedge\tau) 
    = \tfrac{1}{2} *(\Omega\wedge\Omega\wedge\tau)
    + *(\RE\,\Theta\wedge\tau)
    = \tau + *(\RE\,\Theta\wedge\tau).
  \end{align*}
  for $\tau\in \Lambda^{2,0}+\Lambda^{0,2}$. %
  Since the operator $\tau\mapsto *(\RE\,\Theta\wedge\tau)$
  has eigenvalues $\pm2$ on the subspace $\Lambda^{2,0}+\Lambda^{0,2}$ 
  the result follows.
\end{proof}

\begin{remark}
  \label{rmk:standardSU4}
  If $(\Om,J,\Theta)$ is the standard $\SU(4)$--structure
  on $W=\C^4$ with coordinates $(x_1+iy_1,\dots,x_4+iy_4)$,
  then
  \begin{align*}
    \RE\,\Theta
    &= 
      dx_1\wedge dx_2\wedge dx_3\wedge dx_4 
      + dy_1\wedge dy_2\wedge dy_3\wedge dy_4 \\ &\quad
      - dx_1\wedge dx_2\wedge dy_3\wedge dy_4 
      - dy_1\wedge dy_2\wedge dx_3\wedge dx_4 \\ &\quad
      - dx_1\wedge dy_2\wedge dx_3\wedge dy_4 
      - dy_1\wedge dx_2\wedge dy_3\wedge dx_4 \\ &\quad
      - dx_1\wedge dy_2\wedge dy_3\wedge dx_4 
      - dy_1\wedge dx_2\wedge dx_3\wedge dy_4
  \intertext{and}
    \tfrac{1}{2}\Om\wedge\Om
    &=
      dx_1\wedge dy_1\wedge dx_2\wedge dy_2 
      + dx_3\wedge dy_3\wedge dx_4\wedge dy_4 \\ &\quad
      + dx_1\wedge dy_1\wedge dx_3\wedge dy_3 
      + dx_2\wedge dy_2\wedge dx_4\wedge dy_4 \\ &\quad
      + dx_1\wedge dy_1\wedge dx_4\wedge dy_4 
      + dx_2\wedge dy_2\wedge dx_3\wedge dy_3.
  \end{align*}
  These forms are self-dual. %
  The first assertion in \autoref{thm:form8SU4} also follows from the
  fact that the isomorphism $\R^8\to\C^4$ which sends $e_0,\dots,e_7$
  to
  \begin{equation*}
    \p/\p x_1,\p/\p y_1,\p/\p x_2,\p/\p y_2, \p/\p x_3,-\p/\p
    y_3,-\p/\p x_4,\p/\p y_4
  \end{equation*}
  pulls back $\Phi$ to the standard form $\Phi_0$ in \autoref{ex:O}.
\end{remark}

\begin{theorem}
  \label{thm:form8G2}
  Let $V$ be a $7$--dimensional real Hilbert space equipped with a
  cross product and its induced orientation. %
  Let $\phi\in\Lambda^3V^*$ be the associative calibration and
  $\psi:=*_V\phi\in\Lambda^4V^*$ the coassociative calibration. %
  Denote $W:=\R\oplus V$ and define $\Phi\in\Lambda^4 W^*$ by
  \begin{equation*}
    \Phi := e^0 \wedge \phi + \psi.
  \end{equation*}
  Then $\Phi$ is a positive Cayley-form on $W$ and,
  in the notation of \autoref{thm:form7} and \autoref{thm:form8},
  we have
  \begin{align*}
    \Lambda^2_7 W^*
    &= 
      \left\{e^0\wedge *_V(\psi\wedge\tau)+3\tau : 
      \tau\in\Lambda^2_7V^*\right\},
    \\
    \Lambda^2_{21}W^*
    &= 
      \left\{e^0\wedge *_V(\psi\wedge\tau)-\tau :  
      \tau\in\Lambda^2 V^* \right\}, 
    \\
    \Lambda^3_8 W^*
    &= 
      \R\phi\oplus\left\{
      \iota(u)\psi-e^0\wedge\iota(u)\phi : u\in V
      \right\},
    \\
    \Lambda^3_{48} W^*
    &= 
      \Lambda^3_{27} V^*
      \oplus \left\{e^0\wedge\tau : \tau \in \Lambda^2_{14} V^*\right\}
      \oplus \left\{3\iota(u)\psi+4e^0\wedge\iota(u)\phi : u\in V\right\},
    \\
    \Lambda^4_7 W^*
    &= 
      \left\{e^0\wedge\iota(u)\psi-u^*\wedge\phi : u\in V\right\},
    \\
    \Lambda^4_{27} W^*
    &= 
      \left\{e^0\wedge\beta+*_V\beta : \beta\in\Lambda^3_{27}V^*\right\}, 
    \\
    \Lambda^4_{35}W^*
    &=
      \left\{e^0\wedge\beta-*_V\beta :  
      \beta\in\Lambda^3V^*\right\}.
  \end{align*}
\end{theorem}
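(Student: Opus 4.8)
The plan is to reduce everything to the $\SU(3)$ case already treated in \autoref{thm:form7SU3} by exhibiting a compatible $\SU(3)$-structure on a suitable six-dimensional subspace. Concretely, fix a unit vector $u_0\in V$, set $W_0:=u_0^\perp\subset V$, and let $\om:=\om_{u_0}\in\Lambda^2W_0^*$ and $J:=J_{u_0}$ be the symplectic form and compatible complex structure on $W_0$ from \autoref{le:phisymp}~\itref{It_Symplectic}. The complex $3$-form $\theta\in\Lambda^{3,0}W_0^*$ is the one appearing in the proof of \autoref{thm:G2}, namely $\theta(x,y,z):=\phi(x,y,z)-\i\,\psi(u_0,x,y,z)$; one checks that $\abs{\theta}=2^{3/2}$ using \autoref{le:assocphi} and \autoref{le:coassocpsi}, so $(\om,J,\theta)$ is an $\SU(3)$-structure on $W_0$. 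Then $V\cong\R u_0\oplus W_0$ as inner product spaces, and by inspecting \eqref{eq:phiSU3} one verifies $\phi=u_0^*\wedge\om+\RE\,\theta$, hence $\psi=*_V\phi=\tfrac12\om\wedge\om-u_0^*\wedge\IM\,\theta$ by \eqref{eq:psiSU3}. Now $W=\R\oplus V=\R\oplus\R u_0\oplus W_0$, and the $4$-form $\Phi=e^0\wedge\phi+\psi$ rewrites, after substituting the above, in precisely the form $\tfrac12\Om\wedge\Om+\RE\,\Theta$ of \autoref{thm:form8SU4} for the $\SU(4)$-structure on $\R^2\oplus W_0$ with $\Om:=e^0\wedge u_0^*+\om$, $\Theta:=(e^0+\i u_0^*)\wedge\theta$. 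This immediately gives that $\Phi$ is a positive Cayley-form compatible with the inner product and the orientation $W=\R\oplus V$; positivity follows because $\Phi\wedge\Phi=2\,e^0\wedge\phi\wedge\psi+\psi\wedge\psi$, and $e^0\wedge\phi\wedge\psi=e^0\wedge\dvol_V>0$ while $\psi\wedge\psi=0$ (contract the $8$-form $\psi\wedge\psi$ on $V$ with any vector and use $\iota(u)\psi\wedge\psi=\tfrac12\iota(u)(\psi\wedge\psi)$, or simply note $\psi\wedge\psi\in\Lambda^8V^*=0$).

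For the decomposition statements the approach is to compute each $\rG(W,\Phi)$-isotypic subspace directly from its defining equations in \autoref{thm:form8}, splitting a general form on $W=\R e^0\oplus V$ as $e^0\wedge(\text{form on }V)+(\text{form on }V)$ and using the identities of \autoref{le:phipsi} together with $*_W(e^0\wedge\pi^*\alpha)=\pi^**_V\alpha$ and $*_W\pi^*\beta=e^0\wedge\pi^**_V\beta$ (writing $\pi\co W\to V$ for the projection). For $\Lambda^2_7W^*$: a $2$-form $\om$ satisfies $*_W(\Phi\wedge\om)=3\om$; writing $\om=e^0\wedge\alpha+\tau$ with $\alpha\in\Lambda^1V^*$, $\tau\in\Lambda^2V^*$ and expanding $\Phi\wedge\om=(e^0\wedge\phi+\psi)\wedge(e^0\wedge\alpha+\tau)=e^0\wedge(\psi\wedge\alpha+\phi\wedge\tau)+\psi\wedge\tau$ leads, via \eqref{eq:phipsi23}, to $\alpha=*_V(\psi\wedge\tau)$ and $*_V(\phi\wedge\tau)=2\tau$, i.e.\ $\tau\in\Lambda^2_7V^*$ in the notation of \autoref{thm:form7}; the $\Lambda^2_{21}$ case is identical with eigenvalue $-1$ and no constraint on $\tau$. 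For $\Lambda^3_8W^*=\{\iota(w)\Phi:w\in W\}$ one simply evaluates $\iota(e^0)\Phi=\phi$ and $\iota(u)\Phi=\iota(u)\psi-e^0\wedge\iota(u)\phi$ for $u\in V$; $\Lambda^3_{48}W^*$ is cut out by $\Phi\wedge\om=0$ and, writing $\om=e^0\wedge\tau+\beta$, this becomes $\phi\wedge\tau=\psi\wedge\beta$ and $\psi\wedge\tau=0$, which by \autoref{thm:form7} characterizes the three summands listed (the $\Lambda^2_{14}V^*$ part, the $\Lambda^3_{27}V^*$ part, and the $\Lambda^3_7V^*$ part recombined as $3\iota(u)\psi+4e^0\wedge\iota(u)\phi$). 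The $\Lambda^4$ splittings come from the eigenvalue equations $*_W\om=\pm\om$ together with $\om\wedge\Phi=0$ and $\om\wedge\cL_\xi\Phi=0$: writing $\om=e^0\wedge\beta+*_V\gamma$ with $\beta,\gamma\in\Lambda^3V^*$, self-duality $*_W\om=\om$ forces $\gamma=\beta$, anti-self-duality forces $\gamma=-\beta$, and then $\om\wedge\Phi=0$ together with \autoref{Pf_form75} pins down $\beta\in\Lambda^3_{27}V^*$ for $\Lambda^4_{27}$ and $\beta\in\Lambda^3_7V^*$ for $\Lambda^4_7$ (using $\cL_{A_u}\phi=3\iota(u)\psi$ and $\cL_{A_u}\psi=-3u^*\wedge\phi$ from \autoref{rmk:uphipsi} to identify $\Lambda^4_7$ explicitly).

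The one genuinely non-routine point is the identification of $\Lambda^4_{27}W^*$ and the claim that the listed subspaces exhaust it: one must check that $\{e^0\wedge\beta+*_V\beta:\beta\in\Lambda^3_{27}V^*\}$ really lies in the kernel of $\om\mapsto\om\wedge\cL_\xi\Phi$ for \emph{all} $\xi\in\so(W)$, not merely $\xi\in\so(V)$. The cleanest way is to note that $\cL_\xi\Phi\in\Lambda^4_7W^*$ for $\xi\in\g(W,\Phi)^\perp$ by the argument in \autoref{thm:form8}, so it suffices to verify orthogonality of $e^0\wedge\beta+*_V\beta$ to $\Lambda^4_7W^*$ and to $\Lambda^4_1W^*=\langle\Phi\rangle$; the former reduces, after pairing with $e^0\wedge\iota(u)\psi-u^*\wedge\phi$, to $\langle\beta,\iota(u)\psi\rangle_V=0$ and $\langle *_V\beta,u^*\wedge\phi\rangle_V=\langle\beta,\iota(u)\psi\rangle_V$ (using \eqref{eq:phipsi5}), both of which hold precisely because $\beta\perp\Lambda^3_7V^*$. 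Once all eight subspaces are shown to be contained in the asserted isotypic pieces, equality follows by a dimension count: $1+7+7+14+27+35=\dim\Lambda^2W^*+\dim\Lambda^4W^*$ minus the already-accounted pieces, or more simply by matching $\dim\Lambda^k_dW^*=d$ against $\dim\Lambda^2V^*=21$, $\dim\Lambda^3V^*=35$, $\dim\Lambda^4V^*=35$ and the known $7$-dimensional splittings, since the maps $\tau\mapsto e^0\wedge *_V(\psi\wedge\tau)+3\tau$ etc.\ are manifestly injective. I expect the bookkeeping in the $\Lambda^3_{48}$ and $\Lambda^4_{27}$ cases — keeping track of which copy of $V$ (summand of $W$ versus summand of $V$) each vector lives in, and which wedge-with-$\phi$-or-$\psi$ identity from \autoref{le:phipsi} applies — to be the main source of friction, but no new idea beyond \autoref{thm:form7}, \autoref{thm:form7SU3}, \autoref{thm:form8}, and \autoref{le:phipsi} is needed.
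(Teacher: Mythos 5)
Your proof is correct, and for the seven decomposition statements it follows essentially the same route as the paper: write each form on $W=\R e^0\oplus V$ as $e^0\wedge(\cdot)+(\cdot)$ with components pulled back from $V$, unwind the defining eigenvalue and wedge conditions of \autoref{thm:form8}, and use the identities of \autoref{le:phipsi} (especially \eqref{eq:phipsi23}) together with \autoref{thm:form7} and \autoref{rmk:uphipsi} to identify the pieces. The genuinely different part is your route to ``$\Phi$ is a positive Cayley-form.'' The paper simply cites \autoref{thm:NDG} to view $W=\R\oplus V$ as a normed algebra, \autoref{thm:tcW} to recognize $\Phi=e^0\wedge\phi+\psi$ as its Cayley calibration, \autoref{thm:CAYLEY} to get that it is a Cayley-form, and \eqref{eq:tcW3} with \autoref{def:tc+} to read off positivity --- a one-line citation chain. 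You instead fix $u_0\in V$, exhibit an $\SU(3)$-structure on $u_0^\perp$ (reversing \autoref{thm:form7SU3}), assemble an $\SU(4)$-structure on $W$, and invoke \autoref{thm:form8SU4}. This works and the identity $e^0\wedge\phi+\psi=\tfrac12\Om\wedge\Om+\RE\,\Theta$ is a tidy reformulation, but it is more work, it introduces a gratuitous choice of $u_0$, and it silently assumes that the complex orientation on $\R^2\oplus W_0$ agrees with the orientation $\R\oplus V$ of the theorem --- an orientation bookkeeping point that is nontrivial given the sign flips in the identification in \autoref{rmk:standardSU4}. Your independent observation that $\Phi\wedge\Phi=2\,e^0\wedge\phi\wedge\psi>0$ and $\psi\wedge\psi=0$ is clean, but it only yields positivity via \autoref{rmk:Worient} once you already know $\Phi$ is a Cayley-form with the correct induced orientation, so it does not replace either citation chain.

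Two small slips in the decomposition computations. In the $\Lambda^2_7W^*$ derivation, comparing the $e^0$-components of $*_W(\Phi\wedge(e^0\wedge\alpha+\tau))=3(e^0\wedge\alpha+\tau)$ gives $*_V(\psi\wedge\tau)=3\alpha$, not $\alpha=*_V(\psi\wedge\tau)$; the factor of $3$ is exactly why the theorem writes the subspace as $\{e^0\wedge*_V(\psi\wedge\tau)+3\tau\}$. In the $\Lambda^3_{48}W^*$ derivation, expanding $\Phi\wedge(e^0\wedge\tau+\beta)=0$ yields $\phi\wedge\beta+\psi\wedge\tau=0$ and $\psi\wedge\beta=0$; the equations you wrote ($\phi\wedge\tau=\psi\wedge\beta$, $\psi\wedge\tau=0$) are not the ones that arise and do not even have matching degrees. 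The final subspaces you quote are correct, but the second slip also hides the computation \eqref{eq:3phi4psi}, which is needed to verify that the third summand $3\iota(u)\psi+4e^0\wedge\iota(u)\phi$ actually lies in $\Lambda^3_{48}W^*$.
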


\begin{proof}
  By \autoref{thm:NDG}, $W$ is a normed algebra with
  product~\eqref{eq:uv}. %
  Hence, by \autoref{thm:tcW}, $W$ carries a triple cross
  product~\eqref{eq:tcW4} and $\Phi$ is the associated Cayley
  calibration.  By \autoref{thm:CAYLEY}, $\Phi$ is a Cayley form. %
  By~\eqref{eq:tcW3} the triple cross product on $W$
  satisfies~\eqref{eq:tc6} with $\eps=+1$ and so is positive
  (\autoref{def:tc+}). %
  Thus, by \autoref{thm:CAYLEY1}, $\Phi$ is positive.

  Recall that, by \autoref{thm:form8}, $\Lambda^2_7W^*$ and
  $\Lambda^2_{21}W^*$ are the eigenspaces of the operator
  ${*_W(\Phi\wedge\cdot)}$ with eigenvalues $3$ and $-1$ and, by
  \autoref{thm:form7}, $\Lambda^2_7V^*$ and $\Lambda^2_{14}V^*$ are
  the eigenspaces of the operator ${*_V(\phi\wedge\cdot)}$ with
  eigenvalues $2$ and $-1$. %
  With $\alpha\in\Lambda^1 V^*$ and $\tau\in\Lambda^2V^*$ we have
  \begin{align*}
    *_W\left(\Phi\wedge\bigl(e^0\wedge\alpha+\tau\bigr)\right)
    &= 
      *_W\left(
      e^0\wedge\bigl(\psi\wedge\alpha + \phi\wedge\tau\bigr) 
      + \psi\wedge\tau
      \right) \\
    &= e^0 \wedge *_V(\psi\wedge\tau)
      + *_V(\phi\wedge\tau)
      + *_V(\psi\wedge\alpha)
  \end{align*}
  and, hence,
  \begin{equation*}
    e^0\wedge\alpha+\tau \in \Lambda^2_7 W^*
    \qquad\iff\qquad
    \begin{cases}
      *_V(\psi\wedge\tau) = 3\alpha,\\
      *_V(\phi\wedge\tau) + *_V(\psi\wedge\alpha) = 3\tau.
    \end{cases}
  \end{equation*}
  Since $*_V(\psi\wedge *_V(\psi\wedge\tau))=\tau+*_V(\phi\wedge\tau)$,
  by equation~\eqref{eq:phipsi23} in \autoref{le:phipsi}, 
  we deduce that $e^0\wedge\alpha+\tau\in\Lambda^2_7W^*$ 
  if and only if $*_V(\phi\wedge\tau)=2\tau$ 
  and $3\alpha=*_V(\psi\wedge\tau)$. %
  This proves the formula for $\Lambda^2_7W^*$. %
  Likewise, we have $e^0\wedge\alpha+\tau\in\Lambda^2_{21}W^*$
  if and only if $\alpha=-*_V(\psi\wedge\tau)$. %
  In this case the second equation  
  $*_V(\phi\wedge\tau)+*_V(\psi\wedge\alpha) = -\tau$
  is automatically satisfied.

  The formula for the subspace $\Lambda^3_8W^*$ follows from the fact
  that it consists of all $3$--forms of the form $\iota(u)\Phi$ for
  $u\in W$ (see \autoref{thm:form8}). %
  Now let $\tau\in\Lambda^2V^*$ and $\beta\in\Lambda^3V^*$. %
  Then $e^0\wedge\tau+\beta\in\Lambda^3_{48}W^*$ if and only if
  \begin{equation*}
    0
    = \Phi\wedge\left(e^0\wedge\tau+\beta\right)
    = e^0\wedge\left(\phi\wedge\beta+\psi\wedge\tau\right)
    + \psi\wedge\beta
  \end{equation*}
  (see again \autoref{thm:form8}). %
  Hence,
  \begin{equation*}
    e^0\wedge\tau+\beta\in\Lambda^3_{48}W^*
    \qquad\iff\qquad
    \begin{cases}
      \phi\wedge\beta+\psi\wedge\tau=0,\\
      \psi\wedge\beta=0.
    \end{cases}
  \end{equation*}
  These conditions are satisfied in the following three cases.
  \begin{enumerate}[(a)]
  \item
    \label{Pf_form8G2a}
    $\beta=0$ and $\psi\wedge\tau=0$ (or equivalently
    $\tau\in\Lambda^2_{14}V^*$).

  \item
    \label{Pf_form8G2b}
    $\tau=0$ and $\phi\wedge\beta=0$ and $\psi\wedge\beta=0$ (or
    equivalently $\beta\in\Lambda^3_{27}V^*$).

  \item
    \label{Pf_form8G2c}
    $\beta=3\iota(u)\psi$ and $\tau=4\iota(u)\phi$ with $u\in V$.
  \end{enumerate}
  In the case~\itref{Pf_form8G2a} this follows from the equations
  $\psi\wedge\iota(u)\psi=0$ and 
  \begin{equation}
    \label{eq:3phi4psi}
    3\phi\wedge\iota(u)\psi + 4\psi\wedge\iota(u)\phi = 0
  \end{equation}
  for $u\in V$. %
  This last identity can be verified by direct computation using the
  standard structure on $V=\R^7$ with
  \begin{align*}
    \phi_0 
    &=
      e^{123}- e^{145} - e^{167} - e^{246} + e^{257} - e^{347} - e^{356} \qand \\
    \psi_0
    &=
      - e^{1247} - e^{1256} + e^{1346} - e^{1357} - e^{2345} - e^{2367} + e^{4567},
  \end{align*}
  and $u:=e_1$ (see the proof of \autoref{le:psi}). %
  In this case 
  \begin{equation*}
    \iota(u)\phi_0
      =e^{23}-e^{45}-e^{67}, \qquad
    \iota(u)\psi_0
      =-e^{247}-e^{256}+e^{346}-e^{357}
  \end{equation*}
  and so
  \begin{equation*}
    \psi_0\wedge\iota(u)\phi_0
      = 3e^{234567},\qquad
    \phi_0\wedge\iota(u)\psi_0
      = -4e^{234567}.
  \end{equation*}
  This proves~\eqref{eq:3phi4psi}. %
  The subspaces determined by the above conditions are pairwise orthogonal 
  and have dimensions $14$ in case~\itref{Pf_form8G2a}, $27$ in
  case~\itref{Pf_form8G2b}, and $7$ in case~\itref{Pf_form8G2c}. %
  Thus, for dimensional reasons, their direct sum is $\Lambda^3_{48}W^*$. 

  Now $\Lambda^4_7W^*$ is the tangent space of the $\SO(W)$--orbit of
  $\Phi$. %
  For $u\in V$ define the endomorphism $A_u\in\so(V)$ by
  $A_uv:=u\times v$. %
  Then, by \autoref{rmk:uphipsi}, we have
  $\cL_{A_u}\phi=3\iota(u)\psi$ and $\cL_{A_u}\psi=-3u^*\wedge\phi$.
  Hence
  \begin{equation*}
    e^0\wedge\iota(u)\psi-u^*\wedge\phi\in\Lambda^4_7W^*
  \end{equation*}
  for all $u\in V$. %
  Since $\Lambda^4_7W^*$ has dimension $7$, each element of
  $\Lambda^4_7W^*$ has this form.

  Next we recall that $\Lambda^4_{27}W^*$
  is contained in the subspace of self-dual $4$--forms, 
  and every self-dual $4$--form can be written as 
  $e^0\wedge\beta+*_V\beta$ with $\beta\in\Lambda^3V^*$. %
  By \autoref{thm:form8} we have 
  \begin{align*}
    e^0\wedge\beta+*_V\beta\in\Lambda^4_{27}W^*
    &\iff
      \begin{cases}
        \beta\wedge*_V\phi+*_V\beta\wedge*_V\psi
        =0, \\
        \beta\wedge*_V(\iota(u)\psi)
        = *_V\beta\wedge*_V(u^*\wedge\phi)\;\forall u,
      \end{cases}\\
    &\iff
      \psi\wedge\beta=0,\;\;\phi\wedge\beta=0 \\
    &\iff
      \beta\in\Lambda^3_{27}V^*.
  \end{align*}
  Here the
  last equivalence follows from \autoref{thm:form7}. %
  This proves the formula for~$\Lambda^4_{27}W^*$. %
  The formula for $\Lambda^4_{35}W^*$ follows from the fact that this
  subspace consists of the anti-self-dual $4$--forms. %
  This proves \autoref{thm:form8G2}.
\end{proof}



\section{Donaldson--Thomas theory}
\label{sec:DT}

The motivation for the discussion in these notes came from our attempt
to understand Riemannian manifolds with special holonomy in dimensions
six, seven, and eight~\cites{Bryant1987,Harvey1982,Joyce2000} and the
basic setting of Donaldson--Thomas theory on such
manifolds~\cites{Donaldson1998,Donaldson2009}.


\subsection{Manifolds with special holonomy}

\begin{definition}
  \label{def:almost}
  Let $Y$ be a smooth $7$--manifold and $X$ a smooth
  $8$--mani\-fold. %
  A \defined{$\Gtwo$--structure} on $Y$ is a nondegenerate $3$--form
  $\phi\in\Om^3(Y)$; in this case the pair $(Y,\phi)$ is called an
  \defined{almost $\Gtwo$--manifold}. %
  An \defined{$\Spin(7)$--structure} on $X$ is a $4$--form
  $\Phi\in\Om^4(X)$ which restricts to a positive Cayley-form on each
  tangent space; in this case the pair $(X,\Phi)$ is called an
  \defined{almost $\Spin(7)$--manifold}.
\end{definition}

\begin{remark}
  \label{rmk:G2}
  An almost $\Gtwo$--manifold $(Y,\phi)$ admits a unique Riemannian
  metric and a unique orientation that, on each tangent space, are
  compatible with the nondegenerate $3$--form $\phi$ as in
  \autoref{def:nondegenerate} (see \autoref{thm:imO}). %
  Thus each tangent space of $Y$ carries a cross product
  \begin{equation*}
    T_yY\times T_yY\to T_yY \co
    (u,v)\mapsto u\times v
  \end{equation*}
  such that
  \begin{equation*}
    \phi(u,v,w)=\inner{u\times v}{w}
  \end{equation*} 
  for all $u,v,w\in T_yY$. %
  Moreover, \autoref{thm:form7} gives rise to a natural splitting of
  the space $\Om^k(Y)$ of $k$--forms on $Y$ for each $k$.
\end{remark}

\begin{remark}
  \label{rmk:Spin7}
  An almost $\Spin(7)$--manifold $(X,\Phi)$ admits a unique Riemannian
  metric that, on each tangent space, is compatible with the Cayley-form 
  $\Phi$ as in \autoref{def:nondeg} (see \autoref{thm:CAYLEY}). %
  Moreover, the positivity hypothesis asserts that the $8$--forms 
  \begin{equation*}
    \Phi\wedge\Phi,\qquad
    \iota(v)\iota(u)\Phi\wedge\iota(v)\iota(u)\Phi\wedge\Phi
  \end{equation*}
  induce the same orientation whenever $u,v\in T_xX$ are 
  linearly independent (see \autoref{def:cayleypos}). %
  Thus each tangent space of $X$ carries a positive 
  triple cross product 
  \begin{equation*}
    T_xX\times T_xX\times T_xX\to T_xX\co
    (u,v,w)\mapsto u\times v\times w
  \end{equation*}
  such that
  \begin{equation*}
    \Phi(\xi,u,v,w)=\inner{\xi}{u\times v\times w}
  \end{equation*}
  for all $\xi,u,v,w\in T_xX$. %
  Moreover, \autoref{thm:form8} gives rise to a natural splitting of
  the space $\Om^k(X)$ of $k$--forms on $X$ for each $k$.
\end{remark}

Every spin $7$--manifold admits a $\Gtwo$--structure
\cite{Lawson1989}*{Theorem 10.6}; concrete examples are $S^7$
(considered as unit sphere in the octonions), $S^1\times Z$ where $Z$
is a Calabi--Yau $3$--fold and various resolutions of $T^7\!/\Gamma$
where $\Gamma$ is an appropriate finite group, see
\cite{Joyce2000}.  %
A spin $8$--manifold $X$ admits a $\Spin(7)$--structure if and only if
either $\chi(\slS^+) = 0$ or $\chi(\slS^-) = 0$
\cite{Lawson1989}*{Theorem 10.7}; concrete examples can be obtained
from almost $\Gtwo$--manifolds, Calabi--Yau $4$--folds and various
resolutions of~$T^8\!/\Gamma$.

\begin{definition}
  \label{def:torsionfree}
  An almost $\Gtwo$--manifold $(Y,\phi)$ is called a
  \defined{$\Gtwo$--manifold} if $\phi$ is harmonic with respect to
  the Riemannian metric in \autoref{rmk:G2} and we say that $\phi$ is
  \defined{torsion-free}. %
  An almost $\Spin(7)$--manifold $(X,\Phi)$ is called a
  \defined{$\Spin(7)$--manifold} if $\Phi$ is closed (and, hence,
  harmonic with respect to the Riemannian metric in
  \autoref{rmk:Spin7}) and we say that $\Phi$ is
  \defined{torsion-free}.
\end{definition}

\begin{remark}
  \label{rmk:parallel}
  Let $(Y,\phi)$ be an almost $\Gtwo$--manifold equipped with the
  metric of \autoref{rmk:G2}. %
  Then $\phi$ is harmonic if and only if $\phi$ is parallel with
  respect to the Levi--Civita connection and hence is preserved by
  parallel transport. It follows that the holonomy of a
  $\Gtwo$--manifold is contained in the group $\Gtwo$
  \cite{Fernandez1982}. %
  It also follows that the splitting of \autoref{thm:form7} is
  preserved by the Hodge Laplace operator and hence passes on to the
  de Rham cohomology. %
  Exactly the same holds for an almost $\Spin(7)$--manifold $(X,\Phi)$
  equipped with the metric of \autoref{rmk:Spin7}. %
  The $4$--form $\Phi$ is closed (and hence harmonic) if and only if
  it is parallel with respect to the Levi--Civita connection
  \cite{Bryant1987}. %
  Thus the holonomy of a $\Spin(7)$ manifold is contained in
  $\Spin(7)$ and the splitting of its spaces of differential forms in
  \autoref{thm:form8} descends to the de Rham cohomology.
\end{remark}

\begin{remark}[Construction methods]
  \label{Rmk_ManifoldConstruction}
  Examples of manifolds with torsion-free $\Gtwo$-- or
  $\Spin(7)$--structures are much harder to construct.  There are
  however a number of construction techniques (all based on gluing
  methods): Joyce's generalized Kummer construction for $\Gtwo$-- and
  $\Spin(7)$--manifolds
  \cites{Joyce1996,Joyce1996a,Joyce1996b,Joyce2000} based on resolving
  orbifolds of the form $T^7\!/\Gamma$ and $T^8\!/\Gamma$; a method of
  Joyce's for constructing $\Spin(7)$--manifolds from real singular
  Calabi--Yau $4$--folds \cite{Joyce1999}; and the twisted connected 
  sum construction invented by Donaldson, pioneered by Kovalev~\cite{Kovalev2003}, 
  and extended and improved by Kovalev--Lee \cite{Kovalev2011} 
  and Corti--Haskins--Nordstr{\"o}m--Pacini \cites{Corti2012,Corti2012a}.
\end{remark}


\subsection{The gauge theory picture}
\label{sec:DT2}

We close these notes with a brief review of certain partial
differential equations arising in Donaldson--Thomas
theory~\cite{Donaldson1998}. %
We first discuss the gauge theoretic setting. %
Let $(Y,\phi)$ be a $\Gtwo$--manifold with coassociative calibration
$\psi:=*\phi$ and $E\to Y$ a $\rG$--bundle with compact semi-simple
structure group $\rG$. %
In~\cite{Donaldson1998} Donaldson and Thomas introduce a
\defined{$\Gtwo$--Chern--Simons functional}
\begin{equation*}
  \CS^\psi\co \sA(E)\to\R
\end{equation*}
on the space of connections on $E$. %
The functional depends on the choice of a reference connection
$A_0\in\sA(E)$ satisfying $ F_{A_0}\wedge\psi=0 $ and is given by
\begin{equation}
  \label{eq:CSDT}
  \CS^\psi(A_0+a) 
  := \frac{1}{2}\int_Y \left(\winner{d_{A_0} a}{a}
    + \frac{1}{3}\winner{a}{[a\wedge a]}\right)\wedge \psi
\end{equation}
for $a\in\Om^1(Y,\End(E))$. %
The differential of $\CS$ has the form
\begin{align*}
  \delta\CS^\psi(A)a = \int_N \winner{F_{A}}{a}\wedge\psi
\end{align*}
for $A\in\sA(E)$ and $a\in T_A\sA(E)=\Om^1(Y,\End(E))$. %
Thus a connection $A$ is a critical point of $\CS^\psi$ if and only if
\begin{equation}
  \label{eq:crit}
  F_A \wedge \psi = 0.
\end{equation}
By \autoref{thm:form7} this is equivalent to the equation
$*(F_A\wedge\phi)=-F_A$ and hence to $\pi_7(F_A) = 0$. %
A connection $A$ that satisfies equation~\eqref{eq:crit} is called a
\defined{$\Gtwo$--instanton}. %
As in the case of flat connections on $3$--manifolds
equation~\eqref{eq:crit} becomes elliptic with index zero after
augmenting by a suitable gauge fixing condition (which we do not
elaborate on here). %
The negative gradient flow lines of the $\Gtwo$--Chern--Simons
functional are the $1$--parameter families of connections
$\R\to\sA(E)\co t\mapsto A(t)$ satisfying the partial differential
equation
\begin{equation}
  \label{eq:floer}
  \p_t A = -*(F_A \wedge \psi),
\end{equation}
where $F_A=F_{A(t)}$ is understood as the curvature of the connection
$A(t)\in\sA(E)$ for a fixed value of $t$. %
For the study of the solutions of~\eqref{eq:floer} it is interesting
to observe that, by equation~\eqref{eq:phipsi23} in
\autoref{le:phipsi}, every connection $A$ on $Y$ satisfies the energy
identity
\begin{equation*}
  \int_Y\abs{F_A}^2\dvol_Y = \int_Y\abs{F_A\wedge\psi}^2\dvol_Y
  -\int_Y\winner{F_A}{F_A}\wedge\phi.
\end{equation*}

A smooth solution of~\eqref{eq:floer} can also be thought of as
connection $\A$ on the pullback bundle $\bE$ of $E$ over $\R\times
Y$. %
The curvature of this connection is given by
\begin{equation*}
  F_\A = F_A + dt\wedge \p_t A
  = F_A - dt\wedge*(F_A\wedge\psi).
\end{equation*}
Hence, it follows from \autoref{thm:form8} and~\autoref{thm:form8G2}
that $F_\A$ satisfies
\begin{equation}
  \label{eq:DT}
  *(F_\A \wedge \Phi) = -F_\A
\end{equation}
or, equivalently, $\pi_7(F_\A) = 0$. %
Conversely, a connection on $\bE$ satisfying equation~\eqref{eq:DT} can
be transformed into temporal gauge and hence corresponds to a solution
of~\eqref{eq:floer}. %
It is interesting to observe that equation~\eqref{eq:DT} makes sense
over any $\Spin(7)$--manifold. %
Solutions of~\eqref{eq:DT} are called
\defined{$\Spin(7)$--instantons}. %
This discussion is completely analogous to Floer--Donaldson theory in
${3+1}$ dimensions. %
The hope is that one can construct an analogous quantum field theory
in dimension ${7+1}$. %
Moreover, as is apparent from \autoref{thm:form7SU3}
and~\autoref{thm:form8SU4}, this theory will interact with theories in
complex dimensions $3$ and $4$. %
The ideas for the real and complex versions of this theory are
outlined in~\cites{Donaldson1998,Donaldson2009}.

\begin{remark}
  For construction methods and concrete examples 
  of $\Gtwo$--instantons and
  $\Spin(7)$--instantons we refer to
  \cites{Walpuski2011,SaEarp2013,Walpuski2015} and
  \cites{Tanaka2012,Walpuski2014}.
\end{remark}


\subsection{The submanifold picture}
\label{sec:DT3}

There is an analogue of the $\Gtwo$--Chern--Simons functional on the
space of $3$--di\-men\-sio\-nal submanifolds of $Y$, whose critical points
are the associative submanifolds of $Y$ and whose gradient flow lines
are Cayley submanifolds of $\R\times Y$~\cite{Donaldson1998}. %
This is the submanifold part of the conjectural Donaldson--Thomas
field theory.

More precisely, let $(Y,\phi)$ be a $\Gtwo$--manifold with
coassociative calibration $\psi=*\phi$ and let $S$ be a compact
oriented $3$--manifold without boundary. %
Denote by $\sF$ the space of smooth embeddings $f\co S\to Y$ such that
$f^*\phi$ vanishes nowhere. %
Then the group $\sG:=\Diff^+(S)$ of orientation preserving
diffeomorphism of $S$ acts on $\sF$ by composition. %
The quotient space
\begin{equation*}
  \sS:=\sF/\sG
\end{equation*}
can be identified with the space of oriented $3$--dimensional 
submanifolds of $Y$ that are diffeomorphic to $S$ 
and have the property that the restriction of $\phi$ 
to each tangent space is nonzero; the identification sends 
the equivalence class $[f]$ of an element $f\in\sF$ 
to its image $f(S)$. 

Given $f\in\sF$ the tangent space of $\sS$ at $[f]$ 
can be identified with the quotient
\begin{equation*}
  T_{[f]}\sS = \frac{\Om^0(S,f^*TY)}
  {\left\{df\circ\xi : \xi\in\Vect(S)\right\}}.
\end{equation*}
If $g\in\sG$ is an orientation preserving diffeomorphism of $S$, then
$g^*f:=f\circ g$ is another representative of the equivalence class
$[f]$ and the two quotient spaces can be naturally identified via
$[\hat f]\mapsto[\hat f\circ g]$.

Let us fix an element $f_0\in\sF$ and denote by $\tsF$ the universal
cover of $\sF$ based at $f_0$. %
Thus the elements of $\tsF$ are equivalence classes of smooth maps
$\tf\co [0,1]\times S\to Y$ such that $\tf(0,\cdot)=f_0$ and
$\tf(t,\cdot)=:f_t\in\sF$ for all~$t$. %
Thus we can think of $\tf=\{f_t\}_{0\le t\le1}$ as a smooth path in
$\sF$ starting at $f_0$, and two such paths are \emph{equivalent} iff
they are smoothly homotopic with fixed endpoints.
$\tsF\to\sF$ sends $\tf$ to $f:=\tf(1,\cdot)$. %
The universal cover of $\sS$ is the quotient
\begin{equation*}
  \tsS := \tsF/\tsG
\end{equation*}
where $\tsG$ denotes the group of smooth isotopies
$[0,1]\to\Diff(S)\co t\mapsto g_t$ starting at the identity. %
Now the space $\tsF$ carries a natural $\tsG$--invariant action
functional $\sA\co \tsF\to\R$ defined by
\begin{equation*}
  \sA(\tf) := - \int_{[0,1]\times S}\tf^*\psi
  = -\int_0^1\int_S f_t^*\left(\iota(\p_tf_t)\psi\right)\,dt.
\end{equation*}
This functional is well defined because $\psi$ is closed and it
evidently descends to $\tsS$. %
Its differential is the $1$--form $\delta\sA$ on $\sF$ given by
\begin{equation*}
  \delta\sA(f)\hat f = -\int_S f^*\left(\iota(\hat f)\psi\right)
\end{equation*}
This $1$--form is $\sG$--invariant in that
$\delta\sA(g^*f)g^*\hat f=\delta\sA(f)\hat f$ and horizontal in that
$\delta\sA(f)df\xi=0$ for $\xi\in\Vect(S)$. %
Hence, $\delta\sA$ descends to a $1$--form on $\sS$.

\begin{lemma}
  \label{le:critA}
  An element $[\tf]=[\{f_t\}]\in\tsF$ is a critical point of $\sA$ if
  and only if the image of $f:=f_1\co S\to Y$ is an
  \defined{associative submanifold} of $Y$ (that is, 
  each tangent space is an associative subspace).
\end{lemma}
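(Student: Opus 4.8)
The plan is to compute the differential $\delta\sA(f)\hat f = -\int_S f^*(\iota(\hat f)\psi)$ pointwise and identify when it vanishes for all variations $\hat f$. First I would reduce to a local, pointwise statement: since the $1$--form $\delta\sA$ descends to $\sS$ and the variations $df\circ\xi$ with $\xi\in\Vect(S)$ are killed automatically, a representative $\tf$ is critical iff for every section $\hat f\in\Om^0(S,f^*TY)$ the integral $\int_S f^*(\iota(\hat f)\psi)$ vanishes. By a standard cutoff/bump-function argument this is equivalent to the vanishing of the integrand $f^*(\iota(\hat f)\psi)$ at every point $p\in S$, for every vector $\hat f(p)\in T_{f(p)}Y$. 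So the whole statement becomes linear algebra at a single tangent space.

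Next I would set up that linear algebra. Fix $p\in S$, write $y=f(p)$, let $\Lambda:=df_p(T_pS)\subset T_yY$ be the (oriented) $3$--dimensional image, which is a genuine subspace since $f^*\phi$ vanishes nowhere --- indeed $f^*\phi$ nowhere zero means $\phi|_\Lambda\neq0$, so $\Lambda$ is $3$--dimensional and, by \autoref{le:assoc}\itref{It_Assoc2} together with \autoref{le:assocphi}, this already forces $\phi|_\Lambda = \pm\dvol_\Lambda$ --- wait, not quite: $\phi|_\Lambda$ being a nonzero $3$--form on a $3$--space is automatic once $\Lambda$ is $3$--dimensional; the associative condition is the sharper statement $\phi(u,v,w)=\pm1$ for orthonormal bases. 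The point is that $\delta\sA$ vanishing tests $\psi$, not $\phi$. Concretely, $f^*(\iota(\hat f)\psi)$ is a $3$--form on $T_pS$, hence its value is $\psi(\hat f(p), df_p\cdot, df_p\cdot, df_p\cdot)$ evaluated on a basis, i.e.\ it is $\psi(\hat f(p), u_1, u_2, u_3)$ where $u_1,u_2,u_3$ is a basis of $\Lambda$. So criticality at $p$ is equivalent to: $\psi(x, u_1, u_2, u_3) = 0$ for all $x\in T_yY$, i.e.\ the linear functional $\psi(\cdot,u_1,u_2,u_3)$ on $T_yY$ is zero.

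The key step is then to recognize, via \autoref{le:psi}, that $\psi(x,u_1,u_2,u_3) = \inner{[u_1,u_2,u_3]}{x}$ (using the definition $\psi(u,v,w,x)=\inner{[u,v,w]}{x}$ and alternation to put $x$ last --- a sign $(-1)^3=-1$ per transposition chain, which does not affect vanishing). Hence the functional $\psi(\cdot,u_1,u_2,u_3)$ vanishes identically if and only if the vector $[u_1,u_2,u_3]\in T_yY$ is zero. Since $u_1,u_2,u_3$ span $\Lambda$ and the associator bracket is alternating and trilinear, $[u_1,u_2,u_3]=0$ is equivalent to $[u,v,w]=0$ for all $u,v,w\in\Lambda$, which is precisely the definition (\autoref{def:assoc}) of $\Lambda$ being an associative subspace. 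Running this equivalence over all $p\in S$ gives: $\tf$ is critical $\iff$ $df_p(T_pS)$ is associative for every $p$ $\iff$ the image $f(S)$ is an associative submanifold. (The reparametrization freedom is irrelevant since the associativity condition on $df_p(T_pS)$ is invariant under precomposition with diffeomorphisms of $S$.)

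The main obstacle --- really a bookkeeping point rather than a genuine difficulty --- is the careful handling of the antisymmetrization signs when moving $x$ into the last slot of $\psi$ and matching conventions with the formula $\psi(u,v,w,x)=\inner{[u,v,w]}{x}$ in \autoref{le:psi}, and verifying that the pullback $3$--form $f^*(\iota(\hat f)\psi)$ is correctly identified with $\psi(\hat f,df\,\cdot,df\,\cdot,df\,\cdot)$. One should also note explicitly that the hypothesis ``$f^*\phi$ vanishes nowhere'' is used only to guarantee $df_p$ is injective (so $\Lambda$ is genuinely $3$--dimensional and ``associative subspace'' makes sense); it is not otherwise needed for the variational computation. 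Everything else is the bump-function localization and the linear-algebra translation, both of which are routine given the results already established.
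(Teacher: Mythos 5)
Your argument is correct and follows essentially the same route as the paper's: localize the vanishing of $\delta\sA(f)\hat f=-\int_S f^*(\iota(\hat f)\psi)$ to the pointwise condition $\psi(\hat f(p),df_p\,\cdot,df_p\,\cdot,df_p\,\cdot)=0$, then use $\psi(u,v,w,x)=\inner{[u,v,w]}{x}$ from \autoref{le:psi} and \autoref{def:assoc} to translate this into the vanishing of the associator bracket on each tangent space. One small side remark is off: since $f$ is already required to be an embedding, $df_p$ is injective for that reason; the condition that $f^*\phi$ is nowhere zero is imposed so that the metric~\eqref{eq:L2} and the gradient formula in \autoref{le:gradA} (which divides by $f^*\phi$) make sense, not to secure injectivity of $df_p$.
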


\begin{proof}
  We have $\delta\sA(f)=0$ if and only if
  $\psi(\hat f(x),df(x)\xi,df(x)\eta,df(x)\zeta)=0$ 
  for all $\hat f\in\Om^0(S,f^*TY)$, 
  all $x\in S$, and all $\xi,\eta,\zeta\in T_xS$. %
  This means that $\psi(u,v,w,\cdot)=0$ for all $q\in f(S)$ and all
  $u,v,w\in T_qf(S)$. %
  By definition of the coassociative calibration $\psi$ in
  \autoref{le:psi} this means that $[u,v,w]=0$ for all
  ${u,v,w\in T_qf(S)}$ where
  $ T_qY\times T_qY\times T_qY\to T_qY\co (u,v,w)\mapsto[u,v,w] $
  denotes the associator bracket defined by~\eqref{eq:associator}. %
  By \autoref{def:assoc} this means that~$T_qf(S)$~is an associative
  subspace of $T_qY$ for all $q\in f(S)$. %
  This proves \autoref{le:critA}.
\end{proof}

The tangent space of $\sF$ at $f$ carries a natural $L^2$ inner
product given by
\begin{equation}
  \label{eq:L2}
  \Inner{\hat f_1}{\hat f_2}_{L^2}
  := \int_S\inner{\hat f_1}{\hat f_2}\,f^*\phi
\end{equation}
for $\hat f_1,\hat f_2\in\Om^0(S,f^*TY)$.
This can be viewed as a $\sG$--invariant metric on~$\sF$.

\begin{lemma}
  \label{le:gradA}
  The gradient of $\sA$ at an element $f\in\sF$ with respect
  to the inner product~\eqref{eq:L2} is given by 
  \begin{equation*}
    \grad\,\sA(f) = \frac{[df\wedge df\wedge df]}{f^*\phi}
    \in\Om^0(S,f^*TY),
  \end{equation*}
  where $[df\wedge df\wedge df]\in\Om^3(S,f^*TY)$ 
  denotes the $3$--form
  \begin{equation*}
    T_xS\times T_xS\times T_xS\to T_{f(x)}Y\co 
    (\xi,\eta,\zeta)\mapsto [df(x)\xi,df(x)\eta,df(x)\zeta].
  \end{equation*}
\end{lemma}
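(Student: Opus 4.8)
The plan is to compute the differential $\delta\sA(f)\hat f = -\int_S f^*(\iota(\hat f)\psi)$ pointwise, express it via the associator bracket using $\psi = *\phi$, and then identify the $L^2$--gradient by inverting the metric $\inner{\cdot}{\cdot}_{L^2}$ in \eqref{eq:L2}. The key algebraic input is \autoref{le:psi}, which gives $\psi(u,v,w,x) = \inner{[u,v,w]}{x}$ for all $u,v,w,x \in T_qY$.

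First I would fix $x \in S$, choose an oriented basis $\xi_1,\xi_2,\xi_3$ of $T_xS$, and write $f^*\phi = \phi(df\xi_1, df\xi_2, df\xi_3)\,\xi^1\wedge\xi^2\wedge\xi^3$ where $\xi^1,\xi^2,\xi^3$ is the dual basis; note $f^*\phi$ is nowhere zero precisely because $f \in \sF$. For the perturbation term, $f^*(\iota(\hat f)\psi)$ evaluated on $(\xi_1,\xi_2,\xi_3)$ equals $\psi(\hat f(x), df\xi_1, df\xi_2, df\xi_3)$, and by \autoref{le:psi} this is $\inner{[df\xi_1, df\xi_2, df\xi_3]}{\hat f(x)}$ up to sign and the cyclic placement of $\hat f$ — one has to track the sign coming from moving $\hat f$ past three arguments of an alternating $4$--form, i.e.\ $\psi(\hat f, u, v, w) = -\psi(u, v, w, \hat f) \cdot (-1)^{?}$; being careful, $\psi(\hat f,u,v,w) = -\psi(u,\hat f,v,w) = \psi(u,v,\hat f,w) = -\psi(u,v,w,\hat f) = -\inner{[u,v,w]}{\hat f}$. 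So $\delta\sA(f)\hat f = \int_S \inner{[df\xi_1,df\xi_2,df\xi_3]}{\hat f}\,\xi^1\wedge\xi^2\wedge\xi^3$, which is precisely $\int_S \inner{[df\wedge df\wedge df]}{\hat f}$ interpreting the bracket as a $T_{f(x)}Y$--valued $3$--form on $S$ as in the statement.

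Then I would compare this with the definition of the gradient: $\grad\,\sA(f)$ is the element $g \in \Om^0(S,f^*TY)$ with $\Inner{g}{\hat f}_{L^2} = \delta\sA(f)\hat f$ for all $\hat f$, i.e.\ $\int_S \inner{g}{\hat f}\,f^*\phi = \int_S \inner{[df\wedge df\wedge df]}{\hat f}$ for all $\hat f$. Since $f^*\phi$ is a nowhere-vanishing volume form on $S$, the $T_{f(x)}Y$--valued $3$--form $[df\wedge df\wedge df]$ can be divided by it to give the $T_{f(x)}Y$--valued function $[df\wedge df\wedge df]/f^*\phi$, and pointwise nondegeneracy of the fiberwise inner product on $f^*TY$ forces $g = [df\wedge df\wedge df]/f^*\phi$. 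This identification is pointwise and uses only that $\hat f$ ranges over all sections, hence over all values at each point.

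The main obstacle will be bookkeeping of signs and orientations: getting the sign in $\psi(\hat f, u, v, w) = -\inner{[u,v,w]}{\hat f}$ right against the conventions fixed in \eqref{eq:psi} and \eqref{eq:associator}, and checking that the orientation conventions for $f^*\phi$ (which enters both $\delta\sA$ implicitly through the closedness argument and the $L^2$ metric \eqref{eq:L2}) are consistent so that the quotient $[df\wedge df\wedge df]/f^*\phi$ is a well-defined section independent of the choice of oriented basis $\xi_1,\xi_2,\xi_3$ of $T_xS$. I expect the actual computation to be short once the sign conventions are pinned down; the only subtlety worth spelling out is the basis-independence of the quotient, which follows because rescaling the basis by $\lambda$ multiplies both $[df\wedge df\wedge df]$ and $f^*\phi$ by the same $\det$ factor.
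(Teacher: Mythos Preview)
Your proposal is correct and follows essentially the same route as the paper: the paper's proof is precisely the chain of equalities $\int_S\inner{\grad\,\sA(f)}{\hat f}\,f^*\phi = -\int_S f^*(\iota(\hat f)\psi) = \int_S \inner{[df\wedge df\wedge df]}{\hat f}$, justified by the identity $-\psi(\hat f,u,v,w)=\psi(u,v,w,\hat f)=\inner{[u,v,w]}{\hat f}$ from \autoref{le:psi}. You spell out more bookkeeping (choice of basis, basis-independence of the quotient), but the argument and the sign computation are identical.
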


\begin{proof}
  The gradient of $\sA$ at an element $f\in\sF$ is the vector field
  $\grad\,\sA(f)$ along $f$ defined by
  \begin{equation*}
    \int_S\inner{\grad\,\sA(f)}{\hat f}f^*\phi
    = -\int_S f^*\left(\iota(\hat f)\psi\right)
    = \int_S \inner{[df\wedge df\wedge df]}{\hat f}.
  \end{equation*}
  Here the last equation follows from the identity
  \begin{equation*}
    -\psi(\hat f,u,v,w)=\psi(u,v,w,\hat f)=\inner{[u,v,w]}{\hat f}
  \end{equation*}
  (see equation~\eqref{eq:psi} in \autoref{le:psi}). %
  This proves \autoref{le:gradA}.
\end{proof}

We emphasize that the gradient of $\sA$ at $f$ 
is pointwise orthogonal to the image of $df$. %
This is of course a consequence of the fact that the $1$--form
$\delta\sA$ on $\sF$ and the inner product on $T\sF$ are
$\sG$--invariant. %
Now a negative gradient flow line of $\sA$ is a smooth map
\begin{equation*}
  \R\times S\to Y\co (t,x)\mapsto u_t(x)
\end{equation*}
that satisfies the partial differential equation
\begin{equation}
  \label{eq:gradflowA}
  \p_tu_t(x) + 
  \frac{[du_t(x)e_1,du_t(x)e_2,du_t(x)e_3]}
  {\phi(du_t(x)e_1,du_t(x)e_2,du_t(x)e_3)}
  = 0
\end{equation}
for all $(t,x)\in\R\times S$ and every frame $e_1,e_2,e_3$ of
$T_xS$. %
Moreover, we require of course that $u_t$ is an embedding for every
$t$ and that $u_t^*\phi$ vanishes nowhere.

\begin{lemma}
  \label{le:gradflowA}
  Let $\R\times S\to Y\co (t,x)\mapsto u_t(x)$ be a smooth map such
  that ${u_t\in\sF}$ for every $t$. %
  Let $\xi_t\in\Vect(S)$ be chosen such that
  \begin{equation}
    \label{eq:uxi}
    \p_tu_t(x) - du_t(x)\xi_t(x) \perp \im\,du_t(x)
    \qquad\forall (t,x)\in\R\times S.
  \end{equation}
  Then the set
  \begin{equation}
    \label{eq:Sigma}
    \Sigma := \left\{(t,u_t(x)) : t\in\R,\,x\in S\right\}
  \end{equation}
  is a \defined{Cayley submanifold} of $\R\times Y$ (that is, each
  tangent space is a Cayley subspace) with respect to the Cayley
  calibration $\Phi:=dt\wedge\phi+\psi $ if and only if
  \begin{equation}
    \label{eq:gradflowA1}
    \p_tu_t(x) - du_t(x)\xi_t(x)
    + \frac{[du_t(x)e_1,du_t(x)e_2,du_t(x)e_3]}
    {\phi(du_t(x)e_1,du_t(x)e_2,du_t(x)e_3)}
    =0
  \end{equation}
  for every pair $(t,x)\in\R\times S$ and every 
  frame $e_1,e_2,e_3$ of $T_xS$.
\end{lemma}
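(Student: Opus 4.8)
The plan is to reduce the statement to a pointwise computation at each point of $\Sigma$, exactly parallel to the proof of \autoref{le:critA} but now working inside $\R\times Y$ with the Cayley calibration $\Phi = dt\wedge\phi+\psi$. First I would observe that the map $(t,x)\mapsto(t,u_t(x))$ is an embedding of $\R\times S$ onto $\Sigma$, so $\Sigma$ is automatically a smooth $4$--dimensional submanifold; the content is the \emph{Cayley} condition on its tangent spaces. Fix $(t,x)$ and let $q:=u_t(x)\in Y$. A basis of $T_{(t,q)}\Sigma$ is given by the four vectors $\p_t + \p_tu_t(x)\in\R\oplus T_qY$ and $du_t(x)e_i\in\{0\}\oplus T_qY$ for a frame $e_1,e_2,e_3$ of $T_xS$. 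By \autoref{le:cayley-sub}, $T_{(t,q)}\Sigma$ is a Cayley subspace if and only if $\Phi$ vanishes whenever three of its arguments lie in $T_{(t,q)}\Sigma$ and the fourth is orthogonal to it — equivalently (condition~\itref{It_CayleySub1} of \autoref{le:cayley-sub}) the triple cross product of any three basis vectors lies in $T_{(t,q)}\Sigma$. Because adding to $\p_tu_t(x)$ a tangent vector $du_t(x)\xi_t(x)$ (with $\xi_t$ as in~\eqref{eq:uxi}) does not change the span of the four basis vectors, I may replace $\p_tu_t(x)$ throughout by the normal component $\nu := \p_tu_t(x) - du_t(x)\xi_t(x)$, which by~\eqref{eq:uxi} is orthogonal to $\im\,du_t(x)$.

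Next I would carry out the linear-algebra reduction on the four vectors $\p_t+\nu,\ du_t(x)e_1,\ du_t(x)e_2,\ du_t(x)e_3$ inside $W:=\R\oplus T_qY$ equipped with the triple cross product of \autoref{thm:tcW} whose Cayley calibration is $\Phi=1^*\wedge\phi+\psi$ (here $1=e_0$ corresponds to $\p_t$ and $T_qY$ to $V=1^\perp$). Using formula~\eqref{eq:tcW4} for the triple cross product of elements of $W$, the triple cross product of the three vectors $du_t(x)e_i$ (all purely in $V$) is $\phi(du_t e_1,du_t e_2,du_t e_3) - [du_t e_1,du_t e_2,du_t e_3]$; its real part is a scalar (a multiple of $1=\p_t$) and its imaginary part is $-[du_t e_1,du_t e_2,du_t e_3]\in T_qY$. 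Meanwhile the triple cross products involving $\p_t+\nu$ produce terms of the form $\nu\times_{} e_i\times_{} e_j$-type expressions plus cross-product terms $du_t e_i\times du_t e_j$ living in $V$. By Remark~\ref{rmk:assbrack} (or directly \autoref{le:assoc}~\itref{It_Assoc4}), the cross products $du_t e_i\times du_t e_j$ of vectors tangent to $f(S)$ stay tangent once $f(S)$ is associative, and this will be automatic along the flow. The upshot of organizing these computations is that the Cayley condition on $T_{(t,q)}\Sigma$ reduces to the single requirement that the vector
\begin{equation*}
  \nu + \frac{[du_t(x)e_1,du_t(x)e_2,du_t(x)e_3]}{\phi(du_t(x)e_1,du_t(x)e_2,du_t(x)e_3)}
\end{equation*}
be tangent to $f(S)$; but $\nu$ is normal to $\im\,du_t(x)$ and so is the associator-bracket term (the associator bracket $[u,v,w]$ is orthogonal to $u,v,w$, by \autoref{le:assoc} and \autoref{le:psi}), hence the sum is both normal and tangent, i.e.\ it vanishes. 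That vanishing is precisely~\eqref{eq:gradflowA1}.

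More carefully, I would run the equivalence in both directions. For the ``if'' direction, assume~\eqref{eq:gradflowA1}; then $\nu = -[du_t e_1,du_t e_2,du_t e_3]/\phi(\dots)$, so in particular each tangent space $T_qf(S)\subset T_qY$ carries a nonzero restriction of $\phi$ and is therefore associative (by \autoref{le:assoc}~\itref{It_Assoc2}, since after the flow the normal velocity is exactly the associator bracket, forcing the bracket to be controlled — more precisely one checks the associativity of $T_qf(S)$ is propagated, or one simply notes that $T_qf(S)$ spanned by $du_t e_i$ together with the structure of~\eqref{eq:gradflowA1} forces $\phi|_{T_qf(S)}\neq0$ when $u_t\in\sF$). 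Then, substituting $\nu$ back into the four basis vectors and using~\eqref{eq:tcW4}, \autoref{le:assoc}~\itref{It_Assoc4}, and the orthogonality of the associator bracket, one verifies $u\times v\times w\in T_{(t,q)}\Sigma$ for any three of the four basis vectors, so by \autoref{le:cayley-sub} the tangent space is Cayley. For the ``only if'' direction, assume each $T_{(t,q)}\Sigma$ is Cayley; apply \autoref{le:cayley-sub}~\itref{It_CayleySub5} with $e:=$ a unit vector in $T_{(t,q)}\Sigma$ — or more cleanly, test the Cayley condition against the normal vector given by~\eqref{eq:gradflowA1} and conclude that its inner product with itself vanishes, hence it is zero. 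The hard part will be bookkeeping the many cross-product and associator terms in~\eqref{eq:tcW4} when expanding the triple cross products of the four basis vectors and confirming that everything cancels except the term in~\eqref{eq:gradflowA1}; the conceptual content, though, is entirely captured by the two facts that $\Phi = dt\wedge\phi + \psi$ and that the associator bracket is orthogonal to its arguments, so I expect the argument to be short once the reduction to \autoref{le:cayley-sub} is in place.
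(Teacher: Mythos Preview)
Your overall strategy---reduce to \autoref{le:cayley-sub} and compute the triple cross product via~\eqref{eq:tcW4}---is the right one, and you correctly identify the key computation: the triple cross product of the three spatial vectors $(0,du_t(x)e_i)$ equals $\bigl(\phi(du_te_1,du_te_2,du_te_3),-[du_te_1,du_te_2,du_te_3]\bigr)$, which lies in $T_{(t,q)}\Sigma$ if and only if~\eqref{eq:gradflowA1} holds. This single observation already handles the ``only if'' direction and is exactly what the paper does.

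The gap is in your ``if'' direction. You propose to verify that \emph{all four} triple cross products of basis vectors land in the tangent space, invoking \autoref{le:assoc}~\itref{It_Assoc4} for the triples containing $\partial_t+\nu$. But that assertion requires $T_qu_t(S)\subset T_qY$ to be associative, and it generically is \emph{not}: associativity of $u_t(S)$ is the critical-point condition (\autoref{le:critA}), not something that holds along the flow. Your parenthetical (``$\phi|_{T_qf(S)}\neq 0$ hence associative'') conflates nonvanishing of $\phi$---the defining condition of $\sF$---with associativity, which by \autoref{le:assoc}~\itref{It_Assoc2} means $\phi=\pm 1$ on orthonormal frames. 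The fix is that you don't need the other three triples at all: the last sentence of \autoref{le:cayley-sub} says $\mathrm{span}\{u,v,w,u\times v\times w\}$ is \emph{always} Cayley for linearly independent $u,v,w$. Since the triple cross product of the three spatial vectors has nonzero $\partial_t$--component (because $u_t\in\sF$), once it lies in $T_{(t,q)}\Sigma$ the tangent space coincides with this Cayley span. The paper's proof is three sentences long precisely because of this shortcut.
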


\begin{proof}
  Fix a pair $(t,x)\in\R\times S$ and choose 
  a basis $e_1,e_2,e_3$ of $T_xS$. %
  By \autoref{thm:tcW}~\itref{It_tcW3} the triple cross product of the three
  tangent vectors
  \begin{equation*}
    (0,du_t(x)e_1),\qquad
    (0,du_t(x)e_2),\qquad (0,du_t(x)e_3)
  \end{equation*}
  of $\Sigma$ is the pair
  \begin{equation*}
    \bigl(\phi(du_t(x)e_1,du_t(x)e_2,du_t(x)e_3),
    -[du_t(x)e_1,du_t(x)e_2,du_t(x)e_3]\bigr).
  \end{equation*}
  Since this pair is orthogonal to the three vectors $(0,du_t(x)e_i)$
  and its first component is nonzero, it follows that our pair is
  tangent to $\Sigma$ if and only if it is a scalar multiple of the
  pair $(1,\p_tu_t(x) - du_t(x)\xi_t(x))$. %
  This is the case if and only if~\eqref{eq:gradflowA1} holds. %
  Hence, it follows from \autoref{le:cayley-sub} that $\Sigma$ is a
  Cayley submanifold of $\R\times Y$ if and only if $u$ satisfies
  equation~\eqref{eq:gradflowA1}. %
  This proves \autoref{le:gradflowA}.
\end{proof}

\autoref{le:gradflowA} shows that every negative gradient flow line of
$\sA$ determines a Cayley submanifold $\Sigma\subset\R\times Y$
via~\eqref{eq:Sigma} and, conversely, every Cayley submanifold
$\Sigma\subset\R\times Y$, with the property that the projection
$\Sigma\to\R$ is a proper submersion, can be parametrized as a
negative gradient flow line of~$\sA$ (for some $S$). %
Thus the negative gradient trajectories of~$\sA$ are solutions of an
elliptic equation, after taking account of the action of the infinite
dimensional reparametrization group $\sG$. %
They minimize the energy
\begin{align*}
  E(u,\xi) 
  &:=
    \tfrac{1}{2}\int_{-\infty}^\infty\int_S
    \left(\abs{\p_tu_t-du_t\xi_t}^2
    +\abs*{\frac{[du_t\wedge du_t\wedge du_t]}{u_t^*\phi}}^2
    \right)u_t^*\phi\,dt  \\
  &\;= 
    \tfrac{1}{2}\int_{-\infty}^\infty\int_S
    \abs*{\p_tu_t-du_t\xi_t
    + \frac{[du_t\wedge du_t\wedge du_t]}{u_t^*\phi}}^2 
    u_t^*\phi\,dt  
    + \int_{\R\times S}u^*\psi.
\end{align*}
For studying the solutions of~\eqref{eq:gradflowA1} it will
be interesting to introduce the energy density $e_f\co S\to\R$ of 
an embedding $f\in\sF$ via 
\begin{equation*}
  e_f(x) := \frac{\det\bigl(\inner{df(x)e_i}{df(x)e_j}_{i,j=1,2,3}\bigr)}
  {\phi(df(x)e_1,df(x)e_2,df(x)e_3)^2}
\end{equation*}
for every $x\in S$ and every frame $e_1,e_2,e_3$ of $T_xS$. %
Then $e_{f\circ g}=e_f\circ g$ for every (orientation preserving)
diffeomorphism $g$ of~$S$ and so the energy 
\begin{equation}
  \label{eq:Ef}
  \sE(f) := \int_Se_f f^*\phi
\end{equation}
is a $\sG$--invariant function on $\sF$. %
Moreover, it follows from \autoref{le:assocphi} that
\begin{equation*}
  \sE(f) = \int_S
  \abs*{\frac{[df\wedge df\wedge df]}{f^*\phi}}^2
  f^*\phi + \int_Sf^*\phi.
\end{equation*}
If $\phi$ is closed, then the last term on the right is a topological
invariant. %
Moreover, the first term vanishes if and only if $f$ is a critical
point of the action functional $\sA$. %
Thus the critical points of $\sA$ are also the absolute minima of the
energy $\sE$ (in a given homology class).

\subsection{Outlook: difficulties and new phenomena}

These observations are the starting point of a conjectural
Floer--Donald\-son type theory in dimensions seven and eight, as
outlined in the paper by Donaldson and Thomas~\cite{Donaldson1998}. %
The analytical difficulties one encounters when making this precise
are formidable, including non-compactness phenomena in codimension
four~\cite{Tian2000}~and two in the gauge theory and submanifold
theory respectively.  %
The work of Donaldson and Segal~\cite{Donaldson2009} explains that
this leads to new geometric phenomena linking the gauge theory and the
submanifold theory. %
It is now understood that neither the naive approach to counting
$\Gtwo$--instantons~\cites{Donaldson2009,Walpuski2013a} nor that of
counting associative submanifolds~\cite{Nordstrom2013} can work on
their own. %
There are, however, ideas of how the theories outlined
in~\autoref{sec:DT2} and~\autoref{sec:DT3} have to be combined and
extended to obtain new invariants~\cites{Donaldson2009,Haydys2014}.


\bibliography{aux/refs}

\end{document}